\theoremstyle{plain}
\newtheorem{thm}{Theorem}[section]
\newtheorem{lem}[thm]{Lemma}
\newtheorem{lemma}[thm]{Lemma}
\newtheorem{cor}[thm]{Corollary}
\newtheorem{prop}[thm]{Proposition}
\newtheorem{claim}[thm]{Claim}
\newtheorem{question}[thm]{Question}
\newtheorem{obs}[thm]{Observation}
\newtheorem{conj}[thm]{Conjecture}
\newtheorem*{claim*}{Claim}
\newcommand{\K}{\mathcal K}
\newcommand{\Q}{\mathcal Q}
\newcommand{\cB}{\mathcal{B} }
\newcommand{\cC}{\mathcal{C} }
\newcommand{\cD}{\mathcal{D} }
\newcommand{\cE}{\mathcal{E} }
\newcommand{\cK}{\mathcal{K} }
\newcommand{\cN}{\mathcal{N} }
\newcommand{\cP}{\mathcal{P} }
\newcommand{\cQ}{\mathcal{Q} }
\newcommand{\cR}{\mathcal{R} }
\newcommand{\cS}{\mathcal{S} }
\newcommand{\cW}{\mathcal{W} }
\newcommand{\cZ}{\mathcal{Z} }
\newcommand{\pr}{\mathbb{P}}
\newcommand{\E}[0]{\mathbb{E}}
\newcommand{\mbone}[0]{\mathbbm{1}}
\newcommand{\beq}[1]{\begin{equation}\label{#1}}
\newcommand{\enq}[0]{\end{equation}}
\newcommand{\mn}[0]{\medskip\noindent}
\newcommand{\nin}[0]{\noindent}
\newcommand{\sub}[0]{\subseteq}
\newcommand{\sm}[0]{\setminus}
\renewcommand{\iff}[0]{\Longleftrightarrow}
\newcommand{\ov}[0]{\overline}
\renewcommand{\dots}[0]{,\ldots,}
\newcommand{\more}[0]{~~\mbox{\raisebox{-.9ex}{$\stackrel{\textstyle{>}}{\sim}$}}~~}
\newcommand{\A}[0]{{\mathcal A}}
\newcommand{\B}[0]{{\mathcal B}}
\newcommand{\cee}[0]{{\mathcal C}}
\newcommand{\D}[0]{{\mathcal D}}
\newcommand{\eee}[0]{{\mathcal E}}
\newcommand{\m}[0]{{\mathcal M}}
\newcommand{\pee}[0]{{\mathcal P}}
\newcommand{\sss}[0]{{\mathcal S}}
\newcommand{\T}[0]{{\mathcal T}}
\newcommand{\U}[0]{{\mathcal U}}
\newcommand{\W}[0]{{\mathcal W}}
\newcommand{\ra}[0]{\rightarrow}
\newcommand{\Ra}[0]{\Rightarrow}
\newcommand{\Lra}[0]{\Leftrightarrow}
\newcommand{\0}[0]{\emptyset}
\newcommand{\C}[2]{\binom{{#1}}{{#2}}}
\newcommand{\Cc}[0]{\tbinom}
\newcommand{\ga}[0]{\alpha }
\newcommand{\gb}[0]{\beta }
\newcommand{\gc}[0]{\gamma }
\newcommand{\gd}[0]{\delta }
\newcommand{\gD}[0]{\Delta }
\newcommand{\gG}[0]{\Gamma }
\newcommand{\gl}[0]{\lambda }
\newcommand{\gL}[0]{\Lambda}
\newcommand{\go}[0]{\omega}
\newcommand{\gO}[0]{\Omega}
\newcommand{\gs}[0]{\sigma}
\newcommand{\gS}[0]{\Sigma}
\newcommand{\gz}[0]{\zeta}
\newcommand{\eps}[0]{\varepsilon }
\newcommand{\vt}[0]{\vartheta}
\newcommand{\vs}[0]{\varsigma}
\newcommand{\vr}[0]{\varrho}
\newcommand{\vp}[0]{\varphi}
\newcommand{\cond}{\mid}
\newcommand{\tdt}[0]{\tilde{\tau}}
\newcommand{\tds}[0]{\tilde{\gs}}
\newcommand{\tdm}[0]{\tilde{\mu}}
\newcommand{\gLp}[0]{\gL^p}
\newcommand{\prh}[1][]{\pr_h}
\newcommand{\sg}{\gd_1}
\newcommand{\mmm}{a}  
\newcommand{\MMM}{b}  
\newcommand{\CCC}{C}   
\newcommand{\HH}[0]{{H^*}}
\newcommand{\LLL}{L}
\newcommand{\Hs}{{H^*}}
\newcommand{\cHs}{{\cop{H}^*}}
\newcommand{\cKs}{{\cop{K}^*}}
\newcommand{\cg}{{\cop{g}}}
\newcommand{\Ks}{{K^*}}
\newcommand{\td}{\tilde}
\newcommand{\muu}{\mu_{_U}}
\newcommand{\ogd}{\ov{\gD}}
\newcommand{\lp}[0]{\pmb{(}}
\newcommand{\rp}[0]{\pmb{)}}
\newcommand{\rb}[0]{\pmb{]}}
\newcommand{\bft}[0]{\pmb{\tau}}
\newcommand{\bfs}[0]{\pmb{\sigma}}
\newcommand{\cop}[1]{\bar{#1}}
\newcommand{\inte}[1]{{\kern0pt#1}^{\mathrm{o}}}
\newcommand{\eH}{e_H}
\newcommand{\vH}{v_H}
\newcommand{\dHx}{d}
\newcommand{\ogD}{\ov{\gD}}
\newcommand{\ellll}{\ell}
\newcommand{\elll}{l}
\newcommand{\ccc}{\mathfrak{c}}
\newcommand{\aut}{{\rm aut}}
\newcommand{\Aut}{{\rm Aut}}
\title{On the $H$-space of a random graph}
\author{Quentin Dubroff}
\thanks{Department of Mathematics, Carnegie Mellon University}
\author{Jeff Kahn}
\thanks{Department of Mathematics, Rutgers University}
\email{qdubroff@andrew.cmu.edu,jkahn@math.rutgers.edu}
\address{Department of Mathematics, Carnegie Mellon University \\
Wean Hall 6113\\
Pittsburgh, PA 15213, USA}
\address{Department of Mathematics, Rutgers University \\
Hill Center for the Mathematical Sciences \\
110 Frelinghuysen Rd.\\
Piscataway, NJ 08854, USA}
\begin{document}

\begin{abstract}
The edge space $\mathcal{E}(G)$ of a graph $G$ is the vector space $\mathbb{F}_2^{E(G)}$ with members naturally identified with subgraphs of $G$, and the $H$-space is the subspace 
$\mathcal{C}_H(G) $ of $ \mathcal{E}(G)$ spanned by copies of the graph $H$. 
We are interested in when the random graph $G = G_{n,p}$ is likely to satisfy
\[\cC_H(G) = \cW_H(G),\]
where $\cW_H(G)$ takes one of four natural values, depending on the value of $\cC_H(K_n)$.
We show that for strictly $2$-balanced $H$, w.h.p.\ the above equality holds whenever every 
edge of $G$ is in a copy of $H$.
\end{abstract}

\maketitle

\section{Introduction}\label{Intro}

Our main result is Theorem~\ref{thm:Hprecise}, for which 
we first need to fill in some background.

Recall that the \emph{edge space} of a graph $G$,
denoted $\mathcal{E}(G)$, is the vector space $\mathbb{F}_2^{E(G)}$
(elements of which are regarded in the natural way as spanning subgraphs of $G$). 
The \emph{cycle space}, 
$\mathcal{C}(G)$, is the subspace generated by the (indicators of) cycles of $G$;
equivalently, the subspace of even-degree subgraphs of $G$, which we here abusively call
\emph{Eulerian} without requiring that they be connected.
(See \cite[Sec.\ 1.9]{Diestel} for an exposition.)

Following \cite{BK} we define,
the $H$-{\em space} of $G$, denoted $\cee_H(G)$, 
to be the subspace of
$\mathcal{E}(G)$ generated by the copies of the graph $H$ in $G$.
In what follows $H$ will be fixed and $G$ will almost always be large.

When $G=K_n$, its $H$-space has a natural form for all but a few
values of $n$;
the following observation, for which we set
$\mathcal{D}(G) = \{D \in \mathcal{E}(G) : |D| \equiv 0\pmod{2}\}$ 
and $|E(H)|=|H| $,
is \cite[Theorem 1.5]{BK}
(or, really, the more precise version proved in \cite[Sec.\ 8]{BK}).

\begin{thm}
\label{thm:Hspace}
For any graph $H$ with at least one edge and $n\geq |V(H)|+2$,
\begin{align}
\label{eq:Hspace}
\cee_H(K_n) =
\begin{cases}
\mathcal{C}(K_n) &\text{ if } H \text{ is Eulerian and } |H| \text{ is odd,} \\
\mathcal{C}(K_n) \cap \mathcal{D}(K_n) &\text{ if } H \text{ is Eulerian and } |H| \text{ is even,} \\
\mathcal{E}(K_n) &\text{ if } H \text{ is not Eulerian and } |H| \text{ is odd,} \\
\mathcal{D}(K_n) &\text{ if } H \text{ is not Eulerian and } |H| \text{ is even.}
\end{cases}
\end{align}
\end{thm}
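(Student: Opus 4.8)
The plan is to prove the two inclusions separately. For ``$\subseteq$'', a copy of $H$ in $K_n$ is a subgraph with exactly $|H|$ edges, so $\cee_H(K_n)\subseteq\mathcal D(K_n)$ when $|H|$ is even; and a copy of $H$ is an Eulerian subgraph when $H$ is Eulerian, so $\cee_H(K_n)\subseteq\mathcal C(K_n)$ in that case. Together with the trivial $\cee_H(K_n)\subseteq\mathcal E(K_n)$, this gives the stated upper bound in all four cases. The engine for ``$\supseteq$'' is a \emph{relabeling move}: fix $x\in V(H)$ of positive degree $d$ and let $\phi,\psi$ be embeddings of $H$ into $K_n$ that agree on $V(H)\setminus\{x\}$ and send $x$ to distinct vertices $a,b\notin\phi(V(H)\setminus\{x\})$; then $\phi(H)+\psi(H)$ equals the ``double star'' $D_{a,b}(S):=\{as:s\in S\}\cup\{bs:s\in S\}$, where $S=\phi(N_H(x))$ is a $d$-set. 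Hence $D_{a,b}(S)\in\cee_H(K_n)$ for every $d$-set $S$ and all distinct $a,b\notin S$, as soon as $n\ge|V(H)|+1$. Now $D_{a,b}(S)+D_{a,b}(S')=D_{a,b}(S\triangle S')$, so taking $|S|=|S'|=d$ with $|S\cap S'|=d-1$ produces a $4$-cycle; a short count shows the two copies of $H$ involved can be realized within a set of $|V(H)|+2$ vertices, so by the $S_n$-invariance of $\cee_H(K_n)$ every $4$-cycle lies in $\cee_H(K_n)$. Since the $4$-cycles span $\mathcal C(K_n)\cap\mathcal D(K_n)$, this gives $\mathcal C(K_n)\cap\mathcal D(K_n)\subseteq\cee_H(K_n)$.

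This already settles the Eulerian cases. If $H$ is Eulerian with $|H|$ even, combining with ``$\subseteq$'' gives $\cee_H(K_n)=\mathcal C(K_n)\cap\mathcal D(K_n)$. If $H$ is Eulerian with $|H|$ odd, a copy of $H$ is an Eulerian subgraph of odd size, hence lies in $\mathcal C(K_n)\setminus\mathcal D(K_n)$; since $\mathcal C(K_n)\cap\mathcal D(K_n)$ is a hyperplane in $\mathcal C(K_n)$, adjoining this element yields $\cee_H(K_n)\supseteq\mathcal C(K_n)$, so equality holds.

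For the non-Eulerian cases I would bring in degree parities through the boundary map $\partial\colon\mathcal E(K_n)\to\mathbb F_2^{[n]}$ recording the degree sequence mod $2$, so that $\mathcal C(K_n)=\ker\partial$. Because $\mathcal D(K_n)$ has codimension $1$ in $\mathcal E(K_n)$ and a triangle lies in $\mathcal C(K_n)\setminus\mathcal D(K_n)$, we have $\mathcal C(K_n)+\mathcal D(K_n)=\mathcal E(K_n)$, and applying $\partial$ gives $\partial(\mathcal D(K_n))=\partial(\mathcal E(K_n))$, the even-weight subspace of $\mathbb F_2^{[n]}$. A non-Eulerian $H$ has a vertex $x$ of \emph{odd} degree $d$; for this $x$ the double stars $D_{a,b}(S)$ with $|S|=d$ have even size $2d$, hence lie in $\cee_H(K_n)\cap\mathcal D(K_n)$, while $\partial(D_{a,b}(S))=\mbone_a+\mbone_b$. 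As $\{a,b\}$ ranges over all pairs of vertices, these vectors span the even-weight subspace, i.e.\ all of $\partial(\mathcal D(K_n))$; combined with $\mathcal C(K_n)\cap\mathcal D(K_n)=\ker(\partial|_{\mathcal D(K_n)})\subseteq\cee_H(K_n)$, a dimension count forces $\mathcal D(K_n)\subseteq\cee_H(K_n)$. For $|H|$ even this is the asserted equality. For $|H|$ odd, a copy of $H$ has odd size and so lies outside the hyperplane $\mathcal D(K_n)$ of $\mathcal E(K_n)$, giving $\cee_H(K_n)=\mathcal E(K_n)$.

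The step I expect to need the most care is the vertex-bookkeeping in the relabeling move: verifying that the two copies of $H$ used to build a $4$-cycle — whose neighborhood-images $S,S'$ must differ in exactly one vertex — can be placed within $|V(H)|+2$ vertices, which is what makes $|V(H)|+2$ the natural threshold here. The other ingredients — that $4$-cycles span $\mathcal C(K_n)\cap\mathcal D(K_n)$, and that $\mathcal C(K_n)\cap\mathcal D(K_n)$ and $\mathcal D(K_n)$ are hyperplanes in $\mathcal C(K_n)$ and $\mathcal E(K_n)$ respectively — are standard and valid for all $n\ge4$.
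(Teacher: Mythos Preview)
The paper does not prove Theorem~\ref{thm:Hspace}; it is quoted from \cite{BK} (see the sentence introducing the theorem: ``the following observation \ldots\ is \cite[Theorem 1.5]{BK}''), so there is no in-paper argument to compare against.

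Your argument is correct. The relabeling move producing the double stars $D_{a,b}(S)$, the passage from these to $4$-cycles, the use of $4$-cycles to span $\cC(K_n)\cap\cD(K_n)$, and the boundary-map step for non-Eulerian $H$ all go through as you describe. Two small points of exposition: first, it is \emph{four} copies of $H$ that combine to a $4$-cycle (two for $D_{a,b}(S)$ and two for $D_{a,b}(S')$), not two. Second, the ``within $|V(H)|+2$ vertices'' bookkeeping and appeal to $S_n$-invariance are a little more than you need. Once you have $D_{a,b}(S)\in\cee_H(K_n)$ for every $d$-set $S$ and $a,b\notin S$ (this uses only $n\ge |V(H)|+1$), the $4$-cycle $a\,s\,b\,s'$ is obtained directly as $D_{a,b}(S)+D_{a,b}(S')$ for any $(d-1)$-set $T\subseteq [n]\setminus\{a,b,s,s'\}$ with $S=T\cup\{s\}$, $S'=T\cup\{s'\}$; the existence of such $T$ is exactly the condition $n\ge d+3$, and since $d\le |V(H)|-1$ this is implied by $n\ge |V(H)|+2$. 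No further fitting of copies into a common vertex set is required.
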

\nin
Note the left-to-right containments ($\cee_H(K_n)\sub \cee(K_n)$ and so on) are obvious.

The question that will concern us here is (roughly), 
\begin{quote} 
\emph{when (i.e.\ for what $p=p(n)$) is 
the analogue of Theorem~\ref{thm:Hspace}
likely to hold in $G_{n,p}$?}
\end{quote}

\nin
The natural value of $\cee_H(G)$, here denoted $\W_H(G)$,
is what one gets by replacing
$K_n$ by $G$ in the appropriate expression on the r.h.s.\ of \eqref{eq:Hspace}
(e.g.\ if $H $ is a $\kappa$-cycle, then
$   
\mathcal{W}_H(G)$ is 
$\cee(G)$ if $\kappa$ is odd and
$\cee(G)\cap \D(G)$ if $\kappa$ is even);
so we are really asking,
\beq{mQ}
\mbox{\emph{when is $G_{n,p}$ likely to lie in
$\mathcal{T}_H := \{G:\cee_{H}(G) = \mathcal{W}_H(G)\}$?}}
\enq
(Again, $\cee_H(G) \subseteq \mathcal{W}_H(G)$ is trivial.  
Past work on subgraphs generating $\cee(G)$ has focused on cycles; see 
\cite{CNP} and its references.)
There are two easy necessary conditions:

\begin{itemize}

\item[(i)]
each edge of $G$ belonging to some member of $\W_H(G)$
lies in a copy of $H$ in $G$;

\item[(ii)]
each vertex with odd degree in some member of $\W_H(G)$
has odd degree in some copy of $H$ in $G$.
\end{itemize}

For values of $p$ for which the above questions are interesting,
it will be true that
\emph{in $G=G_{n,p}$,
w.h.p.\footnote{\emph{With high probability,} meaning with 
probability tending to 1.
Throughout this paper limits are taken as $n\ra\infty$.}
each edge lies in some member of $\W_H(G)$ and, if $H$ is non-Eulerian, 
each vertex has odd degree in
some member of $\W_H(G)$.}
We accordingly consider slightly simpler conditions, and define
\[
\Q_H= \{G:\mbox{each edge of $G$ lies in a copy of $H$}\},
\]
and
\[
\cR_H=\{G:\mbox{if $H$ is non-Eulerian, then each vertex of $G$ has odd degree in some copy of $H$}\}.
\]
(Note:  in \cite{BK} $\Q_H$ was used for what's now $\Q_H\wedge \cR_H$.)

The following statement, which was proved for triangles in \cite{DHK} and for
all other odd cycles in \cite{BK}, says that, at least when $H$ is an odd cycle (in which case (ii)
does not apply and $\W_H=\cee$), failure of $\Q_H$ is, in a precise sense,
the main obstruction to membership of $G_{n,p}$ in $\T_H$.
This is basically a ``hitting time'' statement adapted to the present situation,
in which the target property $\Q_H$ is not increasing (that is, is not preserved by 
addition of edges).  See also Conjecture~\ref{THT} for a version that becomes a hitting
time statement when we do aim for something increasing.

\begin{thm}
\label{TBK}
For any odd cycle H,
$ \max_p \, \pr(G_{n,p} \in \cQ_H \setminus \T_H) \ra 0$ 
(as $n\ra\infty$).
\end{thm}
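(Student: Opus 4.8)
Throughout, write $H=C_\kappa$ with $\kappa$ odd; as noted in the introduction (via Theorem~\ref{thm:Hspace}) we then have $\W_H(G)=\cC(G)$, the cycle space, so $\T_H=\{G:\cC_H(G)=\cC(G)\}$. Since $\pr(G_{n,p}\in\cQ_H\sm\T_H)\le\min\{\pr(G_{n,p}\in\cQ_H),\pr(G_{n,p}\notin\T_H)\}$, the plan is to split on the threshold $p^*=p^*(n)$ of $\cQ_H$ --- up to a constant, the solution of $n^{\kappa-2}(p^*)^{\kappa-1}=\log n$ (so the expected number of copies of $C_\kappa$ through a fixed edge at $p=p^*$ is of order $\log n$) --- and to note that $p^*\gg(\log n)/n$ since $(\kappa-2)/(\kappa-1)<1$. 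For $p$ below $p^*$ I would bound the first term (showing $\cQ_H$ is unlikely); from $p^*$ upward, $\pr(G_{n,p}\notin\T_H)$ is not always $o(1)$, so there one must bound the joint probability directly, and the vicinity of $p^*$ is where the genuine difficulty lies.

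\emph{Below the threshold} ($p\le(1-\eps)p^*$). If $p=o(1/n)$ then w.h.p.\ $G_{n,p}$ contains no copy of $C_\kappa$; but any graph with no copy of $C_\kappa$ has $\cC_H=\{0\}$, hence belongs to $\cQ_H$ only if it is edgeless and to $\T_H$ only if it is a forest, so it never belongs to $\cQ_H\sm\T_H$ --- this case is immediate. For $1/n\le p\le(1-\eps)p^*$ I would show $\pr(G_{n,p}\in\cQ_H)=o(1)$ in the usual way: letting $X$ be the number of edges of $G_{n,p}$ lying in no copy of $C_\kappa$, a first-moment estimate gives $\E X=n^{2}p\cdot e^{-\Theta(n^{\kappa-2}p^{\kappa-1})}\ra\infty$, and a second-moment (Janson/Chebyshev-type) estimate --- using that the copies of $C_\kappa$ through a fixed edge are governed by an almost independent family of $(\kappa-1)$-path appearances --- gives $X>0$ w.h.p. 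This is routine except very near the upper endpoint, which is absorbed into the next step.

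\emph{At and above the threshold} ($p\ge(1-\eps)p^*$). Here one wants $\pr(G_{n,p}\in\cQ_H\text{ and }\cC_H(G_{n,p})\ne\cC(G_{n,p}))=o(1)$. The backbone is a deterministic sufficient condition for $\cC_H(G)=\cC(G)$: roughly, if $G$ is connected, every edge lies in a copy of $C_\kappa$, and $G$ is ``$C_\kappa$-rich'' (e.g.\ every edge lies in a copy of $C_\kappa$ avoiding any prescribed bounded vertex set, so that copies of $C_\kappa$ cannot be separated from one another by a thin piece of $G$), then $\cC_H(G)=\cC(G)$; the proof is by local surgery, replacing an edge modulo $\cC_H(G)$ by the complementary $(\kappa-1)$-path of a copy of $C_\kappa$ through it and iterating along a spanning tree. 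Some such richness hypothesis is essential beyond plain connectivity: a cyclic ``necklace'' of $m\ne\kappa$ copies of $C_\kappa$, glued in a cycle at single vertices, is $2$-connected and lies in $\cQ_H$, yet $\cC(G)$ exceeds $\cC_H(G)$ by the class of the cycle running around the necklace. For $p\ge(1+\eps)p^*$ one is then quickly done: w.h.p.\ $G_{n,p}$ is connected, has minimum degree (hence vertex connectivity) $\gg\kappa$, has every edge in $\Omega(\log n)$ copies of $C_\kappa$, and is a strong expander, so it meets the hypotheses of the criterion and $G_{n,p}\in\T_H$.

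\emph{The window} ($(1-\eps)p^*\le p\le(1+\eps)p^*$) is the main obstacle. Here $\cQ_H$ is a genuinely nontrivial and, crucially, \emph{non-increasing} event, so conditioning on it skews the law of $G_{n,p}$ and the unconditional estimates above do not transfer directly. One may assume $\pr(G_{n,p}\in\cQ_H)\ge\delta$ for a fixed $\delta>0$ (otherwise the ``below threshold'' bound already applies), so conditioning on $\cQ_H$ inflates probabilities by only $O(1)$, and it suffices to bound the \emph{unconditional} probability that $G_{n,p}$ both lies in $\cQ_H$ and violates the criterion. The plan is: (i) catalogue the bounded-size local configurations whose presence is the only way a connected graph in $\cQ_H$ can violate $C_\kappa$-richness --- necklace-like pieces built around edges that only barely lie in a copy of $C_\kappa$, and their relatives --- and show that in this range these are the only possible witnesses to $\cC_H\ne\cC$ (using, e.g., that here $G_{n,p}$ has vertex connectivity $\gg\kappa$, so no large thin separations); and (ii) show, for each configuration type $\cO$, that $\pr(G_{n,p}\supseteq\cO\text{ and }G_{n,p}\in\cQ_H)=o(1)$, the point being that the edges of $\cO$ not yet in a copy of $C_\kappa$ must, because $\cQ_H$ holds, be completed by copies using edges outside $\cO$, and one must push a union bound over the $\cO$'s together with these forced completions through, uniformly across the window. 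The delicacy is that here $n^{\kappa-2}p^{\kappa-1}$ is only $\Theta(\log n)$, so the ``number of copies of $C_\kappa$ through an edge'' is barely concentrated and the moment estimates sit right at the edge of usefulness; getting the configuration list complete and the bounds uniform across the window is precisely the hard part. For $\kappa=3$ this is the analysis of \cite{DHK}; the remaining odd $\kappa$ need the refinements of \cite{BK}, essentially because the basic surgery move shifts path-lengths by $\kappa-2>1$ rather than by $1$, so more room in the sparse host graph must be found to carry out the surgeries.
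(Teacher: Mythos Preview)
Your proposal is a plan rather than a proof, and the plan does not match what \cite{DHK} and \cite{BK} (or this paper, which subsumes both) actually do.  The approach you sketch---a deterministic ``$C_\kappa$-rich'' criterion plus a catalogue of bounded local obstructions in the window---is not how the argument runs, and there is no indication such a catalogue can be made finite or that the union bound closes.  Your final sentences defer the hard window case to \cite{DHK,BK} while describing their content as ``local surgery'' and ``configuration lists''; that description is inaccurate.

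The actual argument (here for general strictly $2$-balanced $H$, specializing to odd cycles) works in the \emph{perpendicular} space.  One fixes a rule assigning to each $G\notin\T_H$ a smallest $F=F(G)\in\cC_H^\perp(G)\setminus\W_H^\perp(G)$, and proves $\pr(F\neq\0,\,G\in\cQ_H)=o(1)$.  The first step---Lemma~\ref{LemCG}, a consequence of the container theorems of \cite{BMS,ST}---shows $|F|<\eps n^2p$ w.h.p.; this replaces your ``no large thin separations'' idea and is already nontrivial.  The heart of the matter (Lemmas~\ref{lem:Claim2} and \ref{Rlemma}) then rules out small nonempty $F$ by a two-way coupling with $G_0=G_{n,q}$, $q=\vt p$: coupling \emph{down}, one shows that many edges of $F$ are forced to lie on $\Omega(\log n)$ internally disjoint $F_0$-central bridges in $G_0$; coupling \emph{up}, one shows this is far too many for a set as small as $F_0$, via a Janson-type inequality counting ``ropes'' in $G_0$.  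The tension between these two estimates, summed over possible $F_0$'s, gives the contradiction.  No finite list of bad configurations, no surgery along a spanning tree, and no expansion hypothesis enter.

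If you want to pursue your outline as an alternative, the gap to fill is precisely the one you flag: you would need a \emph{complete} structural characterization of connected $G\in\cQ_H$ with $\cC_H(G)\neq\cC(G)$ in terms of boundedly many templates, valid throughout the window, together with a calculation showing each template is $o(1)$-unlikely even jointly with $\cQ_H$.  Nothing in the existing literature suggests this is tractable, and your own necklace example shows the obstructions are not all of bounded size.
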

\nin
For perspective we note that the triangle result of \cite{DHK} had been of particular 
interest, being the first open case of a
conjecture of M.\ Kahle \cite{Kahle,KahleArxiv} on the homology of the clique complex of
$G_{n,p}$, and that the argument of \cite{BK} doesn't even extend to
even cycles (where $\cee$ becomes $\cee\cap \D$).

The next item is \cite[Question 1.5]{BK}.
\begin{question}\label{Gen'l?}
Could it be that for each (fixed) H,
$\max_p \, \pr(G_{n,p} \in (\Q_H\wedge \cR_H) \setminus \T_H) \ra 0$
(as $n\ra\infty$)?
\end{question}
\nin
As observed below, this completely general suggestion turns out to be too optimistic.  
On the other hand our main result, to which we now turn,
provides a positive answer general enough to cover many (or most?) $H$'s of common interest.

We first recall a central notion.
The \emph{$2$-density} of a graph $H$ is 
\[
d_2(H)=\left\{\begin{array}{ll}
(e_H-1)/(v_H-2)&\mbox{if $v_H \geq 3$,}\\
0&\mbox{otherwise}
\end{array}\right.
\]
(where $(v_H,e_H) =(|V(H)|,|E(H)|)$); 
the \emph{maximum $2$-density} of $H$ is
\[
m_2(H)  = \max\{d_2(K): K\sub H\};
\]
and $H$ is \emph{$2$-balanced} 
if $m_2(H) = d_2(H)$, and \emph{strictly $2$-balanced} 
if
$d_2(K)<d_2(H)$ for all $K\subsetneq H$.
Throughout this paper, ``$H$ 2-balanced''
is taken to include $v_H\geq 3$.
See \cite{JLR} (from which our usage differs slightly) for a thorough account of these and related notions.

For present purposes $\cR_H$ turns out to be irrelevant for strictly 2-balanced $H$, since for such $H$
\[
\max_p \, \pr(G_{n,p} \in \cQ_H \sm \cR_H) \ra 0.
\]
We will not need this and leave the easy proof as an exercise, but mention it to explain
the disappearance of $\cR_H$ from almost all of what follows (all but
the discussion around Question~\ref{Gen'l?'}).

Here we extend Theorem~\ref{TBK} to all strictly 2-balanced graphs:

\begin{thm}
\label{thm:Hprecise}
For any strictly 2-balanced (fixed) $H$,
\beq{orig}
 \max_p \, \pr(G_{n,p} \in \cQ_H \setminus \T_H) \ra 0;
 \enq
equivalently,
\beq{equiv}
\forall \, p=p(n), ~~~\pr(G_{n,p} \in \mathcal{Q}_H\setminus \T_H) \ra 0.
\enq
\end{thm}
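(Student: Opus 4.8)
The plan is to fix a strictly $2$-balanced $H$ and show that, w.h.p., membership of $G = G_{n,p}$ in $\cQ_H$ forces $\cC_H(G) = \cW_H(G)$. Since $\cC_H(G) \subseteq \cW_H(G)$ is automatic, the task is to produce, for each ``test element'' $W$ spanning $\cW_H(G)$, a representation of $W$ as an $\F_2$-sum of copies of $H$ living inside $G$. By Theorem~\ref{thm:Hspace} we know exactly which $W$ we must hit: depending on the parity of $|H|$ and whether $H$ is Eulerian, $\cW_H(G)$ is one of $\cE(G)$, $\cD(G)$, $\cC(G)$, $\cC(G)\cap\cD(G)$, and in each case it is generated by very simple elements --- single edges, pairs of edges, triangles/paths joining two edges, short cycles --- that are supported on a bounded number of edges. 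So it suffices to show that w.h.p., \emph{conditioned on $G \in \cQ_H$}, every such bounded-support generator lies in $\cC_H(G)$. The main reduction is thus: for a fixed small graph $F$ (the generator), show
\[
\max_p \, \pr\!\big(G_{n,p} \in \cQ_H,\ F \not\subseteq_{\mathrm{span}} \cC_H(G)\big) \to 0,
\]
where by a slight abuse we quantify over all placements of $F$ in $K_n$ and take a union bound over the $O(n^{|V(F)|})$ placements, so it is enough to beat this polynomial factor.

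The heart of the argument is a local/absorption construction. Fix an edge $e = xy$ of $G$; by $G \in \cQ_H$ there is at least one copy $H_0$ of $H$ through $e$, but one copy is not enough --- we need to cancel everything in $H_0$ except $e$ itself (or except a controlled remainder), and for that we need \emph{many} copies of $H$ through $e$ whose pairwise symmetric differences we can manipulate. The key structural input is strict $2$-balancedness: this is exactly the condition under which the number of copies of $H$ through a fixed edge is, for the relevant range of $p$, either large (polynomial in $n$) or the copy is essentially forced/isolated, and --- more importantly --- it is the condition that makes the ``second moment / spread'' behavior of copies of $H$ clean, so that two copies of $H$ sharing the edge $e$ typically share \emph{only} $e$. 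I would make this precise via a first-moment estimate: the expected number of pairs of copies of $H$ through $e$ that overlap in more than the edge $e$ is, by strict $2$-balancedness, negligible compared to (the square root of, or a suitable power of) the expected number of copies through $e$; hence w.h.p., for every edge $e$ lying in any copy of $H$, either $e$ lies in many copies of $H$ that are pairwise edge-disjoint off $e$, or $e$ lies in only $O(1)$ copies. The sparse regime (few copies per edge, $p$ near the threshold $n^{-1/m_2(H)}$) and the dense regime (many copies) should be handled separately, with the genuinely delicate analysis concentrated near the threshold.

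With this dichotomy in hand, the combinatorial core is an \emph{algebraic sieve / chip-firing} argument over $\F_2$: given two copies $H_1, H_2$ of $H$ sharing exactly the edge $e$, their sum $H_1 + H_2$ is an element of $\cC_H(G)$ supported on $(E(H_1)\triangle E(H_2)) = E(H_1)\cup E(H_2) \setminus \{e\}$, i.e.\ a copy of the ``double'' graph $H \oplus_e H$ with $e$ deleted. Iterating and combining such sums --- together with the analogous statements for copies of $H$ through a fixed \emph{vertex} (needed for the non-Eulerian, odd-$|H|$ case where we must realize single edges and must match odd vertex degrees) --- one builds up, step by step, all the bounded generators. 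Here one wants to recurse on subgraphs of $H$: removing the last vertex of a $2$-balanced $H$ leaves something strictly sparser, which is what lets the copy-counting stay under control at each stage. I expect the main obstacle to be precisely this recursive bookkeeping near the threshold $p \asymp n^{-1/d_2(H)}$: there the number of copies of $H$ through an edge has bounded expectation, so one cannot simply ``have many copies,'' and one must instead argue that the \emph{bad} event (an edge in a copy of $H$ that cannot be cancelled) is rare enough to survive the union bound over placements --- this is where the non-monotonicity of $\cQ_H$ bites, and where the bulk of the technical work (a careful analysis of the distribution of $H$-copies through a fixed edge, in the spirit of the odd-cycle arguments of \cite{DHK, BK} but without the special structure of cycles) will go. A secondary, more routine obstacle is verifying that in each of the four cases of \eqref{eq:Hspace} the chosen bounded generators genuinely span $\cW_H(G)$ and are each reachable from the local constructions; this is finite-case checking plus standard cycle-space facts from \cite[Sec.\ 1.9]{Diestel}.

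Finally, the equivalence of \eqref{orig} and \eqref{equiv} is immediate and formal: \eqref{orig} says the supremum over $p$ of the probability tends to $0$, which in particular gives \eqref{equiv} for every sequence $p = p(n)$; conversely, if \eqref{orig} failed there would be a sequence $p(n)$ along which $\pr(G_{n,p}\in\cQ_H\setminus\T_H)$ is bounded away from $0$, contradicting \eqref{equiv}. So I would prove \eqref{orig} and note the equivalence in one line.
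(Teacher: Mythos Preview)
There is a genuine gap at the foundation of your plan. The claim that $\cW_H(G)$ is generated by elements of bounded support fails in the Eulerian cases $\cW_H \in \{\cC, \cC \cap \cD\}$: the cycle space $\cC(G)$ is generated by cycles, and while triangles suffice in $K_n$, in $G = G_{n,p}$ at $p$ near $p^*_H$ there need be no short cycle through a typical edge (e.g.\ for $H = C_k$ with $k$ large, $p^*_H \approx n^{-1+1/(k-1)}$ and w.h.p.\ most edges lie in no cycle of length below $k$). So there is no fixed small $F$ to union-bound over, and your polynomial-in-$n$ union bound never gets started. Even setting this aside, the ``chip-firing'' step runs the wrong way: $H_1 + H_2$ has about $2(e_H - 1)$ edges, so iterating produces \emph{larger} elements of $\cC_H(G)$, and you give no mechanism for cancelling down to a single edge or short cycle. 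The appeal to ``recurse on subgraphs of $H$'' does not help, since $\cC_H(G)$ is spanned by copies of $H$ itself, not of its subgraphs.

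The paper's route is essentially orthogonal. It works dually: $G \in \T_H$ iff $\cC_H^\perp(G) = \cW_H^\perp(G)$, so one fixes a rule assigning to each $G \notin \T_H$ a \emph{smallest} witness $F \in \cC_H^\perp(G) \setminus \cW_H^\perp(G)$ and shows $\pr(F \neq \emptyset) = o(1)$. Large $F$ is ruled out by the container method (\cite[Thm.~5.1]{BK}, resting on \cite{BMS,ST}); small nonempty $F$ is ruled out by a two-way coupling between $G_{n,p}$ and $G_{n,\vt p}$: one first shows that a small $F$ forces a positive fraction of its edges to have many internally disjoint ``$F$-central'' bridges surviving in the subsampled graph, and then that this would require an unnaturally large number of certain auxiliary structures (``ropes''), which a generalized Janson inequality (Theorem~\ref{JDK}) forbids. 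The minimal-witness device, the dual formulation, and the coupling are all essential and absent from your sketch; your correct intuition that the difficulty concentrates near $p^*_H$ is where all the work lies, but the plan you outline does not engage with it.
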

\nin
(The (trivial) equivalence is given by the observation that
\eqref{equiv} holds iff it holds when, for each $n$, $p=p(n)$ is a value
achieving the maximum in \eqref{orig} (and in this case the two statements
are the same).)

As mentioned earlier, the following strengthening of Theorem~\ref{thm:Hprecise} 
would extend
``hitting time'' to the present situation, in which the properties of interest
are not increasing.  (We expect the proof of Theorem~\ref{thm:Hprecise} 
can 
be strengthened to give this, but we have not done so and for now state it as 
a conjecture.)
\begin{conj}\label{THT}  
If $e_1\dots e_{\C{n}{2}}$ is a uniform permutation of $E(K_n)$ and 
$G_i $ is the graph on $V=[n]$ with edge set $\{e_1\dots e_i\}$, then w.h.p.
\[
\{i:G_i\in \Q_H\}\sub \{i:G_i\in \T_H\}.
\]
\end{conj}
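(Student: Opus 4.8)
The plan is to prove \eqref{orig} and then derive \eqref{equiv}, leveraging the strictly 2-balanced hypothesis and the structure of the four cases in Theorem~\ref{thm:Hspace}. Fix $p = p(n)$ achieving the maximum in \eqref{orig}; we must show $\pr(G_{n,p}\in \cQ_H\sm\T_H)\ra 0$. The first step is to reduce to a narrow range of $p$. If $p$ is below the threshold where every edge lies in a copy of $H$ --- roughly $p \ll n^{-1/m_2(H)}(\log n)^{1/e_H}$, using strict 2-balancedness to pin this down --- then w.h.p.\ $G_{n,p}\notin \cQ_H$ (indeed many edges are in no copy), so the event in question is unlikely for the trivial reason; and if $p$ is well above this threshold, say $p \gg n^{-1/m_2(H)}\log n$, one expects $G_{n,p}\in\T_H$ w.h.p.\ by a more robust argument (there are so many copies of $H$ through each edge and each pair of edges that a sandwiching/expansion argument forces $\cee_H = \cee$, modulo the parity constraints; this is the regime handled in spirit by \cite{BK} for odd cycles, and the strictly 2-balanced condition makes the copy-counting uniform). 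So the crux is the \emph{critical window} $p = \Theta(n^{-1/m_2(H)}(\log n)^{1/e_H})$, where membership in $\cQ_H$ is itself a nontrivial event.

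In the critical window the key structural fact to establish is that, \emph{conditioned on $G_{n,p}\in\cQ_H$}, w.h.p.\ the copies of $H$ in $G$ already generate all of $\W_H(G)$. The approach is to argue that the ``local obstructions'' are the only ones: define, for a subgraph $F\in\W_H(G)$, what it would mean for $F$ to lie outside $\cee_H(G)$, and show via a linear-algebraic / duality argument that any $F\in\W_H(G)\sm\cee_H(G)$ would force the existence of a small ``bad'' configuration (an edge, or a short even/odd structure) witnessing failure of $\cQ_H$ or of a related local property. Concretely, I would work with the dual: $\cee_H(G)^\perp$ inside $\cee(G)^*$ consists of the functionals vanishing on every copy of $H$; one shows that any such functional not forced by the description of $\W_H(K_n)$ must be ``supported'' near an edge in few copies of $H$. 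The combinatorial heart is a \textbf{sprinkling / two-round exposure} argument: reveal $G_{n,p}$ as $G_1\cup G_2$ with $G_2$ a sparse independent sprinkle; condition on $G_1\in\cQ_H$-like good behavior except possibly at a bounded number of ``deficient'' edges, then use $G_2$ to either complete copies of $H$ through those edges (absorbing them into $\cee_H$) or to exhibit that $G\notin\cQ_H$ --- the strictly 2-balanced hypothesis guarantees that extending a single edge to a copy of $H$ requires adding $v_H-2$ new vertices with $e_H-1$ edges in a way that behaves like an independent near-threshold event, so the sprinkle succeeds with the right probability. One must separately handle the parity layers: when $|H|$ is even one works modulo $\D(G)$, and when $H$ is Eulerian modulo $\cee(G)\cap\D(G)$ or $\cee(G)$; here the symmetric-difference trick of combining two copies of $H$ sharing an edge (which lies in $\cee_H(G)$ and is Eulerian, or has controlled size parity) is what lets one move between $\W_H$ and the span of copies.

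The main obstacle I anticipate is exactly the window analysis combined with the non-monotonicity of $\cQ_H$: because $\cQ_H$ is not increasing, one cannot simply couple to a denser graph and inherit $\T_H$, so the sprinkling must be done delicately --- the sprinkled edges $G_2$ can themselves be in no copy of $H$ and thus \emph{destroy} membership in $\cQ_H$ even while helping generate $\W_H$. The resolution is to choose the sprinkle sparse enough that w.h.p.\ every sprinkled edge is in a copy of $H$ using only $G_1$-edges plus that one edge (again using strict 2-balancedness to control the number of such potential copies), so $G_2$ never creates new deficient edges; making this quantitative across the whole critical window, uniformly, is the technical core. A secondary difficulty is the base case of the linear algebra: showing that the only functionals in $\cee_H(G)^\perp$ beyond the ``global'' ones (those coming from $\W_H(K_n)^\perp$, i.e.\ parity and Eulerian constraints) are localized --- this requires a careful analysis of how a copy of $H$ can be written as a symmetric difference of other copies, and is where the precise structure from \cite[Sec.\ 8]{BK} underlying Theorem~\ref{thm:Hspace} gets used. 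Once the critical window is handled, \eqref{equiv} follows immediately from \eqref{orig} by the equivalence already noted in the excerpt.
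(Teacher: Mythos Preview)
The statement you were asked to prove is Conjecture~\ref{THT}, which the paper explicitly does \emph{not} prove; the authors write that they ``expect the proof of Theorem~\ref{thm:Hprecise} can be strengthened to give this, but we have not done so and for now state it as a conjecture.''  So there is no proof in the paper to compare against.

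More seriously, your proposal is not a proof of Conjecture~\ref{THT} at all: you set out to establish \eqref{orig} and \eqref{equiv}, which are the assertions of Theorem~\ref{thm:Hprecise}, a strictly weaker statement about $G_{n,p}$ for each fixed $p$.  The conjecture concerns the random graph \emph{process} $(G_i)_{i\le \binom{n}{2}}$ and asks that w.h.p.\ the containment $\{i:G_i\in\cQ_H\}\subseteq\{i:G_i\in\T_H\}$ holds \emph{simultaneously for all $i$}.  Proving $\pr(G_{n,p}\in\cQ_H\setminus\T_H)\to 0$ for each $p$, even uniformly in $p$, does not by itself give the process statement, because the bad events $\{G_i\in\cQ_H\setminus\T_H\}$ for different $i$ in the critical window are not independent and there are $\Theta(n^2p^*)$ of them.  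Your proposal contains no mechanism for this union-over-$i$ step.

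Even viewed as an attempt at Theorem~\ref{thm:Hprecise}, the outline misses the paper's actual engine.  The paper does not use sprinkling in your sense; instead it fixes a rule $G\mapsto F(G)$ selecting a smallest element of $\cee_H^\perp(G)\setminus\W_H^\perp(G)$, uses the container theorem (via Theorem~\ref{TCGA}) to force $|F|=o(n^2p)$, and then runs a two-way coupling argument (Lemmas~\ref{lem:Claim2} and~\ref{Rlemma}): coupling \emph{down} shows that many edges of $F$ have many $F_0$-central bridges in $G_0$, while coupling \emph{up} and a delicate rope-counting bound (the heart of the paper, Sections~\ref{PL7.2}--\ref{PLJ2}) shows this is unlikely.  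Your ``sprinkled edges might themselves be deficient'' worry is real, but the paper sidesteps it entirely by never trying to complete copies of $H$ via a sprinkle; the contradiction comes instead from comparing two incompatible estimates for $|F\cap R(F_0)|$.
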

\nin
(With $\T_H$ and $\Q_H$ replaced by \emph{increasing} properties, say $\T$ and $\Q$,
this becomes a \emph{hitting time} statement:  if $T=\min\{i: G_i\in \Q\}$, then w.h.p.\
$G_T\in \T$.)

\mn
\emph{Thresholds.}
Though, as for hitting times, ``thresholds'' don't quite make sense for properties that are not
increasing, it's easy to get from Theorem~\ref{thm:Hprecise} to 
a \emph{threshold-like} statement, as follows.

Given $H$, let 
\beq{gaH}
\vr =\vr (H) = \sum_{(a,b)}1/\aut(H;a,b),
\enq
where $(a,b)$ ranges over isomorphism types of \emph{ordered} edges of $H$
(i.e.\ over a set of representatives for the isomorphism classes), and $\aut(\cdot)=|\Aut(\cdot)|$,
with $\Aut(H;a,b)=\{\gc\in\Aut(H): \gc(a) = a, ~\gc(b) = b\}$.

The expected number of copies of $H$ in $G=G_{n,p}$ 
containing a given $xy\in G$ is 
\beq{gLexact}
\vr\cdot(n-2)_{\vH-2}~p^{\eH-1} = (1-O(1/n))\vr n^{\vH-2}p^{\eH-1}
\enq
\nin

\nin
(where $(a)_t=a(a-1)\cdots (a-t+1)$), and 
\begin{align}
\label{eq:p^*intro}
p^*_H = p^*_H(n) := n^{-1/m_2(H)}[(2 - 1/m_2(H))\vr^{-1}\ln n]^{1/(\eH-1)}
\end{align}
behaves like a 
\emph{sharp threshold} for $\mathcal{Q}_H$
in the sense that:

\begin{lemma}
\label{lem:Qsharpthresh}
For any fixed $H$ and $\eps>0$,
\begin{align}
\label{eq:lemQsharpthresh}
\pr(G_{n,p} \in \mathcal{Q}_{H}) \rightarrow
\begin{cases}
0 &\text{ if } n^{-2}\ll~ p < (1-\eps)p^*_H, \\
1 &\text{ if } p > (1+\eps)p^*_H.
\end{cases}
\end{align}
\end{lemma}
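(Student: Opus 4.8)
The plan is to treat $\cQ_H$ as a coverage problem: $G_{n,p}\in\cQ_H$ iff no edge of $G_{n,p}$ is ``uncovered'' (i.e.\ lies in no copy of $H$). For a potential edge $xy\in K_n$, let $X_{xy}$ be the number of copies of $H$ in $G_{n,p}$ containing $xy$ \emph{given} that $xy\in G_{n,p}$; the relevant quantity is $\mu:=p\cdot\Pr(X_{xy}=0\mid xy\in G_{n,p})$ summed over all $\binom n2$ pairs, i.e.\ the expected number of uncovered edges. A routine first/second moment computation on $X_{xy}$ (using strict $2$-balance only for the upper tail/correlation bounds) gives, via the standard Janson-type estimate, that $\Pr(X_{xy}=0\mid xy\in G)=\exp[-(1+o(1))\gL]$ whenever $\gL:=\vr n^{\vH-2}p^{\eH-1}=\omega(1)$ and $p=n^{-\gT(1)}$, where $\gL$ is the conditional expectation in \eqref{gLexact}. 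Hence the expected number of uncovered edges is $\approx \tfrac12 n^2 p\exp(-\gL)$, and $p^*_H$ in \eqref{eq:p^*intro} is exactly the value at which this expectation is $n^{o(1)}$: writing $m=m_2(H)$ and noting $\gL\asymp n^{\vH-2}p^{\eH-1}$ with $d_2(H)=(\eH-1)/(\vH-2)$, one checks $\tfrac12 n^2 p e^{-\gL}\to 0$ when $p>(1+\eps)p^*_H$ and $\to\infty$ when $n^{-2}\ll p<(1-\eps)p^*_H$. (One must also handle the regime $p$ so small that $\gL$ is bounded or the ``$H$ is $2$-balanced but not strictly'' subgraph wrinkles don't arise — here strict $2$-balance guarantees the copy count is driven by $H$ itself, not a denser subgraph, so the truncated second moment is clean.)

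For the $1$-statement ($p>(1+\eps)p^*_H$), the expected number of uncovered edges tends to $0$, so Markov's inequality gives $\Pr(G_{n,p}\notin\cQ_H)\to 0$ directly; no variance estimate is needed. The only care required is that $\gL\to\infty$ throughout this range (immediate from $p>(1+\eps)p^*_H$ and the form of $p^*_H$), so the Janson bound applies uniformly.

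For the $0$-statement ($n^{-2}\ll p<(1-\eps)p^*_H$), the expected number $N$ of uncovered edges tends to $\infty$, and I would apply the second moment method to $N$ to conclude $N>0$ w.h.p. The main obstacle is the covariance estimate: I need $\Pr(X_{xy}=X_{zw}=0\mid xy,zw\in G)\le (1+o(1))\Pr(X_{xy}=0)\Pr(X_{zw}=0)$ for ``most'' pairs of edges $\{xy\},\{zw\}$, so that $\mathrm{Var}(N)=o(\E N^2)$. For edges at bounded ``$H$-distance'' the events are genuinely positively correlated, but there are only $O(n^3)$ such pairs (versus $n^4$ total), and each contributes at most $p^2\le (p^*_H)^2=n^{o(1)-2/m}$, so their total contribution is $n^{1+o(1)-?}$ — I'd need to check this is $o((\E N)^2)=n^{2+o(1)}\cdot$(something), which it comfortably is since $(\E N)^2$ is at least a small power of $n$ throughout the stated range. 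For edges that are ``far apart,'' the conditional non-covering events decouple up to a $1+o(1)$ factor by a local-structure/Harris-type argument: conditioning on $xy$ uncovered only reveals edges in a bounded neighborhood, which affects the copies through $zw$ negligibly. Assembling these pieces gives $\mathrm{Var}(N)=o((\E N)^2)$ and hence $\Pr(N=0)\to 0$, i.e.\ $\Pr(G_{n,p}\in\cQ_H)\to 0$. I expect the correlation bookkeeping in this last step — in particular isolating exactly which pairs of edges have non-negligibly correlated covering events and bounding their count by the right power of $n$ using strict $2$-balance — to be the technical heart of the argument; everything else is a standard threshold computation.
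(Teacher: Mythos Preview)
Your proposal is correct and follows essentially the same route as the paper: Janson's inequality plus a union bound for the $1$-statement, and the second moment method on the number of uncovered edges for the $0$-statement. The paper's execution of the covariance step is slicker than your close/far decomposition, though. It first uses Harris's inequality to decouple $\{xy,uv\in G\}$ from $\{\bft(xy)=\bft(uv)=0\}$ (the former increasing, the latter decreasing), and then applies Janson directly to the \emph{sum} $\bft(xy)+\bft(uv)$ --- packaged as Lemma~\ref{JansApp} with $k=2$ --- to get $\Pr(\bft(xy)=\bft(uv)=0)\le e^{-2\gL^p+o(1)}$ in one stroke, with no case split by distance. Your ``local-structure/Harris-type'' justification for far pairs is the soft spot in your sketch: the events $\{X_{xy}=0\}$ are global (they depend on all of $G$, not a bounded neighbourhood), so decoupling does not follow from locality alone; the Janson-on-the-sum bound is what actually does the work, and the $\ov\gD$ computation there --- controlling the cross terms between copies through $xy$ and copies through $uv$ --- is exactly where strict $2$-balance enters, via Propositions~\ref{LS2Bd} and~\ref{balcons}. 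The paper also peels off the range $p\ll n^{-1/d_2(H)}$ and handles it by a direct Markov bound on the total number of copies of $H$ (which is $o(|G|)$ there), rather than running the second moment argument in a regime where $\gL^p\to 0$.
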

\noindent
We prove this routine observation in Section \ref{SEC:LemmaThresh}.
The cases in \eqref{eq:lemQsharpthresh} are called the \emph{0-statement}
and the \emph{1-statement} respectively.
With Theorem~\ref{thm:Hprecise}, Lemma~\ref{lem:Qsharpthresh}
easily implies the promised ``threshold-like'' statement:

\begin{thm}
\label{lem:Qsharpthresh'}
For any fixed, stricly 2-balanced $H$ and $\eps>0$,
\begin{align}
\label{eq:lemQsharpthresh2}
\pr(G_{n,p} \in \T_{H}) \rightarrow
\begin{cases}
1 &\mbox{if $p> (1+\eps)p^*_H(n)$,}\\
0 &\mbox{if
$\W_H\in \{\cee,\cee\cap \D\}$ and $(1-o(1))/n< p < (1-\eps)p^*_H(n)$}\\
&\mbox{or $\W_H\in \{\eee,\D\}$ and $n^{-2}\ll p< (1-\eps)p^*_H(n)$.}
\end{cases}
\end{align}
\end{thm}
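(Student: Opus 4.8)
Here is the plan.

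The $1$-statement is immediate: if $p>(1+\eps)p^*_H$, then by the $1$-statement of Lemma~\ref{lem:Qsharpthresh} w.h.p.\ $G_{n,p}\in\Q_H$, and by Theorem~\ref{thm:Hprecise} $\pr(G_{n,p}\in\Q_H\sm\T_H)\ra 0$, so $\pr(G_{n,p}\notin\T_H)\le\pr(G_{n,p}\notin\Q_H)+\pr(G_{n,p}\in\Q_H\sm\T_H)\ra 0$. The $0$-statements all rest on one trivial observation: if an edge $e\in E(G)$ lies in no copy of $H$, then no member of $\cee_H(G)$ contains $e$ --- that space being generated by the edge sets of copies of $H$, each avoiding $e$, and closed under symmetric difference --- so, since $\cee_H(G)\sub\W_H(G)$ always, $G\notin\T_H$ as soon as \emph{some} member of $\W_H(G)$ contains $e$.

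When $\W_H\in\{\eee,\D\}$ the latter is automatic (take $\{e\}$, resp.\ $\{e,e'\}$ for any other edge $e'$ of $G$), so here I would just quote the $0$-statement of Lemma~\ref{lem:Qsharpthresh} --- which applies exactly in the stated range $n^{-2}\ll p<(1-\eps)p^*_H$ --- to produce an uncovered edge w.h.p., together with the triviality that w.h.p.\ $e(G)\ge2$. When $\W_H\in\{\cee,\cee\cap\D\}$, $H$ is Eulerian; being strictly $2$-balanced it is connected, so (connected and Eulerian, hence $\eH\ge\vH$) $m_2(H)=d_2(H)=(\eH-1)/(\vH-2)>1$ and therefore $p^*_H\asymp n^{-1/m_2(H)}\,\mathrm{polylog}(n)\gg(\log n)/n$. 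Now a member of $\W_H(G)$ through $e$ exists whenever $e$ is a non-bridge of $G$ (for $\W_H=\cee$, take a cycle through $e$; the case $\W_H=\cee\cap\D$ is handled the same way with ``even cycle'' in place of ``cycle'', using the corresponding facts about even cycles in $G_{n,p}$). So the goal becomes: for $(1-o(1))/n<p<(1-\eps)p^*_H$, show w.h.p.\ that $G_{n,p}$ has a non-bridge lying in no copy of $H$; I would split the range at $2(\log n)/n$.

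For $p\ge2(\log n)/n$: w.h.p.\ $G$ is $2$-edge-connected, hence bridgeless (classical), so every edge is a non-bridge and the $0$-statement of Lemma~\ref{lem:Qsharpthresh} already supplies an edge in no copy of $H$. For $(1-o(1))/n<p<2(\log n)/n$: the expected number of copies of $H$ through a fixed potential edge, $(1+o(1))\vr n^{\vH-2}p^{\eH-1}$, is increasing in $p$ and equal to $\Theta(n^{\vH-\eH-1}(\log n)^{\eH-1})=o(1)$ at the top of the range (using $\eH\ge\vH$), so the expected number of edges of $G$ in some copy of $H$ is $o(1)\cdot\E[e(G)]$ and, by Markov, w.h.p.\ this count is $o(e(G))$. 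Against this I would set the number of non-bridges of $G$, which w.h.p.\ tends to infinity: for $np\ra\infty$ the number of bridges is w.h.p.\ $o(e(G))$, and for $np=1+o(1)$ the graph w.h.p.\ contains $\to\infty$ vertex-disjoint short cycles --- the number of cycles of length $\le\log n$ being approximately Poisson with mean $\to\infty$ in this regime --- each contributing non-bridges. Comparing the two counts yields an uncovered non-bridge. The one wrinkle is $np=1+o(1)$ with $H$ a cycle $C_k$: then the expected number of copies of $H$ is only $\Theta(1)$, hence the count is w.h.p.\ bounded by any prescribed $\go(n)\ra\infty$ rather than genuinely $o(e(G))$, but it is still w.h.p.\ smaller than the diverging number of non-bridges; while if $H$ is Eulerian and not a cycle, $\eH\ge\vH+1$ forces zero copies of $H$ w.h.p.\ at $np=1+o(1)$, so there the comparison is trivial.

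I expect the only place genuine work is required to be this last sub-range $(1-o(1))/n<p\lesssim(\log n)/n$ of the Eulerian $0$-statement: Lemma~\ref{lem:Qsharpthresh} is not available there (near $p^*_H$ uncovered edges are abundant but so are copies of $H$, which is what forces the split, and near $p=1/n$ the graph has bridges), so one is obliged to compare ``edges covered by copies of $H$'' directly against ``non-bridges'', and for that to invoke the near-critical structure of $G_{n,p}$ --- the number of bridges, the size of the $2$-core, the Poisson behaviour of short cycles --- together with a separate treatment of cyclic $H$. Everything else is a formal consequence of Theorem~\ref{thm:Hprecise}, Lemma~\ref{lem:Qsharpthresh}, and standard random-graph facts.
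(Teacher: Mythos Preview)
Your proof is correct and follows essentially the same route as the paper: the $1$-statement is exactly the paper's argument, and for the $0$-statement both you and the paper reduce to exhibiting an edge lying in some member of $\W_H(G)$ but in no copy of $H$, handling $\W_H\in\{\cE,\cD\}$ directly via Lemma~\ref{lem:Qsharpthresh} and splitting the Eulerian case at roughly $(\log n)/n$.

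The only noteworthy difference is in the execution of the Eulerian $0$-statement. The paper works throughout with \emph{even cycles} (via Claim~\ref{evencyc}), which handles $\W_H=\cC$ and $\W_H=\cC\cap\cD$ uniformly: for $p>(1+\Omega(1))\log n/n$ every edge lies in an even cycle, and for smaller $p$ one compares the length of the longest even cycle (which is $\omega(1)$, resp.\ $\Omega(n)$) against the total number of edges covered by copies of $H$. Your version instead counts non-bridges against covered edges, with a further split at $np\to\infty$ versus $np=1+o(1)$ and a separate remark for cyclic $H$; this is fine for $\W_H=\cC$, and your parenthetical ``the corresponding facts about even cycles'' is exactly what Claim~\ref{evencyc} supplies for $\W_H=\cC\cap\cD$. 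The paper's choice to use even cycles from the start is marginally cleaner---it avoids both your case split on $np$ and the need to treat $\cC$ and $\cC\cap\cD$ separately---but the arguments are equivalent in substance.
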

\nin
Here and in \eqref{eq:lemQsharpthresh} the conditions involving $n^{-2}$ 
avoid (w.h.p.) $E(G_{n,p})=\0$ (where $\Q_H$ and $\T_H$ do hold) and
$|E(G_{n,p})|=1$ (where
$\T_H$ holds if $\W_H=\D$).

\mn

As mentioned earlier, the answer to Question~\ref{Gen'l?}
turns out to be negative, for a fairly silly 
reason.  
Suppose for example that $H$ consists of a copy of $K_5$
plus a 7-cycle attached to that copy at a vertex, and let $p =n^{-1/2}$. 
Then with probability $\gO(1)$, 
$G=G_{n,p}$ satisfies $\Q_H$ (again, there is no $\cR_H$ here) and contains exactly one copy, 
say $L$, of $K_5$, as well as a cycle containing exactly one edge of $L$, which cycle is then in 
$\cee(G)\sm \cee_H(G)$
($=\W_H(G)\sm\cee_H(G)$).

Persisting, we add a small additional condition to $\cQ_H$ and $\cR_H$,
namely
\[
\cS_H =\{G:\mbox{no two edges of $G$ are in precisely the same copies of $H$}\},
\]
and try again:
\begin{question}\label{Gen'l?'}
Could it be that for every H,
$\max_p \, \pr(G_{n,p} \in (\Q_H\wedge \cR_H\wedge \sss_H) \setminus \T_H) \ra 0$
(as $n\ra\infty$)?
\end{question}
\nin
This still seems optimistic, but it would be nice to see a counterexample.

\mn 
\textbf{\emph{Outline.}} The rest of the paper is organized as follows.
Usage notes conclude the introduction. Section~\ref{SEC2} recalls edge space
basics
and outlines the main points, Lemmas~\ref{lem:p=O(p)'}-\ref{LemCG},
for the proof of Theorem~\ref{thm:Hprecise}.  The next two sections
contain preliminaries:  
Section~\ref{SEC:Tools} reviews preexisting tools, and the longer 
Section~\ref{CandE} develops information on the frequencies of subgraphs
and related structures in $G_{n,p}$ that will be a basic resource for what comes later.
Section~\ref{SEC:LemmaThresh} then proves Lemma~\ref{lem:Qsharpthresh}
and gives the easy derivation of
Theorem~\ref{lem:Qsharpthresh'} from Theorem~\ref{thm:Hprecise}.

Of the ``main points,'' Lemma~\ref{LemCG} needs no argument here, being
immediate from \cite[Theorem~5.1]{BK} (a consequence of \cite{BMS,ST}, here recalled as
Theorem~\ref{TCGA}), and the easy proof of Lemma~\ref{lem:couplingdown} is given in 
Section~\ref{PLCD}.  The real main point, Lemma~\ref{lem:p=O(p)'},
is proved in Sections~\ref{SEC:LemmaP=O(p*)}-\ref{PLJ2}, beginning in
Section~\ref{SEC:LemmaP=O(p*)} with statements of the main ingredients 
(Lemmas~\ref{lem:Claim2} and \ref{Rlemma}) and derivation of Lemma~\ref{LemCG}
from these, and continuing with the proofs of Lemmas~\ref{lem:Claim2} and \ref{Rlemma}
themselves in Sections~\ref{PC2}
and \ref{PL7.2}-\ref{PLJ2} respectively.

Much of this structure---and even some of the prose---is borrowed from \cite{BK}.
The most important differences are in the proof of Lemma~\ref{lem:Claim2} and,
especially, Lemma~\ref{Rlemma} and its proof, which are unrelated to anything in \cite{BK}.
We will say more about these connections as they arise.

\mn \textbf{\emph{Usage.}}  
We will usually use $xy$ rather than $\{x,y\}$ for an edge (and usually write
``$xy\in G$'' for ``$xy\in E(G)$'').  As above, we use $v_G$ and $e_G$ for the numbers of vertices 
and edges of $G$.

If we are in some host graph $G$ on $V$, then
for $v\in V$ and $F \subseteq G$ we use $N_F(v) = \{x : vx \in F\}$ and $d_F(v) = |N_F(v)|$.
For disjoint $A, B \subseteq V$, $\nabla_F(A,B)$ is the set of $F$-edges joining $A$ and $B$,
$\nabla_F(A)=\nabla_F(A, V\setminus A)$ (these are the {\em cuts} of $F$) and
$\nabla_F(v)=\nabla_F(\{v\})$.  In all cases we drop the subscripts when $F=G$.
As usual $\gD_G$ (or $\gD(G)$)
is the maximum degree of $G$ and $V(F)$ is the set of vertices incident to the edge of $F$.

We use $[n]$ for $\{1\dots n\}$ (for positive integer $n$),
$\log$ for $\ln$ and
$a = (1 \pm b)c$ for $(1-b)c \leq a \leq (1+b)c$.
Asymptotic notation ($\sim$, $O(\cdot)$, $\gO(\cdot)$, $\tilde{\Theta}(\cdot)$ and so on), is standard,
with $a\ll b$ and $a\asymp b$ replacing $a=o(b)$ and $a=\Theta(b)$ when convenient.
Throughout the paper we assume $n$ is large enough to support our various assertions,
and usually pretend large numbers are integers.

Less conventionally, we often use
$\ccc$ to denote an unspecified positive constant, with different $\ccc$'s unrelated except where otherwise stated; 
so $\ccc$ is nearly the same as $\gO(1)$, but has
the advantage that it allows reference to particular $\ccc$'s.

\section{Main points for the Proof of Theorem~\ref{thm:Hprecise}}
\label{SEC2}

Before outlining the proof of Theorem~\ref{thm:Hprecise},
we recall just a little more background.

\subsection{Edge space basics}
\label{subsec:EdgeSpace}
The edge space $\mathcal{E}(G)$ of a graph $G$ (defined in this paper's second paragraph),
being an $\mathbb{F}_2$-vector space, comes equipped with a standard inner product:
$\langle J,K\rangle = \sum_{e \in E(G)} J_eK_e = |J \cap K|$, where sum and cardinality
are interpreted mod 2. (The first expression thinks of $J$ and $K$ as vectors, the second as
subgraphs of $G$.) With this, the orthogonal complement, $\mathcal{S}^\perp$, of a subspace
$\mathcal{S}$ of $\mathcal{E}(G)$ is defined as usual. Then $\mathcal{C}^\perp(G)$, 
the \emph{cut space} of $G$, consists of the (indicators of) cuts of $G$ (one of which is $\0$);
$(\cee(G)\cap \D(G))^\perp$ consists of cuts and their complements; and
$\cee_H^\perp(G)$ is the set of
subgraphs of $G$ having even intersection with every copy of $H$ in $G$.
(And of course $\D^\perp(G)=\{\0,G\}$ and $\eee^\perp(G) =\{\0\}$.)

For the proof of Theorem~\ref{thm:Hprecise} we will work with
the complementary spaces, noting to begin that the
already mentioned $\cee_H(G) \subseteq \mathcal{W}_H(G)$ is equivalent to
\[   
\mathcal{W}_H^\perp(G) \subseteq \cee^\perp_H(G), 
\]   
and equality here is the same as $G \in \T_H$.

For the last little bit of this subsection we assume $H$ is Eulerian;
so $\W_H(G)\in \{\cee(G) ,\cee\cap \D(G)\}$.
The following trivial observation will be useful at a few points.

\begin{prop}\label{prop:addcut}
Let $G$ be a graph and $L\sub G$, and suppose
$L',L''$ are (respectively) smallest and largest members of
the coset $L + \W_H^\perp(G)$.  Then
\[
\forall \, v\in V
~~~ d_{L'}(v)\leq d_{G}(v)/2 \leq
d_{L''}(v).
\]
\end{prop}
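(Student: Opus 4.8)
The statement is about the Eulerian case, where $\W_H(G) \in \{\cee(G), \cee(G)\cap \D(G)\}$, so $\W_H^\perp(G)$ is either the cut space $\cee^\perp(G)$ or the space of cuts-and-their-complements $(\cee\cap\D)^\perp(G)$. In either case the relevant fact is that for any cut $\nabla_G(A)$ and any $v\in V$, adding $\nabla_G(A)$ to $L$ replaces $d_L(v)$ by $d_L(v) + d_G(v) - 2d_L(v\to A)$ when $v\notin A$ (and similarly with the roles reversed when $v\in A$), where $d_L(v\to A)$ counts $L$-edges from $v$ into $A$; in particular, adding the single cut $\nabla_G(v)$ flips the parity of $d_L(v)$ while toggling it between $d_L(v)$ and $d_G(v)-d_L(v)$. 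That single observation is the whole engine.

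First I would reduce to manipulating one vertex at a time. Fix $v\in V$ and let $L'$ be a smallest member of the coset $L+\W_H^\perp(G)$. I claim $d_{L'}(v)\le d_G(v)/2$: if not, i.e.\ $d_{L'}(v) > d_G(v)/2$, then replace $L'$ by $L'' := L' + \nabla_G(v)$ (note $\nabla_G(v)$ is a cut, hence lies in $\cee^\perp(G) \subseteq \W_H^\perp(G)$ in the first case; in the second case I instead argue with a complement-of-cut or handle the parity bookkeeping — see below). We have $d_{L''}(v) = d_G(v) - d_{L'}(v) < d_G(v)/2 < d_{L'}(v)$, and $d_{L''}(u) = d_{L'}(u)$ for all $u\ne v$ since $\nabla_G(v)$ is incident only to $v$. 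Hence $|L''| = |L'| - (d_{L'}(v) - d_{L''}(v)) < |L'|$, contradicting minimality of $|L'|$. The inequality $d_G(v)/2 \le d_{L''}(v)$ for a largest member $L''$ of the coset is the mirror image: if $d_{L''}(v) < d_G(v)/2$, then $L'' + \nabla_G(v)$ has strictly larger size, again a contradiction.

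The one genuine wrinkle is the case $\W_H(G) = \cee(G)\cap\D(G)$, where $\W_H^\perp(G)$ is the span of all $\nabla_G(A)$ together with $G$ itself (equivalently, cuts and their complements), and a single star $\nabla_G(v)$ need not lie in this space — it does iff $d_G(v)$ is even, since $\nabla_G(v)\in\D(G)$ is needed... actually $\nabla_G(v)$ is a cut so it IS in $(\cee\cap\D)^\perp(G)$; the subtlety is rather that not every cut is, only those of even size. But $\nabla_G(v)$ has size $d_G(v)$, which can be odd. So I would instead, in this case, pick any other vertex $w$ and add $\nabla_G(\{v,w\})$, which does the job for $v$ up to also altering $d_L(w)$ by a controlled amount, and then note that since we are comparing sizes over the whole coset and $|L'|$ is minimal, a local exchange argument still forces $d_{L'}(v)\le d_G(v)/2$; alternatively and more cleanly, observe $\nabla_G(v) \in \cee^\perp(G) \subseteq (\cee\cap\D)^\perp(G)$ is false in general but $\nabla_G(v) + \nabla_G(w)= \nabla_G(\{v,w\})$ (symmetric difference) has the right parity when $d_G(v)+d_G(w)$ is even, and one handles the parity by choosing $w$ appropriately or by a two-step exchange. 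I expect this parity bookkeeping in the second Eulerian subcase to be the only thing requiring care; everything else is the one-line exchange argument above.
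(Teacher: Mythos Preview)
Your core exchange argument is exactly the paper's proof: if $d_{L'}(v) > d_G(v)/2$ then $L' + \nabla_G(v)$ is a strictly smaller member of the same coset, contradicting minimality of $|L'|$; the argument for $L''$ is symmetric.

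The ``wrinkle'' you worry about does not exist. Taking orthogonal complements reverses containment: from $\cee(G)\cap\D(G) \subseteq \cee(G)$ one gets $\cee^\perp(G) \subseteq (\cee(G)\cap\D(G))^\perp$, so \emph{every} cut---in particular $\nabla_G(v)$, regardless of the parity of $d_G(v)$---lies in $(\cee\cap\D)^\perp(G)$. (Indeed, as the paper states, $(\cee\cap\D)^\perp(G)$ consists precisely of all cuts together with their complements.) Hence in both Eulerian subcases $\nabla_G(v) \in \W_H^\perp(G)$ and the one-line exchange goes through unchanged; no parity bookkeeping or auxiliary vertex $w$ is needed. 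A small side correction: your claim that $d_{L' + \nabla_G(v)}(u) = d_{L'}(u)$ for $u\ne v$ is false (the edge $uv$, if present in $G$, toggles at $u$ as well), but this is harmless since the size change $|L' + \nabla_G(v)| - |L'| = d_G(v) - 2d_{L'}(v)$ follows directly.
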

\nin
(For example if $d_{L'}(v)>d_{G}(v)/2$, then $L' + \nabla(v)$
($\in L + \W_H^\perp(G)$)
is smaller than $L'$.)

In particular, if $G \notin \T_H$, then since
$\cee_H^\perp(G) \setminus \W_H^\perp(G) \supseteq L+\W_H^\perp(G)$
for any $L \in \cee_H^\perp(G) \setminus \W_H^\perp(G)$, a smallest
$F\in\cee_H^\perp(G) \setminus \W_H^\perp(G)$ satisfies
\begin{align}
\label{dFdG}
d_F(v)\leq d_{G}(v)/2 \;\;\; \forall \, v \in V.
\end{align}

\subsection{Structure of the proof}
\label{SEC:Overview}

The case $H=K_3$ of Theorem~\ref{thm:Hprecise} was, as already mentioned, shown
in \cite{DHK}, and (as was also true in \cite{BK})
we exclude it in what follows, since it would require special treatment,
adding to the length of this already too long paper.  
We will also find it convenient to exclude $H=P_2$ (a 2-edge path);
the proof in this case is a routine random graph exercise 
and is left to the reader.

Thus for the rest of the paper we fix a strictly 2-balanced $H\neq P_2,K_3$,
and set
$p^*=p^*_H$ (see \eqref{eq:p^*intro}), $\cQ=\cQ_H$ and $\T=\T_H$; in particular our objective,
\eqref{orig}, becomes
\beq{orig'}
 \max_p \, \pr(G_{n,p} \in \cQ\sm \T) \ra 0.
 \enq
(The interest here is really in $p$ at least about $p^*$, smaller
values being handled by Lemma~\ref{lem:Qsharpthresh}; see
\eqref{prange}.)

As sometimes happens, though \eqref{orig'} should become ``more true'' as $p>p^*$ grows,
some points in the proof
run into difficulties for larger $p$, and it seems easiest to first deal with smaller $p$
and then derive the full statement from this restricted version. The next two lemmas,
the first of
which is our main point, implement this plan.

\begin{lemma}
\label{lem:p=O(p)'}
For any fixed $K$ and $p\leq  Kp^*$,
\beq{MP}
\pr(G_{n,p} \in \cQ\setminus \T)\ra 0.
\enq
\end{lemma}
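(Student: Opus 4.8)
The plan is to work with the orthogonal complements, as set up in Section~\ref{subsec:EdgeSpace}: we must show that w.h.p., conditioned on $G = G_{n,p} \in \cQ$, there is no $F \in \cee_H^\perp(G) \setminus \W_H^\perp(G)$, and by \eqref{dFdG} it suffices to rule out such an $F$ satisfying $d_F(v) \le d_G(v)/2$ for all $v$. Such an $F$ is a subgraph of $G$ meeting every copy of $H$ evenly, but (depending on the case in \eqref{eq:Hspace}) $F$ is \emph{not} a cut (non-Eulerian case) or not a cut nor a complement of a cut (Eulerian cases). So the heart of the matter is a union bound over candidate ``bad'' sets $F$: for each graph $F$ on $[n]$ that is not of the excluded form and satisfies the degree bound, estimate $\pr(F \subseteq G_{n,p},\ G_{n,p} \in \cQ,\ F \perp \text{(every copy of $H$ in $G_{n,p}$)})$, and sum over $F$.

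The key steps, in order: (i) Fix the host $G = G_{n,p}$ and a minimal bad $F$; record that $d_F \le d_G/2$ everywhere and, since $F$ is not (the indicator of) a cut, there is some ``witness'' to non-cut-ness — concretely a vertex $v$ and a partition-violating triangle or path, i.e.\ locally $F$ cannot be explained by a vertex 2-coloring. (ii) The constraint ``every copy of $H$ in $G$ is even on $F$'' is extremely restrictive: each edge $xy \in F$ lies (by $\cQ$) in a copy of $H$, and evenness of that copy forces at least one other $F$-edge inside it; iterating/propagating this along overlapping copies of $H$ should show $F$ has many edges, or has rigid structure, on any region where it fails to be a cut. (iii) Do the first-moment computation: the number of copies of $H$ through a given edge is governed by \eqref{gLexact}, and for $p \le Kp^*$ this is $\tilde\Theta(\log n)$, which is the regime where the edge-coverage by copies of $H$ is ``thin'' — precisely why one restricts to $p = O(p^*)$ here rather than handling all $p$ at once. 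Summing $p^{e_F}$ (times the structural count of valid $F$) over all bad $F$, using the degree bound $d_F(v)\le d_G(v)/2 \approx np/2$ to control $e_F$ and the $H$-evenness to force $e_F$ large relative to $v_F$, the sum should go to $0$. Presumably this is packaged as the two ingredient lemmas hinted at (Lemmas~\ref{lem:Claim2} and \ref{Rlemma}): one handling ``small'' bad configurations by direct counting, the other the ``spread-out'' ones.

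I expect the main obstacle to be step (ii)–(iii) in the \emph{Eulerian, non-sparse} part of the range, where $\W_H$ is $\cee$ or $\cee \cap \D$: here a bad $F$ is an even subgraph that is not a cut (not in the cycle space being the wrong way around — rather, $F$ is in $\cee_H^\perp$ but not a cut/cut-complement), so ruling it out amounts to showing the copies of $H$ through $G$'s edges genuinely span all of $\cee$ (resp.\ $\cee\cap\D$), and near $p^*$ there are barely enough copies of $H$ to do this. Controlling the union bound then requires a delicate combinatorial classification of the possible minimal bad $F$ (a ``core/kernel'' reduction à la the $\cee^\perp$-arguments in \cite{BK}, \cite{DHK}), ensuring that whenever $F$ is not a cut it must carry $\ccc\, n^2 p$ edges or be built from a bounded-size rigid gadget appearing too rarely — and making the strict $2$-balancedness of $H$ do its job (via $m_2(H)$ in the definition of $p^*$) so that the expected number of such gadgets, or the expected ``deficiency'' in the span, is $o(1)$. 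The non-Eulerian cases ($\W_H \in \{\eee,\D\}$) should be easier, since there $F$ being bad just means $F \ne \0$ (or $F \notin \{\0, G\}$), and one only needs that the copies of $H$ span $\eee$ (resp.\ $\D$), which is a cleaner first-moment statement.
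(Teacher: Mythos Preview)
Your proposal has a genuine gap: a direct union bound over candidate bad sets $F$, weighted by $p^{e_F}$, does not work here, and the paper's argument is entirely different.

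The difficulty is that the event ``$F\in \cee_H^\perp(G)$'' is not controlled by $\pr(F\sub G)=p^{e_F}$ alone; it is a global constraint on all of $G$ (every copy of $H$ anywhere in $G$ must meet $F$ evenly), and you have no mechanism to turn that constraint into a probability savings that beats the entropy $\sim\binom{n^2p}{|F|}$ of choosing $F$ inside $G$.  Your step~(ii) gestures at propagation of the evenness constraint, but this only tells you $F$ has many edges---exactly the regime where the union bound explodes.  The first-moment calculation you describe in step~(iii) is simply summing $p^{e_F}$ over structured $F$'s, which ignores the actual event of interest.

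What the paper does instead is a two-scale \emph{coupling} argument.  First, Lemma~\ref{LemCG} (a container-theorem consequence) disposes of $|F|>\gl n^2p$ for any $\gl\to 0$; this is the only place a size dichotomy appears.  For small nonempty $F$, the paper couples $G=G_{n,p}$ with $G_0=G_{n,q}$, $q=\vt p$, and introduces the set $R(S)=\{\{x,y\}:\bfs_0(xy;S)>c\gL^q\}$.  Lemma~\ref{lem:Claim2} (via ``coupling down'') shows that if $G\in\cQ$ and $F\neq\0$ is small, then $|F\cap R(F_0)|\geq c\ga n^2p$.  Lemma~\ref{Rlemma} (the technical heart, proved via a Janson-type inequality and the ``rope'' machinery of Sections~\ref{PL7.2}--\ref{PLJ2}) shows $|R(S)|=O(\ga_S^{1+\gd}n^2)$ for all small $S\sub G_0$.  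The crucial point is that the factor $\ga^{-\gd}$ between $|F\cap R(F_0)|$ and the expected $|R(F_0)|p$ is what allows the union bound over \emph{possible values $S$ of $F_0$} (not over $F$!) to close when ``coupling up''---see the computation at \eqref{vt.constraint2}.  Your description of Lemmas~\ref{lem:Claim2} and \ref{Rlemma} as a small/spread-out case split is incorrect; they are two halves of a single contradiction-via-coupling argument.
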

\nin

\begin{lemma}
\label{lem:couplingdown}
There exists $K$ such that if $p>q:=Kp^*$, then
\[
\pr(G_{n,p}\notin \T) < \pr(G_{n,q}\notin \T) + o(1).
\]
\end{lemma}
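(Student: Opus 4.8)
The statement I want is a "going down" coupling: if $p>q:=Kp^*$ then $\pr(G_{n,p}\notin\T)<\pr(G_{n,q}\notin\T)+o(1)$, for a suitable absolute constant $K$ (the same $K$ as in Lemma~\ref{lem:p=O(p)'}, or larger). The natural approach is the standard two-round exposure: write $G_{n,p}$ as $G_{n,q}\cup G_{n,r}$ where $1-p=(1-q)(1-r)$, so that $G_{n,p}$ is distributed as the union of independent copies of $G_{n,q}$ and $G_{n,r}$. Since $\T$ is \emph{not} increasing we cannot just say "adding edges preserves $\T$"; instead I want to show that w.h.p.\ adding the extra edges of $G_{n,r}$ cannot \emph{destroy} membership in $\T$, i.e.\ $G_{n,q}\in\T$ together with a w.h.p.\ structural property of the pair $(G_{n,q},G_{n,p})$ forces $G_{n,p}\in\T$. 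Then

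\beq{couplingplan}
\pr(G_{n,p}\notin\T)\le \pr(G_{n,q}\notin\T)+\pr(\text{bad event for the pair}),
\enq

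and the second term is $o(1)$.

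\medskip

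So the crux is: \emph{for $q=Kp^*$ with $K$ large and any $p>q$, w.h.p.\ $G_{n,q}\in\T\Rightarrow G_{n,p}\in\T$.} Working with complements as in Section~\ref{subsec:EdgeSpace}, $G\in\T$ iff $\cee_H^\perp(G)=\W_H^\perp(G)$, i.e.\ every $F\subseteq G$ with even intersection with every copy of $H$ in $G$ is a cut (or cut/complement, or $\{\0,G\}$, etc., depending on $\W_H$). Passing from $G_{n,q}$ to $G_{n,p}$ only \emph{adds} copies of $H$, which can only \emph{shrink} $\cee_H^\perp$; so the only way to leave $\T$ is for some $F\in\cee_H^\perp(G_{n,p})$ that \emph{is} of the required "cut" form on $G_{n,q}$ to fail to be of that form on $G_{n,p}$ — but "being a cut" is a property of $F$ as a subgraph together with which vertex-set it separates, and adding isolated-from-$F$ structure doesn't change that. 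The real content, then, is that when $q\ge Kp^*$ with $K$ large, $G_{n,q}$ is already "$H$-rich enough" that $\cee_H^\perp(G_{n,q})$ has \emph{already} collapsed to $\W_H^\perp(G_{n,q})$ — that is, $G_{n,q}\in\T$ is essentially automatic in this range — and moreover remains so after adding edges. In other words, the honest way to organize this is: (a) show $\pr(G_{n,q}\in\T)\to1$ for $q=Kp^*$, $K$ a large constant — this follows directly from Lemma~\ref{lem:p=O(p)'} applied at $p=q=Kp^*$ (which gives $\pr(G_{n,q}\in\cQ\setminus\T)\to0$) together with $\pr(G_{n,q}\in\cQ)\to1$ from the 1-statement of Lemma~\ref{lem:Qsharpthresh} (valid since $Kp^*>(1+\eps)p^*$); and (b) show that for $K$ large, the "$\T$-ness" survives adding any further edges.

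\medskip

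For step (b), the cleanest route uses a \emph{monotone certificate}. I would like to exhibit an increasing property $\mathcal{M}$ of graphs on $[n]$ such that $\mathcal{M}\subseteq\T$ (so $\mathcal{M}$ certifies $\T$ and is preserved under adding edges), and such that $\pr(G_{n,q}\in\mathcal{M})\to1$ for $q=Kp^*$. A candidate: for Eulerian $H$, the property "every edge lies in at least one copy of $H$, \emph{and} every vertex $v$ has $\min(d_{L}(v))$ over $L$ in various cosets controlled" — but more practically, the natural $\mathcal{M}$ is extracted from the proof of Theorem~\ref{thm:Hprecise} itself: the proof of Lemma~\ref{lem:p=O(p)'} presumably shows that $G_{n,q}\in\T$ follows from a bundle of w.h.p.\ events (expansion, every edge in $\asymp$ the expected number of copies of $H$, no small "bad configurations", etc.), and among these the ones actually used to force $\cee_H^\perp=\W_H^\perp$ can be taken increasing (more copies of $H$, denser cuts) once $q$ is a large enough constant times $p^*$. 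I would define $\mathcal{M}$ to be the conjunction of: (i) $G\in\cQ_H$ \emph{witnessed robustly} — actually I will instead use the increasing event $\cQ_H^+:=\{$every edge lies in $\ge2$ copies of $H\}$ isn't quite increasing either, hmm — so more carefully, I take $\mathcal{M}=\{G: G\supseteq G'$ for some $G'$ with $G'\in\cQ_H\cap(\text{the w.h.p.\ event from Lem.~\ref{lem:p=O(p)'} at scale }p^*)\}$, the \emph{upward closure} of that event, which is increasing by fiat and contained in $\T$ provided one checks that adding edges to a graph in $\cQ_H\cap\T$ (for $H$ strictly 2-balanced) keeps it in $\T$; this last containment is exactly the subgraph/cut-form argument sketched in the previous paragraph and is, I expect, a short lemma (the main case to watch is when a new edge lies in no copy of $H$ of $G_{n,p}$, but w.h.p.\ in the range $p>Kp^*$ that doesn't happen since $G_{n,p}\in\cQ_H$ w.h.p.). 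Then $\pr(G_{n,q}\in\mathcal{M})\ge\pr(G_{n,q}\in\cQ_H\cap\T)\to1$ by (a), and $\mathcal{M}$ increasing gives $\pr(G_{n,p}\notin\T)\le\pr(G_{n,p}\notin\mathcal{M})\le\pr(G_{n,q}\notin\mathcal{M})\le\pr(G_{n,q}\notin\T)+o(1)$ by the coupling, which is the lemma.

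\medskip

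\textbf{Main obstacle.} The genuinely delicate point is the implication "$G\in\cQ_H\cap\T$ and $G'\supseteq G$ $\Rightarrow G'\in\T$" (equivalently: $\T$ restricted to $\cQ_H$-graphs, in the relevant density window, behaves monotonically). One must rule out that some $F\in\cee_H^\perp(G')\setminus\W_H^\perp(G')$ arises even though $\cee_H^\perp(G)=\W_H^\perp(G)$: since $\cee_H^\perp(G')\subseteq\cee_H^\perp(G)$, such an $F$ satisfies $F\in\W_H^\perp(G)$ but $F\notin\W_H^\perp(G')$, which for $\W_H\in\{\cee,\cee\cap\D\}$ means $F$ is a cut/complement of a cut of $G$ that fails to be one in $G'$ — impossible, a cut of $G$ extends to a cut of $G'$ on the same vertex bipartition (one must check the small-edge degenerate cases where $\W_H=\D$ or $\eee$ separately, and confirm the parity/Eulerian bookkeeping in $\W_H(G)$ vs $\W_H(G')$ matches, using $G,G'\in\cQ_H$ so that no edge is "orphaned"). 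I expect this to go through with care but it is where the argument has to actually engage with the definition of $\W_H$; everything else (the $q$–$r$ coupling, quoting Lemma~\ref{lem:p=O(p)'} at $q=Kp^*$, the 1-statement) is routine.
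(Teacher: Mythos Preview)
Your approach has a genuine gap at exactly the point you flag as the ``main obstacle,'' and your sketch there does not go through.

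First, the assertion ``$\cee_H^\perp(G')\subseteq\cee_H^\perp(G)$'' is ill-posed: these live in $\eee(G')$ and $\eee(G)$ respectively, and an $F\in\cee_H^\perp(G')$ may well use edges of $G'\setminus G$, so it need not lie in $\eee(G)$ at all. What is true is the restriction statement $F\cap G\in\cee_H^\perp(G)$; but then your task is to show $F$ (not $F\cap G$) lies in $\W_H^\perp(G')$, and knowing $F\cap G\in\W_H^\perp(G)$ does not give this. Your line ``a cut of $G$ extends to a cut of $G'$ on the same vertex bipartition'' points the wrong way: you would need that $F$ \emph{equals} $\nabla_{G'}(A)$ for the $A$ determined by $F\cap G=\nabla_G(A)$, i.e.\ that the edges of $F$ in $G'\setminus G$ are exactly the $A$-crossing ones. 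To pin these down you would need, for each such edge, a copy of $H$ through it whose other edges are all in $G$---but copies of $H$ in $G'$ may use several edges of $G'\setminus G$ at once, and there is no deterministic reason this can be avoided. So the monotone certificate $\mathcal M$ you propose is not shown to sit inside $\T$.

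The paper's proof goes in the opposite direction and thereby sidesteps this. It couples $G_0=G_{n,q}\subseteq G=G_{n,p}$ and shows $\pr(G\notin\T,\,G_0\in\T)=o(1)$ by pushing the witness \emph{down}: if $G\notin\T$, take $F=F(G)$ as in \eqref{eq:f} and set $F_0=F\cap G_0$; then $F_0\in\cee_H^\perp(G_0)$ automatically (every copy of $H$ in $G_0$ is one in $G$), and the work is to show $F_0\notin\W_H^\perp(G_0)$. This is done via size control: Corollary~\ref{cor:pip} and \eqref{xyF} give $|F|=\gO(\pi)$, Lemma~\ref{LemCG} gives $|F|<n^2p/10$, and Proposition~\ref{prop:Frightsize} transfers these to $1\ll|F_0|<(1+o(1))n^2q/10$; when $\W_H\subseteq\cC$ one further uses \eqref{dFdG} and Proposition~\ref{cpts} to show $G_0\setminus F_0$ is connected and nonbipartite, forcing $F_0\notin\W_H^\perp(G_0)$. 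The point is that restricting a witness from $G$ to $G_0$ preserves membership in $\cee_H^\perp$ for free, whereas extending structure from $G_0$ to $G$ (your direction) does not; the substantive input---Lemma~\ref{LemCG}, which rests on the container method---is what lets the size of the restricted witness be controlled. Your plan does not invoke Lemma~\ref{LemCG} at all, and without it (or something of comparable strength) the lemma does not follow.
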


\nin
Applying Lemmas~\ref{lem:couplingdown} and \ref{lem:p=O(p)'}, together with (the
1-statement of) Lemma~\ref{lem:Qsharpthresh}, to $p'(n):=\min\{p(n),Kp^*(n)\}$
then easily gives
Theorem~\ref{thm:Hprecise}.
(For $n$'s with $p(n)>Kp^*$, we have,
using Lemma~\ref{lem:couplingdown} for the first inequality and
Lemmas~\ref{lem:p=O(p)'} and
\ref{lem:Qsharpthresh} for the final $o(1)$,
\begin{align*}
\pr(G_{n,p}\in \cQ\sm \T)&< \pr(G_{n,p'}\not\in \T) +o(1)\\
&< \pr(G_{n,p'}\in \cQ\sm \T) +\pr(G_{n,p'}\not\in \cQ)+o(1) =o(1);
\end{align*}
and for the remaining $n$'s we have $p=p'$ and
Lemma~\ref{lem:p=O(p)'} applies directly.)

\medskip
The following device will play a central role in the proofs of both of these lemmas
(so in most of the paper).
For the remainder of our discussion we fix some rule that associates with each finite
graph $G$ a subgraph $F(G)$ satisfying
\begin{align}
\label{eq:f}
F(G) =
\begin{cases}
\0 &\text{if } G \in \T, \\
\text{some smallest element of } \cee_{H}^\perp(G)
\setminus \W^\perp_H(G) &\text{if } G \notin \T.
\end{cases}
\end{align}

\nin
We will use this only with $G=G_{n,p}$, so set $F(G_{n,p})=F$ throughout.
A crucial point is that $G$ determines $F$ (for ``crucial'' see the paragraph preceding
Proposition~\ref{prop:Frightsize}).
That $F$ is a
minimizer will be used only to say that it is small and, for Eulerian $H$, has small
degrees, as promised by \eqref{dFdG}.
With $F$ thus defined we may replace the event $\{G_{n,p}\notin \T\} $
by the more convenient $\{F \neq \0 \}$, which in particular allows us
to tailor our treatment to the size of a hypothetical $F$.

This brings us to the crucial starting point for the proof of Theorem~\ref{thm:Hprecise}, 
the fact that we really only need to worry about quite small $F$'s:
\begin{lemma}
\label{LemCG}
For fixed $c>0$ and $p\gg n^{-1/m_2(H)}$,
$    
\,\,\,\pr(|F|>c n^2p)\ra 0.
$    
\end{lemma}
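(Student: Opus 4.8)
The plan is to show that, conditionally on $G=G_{n,p}$ being in some favorable (w.h.p.)\ event, no subgraph $F$ satisfying \eqref{dFdG} and lying in $\cee_H^\perp(G)\setminus\W_H^\perp(G)$ can have $|F|>cn^2p$; by the definition \eqref{eq:f} of $F(G)$, this forces $\pr(|F|>cn^2p)\to 0$. The key structural fact to exploit is that $F\in\cee_H^\perp(G)$ means every copy of $H$ in $G$ meets $F$ in an even number of edges, while $F\notin\W_H^\perp(G)$ means $F$ is \emph{not} merely a cut (or cut-complement, in the Eulerian case) --- so $F$ is a genuinely ``large and spread out'' set of edges that is nonetheless orthogonal to every $H$-copy. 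The intuition is that a large edge set with bounded degrees (by Proposition~\ref{prop:addcut}/\eqref{dFdG}, at least for Eulerian $H$, and via minimality for the degrees entering relevant copies in general) should, for $p\gg n^{-1/m_2(H)}$, be ``pierced an odd number of times'' by some copy of $H$ unless it has very special cut-like structure; quantitatively, if $|F|$ is a positive proportion of $|G|\asymp n^2p$, a random or carefully located copy of $H$ should hit $F$ oddly with decent probability, contradicting $F\in\cee_H^\perp(G)$.

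The steps I would carry out, in order: (1)~Fix the favorable event $\cE$ on which $G_{n,p}$ enjoys the standard concentration estimates for counts of $H$-copies and related configurations (these are exactly what Section~\ref{CandE} is advertised to provide), together with $\gD(G)\lesssim np$; w.h.p.\ $G\in\cE$. (2)~Work on $\cE$ and suppose for contradiction $|F|>cn^2p$ with $F$ as in \eqref{eq:f}. (3)~Since $F$ is not in $\W_H^\perp(G)$, in the non-Eulerian cases $\W_H^\perp\in\{\{\0\},\{\0,G\}\}$ so $F\notin\{\0,G\}$; in the Eulerian cases $F$ is not a cut or cut-complement, and \eqref{dFdG} gives $d_F(v)\le d_G(v)/2$ for all $v$. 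Use this to locate a ``witness'': either two edges $e,e'$ of $G$ with $e\in F$, $e'\notin F$ sharing structure that extends to an $H$-copy, or more robustly a vertex $v$ and a configuration around $v$ realizing both possibilities. (4)~Build an $H$-copy $H_0\subseteq G$ through such a witness and toggle one edge along it to produce $H_0'\subseteq G$ with $|H_0\cap F|\not\equiv|H_0'\cap F|\pmod 2$; then at least one of $H_0,H_0'$ meets $F$ oddly, contradicting $F\in\cee_H^\perp(G)$. The role of $p\gg n^{-1/m_2(H)}$ and strict $2$-balancedness is precisely to guarantee (via the Section~\ref{CandE} estimates) that the partial structures involved extend to full copies of $H$ in $G$; this is where strict $2$-balancedness makes the extension counts well-behaved.

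The main obstacle will be step~(3)--(4): turning ``$F$ is large and not a cut'' into the existence of a concrete, extendable local witness that lets us flip the parity of $|H_0\cap F|$. The clean handle is the degree bound \eqref{dFdG}, which is available for Eulerian $H$; for non-Eulerian $H$ one must instead use minimality of $F$ more carefully (or an analogue of Proposition~\ref{prop:addcut} tailored to the relevant coset), and argue that a large $F$ cannot simultaneously be ``edge-even on every $H$-copy'' and fail to be a cut, because largeness forces some $H$-copy to straddle the boundary of $F$ in an odd way. I expect this to reduce to a short combinatorial lemma: a nonempty proper $F\subsetneq G$ with $d_F(v)\le d_G(v)/2$ everywhere must contain, inside $G$, a short configuration (path or cycle of bounded length, depending on $H$) alternating in and out of $F$ in a way that embeds into an $H$-copy once $G$ is dense enough --- and the density threshold for that embedding is exactly $n^{-1/m_2(H)}$. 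Everything else (the concentration inputs, the w.h.p.\ bound on $\gD(G)$, the union bound over the at-most-$2^{\binom n2}$-many candidate $F$'s, absorbed by the exponentially small failure probability of the piercing event) is routine given Section~\ref{CandE}.
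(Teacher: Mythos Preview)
Your proposal has a genuine gap at the crux, and it is worth contrasting with what the paper actually does.  In the paper, Lemma~\ref{LemCG} is not proved from scratch: it is read off immediately from Theorem~\ref{TCGA} (which is \cite[Theorem~5.1]{BK}), a stability statement whose proof rests on the hypergraph container method of \cite{BMS,ST}.  That theorem says directly that w.h.p.\ every $X\in\cee_H^\perp(G)$ is within $\eps n^2p$ of some $Y\in\W_H^\perp(G)$, so in particular the smallest element of $\cee_H^\perp(G)\setminus\W_H^\perp(G)$ (if any) has size below $\eps n^2p$.

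Your plan, by contrast, tries to derive the same conclusion by a local ``find a witness and extend'' argument, and this is where it breaks.  Step~(4) as written does not make sense: ``toggling one edge'' of a copy $H_0$ of $H$ does not in general produce another copy of $H$, and even if you intend to exhibit two copies $H_0,H_0'\subseteq G$ with $|H_0\cap F|\not\equiv|H_0'\cap F|\pmod 2$, this is exactly the assertion that some copy meets $F$ oddly, i.e.\ that $F\notin\cee_H^\perp(G)$---which is what you assumed to begin with.  The substantive content you are really asking for is: \emph{for typical $G_{n,p}$, any $F\subseteq G$ with $|F|>cn^2p$ that is not (close to) a cut must be hit oddly by some copy of $H$}.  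This is precisely a stability/supersaturation statement for the $H$-space, and the only known routes to it go through regularity or containers; it is not a ``short combinatorial lemma,'' and the extension estimates of Section~\ref{CandE} do not supply it.  (To see why a naive parity argument fails, note that an adversarial $F$---not random---can in principle be arranged to meet every copy of $H$ evenly; ruling out all such large $F$ simultaneously is exactly what makes the problem nontrivial.)  Your remark about a union bound over $2^{\binom{n}{2}}$ candidate $F$'s is also off: since $F=F(G)$ is determined by $G$ there is no such union bound to perform, but conversely nothing in your outline produces the required \emph{uniform} structural conclusion either.  In short, the step you flag as ``the main obstacle'' is not merely an obstacle but the entire content of the lemma, and the paper correctly imports it as a black box.
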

\nin
This is immediate from the next result, which is Theorem~5.1 of \cite{BK} 
(with proof given in \cite[Sec.\ 4.8]{Baron}).
\begin{thm}\label{TCGA}
For any fixed $H$ and $G=G_{n,p}$ the following is true.
For any $\eps>0$ there is a C such that if $p>C n^{-1/m_2(H)}$ then w.h.p.:
for each $X\in \cee_H^\perp(G)$ there is a $Y\in \W_H^\perp(G)$ with
$|X\Delta Y|< \eps n^2p$;
in particular, if $\cee_H(G)\neq \W_H(G)$, then
\[
\min\{|X|:X\in \cee^\perp_H(G)\sm \W^\perp_H(G)\}< \eps n^2p.
\]
\end{thm}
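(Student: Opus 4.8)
The plan is to obtain this as a consequence of the hypergraph container method of \cite{BMS,ST}. The statement is a ``robust'' (sparse, random) version of Theorem~\ref{thm:Hspace}: it asserts that in $G_{n,p}$, once $p$ is past the appearance threshold for $H$, the only approximate solutions of the $\mathbb F_2$-linear system ``$X$ has even intersection with every copy of $H$'' are the approximately trivial ones, i.e.\ those near a member of $\W_H^\perp(G)$ (which in the four cases of \eqref{eq:Hspace} means: near $\0$; near a cut; near a cut or its complement; near $E(G)$). Accordingly I would prove it from two ingredients: a supersaturation/removal statement for $K_n$, and hypergraph containers; and to reach $p$ of \emph{order} $n^{-1/m_2(H)}$ (rather than $p\gg n^{-1/m_2(H)}$) I would need the \emph{balanced} form of supersaturation in the sense of Morris--Samotij and the attendant refinement of the container count.

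First I would establish the $K_n$ input. Call $Y\sub E(K_n)$ \emph{trivial} if $Y\in\W_H^\perp(K_n)$; by Theorem~\ref{thm:Hspace} these are exactly the subgraphs with even intersection with every copy of $H$ in $K_n$. The quantitative version I need: for each $\eps>0$ there is $\beta>0$ such that any $X\sub E(K_n)$ with $|X\,\Delta\,Y|\ge\eps n^2$ for every trivial $Y$ has at least $\beta n^{v_H}$ copies $T$ of $H$ with $|X\cap T|$ odd; and, crucially, these ``bad'' copies can be selected so as to form an $e_H$-uniform hypergraph on $E(K_n)$ whose maximum $j$-degrees satisfy the bounds required to feed the container theorem at the threshold (the exponents being governed by $m_2(H)$, and particularly clean when $H$ is strictly $2$-balanced). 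I expect this to be provable by an Erd\H{o}s--Simonovits-type argument, handling the four cases of \eqref{eq:Hspace} separately according to whether $H$ is Eulerian and the parity of $|H|$.

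Next I would run the container machinery. After encoding the parity condition as an independence condition in an auxiliary uniform hypergraph on $E(K_n)$ --- the standard device for $\mathbb F_2$-linear constraints, e.g.\ using copies of $H$ with one marked edge, or symmetric differences of pairs of copies sharing a single edge --- the balanced supersaturation verifies the hypotheses of the container theorem, which yields a family $\cC$ of subsets of $E(K_n)$ with $|\cC|\le\exp(o(n^2p))$ for $p\ge Cn^{-1/m_2(H)}$ ($C$ large), such that every relevant ``independent'' set lies in some $C\in\cC$ and every $C\in\cC$ is within edit distance $\eps'n^2$ of some trivial $Y_0=Y_0(C)$. Given $X\in\cee_H^\perp(G)$ with $G=G_{n,p}$, the standard transfer step of the sparse container method --- using that every copy of $H$ already present in $G$ constrains $X$ --- places $X$ inside $E(G)\cap C$ for some $C\in\cC$. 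Then $Y:=Y_0(C)\cap E(G)$ lies in $\W_H^\perp(G)$ (a cut of $K_n$ meets $G$ in a cut, $E(K_n)$ meets $G$ in $E(G)$, etc.), and $X\,\Delta\,Y\sub E(G)\cap(C\,\Delta\,Y_0)$ with $|C\,\Delta\,Y_0|\le\eps'n^2$; since $|\cC|\le\exp(o(n^2p))$, a Chernoff bound for $|E(G)\cap(C\,\Delta\,Y_0)|$ together with a union bound over $C\in\cC$ gives $|X\,\Delta\,Y|<\eps n^2p$ w.h.p., uniformly in $X$.

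The hard part, I expect, will be the combination of (i) the balanced-supersaturation estimate of Step~1 in the generality of all (or at least all strictly $2$-balanced) $H$, and (ii) keeping $|\cC|\le\exp(o(n^2p))$ precisely \emph{at} $p\asymp n^{-1/m_2(H)}$: a crude container count loses a factor $\log n$ in the exponent and only suffices for $p\gg n^{-1/m_2(H)}\log n$, so one is forced into the balanced version of the method throughout. (The full argument is carried out in \cite[Sec.~4.8]{Baron}.)
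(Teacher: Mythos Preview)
The paper does not itself prove this result: it is quoted from \cite{BK} (their Theorem~5.1), with the argument carried out in \cite[Sec.~4.8]{Baron}, and the paper only remarks that the proof ``is mainly based on the `container' machinery of \cite{BMS,ST}.'' So there is no in-paper proof to compare against; your overall plan (dense supersaturation, containers, Chernoff plus a union bound over containers) matches what the paper says about the source.

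That said, your sketch has a real gap at the encoding/transfer step. The condition ``$|X\cap T|$ is even for every copy $T$ of $H$'' is not monotone in $X$, so it cannot be written as independence in a hypergraph on vertex set $E(K_n)$; neither of your suggested devices (``copies of $H$ with one marked edge'' or ``symmetric differences of pairs of copies sharing a single edge'') produces a hypergraph whose independent sets are exactly the sets with the parity property. Correspondingly, your transfer sentence ``places $X$ inside $E(G)\cap C$'' presumes containers $C\subseteq E(K_n)$ with $X\subseteq C$, which is not what the parity problem gives. Note the contrast with the Tur\'an setting, where ``$H$-free in $G$'' trivially implies ``$H$-free in $K_n$'' and the inclusion $X\subseteq C$ is immediate; here $X\in\cee_H^\perp(G)$ says nothing about copies of $H$ not contained in $G$.

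The standard repair is to work on the doubled ground set $E(K_n)\times\{0,1\}$, encoding the $2$-colouring $e\mapsto\mathbf{1}_{\{e\in X\}}$; a hyperedge is the set $\{(e,c(e)):e\in E(T)\}$ for each copy $T$ of $H$ and each colouring $c$ of $E(T)$ with an odd number of $1$'s. Now the set
\[
I_X \;=\; \{(e,1):e\in X\}\ \cup\ \{(e,0):e\in G\setminus X\}
\]
is genuinely independent in this hypergraph whenever $X\in\cee_H^\perp(G)$, because any hyperedge coming from a copy $T\not\subseteq G$ uses a pair $(e,\cdot)$ with $e\notin G$ and hence is not contained in $I_X$. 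Containers for this coloured hypergraph (together with your supersaturation step, which now says that any $2$-colouring far from every trivial one has many odd copies) give, for each container $C$, a trivial $Y_0$ such that the ``ambiguous'' set $\{e:(e,0),(e,1)\in C\}$ has size at most $\eps' n^2$; intersecting with $G$ and taking $Y=Y_0\cap G\in\W_H^\perp(G)$ then yields $|X\Delta Y|\le |G\cap\{\text{ambiguous edges}\}|$, and your Chernoff-plus-union-bound over the $\exp(o(n^2p))$ containers finishes as you wrote. Your remark about needing the balanced form of supersaturation to keep $|\cC|\le\exp(o(n^2p))$ at $p\asymp n^{-1/m_2(H)}$ is well taken.
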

\nin
While still requiring some effort, the proof of Theorem~\ref{TCGA}
is mainly based on the ``container'' machinery of
\cite{BMS,ST}; so this first step is itself something substantial.  (But it is not a surprise;
e.g.\ \cite{DHK}, which predates \cite{BMS,ST}, begins with something like 
Theorem~\ref{TCGA}---Theorem~8.34 of \cite{JLR}---which even before \cite{DHK} seems
to have been recognized as a natural starting point.)

Thus the real problem in proving Lemma~\ref{lem:p=O(p)'},
and the most interesting part of the whole business,
is dealing with (nonempty) $F$'s that are small relative to
$G$.

\section{Tools}
\label{SEC:Tools}

\subsection{Deviation and correlation}
\label{Deviation and correlation}
Set
\beq{eq:varphidef}
\varphi(x) = (1+x)\log(1+x)-x
\enq
for $ x > -1$ and (for continuity) $ \varphi(-1)=1$.
We use ``Chernoff's Inequality'' in the following form; see for example
\cite[Thm. 2.1]{JLR}.
\begin{thm}
\label{thm:Chernoff}
If $X \sim \mathrm{Bin}(n,p)$ and $\mu = \mathbb{E}[X] = np$, then for $t \geq 0$,
\begin{align}
\label{eq:ChernoffUpper}
\pr(X \geq \mu + t) &\leq \exp\left[-\mu\varphi(t/\mu)\right]
\leq \exp\left[-t^2/(2(\mu+t/3))\right], \\
\label{eq:ChernoffLower}
\pr(X \leq \mu - t) &\leq \exp[-\mu\varphi(-t/\mu)]
\leq \exp[-t^2/(2\mu)].
\end{align}
\end{thm}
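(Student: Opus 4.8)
The plan is to use the standard exponential-moment (Chernoff) method; since $X\sim\mathrm{Bin}(n,p)$ is a sum of $n$ i.i.d.\ Bernoulli$(p)$ random variables, its moment generating function factors, and everything reduces to a one-variable optimization. For the upper tail I would fix $\lambda>0$ and apply Markov's inequality to $e^{\lambda X}$:
\[
\pr(X\geq \mu+t)=\pr\!\left(e^{\lambda X}\geq e^{\lambda(\mu+t)}\right)\leq e^{-\lambda(\mu+t)}\,\E[e^{\lambda X}]\leq \exp\!\left(\mu(e^\lambda-1)-\lambda(\mu+t)\right),
\]
where the last step uses $\E[e^{\lambda X}]=(1+p(e^\lambda-1))^n\leq\exp(np(e^\lambda-1))=\exp(\mu(e^\lambda-1))$ (via $1+x\leq e^x$). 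Minimizing the exponent over $\lambda$ — the critical point is at $e^\lambda=1+t/\mu$ — collapses it to exactly $-[(\mu+t)\log(1+t/\mu)-t]=-\mu\varphi(t/\mu)$, which is the first inequality in \eqref{eq:ChernoffUpper}.

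For the lower tail I would run the identical argument with $e^{-\lambda X}$ ($\lambda>0$) in place of $e^{\lambda X}$, getting $\pr(X\leq\mu-t)\leq\exp(\mu(e^{-\lambda}-1)+\lambda(\mu-t))$; optimizing (now $e^{-\lambda}=1-t/\mu$, legitimate for $t\leq\mu$, while for $t>\mu$ the probability is literally $0$) gives exponent $-[(\mu-t)\log(1-t/\mu)+t]=-\mu\varphi(-t/\mu)$, i.e.\ the first inequality in \eqref{eq:ChernoffLower}. This completes the two sharp (``Bernstein form'') bounds, and it is where all the actual content lies.

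It then remains to pass from the sharp bounds to the cruder sub-Gaussian-type ones. For \eqref{eq:ChernoffUpper} this is precisely the elementary inequality $\varphi(x)\geq \tfrac{x^2}{2(1+x/3)}$ for $x\geq 0$ (equivalently $\mu\varphi(t/\mu)\geq t^2/(2(\mu+t/3))$), and for \eqref{eq:ChernoffLower} it is $\varphi(-x)\geq x^2/2$ for $0\leq x\leq 1$ (equivalently $\mu\varphi(-t/\mu)\geq t^2/(2\mu)$); the latter is transparent from the Taylor expansion $\varphi(-x)=\sum_{k\geq 2}x^k/(k(k-1))$, and the former is a routine single-variable check. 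I would expect this cosmetic step — coercing $\varphi$ into a quadratic lower bound — to be the only place demanding even minor care, and it is entirely standard. (Of course, since the statement is quoted verbatim from \cite[Thm.\ 2.1]{JLR}, one may alternatively just cite it and omit the proof.)
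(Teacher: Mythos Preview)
Your proof is correct and entirely standard. The paper itself does not prove this statement at all; it simply cites \cite[Thm.~2.1]{JLR} and moves on, exactly as you anticipate in your final parenthetical remark. So there is nothing to compare: you have supplied a full (and correct) argument where the paper supplies only a reference.
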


\nin
For larger deviations the following consequence of the finer bound in \eqref{eq:ChernoffUpper}
will be convenient.
\begin{thm}
\label{thm:Chernoff'}
For $X\sim B(n,p)$ and any $K$, with $\mu=\mathbb{E}[X]=np$,
\begin{eqnarray*}
\pr(X > K\mu) < \exp[-K\mu \log (K/e)].
\end{eqnarray*}
\end{thm}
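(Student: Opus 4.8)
The plan is to read this off directly from the sharper bound in \eqref{eq:ChernoffUpper}, the only content being a one-line manipulation of $\varphi$. First observe that there is nothing to prove unless $K>e$: if $K\le e$ then $\log(K/e)\le 0$, so the asserted right-hand side is at least $1$ and the bound is vacuous. (We may likewise assume $\mu=np>0$, since otherwise $X\equiv 0$ and $\pr(X>K\mu)=0$.) So from now on assume $K>e$, in particular $K>1$, and put $t=(K-1)\mu\ge 0$, so that $\mu+t=K\mu$.

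With this choice, \eqref{eq:ChernoffUpper} gives
\[
\pr(X>K\mu)\le \pr(X\ge \mu+t)\le \exp\!\left[-\mu\varphi(t/\mu)\right]=\exp\!\left[-\mu\varphi(K-1)\right].
\]
It now suffices to note that, directly from the definition \eqref{eq:varphidef} of $\varphi$,
\[
\varphi(K-1)=K\log K-(K-1)=K\log(K/e)+1>K\log(K/e),
\]
whence (using $\mu>0$)
\[
\exp\!\left[-\mu\varphi(K-1)\right]=\exp\!\left[-K\mu\log(K/e)-\mu\right]<\exp\!\left[-K\mu\log(K/e)\right],
\]
as claimed.

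There is no real obstacle here: the argument is just the standard ``finer'' Chernoff estimate evaluated at the relative deviation $K-1$, together with the elementary identity $\varphi(K-1)=K\log(K/e)+1$; the preliminary reduction to $K>e$ merely disposes of the range in which the target quantity already exceeds $1$.
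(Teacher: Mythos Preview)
Your proof is correct and is precisely the approach the paper intends: the paper does not spell out a proof but introduces the theorem as ``the following consequence of the finer bound in \eqref{eq:ChernoffUpper},'' and your computation $\varphi(K-1)=K\log(K/e)+1$ is exactly the one-line manipulation needed to extract it.
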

\nin (Of course this is only helpful if $K>e$.)

\medskip
We will make substantial use of
the following fundamental lower tail bound of
Svante Janson (\cite{Janson} or
\cite[Theorem 2.14]{JLR}), for which
we need a little notation.
Suppose $A_1\dots A_m$ are subsets of the
finite set $\gG$. Let $\Gamma_p$ be the random subset of $\Gamma$
gotten by including each $x$ ($ \in \Gamma$) with probability $p$,
these choices made independently.
For $j\in [m]$, let $I_j$ be the indicator of the event
$\{\gG_p\supseteq A_j\}$, and set $X=\sum I_j$,
$\mu = \mathbb{E} X =\sum_j\mathbb{E} I_j$
and
\beq{Delta}
\mbox{$\ov{\gD} = \sum\sum\{\mathbb{E} I_iI_j: A_i\cap A_j\neq\0  \}.$}
\enq
(Note this includes the diagonal terms.)

\begin{thm}\label{TJanson}
With notation as above, for any $t\in [0,\mu]$,
\[\pr(X\leq \mu -t) \leq \exp[-\varphi(-t/\mu)\mu^2/\ov{\gD}] \leq \exp[-t^2/(2\ov{\gD})].\]
\end{thm}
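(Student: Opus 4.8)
\textbf{Proof proposal for Theorem~\ref{TJanson}.}
The plan is to prove this via the exponential-moment method, exactly as in Janson's original argument. First I would introduce the ``missing'' random variable: set $Y = -X = -\sum_j I_j$ and, for a parameter $t \ge 0$ to be optimized, bound
\[
\pr(X \le \mu - t) = \pr(e^{-tX} \ge e^{-t(\mu-t)}) \le e^{t(\mu - t)} \, \E[e^{-tX}],
\]
by Markov's inequality (with a slight abuse of notation, reusing $t$ for the deviation and the exponential parameter; in the write-up I would call the exponential parameter $u \ge 0$). So the whole game is to upper bound the negative exponential moment $\E[\exp(-u\sum_j I_j)]$. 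The key structural input is that the $I_j$ are \emph{increasing} functions of the independent coordinates $(\mathbbm{1}\{x \in \Gamma_p\})_{x \in \Gamma}$, hence $(-I_j)$ are decreasing, so by the Harris/FKG inequality the variables $e^{-uI_j}$ are positively correlated in the wrong direction --- more precisely, $\E[\prod_j e^{-uI_j}] \ge \prod_j \E[e^{-uI_j}]$, which goes the wrong way. The correct move is the standard one: partition the index set into groups within which the $A_j$'s pairwise intersect is not available in general, so instead I would use the inequality
\[
\E\Big[\exp\big(-u\sum_j I_j\big)\Big] \le \exp\Big(-u\mu + \tfrac{u^2}{2}\,\ov{\gD}\,g(u)\Big)
\]
style bound; concretely, following \cite[proof of Thm.~2.14]{JLR}, one shows $\log \E[e^{-uX}] \le -\mu(1 - e^{-u}) + \tfrac12(1-e^{-u})^2 \ov{\gD}$ — wait, I should be careful: the clean route is Janson's, detailed below.

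Here is the route I would actually follow. For each $j$, let $B_j = \{j\} \cup \{k : A_k \cap A_j \ne \0\}$. Janson's lemma gives, for $u \ge 0$,
\[
\log \E\big[e^{-uX}\big] \;\le\; \sum_j \Big(\E[e^{-uI_j}] - 1\Big) \cdot \text{(correction)},
\]
but the cleanest statement to quote/reprove is: $\E[e^{-uX}] \le \prod_j \E[e^{-uI_j \mid I_k, k \notin B_j}]$ type conditioning does not immediately work, so I would instead reproduce the two-line argument that
\[
-\log \E[e^{-uX}] \;\ge\; u\mu - \tfrac{u^2}{2}\ov{\gD}
\]
for all $u \ge 0$. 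Granting that, plugging into the Markov bound gives, for the deviation $t$,
\[
\pr(X \le \mu - t) \;\le\; \exp\!\big(u(\mu - t) - u\mu + \tfrac{u^2}{2}\ov{\gD}\big) \;=\; \exp\!\big(-ut + \tfrac{u^2}{2}\ov{\gD}\big),
\]
and optimizing over $u \ge 0$ with $u = t/\ov{\gD}$ yields the weaker bound $\exp[-t^2/(2\ov{\gD})]$. To get the sharper $\exp[-\varphi(-t/\mu)\mu^2/\ov{\gD}]$, I would instead carry the exact exponential-moment computation: one has $\E[e^{-uI_j}] = 1 - p_j(1 - e^{-u})$ where $p_j = \E I_j$, and a careful application of the dependency bound (this is the actual content of Janson's inequality) gives
\[
\log \E[e^{-uX}] \;\le\; -(1 - e^{-u})\mu + \tfrac12 (1 - e^{-u})^2 \ov{\gD};
\]
combining with Markov, $\pr(X \le \mu - t) \le \exp[u(\mu-t) - (1-e^{-u})\mu + \tfrac12(1-e^{-u})^2\ov{\gD}]$, and choosing $u$ so that $e^{-u} = 1 - t/(\text{something involving }\mu,\ov{\gD})$ and simplifying produces precisely $\exp[-\varphi(-t/\mu)\mu^2/\ov{\gD}]$ after the substitution $x = -t/\mu$ in the definition $\varphi(x) = (1+x)\log(1+x) - x$. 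The monotonicity $\varphi(-t/\mu)\mu^2/\ov{\gD} \ge t^2/(2\ov{\gD})$, giving the second inequality in the theorem, follows from the elementary estimate $\varphi(x) \ge x^2/2$ for $x \le 0$ (equivalently $\varphi(-s) \ge s^2/2$ for $s \in [0,1]$), which I would verify by comparing derivatives.

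The main obstacle is establishing the key exponential-moment inequality $\log \E[e^{-uX}] \le -(1-e^{-u})\mu + \tfrac12(1-e^{-u})^2\ov{\gD}$, i.e.\ the heart of Janson's inequality — this is where the dependency structure encoded by $\ov{\gD}$ enters and requires the clever telescoping/conditioning argument (order the indices, write $X = \sum I_j$, condition successively, and control each conditional exponential moment using that $I_j$ is positively correlated only with those $I_k$ for which $A_k \cap A_j \ne \0$). Everything after that — the Markov step, the optimization over $u$, and the reduction to the $\varphi$-form and then to the Gaussian-type form — is routine calculus. Since the theorem is quoted as \cite[Theorem 2.14]{JLR} and attributed to \cite{Janson}, in the actual paper I would simply cite it rather than reprove it; the above is the proof one would give if forced to be self-contained.
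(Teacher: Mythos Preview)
Your final sentence is exactly right: the paper does not prove this theorem at all; it simply quotes it as \cite[Theorem~2.14]{JLR} (originally \cite{Janson}) and moves on. So there is nothing to compare against, and your instinct to cite rather than reprove matches the paper exactly.

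That said, a brief comment on the sketch itself, since you wrote one. The overall strategy---Markov on $e^{-uX}$, an upper bound on $\log \E[e^{-uX}]$ of the form $-(1-e^{-u})\mu + \tfrac12(1-e^{-u})^2\ov{\gD}$, then optimize in $u$---is indeed Janson's argument and is correct in outline. But the write-up wobbles: you first note FKG goes the wrong way, then float a couple of half-formed alternatives before settling on ``Janson's lemma,'' and the displayed inequality with ``(correction)'' is not a real statement. The actual derivation of the key exponential-moment bound (the ``clever telescoping/conditioning argument'' you allude to) is the only nontrivial step, and you don't carry it out. If you ever need to write this proof in full, the cleanest route is to differentiate $\psi(u) := -\log \E[e^{-uX}]$ in $u$ and bound $\psi'(u)$ using the correlation structure; this gives the inequality directly and avoids the false starts.
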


\nin

The following generalization of Theorem~\ref{TJanson}, from \cite{DK},
is the basis for the crucial Lemma~\ref{J1} in Section~\ref{PLJ2}.
(The generalization owed some inspiration to the observation of Riordan and 
Warnke \cite{RW} that 
Theorem~\ref{TJanson} is valid for general increasing events.)

Consider some product probability measure on $2^\gG$, and
suppose $B_{ij} \sub 2^\gG$ are increasing and
$B_i=\cup_jB_{ij}$.
Write $(i,j)\sim (k,l)$ if $B_{ij}$ and $B_{kl}$
are dependent.
(Note that, unlike \cite{RW},
we take $(i,j)\sim (i,j)$.)
Let $I_{ij}$ and $I_i$ be
the indicators of $B_{ij}$ and $B_i$ and
set $X=\sum I_i$,
\[\mbox{$\mu = \sum_{i,j}\E I_{ij}$,}\]
\[\mbox{$\ov{\gD} = \sum\sum\{\E I_{ij}I_{kl}:
(i,j)\sim (k,l)\}~~~$} \]
and
\beq{gamma'}
\mbox{$\gc =\sum_i\sum_{\{j,k\}}\E I_{ij}I_{ik}$,}
\enq
with the inner sum over (unordered) pairs with $j\neq k$.

Specializing the next statement to
the case when there is just one $j$ for each $i$
yields the above-mentioned Riordan-Warnke extension of Janson 
to increasing events.

\begin{thm}\label{JDK}
With notation as above, for any $t\in [\gc,\mu]$,
\begin{eqnarray}\label{JbdA}
\pr(X\leq \mu -t)
&\leq &\exp[-(t-\gc)^2/(2\ov{\gD})].
\end{eqnarray}
\end{thm}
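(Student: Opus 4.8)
The plan is to prove \eqref{JbdA} by emulating the classical exponential-moment (Harris/FKG) proof of Janson's inequality, Theorem~\ref{TJanson}, in the generality established by Riordan and Warnke \cite{RW} --- who observed that that proof uses only two features of principal up-sets, namely that events with disjoint supports are independent and that Harris's correlation inequality holds, both of which persist for arbitrary increasing events --- and then grafting on one new ingredient to cope with the fact that each $I_i$ here is the indicator of a \emph{union} $B_i=\bigcup_jB_{ij}$. One begins as usual from
\[
\pr(X\le\mu-t)\ \le\ e^{\lambda(\mu-t)}\,\E\,e^{-\lambda X}\qquad(\lambda\ge 0),
\]
so that it suffices to prove an upper bound on $\E\,e^{-\lambda X}$ of the form $\exp[-\lambda(\mu-\gc)+\tfrac12\lambda^2\ov{\gD}]$ (or its sharper $\varphi$-refinement); optimizing at $\lambda=(t-\gc)/\ov{\gD}$ --- which is nonnegative exactly because $t\ge\gc$ --- then yields $\exp[-(t-\gc)^2/(2\ov{\gD})]$, and the restriction $t\ge\gc$ in the statement is precisely the reflection of the $\gc$ appearing in this moment bound.

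To obtain the moment bound I would fix a total order on the index set and run the Janson recursion: writing $X'=\sum_{k<m}I_k$ and $q=1-e^{-\lambda}$,
\[
\E\,e^{-\lambda X}\ =\ \E\,e^{-\lambda X'}\ -\ q\,\E\big[I_m e^{-\lambda X'}\big],
\]
and one needs a lower bound on $\E[I_m e^{-\lambda X'}]$. Conditioning on the coordinates outside the support of $B_m$, the ``far'' contribution $X'_{\mathrm{far}}$ becomes fixed and $I_m$ becomes independent of it, so that $\E[I_m e^{-\lambda X'_{\mathrm{far}}}]=\E I_m\cdot\E\,e^{-\lambda X'_{\mathrm{far}}}\ge\E I_m\cdot\E\,e^{-\lambda X'}$; the ``close'' contribution is absorbed using $e^{-\lambda X'_{\mathrm{cl}}}\ge 1-q\sum_{k\in\mathrm{cl}}I_k$ together with Harris's inequality for the resulting products (each $\mbone_{B_m\cap B_k}$ is increasing and the conditioning is decreasing), which produces an error that telescopes to (half of) the dependent-pair sum $\ov{\gD}$. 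The one new move is that the block indicator $I_m=\mbone[\bigcup_j B_{mj}]$ must be estimated by Bonferroni, $I_m\ge\sum_j I_{mj}-\sum_{\{j,l\}}I_{mj}I_{ml}$, so $\E I_m\ge\sum_j\E I_{mj}-\gc_m$; carried through the recursion, the terms $\sum_j\E I_{mj}$ assemble into $\mu$ and the deficits $\gc_m$ into $-\gc$. Unfolding and then optimizing over $\lambda$ should deliver \eqref{JbdA}.

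I expect the real obstacle to lie exactly in the interaction between the union structure and the dependency-based error terms, and in particular in making the cross-term error come out to be no more than the \emph{stated} $\ov{\gD}$. When one bounds $\E[I_m I_k]$ for a close block $k$, the union $B_k=\bigcup_l B_{kl}$ may be realized by a sub-event $B_{kl}$ that shares no coordinate with $B_m$; such a pair $(m,j),(k,l)$ is counted in $\E I_m I_k$ but \emph{not} in $\ov{\gD}$ (which only sums genuinely dependent sub-pairs), and a naive accounting leaves behind an uncontrolled term of the shape $\sum\E I_{mj}\,\E I_{kl}$. Organizing the recursion so that these contributions either cancel (by conditioning on the absence of the relevant far sub-events, as in the ``probability-zero'' version of Janson) or get reabsorbed, so that only $\ov{\gD}$ and $\gc$ survive and the bound holds all the way down to $t=\gc$, is the crux --- and is where the argument of \cite{DK} needs input genuinely beyond the classical and Riordan--Warnke proofs. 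As a consistency check, taking a single $j$ for each $i$ makes $\gc=0$ and recovers the Riordan--Warnke bound for increasing events, and further specializing to principal up-sets recovers Theorem~\ref{TJanson} in its Gaussian form.
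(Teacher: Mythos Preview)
The paper does not prove Theorem~\ref{JDK}; it is quoted from \cite{DK}, so there is no in-paper proof to compare against.  Your outline---exponential moments, the Janson/Riordan--Warnke recursion over blocks, and the Bonferroni lower bound $I_m\ge\sum_j I_{mj}-\sum_{\{j,k\}}I_{mj}I_{mk}$ as the source of $\gc$---is the right one, and you have correctly located the one nontrivial step.

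The difficulty you flag is real, but it has a clean resolution and does not require anything as elaborate as cancellation or the ``probability-zero'' version of Janson.  The mistake is to split $X'=\sum_{k<m}I_k$ at the \emph{block} level (into $k$'s ``close to'' or ``far from'' $(m,j)$); as you say, a close block $k$ may be realized by a sub-event $B_{kl}$ with $(k,l)\not\sim(m,j)$, and then $\E[I_{mj}I_k]$ contains terms not present in $\ov\gD$.  The fix is to split each \emph{block indicator} relative to $(m,j)$: set
\[
I_k^{\mathrm{far}}=\mbone\Bigl[\,\bigcup_{l:\,(k,l)\not\sim(m,j)} B_{kl}\Bigr],
\qquad
0\le I_k-I_k^{\mathrm{far}}\le \sum_{l:\,(k,l)\sim(m,j)} I_{kl},
\]
and write $X'=X'^{\mathrm{far}}+X'^{\mathrm{near}}$ with $X'^{\mathrm{far}}=\sum_{k<m}I_k^{\mathrm{far}}$.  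Now $I_{mj}$ is genuinely independent of $X'^{\mathrm{far}}$ (every sub-event contributing to any $I_k^{\mathrm{far}}$ has support disjoint from that of $B_{mj}$), while $I_{mj}X'^{\mathrm{near}}\le\sum_{k<m}\sum_{l:(k,l)\sim(m,j)}I_{mj}I_{kl}$.  Running exactly the computation you describe---$e^{-\lambda X'^{\mathrm{near}}}\ge 1-qX'^{\mathrm{near}}$, independence for the main term, Harris for the error, and $\E e^{-\lambda X'^{\mathrm{far}}}\ge \E e^{-\lambda X'}$---the cross term then involves only pairs $(m,j)\sim(k,l)$ with $k<m$, which telescopes to (at most half of) $\ov\gD$.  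Your Bonferroni correction supplies $\gc$.  Nothing uncontrolled survives, and the rest is the standard optimization.
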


\nin
\emph{Remark.}  As observed in \cite{DK}, Theorem~\ref{JDK} is
useful when 
$\pr(B_i)\approx \sum_j\pr(B_{ij})$; that is,
when the probability of seeing at least two $B_{ij}$'s
for a given $i$ is small relative to the probability
of seeing just one.

The next result is \cite[Lemma 2.46]{JLR} (originally \cite[Lemma 2]{Janson}).
\begin{lemma}\label{JUB}
For events $A_1\dots A_n$
in a probability space, and $\mu=\sum\pr(A_i)$,
\begin{align*}
\pr(\mbox{some $\mu+t$
independent $A_i$'s occur})
&\leq
\exp\left[-\mu\varphi(t/\mu)\right]\\
&\leq \exp\left[-t^2/(2(\mu+t/3))\right].
\end{align*}
\end{lemma}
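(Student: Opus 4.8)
The plan is to reduce the bound to the ordinary (fully independent) Chernoff inequality by a single exponential‑moment computation. Let $\mathcal{G}$ be the family of ``good'' index sets, i.e.\ those $S\subseteq[n]$ for which the events $\{A_i:i\in S\}$ are mutually independent. A subfamily of a mutually independent family is again mutually independent, so $\mathcal{G}$ is a down‑set and contains $\emptyset$. Writing $X_i=\mbone_{A_i}$ and $p_i=\pr(A_i)$, set
\[
Z=\max_{S\in\mathcal{G}}\sum_{i\in S}X_i .
\]
Since $\mathcal{G}$ is downward closed, $Z$ is exactly the maximum number of mutually independent $A_i$'s that all occur: if $S^*$ attains the maximum, then its subset of occurring indices lies in $\mathcal{G}$ and all its events occur. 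Thus $Z$ is the quantity in the statement, and (as $Z$ is integer‑valued) it suffices to bound $\pr(Z\geq\mu+t)$, the case $\mu=0$ or $t=0$ being trivial.

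Now fix $\lambda\geq 0$. Because each $X_i\in\{0,1\}$ we have $e^{\lambda X_i}=1+(e^{\lambda}-1)X_i\geq 1$, so
\[
e^{\lambda Z}=\max_{S\in\mathcal{G}}\prod_{i\in S}e^{\lambda X_i}\leq\sum_{S\in\mathcal{G}}\prod_{i\in S}\bigl(1+(e^{\lambda}-1)X_i\bigr),
\]
the inequality being just ``a maximum of nonnegative reals is at most their sum.'' Taking expectations and using that for each $S\in\mathcal{G}$ the variables $\{X_i:i\in S\}$ are independent,
\[
\E e^{\lambda Z}\leq\sum_{S\in\mathcal{G}}\prod_{i\in S}\bigl(1+(e^{\lambda}-1)p_i\bigr)\leq\sum_{S\subseteq[n]}\prod_{i\in S}\bigl(1+(e^{\lambda}-1)p_i\bigr)=\prod_{i=1}^n\bigl(1+(e^{\lambda}-1)p_i\bigr)\leq\exp\!\bigl[(e^{\lambda}-1)\mu\bigr].
\]

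Finally, Markov's inequality gives $\pr(Z\geq\mu+t)\leq\exp[-\lambda(\mu+t)+(e^{\lambda}-1)\mu]$ for every $\lambda\geq 0$, and the choice $\lambda=\log(1+t/\mu)$ turns the exponent into $-(\mu+t)\log(1+t/\mu)+t=-\mu\varphi(t/\mu)$, which is the first claimed bound. The second bound is the elementary estimate $\varphi(x)\geq x^2/(2(1+x/3))$ for $x\geq 0$, already recorded (with $x=t/\mu$) as the second inequality in Theorem~\ref{thm:Chernoff}. I do not expect a real obstacle: the only points needing care are the identification of the quantity in the statement with $\max_{S\in\mathcal{G}}\sum_{i\in S}X_i$ (which uses that $\mathcal{G}$ is a down‑set) and the observation that replacing the maximum over $S\in\mathcal{G}$ by the sum over \emph{all} $S\subseteq[n]$ costs nothing in the end, since after taking expectations that sum factors as $\prod_i(1+(e^\lambda-1)p_i)$ and one recovers precisely the fully independent Chernoff bound with mean $\mu$.
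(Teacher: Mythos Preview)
Your approach is essentially the right one (and is the one in Janson/JLR that the paper cites), but there is a genuine error in the chain of inequalities. The claimed identity
\[
\sum_{S\subseteq[n]}\prod_{i\in S}\bigl(1+(e^{\lambda}-1)p_i\bigr)=\prod_{i=1}^n\bigl(1+(e^{\lambda}-1)p_i\bigr)
\]
is false: for any constants $c_i$ one has $\sum_{S\subseteq[n]}\prod_{i\in S}c_i=\prod_i(1+c_i)$, so with $c_i=1+(e^\lambda-1)p_i$ the left side equals $\prod_i\bigl(2+(e^\lambda-1)p_i\bigr)$, which is larger than the right side by a factor of roughly $2^n$. This ruins the bound: the step ``max $\le$ sum'' is too crude here because every summand is at least $1$.

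The fix is to use the down-set property of $\mathcal G$ more carefully before passing to the sum. With $a=e^\lambda-1$, choose $S^*\in\mathcal G$ with $X_i=1$ for all $i\in S^*$ and $|S^*|=Z$ (possible by down-closedness, as you note). Then every $S'\subseteq S^*$ lies in $\mathcal G$ and has $\prod_{i\in S'}X_i=1$, so
\[
(1+a)^Z=(1+a)^{|S^*|}=\sum_{S'\subseteq S^*}a^{|S'|}\le\sum_{S\in\mathcal G}\prod_{i\in S}(aX_i).
\]
Now taking expectations and using independence on each $S\in\mathcal G$ gives
\[
\E e^{\lambda Z}\le\sum_{S\in\mathcal G}\prod_{i\in S}(ap_i)\le\sum_{S\subseteq[n]}\prod_{i\in S}(ap_i)=\prod_{i=1}^n(1+ap_i)\le\exp[(e^\lambda-1)\mu],
\]
and the rest of your argument (Markov plus the optimal $\lambda=\log(1+t/\mu)$) goes through unchanged. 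The only change is replacing the factor $1+(e^\lambda-1)X_i$ by $(e^\lambda-1)X_i$ inside the sum over $S$; this is exactly what makes the final product identity come out right.
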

\nin
Note the bound here is the same as the one in \eqref{eq:ChernoffUpper}, which is
thus contained in Lemma~\ref{JUB}.
(Strictly speaking, \cite{Janson} and \cite{JLR} state Lemma~\ref{JUB} only in setting
of Theorem~\ref{TJanson}, but their proofs are valid for the version here.)

\mn

We will also find some use for Harris' Inequality (essentially \cite{Harris}),
which we state just for product measures on Boolean algebras.
(Recall that $\A \subseteq \Omega=2^{[n]}$ 
is \emph{increasing} if $B\subseteq A\in \A \Ra B\in \A$ and \emph{decreasing} if
its complement is increasing.)
\begin{thm}
\label{thm:Harris}
If $A,B\sub \gO$ are either both increasing or both decreasing, then
\[
\pr(A \cap B) \geq \pr(A)\pr(B).
\]
If one is increasing and the other decreasing then the inequality is reversed.
\end{thm}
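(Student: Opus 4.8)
The plan is to prove the stronger \emph{functional} version and deduce the stated set version. Identifying subsets of $[n]$ with their $\{0,1\}$-valued indicator vectors, so $\gO = \prod_{i=1}^n\{0,1\}$, I claim: for any product probability measure on $\gO$ and any two functions $f,g:\gO\to\R$ that are both nondecreasing (coordinatewise), one has $\E[fg]\geq \E[f]\E[g]$. Granting this, the first assertion of the theorem follows by taking $f=\mbone_A$, $g=\mbone_B$ when $A,B$ are increasing, and by applying the claim to $-f,-g$ (which are nondecreasing) when $A,B$ are decreasing, using $\E[(-f)(-g)]=\E[fg]$ and $\E[-f]\E[-g]=\E[f]\E[g]$. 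The reversed inequality for one increasing and one decreasing event follows by applying the nondecreasing case to $f=\mbone_A$ and $-g=-\mbone_B$, which gives $\E[f(-g)]\geq\E[f]\E[-g]$, i.e.\ $\E[fg]\leq\E[f]\E[g]$.

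For the functional statement I would induct on $n$. For $n=1$, writing $p$ for the probability that coordinate $1$ equals $1$, a direct computation gives $\E[fg]-\E[f]\E[g]=p(1-p)\big(f(1)-f(0)\big)\big(g(1)-g(0)\big)\geq 0$, the last step because monotonicity forces the two factors to have the same sign. For the inductive step, condition on the last coordinate: with $\E'$ denoting expectation over the first $n-1$ coordinates and $p=p_n$, set $f_i(x')=f(x',i)$ and $g_i(x')=g(x',i)$ for $i\in\{0,1\}$, all of which are nondecreasing on $\{0,1\}^{n-1}$. Then $\E[fg]=(1-p)\E'[f_0g_0]+p\,\E'[f_1g_1]$, and the induction hypothesis bounds each term below by $\E'[f_i]\E'[g_i]$. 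Writing $a_i=\E'[f_i]$ and $b_i=\E'[g_i]$, monotonicity of $f,g$ in the last coordinate gives $a_0\leq a_1$ and $b_0\leq b_1$, so applying the $n=1$ case to the two-point distribution $\{1-p,p\}$ (equivalently, the elementary inequality $(1-p)a_0b_0+p\,a_1b_1\geq\big((1-p)a_0+pa_1\big)\big((1-p)b_0+pb_1\big)$, whose difference is $p(1-p)(a_1-a_0)(b_1-b_0)\geq 0$) yields $\E[fg]\geq\E[f]\E[g]$, completing the induction.

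There is no serious obstacle here---the argument is a classical two-line induction---so the only things to watch are the bookkeeping in reducing the four sign combinations of the theorem to the single ``both nondecreasing'' case, and the observation that the base-case identity is exactly what is reused (for the two-point ``last coordinate'' law) in the inductive step. One could instead simply invoke the FKG inequality or cite \cite{Harris} directly, but the induction is short and self-contained, so I would include it.
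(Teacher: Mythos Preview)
Your argument is correct: the base case computation is right, the inductive step correctly applies the hypothesis to the conditioned functions and then reuses the $n=1$ inequality on the pair $(a_0,a_1),(b_0,b_1)$, and the reductions from the set version (both increasing, both decreasing, mixed) to the functional nondecreasing case are handled cleanly.

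As for comparison with the paper: there is nothing to compare against. The paper does not prove Theorem~\ref{thm:Harris}; it simply states it as a standard tool with the parenthetical ``(essentially \cite{Harris})'' and moves on. Your inclusion of the short induction is therefore strictly more than the paper provides, and is the classical textbook proof (often attributed to Chebyshev's sum inequality in the $n=1$ case and to Harris/FKG in general). If you were writing this paper you would almost certainly do as the authors did and just cite it, but as a standalone verification your write-up is fine.
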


\subsection{Density generics}

{\em From now on we use
$G$ for $G_{n,p}$ and $V$ for $[n]=V(G)$.}
%
%
Theorems~\ref{thm:Chernoff} and \ref{thm:Chernoff'}
easily imply the next two standardish propositions, whose proofs we omit.

\begin{prop}
\label{prop:routine}
For $p \gg n^{-1}\log n$, w.h.p.
\beq{generic1}
\mbox{$|G| \sim n^2p/2~~$ and $~~d(v) \sim np \;\, \forall \ v \in V$.}
\enq
\end{prop}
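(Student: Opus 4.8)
The plan is a textbook Chernoff-plus-union-bound argument, and since the proposition only asserts asymptotic concentration (the symbol $\sim$), it suffices to fix an arbitrary $\eps>0$ and show that w.h.p.\ both $|G|=(1\pm\eps)n^2p/2$ and $d(v)=(1\pm\eps)np$ for every $v\in V$; letting $\eps\to 0$ then yields \eqref{generic1}.

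First I would handle $|G|$. Here $|G|\sim\mathrm{Bin}(\C{n}{2},p)$ has mean $\mu=\C{n}{2}p\sim n^2p/2$, so applying Theorem~\ref{thm:Chernoff} with (say) $t=(\eps/2)\mu$ — which for large $n$ forces $|G|=(1\pm\eps)n^2p/2$, since $\mu\sim n^2p/2$ — bounds $\pr(|G|\neq(1\pm\eps)n^2p/2)$ by $2\exp[-\gO(\eps^2 n^2p)]$. As $p\gg n^{-1}\log n$ gives $n^2p\gg n\log n\to\infty$, this is $o(1)$ with enormous room to spare.

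Next I would treat the degrees. For a fixed $v$, $d(v)\sim\mathrm{Bin}(n-1,p)$ with mean $(n-1)p\sim np$, and Theorem~\ref{thm:Chernoff} again gives $\pr(d(v)\neq(1\pm\eps)np)\le 2\exp[-\ccc\,\eps^2 np]$ for a suitable absolute constant $\ccc>0$ and $n$ large. A union bound over the $n$ vertices costs a factor $n$, leaving
\[
\pr(\exists\,v\in V:\ d(v)\neq(1\pm\eps)np)\ \le\ 2\exp[\log n-\ccc\,\eps^2 np],
\]
and this is exactly where the hypothesis $p\gg n^{-1}\log n$ (equivalently $np/\log n\to\infty$) enters: it forces $\ccc\,\eps^2 np-\log n\to\infty$, so the right-hand side is $o(1)$. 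One final union bound combines this with the estimate for $|G|$.

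I do not expect any real obstacle here — the argument is entirely routine, which is presumably why the authors omit it. The only point requiring the slightest attention is arranging for the union bound over vertices to converge, and that is precisely the content of the assumption on $p$. (One could instead package the degree tails via Theorem~\ref{thm:Chernoff'}, but for fixed-ratio concentration the plain Chernoff bound in Theorem~\ref{thm:Chernoff} is all that is needed; Theorem~\ref{thm:Chernoff'} is presumably reserved for the subgraph-count estimates that follow.)
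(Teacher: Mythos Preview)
Your proposal is correct and is exactly the routine Chernoff-plus-union-bound argument the paper has in mind; the authors omit the proof entirely, merely remarking that Theorems~\ref{thm:Chernoff} and \ref{thm:Chernoff'} easily imply it. Your observation that only Theorem~\ref{thm:Chernoff} is actually needed here (with Theorem~\ref{thm:Chernoff'} reserved for larger-deviation situations elsewhere) is also on point.
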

\nin
(Of course the second conclusion implies the first, which just needs $p\gg n^{-2}$.)

\begin{prop}\label{density}
For any $\eps>0$ there is a $K$
such that w.h.p.\ for all disjoint $S,T\sub V$ with
$|S|,|T|>Kp^{-1}\log n$
\[
\mbox{$|\nabla_G(S,T)| =(1\pm \eps) |S||T|p$}
\]
and
\[
\mbox{$|G[S]| =(1\pm \eps) \Cc{|S|}{2}p$.}
\]
\end{prop}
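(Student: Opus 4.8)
The plan is a routine first-moment argument: for each candidate pair $(S,T)$ the relevant edge count is a binomial, Chernoff controls its deviation from the mean, and a union bound over all eligible pairs closes the statement provided $K$ is chosen large enough relative to $\eps$. First I would fix $\eps>0$, noting that it suffices to handle $\eps\le 1$ (the assertion for larger $\eps$ follows from the case $\eps=1$), and that if $Kp^{-1}\log n\ge n$ there are no eligible $S,T$ and nothing to prove.

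Next, for fixed disjoint $S,T\sub V$ with $|S|=s$ and $|T|=t$, I would observe that the $st$ potential edges between $S$ and $T$ are present independently, so $|\nabla_G(S,T)|\sim\mathrm{Bin}(st,p)$ with mean $stp$, and likewise $|G[S]|\sim\mathrm{Bin}(\binom s2,p)$ with mean $\binom s2 p$. Applying both parts of Theorem~\ref{thm:Chernoff} with deviation $\eps\mu$, and using $\eps\le 1$ to bound the constant $2(1+\eps/3)\le 8/3<3$, gives
$\pr(|\nabla_G(S,T)|\neq(1\pm\eps)stp)\le 2\exp[-\eps^2 stp/3]$ and $\pr(|G[S]|\neq(1\pm\eps)\binom s2 p)\le 2\exp[-\eps^2\binom s2 p/3]$.

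Then I would union-bound over the at most $\binom ns\binom nt\le n^{s+t}$ eligible pairs of sizes $(s,t)$ (and the $\le n^s$ sets of size $s$ for the second estimate), ranging over all $s,t>Kp^{-1}\log n$. The key inequality is $\tfrac{st}{s+t}\ge\tfrac12\min(s,t)>\tfrac K2 p^{-1}\log n$, which turns the Chernoff exponent $\eps^2 stp/3=\tfrac{\eps^2}{3}\cdot\tfrac{st}{s+t}\cdot p\cdot(s+t)$ into something exceeding $\tfrac{\eps^2 K}{6}(s+t)\log n$ (and similarly $\eps^2\binom s2 p/3>\tfrac{\eps^2 K}{12}s\log n$ for $s\ge2$); choosing $K$ a sufficiently large multiple of $\eps^{-2}$ makes these at least $2(s+t)\log n$ and $2s\log n$ respectively. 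The probability that some eligible pair fails the first estimate is then at most $\sum_{s,t}n^{s+t}\cdot 2\exp[-2(s+t)\log n]=2\sum_{s,t}n^{-(s+t)}=O(n^{-2})$, and the probability that some eligible $S$ fails the second is $\le\sum_s n^s\cdot 2\exp[-2s\log n]=O(n^{-1})$; both are $o(1)$, which gives the proposition.

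There is no real obstacle here; the argument is textbook. The only point needing a moment's thought is the dependence $K=K(\eps)$ together with the elementary bound $\tfrac{st}{s+t}\ge\tfrac12\min(s,t)$, which is exactly what lets the exponential savings from Chernoff absorb the roughly $n^{s+t}$ choices of $(S,T)$.
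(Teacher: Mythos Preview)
Your argument is correct and is exactly the standard Chernoff-plus-union-bound computation the paper has in mind; indeed the paper omits the proof entirely, saying only that Proposition~\ref{density} follows easily from Theorems~\ref{thm:Chernoff} and~\ref{thm:Chernoff'}. Your choice of $K$ of order $\eps^{-2}$ and the use of $st/(s+t)\ge\tfrac12\min(s,t)$ are precisely the right ingredients.
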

\nin

\begin{prop}
\label{cpts}
For fixed $\eps >0$ and $p \gg 1/n$, w.h.p.:
if $H \sub G$ satisfies
\beq{deltaF}
d_H(v) > (1-\eps)np/2 ~~~\forall \; v \in V,
\enq
then no component of $H$ has size less than $(1-2\eps)n/2$.
\end{prop}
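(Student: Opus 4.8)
The plan is to bound the probability of the ``bad event'': that $G$ has a subgraph $H$ with $d_H(v)>(1-\eps)np/2=:\tau$ for every $v\in V$ and some component $C$ of $H$ with $|C|<(1-2\eps)n/2$. First I would make a simple reduction. If $C$ is such a component, then every $v\in C$ has all of its $H$-neighbors inside $C$, so $d_{G[C]}(v)\ge d_H(v)>\tau$, whence $e(G[C])>\tfrac12|C|\tau$; moreover $G[C]$ is connected (it contains the connected graph $H[C]$), and $|C|-1\ge d_H(v)>\tau$ gives $\tau<|C|<(1-2\eps)n/2$. So it suffices to show that w.h.p.\ $G$ contains no connected $S\sub V$ with $s:=|S|\in(\tau,(1-2\eps)n/2)$ and $e(G[S])>s\tau/2$. (If no such $s$ exists --- i.e.\ if $(1-\eps)p\ge 1-2\eps$ --- there is nothing to prove, since then every component of any valid $H$ already has size $>\tau\ge(1-2\eps)n/2$.)

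I would then split on the size of $np$. If $np\le\tfrac13\log n$ (so $p\to0$), then $\E[\#\{\text{isolated vertices of }G\}]=n(1-p)^{n-1}\ge ne^{-2np}\ge n^{1/3}\to\infty$, and a standard second-moment argument shows $G$ has an isolated vertex w.h.p.; such a vertex cannot have $H$-degree exceeding $\tau>0$, so no valid $H$ exists and the conclusion holds vacuously. So assume $np>\tfrac13\log n$; in particular $np\to\infty$ and $(np)^2\gg\log n$, the latter being what forces the bound below to $0$.

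For this regime I would run a first-moment bound over pairs $(S,T)$ with $|S|=s$ and $T$ a spanning tree of $S$; using a spanning tree is how I encode connectivity, and this is the crux of the argument, since the trivial bound $\binom ns$ on the number of $s$-sets is far too large when $s=\Theta(n)$ and $p$ is barely above $1/n$, whereas $\binom ns\,s^{s-2}p^{s-1}\le (enp)^s/(s^2p)$ is controllable. As the $s-1$ tree-edges and the remaining $\binom s2-(s-1)$ possible edges inside $S$ are on disjoint coordinates,
\[
\pr\big(T\sub G,\ e(G[S])>\tfrac{s\tau}{2}\big)=p^{s-1}\,\pr\Big(\mathrm{Bin}\big(\tbinom s2-(s-1),\,p\big)>\tfrac{s\tau}{2}-(s-1)\Big),
\]
and because $s<(1-2\eps)n/2$ the threshold here exceeds the mean by at least $\tfrac{\eps np}{8}\,s$ for $n$ large (using $np\to\infty$), so Theorem~\ref{thm:Chernoff} bounds this probability by $\exp[-\tfrac{3\eps^2np}{128}s]$. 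Hence
\[
\pr(\text{bad event})\ \le\ \sum_{s}\binom ns\,s^{s-2}p^{s-1}e^{-\frac{3\eps^2np}{128}s}\ \le\ \sum_{s}\frac1{s^2p}\Big(enp\cdot e^{-\frac{3\eps^2np}{128}}\Big)^{s}.
\]
Since $np\to\infty$, the base equals $e^{\,1+\log(np)-3\eps^2np/128}\le e^{-3\eps^2np/256}<1$ for $n$ large; inserting $s>\tau=(1-\eps)np/2$ into the exponent bounds each summand by $\tfrac1p\exp[-\tfrac{3(1-\eps)\eps^2}{512}(np)^2]$, and summing the fewer than $n/2$ summands (with $p^{-1}<3n$, as $np>\tfrac13\log n$) gives $\pr(\text{bad event})<3n^2\exp[-\Omega((np)^2)]\le 3n^2\exp[-\Omega((\log n)^2)]\to0$.

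I expect the genuine difficulty to be exactly this enumeration: a naive union bound over vertex subsets is too lossy for mid-sized hypothetical components when $p$ is only slightly above $1/n$, so one is forced to exploit connectivity (through spanning trees) to tame the combinatorial count, and then to dispatch the very sparse range $np=O(\log n)$ --- where even the spanning-tree bound has no room to spare --- separately, via the isolated-vertex observation. The remainder is routine Chernoff estimation.
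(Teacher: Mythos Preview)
Your proof is correct, but it takes a more elaborate route than the paper's. The paper runs a direct union bound over all $w$-subsets $W$ (no connectivity, no case split on $np$): if $W$ were a small component of a valid $H$, then $\chi:=|G[W]|$ would exceed $K\mu$ with $\mu=\binom{w}{2}p$ and $K=(1-\eps)n/(2w)>1+\Omega(1)$. For bounded $K$ the ordinary Chernoff bound gives $\gamma_w:=\pr(\chi>K\mu)<\exp[-\Omega(\mu)]$; the point you may have missed is that for large $K$ the multiplicative form (Theorem~\ref{thm:Chernoff'}) gives $\gamma_w<\exp[-K\mu\log(K/e)]$, and the extra factor $\log(K/e)\asymp\log(n/w)$ is precisely what makes the naive $\binom{n}{w}$ count affordable when $w$ is small and $np$ is merely $\omega(1)$. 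So the spanning-tree enumeration and the isolated-vertex case split you introduced are unnecessary here: the ``genuine difficulty'' you flagged is dissolved in one line by appealing to the sharper large-deviation bound already recorded in the paper. Your approach does have the minor virtue of using only the quadratic Chernoff form, at the cost of the added machinery.
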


\begin{proof}
For a given $W\sub V$ of size $w< (1-2\eps)n/2$, let $\chi =|G[W]|$.
Then $\mu:=\mathbb{E}\chi = \binom{w}{2}p< w^2p/2$, while if $W$ is a component of an $H$
satisfying \eqref{deltaF} then
\[
\chi \geq |H[W]| > w(1-\eps)np/4 > \tfrac{(1-\eps)n}{2w}\mu =:K\mu.
\]
But (since $K>(1-\eps)/(1-2\eps)=1+\gO(1)$)
Theorems~\ref{thm:Chernoff} and \ref{thm:Chernoff'} give
\[
\gc_w:=\pr(\chi >K\mu) <\left\{\begin{array}{ll}
\exp[-\gO(\mu)]&\mbox{if $K<e^2$ (say),}\\
\exp[-K\mu\log (K/e)]&\mbox{otherwise.}
\end{array}\right.
\]
Thus, with sums over $w\in (0,(1-2\eps)n/2)$,
the probability that some $H$ as in the lemma admits a component of size less
than $(1-2\eps)n/2$ is less than
\[
\sum\Cc{n}{w} \gc_w < \sum\exp[w\log(en/w)]\gc_w,
\]
which for $p\gg 1/n$ is easily seen to be $o(1)$.
\end{proof}

Finally, the following observation will be helpful in the second part of 
Section~\ref{SEC:LemmaThresh}. 
\begin{claim}\label{evencyc}
For any $\eps > 0$,

\nin
(a) if $p> (1+\eps)\log n/n$, then w.h.p.\ every edge of $G$ lies in an even cycle;
    
\nin
(b) if $p > (1 + \eps)/n$, then w.h.p.\ $G$ contains an even cycle of length $\gO(n)$;
    
\nin    
(c) if $p > (1 - o(1))/n$ then w.h.p.\ $G$ contains an even cycle of length $\go(1)$.

\end{claim}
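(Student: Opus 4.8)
The plan is to prove all three parts via the same basic mechanism: an even cycle through a prescribed edge (or in a prescribed region) is produced by finding \emph{two} internally disjoint paths of the same parity between two vertices. I would organize the argument around a standard DFS/BFS exploration of $G = G_{n,p}$, tracking distances modulo $2$.

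\emph{Part (a).} Fix an edge $xy \in G$. I would run a breadth-first search from $x$ in $G - xy$, building the BFS tree level by level. For $p > (1+\eps)\log n/n$, w.h.p.\ $G$ is connected (indeed every vertex has degree $\sim np \gg 1$), so $y$ is reached, and moreover w.h.p.\ $G$ is not bipartite and in fact has no bipartition respecting any BFS layering — concretely, w.h.p.\ every vertex lies on a (short) odd closed walk, since the expected number of triangles through a given vertex is $\gO((np)^2) \to \infty$ and a second-moment/Janson argument (Theorem~\ref{TJanson}) gives one w.h.p., uniformly over vertices by a union bound over $n$ vertices. Given such an odd closed walk $W$ at some vertex $z$ on an $x$--$y$ BFS path $P$, the closed walk $P \cup xy$ has odd length $\mathrm{dist}(x,y)+1$ or even length depending on parity; if it is already even we are done, and if it is odd we splice in $W$ at $z$ to flip the parity, then extract an actual even cycle from the resulting even closed walk (a standard fact: an even closed walk contains an even cycle unless it is a "tree-like" walk, which is ruled out because $xy$ is traversed exactly once). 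To make this clean I would instead phrase it as: w.h.p.\ for every edge $xy$ there exist $x$--$y$ paths of both parities in $G - xy$ (found greedily using that BFS reaches a positive fraction of $V$ within $O(1)$ levels and that short odd walks are available everywhere), and two such paths of the \emph{same} parity as each other, together with $xy$, but disjointness must be arranged — see below.

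\emph{Parts (b) and (c).} For $p > (1+\eps)/n$, the classical result (e.g.\ via DFS, or \cite[Ch.~2]{JLR}) gives w.h.p.\ a path of length $\gO(n)$, indeed a cycle of length $\gO(n)$; to force it to be \emph{even}, run DFS and track the parity of the stack depth, observing that the longest path $P = v_0 v_1 \cdots v_\ell$ with $\ell = \gO(n)$ has, w.h.p., many back-edges from $v_\ell$ (or from late vertices) into $P$, and among the resulting cycles through such chords two consecutive back-edges give cycles whose lengths differ by $1$, so one of them is even and still of length $\gO(n)$. For part (c), at $p = (1-o(1))/n$ we only have a giant-ish structure or at least unbounded tree components / a near-critical component containing cycles of unbounded length w.h.p.; here I would use that w.h.p.\ $G$ contains a cycle of length $\go(1)$ (the number of cycles of length $k$ has expectation $\sim (np)^k/(2k)$, which for $np = 1 - o(1)$ is still $\go(1)$ when summed over $k$ up to some slowly growing function, so Borel–Cantelli / second moment yields such a cycle) and then, as in (b), a chord argument upgrades it to an \emph{even} cycle of length $\go(1)$ — if the long cycle $C$ has a chord we get two shorter cycles of lengths summing to $|C|+2$, so one is even; iterating, we either reach an even cycle of length $\go(1)$ or find that one of the two pieces is still long and chordless, and a chordless long cycle in $G_{n,p}$ at this density is itself w.h.p.\ even or can be made so by noting w.h.p.\ there \emph{is} a chord (expected number of chords on a length-$k$ cycle is $\binom k2 p - k \asymp k^2/n$, which $\to\infty$ if we only demand $k = \go(\sqrt n)$; for shorter target lengths a direct count of even $k$-cycles suffices).

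\emph{Main obstacle.} The delicate point throughout is \textbf{disjointness}: turning two equal-parity walks/paths into an honest even \emph{cycle} requires that (after cancellation) they share only endpoints, and the "splice in a short odd walk" trick can introduce repeated vertices. I expect the cleanest fix is to avoid walks entirely and argue directly with cycles via chords — i.e.\ first secure a single long (or long-enough) cycle by the standard DFS argument, then use that such a cycle w.h.p.\ has a chord, and that two cycles arising from a chord have lengths differing by exactly $2$ (hence opposite... no: summing to length $+2$, hence one even) — so I would push as much as possible through the elementary "a cycle with a chord yields an even cycle, of comparable length" lemma, and only use the odd-closed-walk / non-bipartiteness input for part (a), where I must hit a \emph{specified} edge. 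For part (a) the disjointness there is handled by the standard "two independent BFS explorations" coupling: explore from $x$ and from $y$ simultaneously, the two search trees meet w.h.p.\ in $O(1/\log(np))$ levels at $\gO(1)$ many vertices, and among the resulting $x$--$y$ paths (all internally disjoint from each other if we take them through distinct meeting vertices) w.h.p.\ two have the same parity, giving with $xy$ an even cycle — this is the step I would write out most carefully.
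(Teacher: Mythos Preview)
The paper's proof is entirely by citation: (a) follows from Hamilton connectedness of $G_{n,p}$ (Bollob\'as--Fenner--Frieze), since a Hamilton $x$--$y$ path in $G$ (if $n$ is even) or in $G-v$ (if $n$ is odd) together with $xy$ is an even cycle; (b) is quoted from Alon--Krivelevich--Lubetzky; and (c) is the Poisson approximation for cycle counts in \cite{JLR}, which directly gives that w.h.p.\ there is an even cycle of any fixed length up to some $\go(1)$. Your eventual fallback in (c) to a direct count of even $k$-cycles is essentially this last argument, so that part is fine.

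Your approach to (a), however, has a real gap at exactly the point you flag. In the ``two BFS trees meet at many vertices'' step, the $x$--$y$ paths through distinct meeting vertices are \emph{not} internally disjoint: the $T_x$-segments of two such paths share a prefix in $T_x$ (and similarly on the $T_y$ side), so the union of two such paths is generally not a cycle at all. There is also a confusion about what is needed: an even cycle through $xy$ is the same as a single $x$--$y$ path of \emph{odd} length in $G-xy$; your ``two paths of the same parity, together with $xy$'' does not produce such a thing (two same-parity internally disjoint $x$--$y$ paths give an even cycle that \emph{misses} $xy$). The Hamilton-connectedness route sidesteps all of this: it hands you an $x$--$y$ path of length $n-1$ or $n-2$, one of which is odd, and you are done. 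If you want a from-scratch argument, the natural target is to show directly that w.h.p.\ every pair $x,y$ is joined by paths of both parities in $G-xy$; this is true at $p>(1+\eps)\log n/n$ but needs a genuinely different mechanism (e.g.\ expansion plus an odd cycle attached via two vertex-disjoint paths to the endpoints), not BFS meeting points.

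For (b) your chord/back-edge idea is in the right spirit, but you still owe the step that the resulting \emph{even} piece has length $\gO(n)$: a single chord on an odd $L$-cycle could give an even cycle of length $2$-ish. One fix is to show that a $\Theta(n)$-cycle in $G_{n,(1+\eps)/n}$ w.h.p.\ has a \emph{balanced} chord (both arcs of length $\gO(n)$), which follows from a first-moment count over chords in the middle third; the paper avoids this by citing \cite{AKL}.
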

\begin{proof}
(a) This follows from \cite[Thm.~1.5]{BFF}, which says that, for $p$ as in (a) 
(or a little smaller),
$G_{n,p}$ is w.h.p.\ \emph{Hamilton connected} 
(i.e.\ contains a Hamilton path between any two vertices), applied to $G$ itself if $n$ is even,
and otherwise to any $G-v$.

\nin
(b) 
This is a minor consequence of \cite[Remark 1.2]{AKL}.

\nin
(c)  This follows from Poisson approximation for subgraph counts, as in 
\cite[Theorem 3.19 and Remark 3.20]{JLR}, which say that for any fixed $\ell$, 
the probability that $G$ contains no even cycle of length between (say) $\ell$ and $\ell^3$
is $O(\ell^{-1/2})$.
\end{proof}

\subsection{Coupling}
\label{subsec:Coupling}

A central role in the proofs of
Lemmas~\ref{lem:p=O(p)'} and \ref{lem:couplingdown}
is played by the usual coupling of $G:=G_{n,p}$ and $G_0:=G_{n,q}$,
where $p$ will always be the value we're really interested in and $q<p$ will depend
on what we're trying to do.
A standard description:

Let $\gl_e$, $e\in E(K_n)$, be chosen uniformly and independently from $[0,1]$ and set
\[
G = \{e:\gl_e<p\}, ~~~ G_0 = \{e:\gl_e<q\}.
\]
In particular $G_0\sub G$.
Probabilities in the proofs of Lemmas~\ref{lem:p=O(p)'} and \ref{lem:couplingdown} refer
to the joint distribution of $G$ and $G_0$.

We will get most of our leverage from two alternate ways of viewing the choice of
the pair $(G,G_0)$:

\begin{itemize}
\item[(A)]  Choose $G$ first; thus we choose $G$ ($=G_{n,p}$) in the usual way and let $G_0$
be the (``($q/p$)-random") subset of $G$ gotten by retaining edges of $G$ with probability
$q/p$, these choices made independently (a.k.a.\ {\em percolation on} $G$).

\item[(B)] Choose $G_0$ first; that is, we choose $G_0$ ($=G_{n,q}$) in the usual way,
define $p'$ by $(1-q)(1-p')=1-p$, and let $G$ be the random superset of $G_0$ gotten by
adding each edge of $\overline{G}_0$ to $G_0$ with probability $p'$, these choices again
made independently.
\end{itemize}

\nin
As in \cite{BK} we refer to these as ``coupling down'' and ``coupling up'' (respectively).

\medskip
The proof of
Lemma~\ref{lem:couplingdown} is based naturally (or inevitably) on the viewpoint in (A);
namely, we show that (with $p,q$ as in the lemma) if $G=G_{n,p}$ is ``bad''
(meaning $G\not\in \T$) then
the coupled $G_0=G_{n,q}$ is likely to be bad as well.
For the proof of
Lemma~\ref{lem:p=O(p)'}, viewpoint (B) is the primary mover, though the role of
(A) is also crucial.

With reference to the setup introduced at \eqref{eq:f},
when working with $G=G_{n,p}$ and $G_0=G_{n,q}$ as above, we set
$F_0=G_0 \cap F$
(a $(q/p)$-random subset of $F$.  (Note this has nothing to do with $F(G_0)$,
which will play no role here.)
Then automatically
\beq{F0eperp}
F_0 \in \cee^\perp_{H}(G_0),
\enq
since $F_0\cap K=F\cap K$ for any copy $K$ of $H$ in $G_0$.

We will want to say that certain features of $(G,F)$ are reflected in
$(G_0,F_0)$.
A simple but crucial point here is that there is no summing (of probabilities) over possible
$F$'s, since there is just one $F$ for each $G$.
The following proposition will be sufficient for our purposes.

\begin{prop}
\label{prop:Frightsize}
With the above setup,
for any $p$, $q$ and $g=g(n)=\go(1)$, w.h.p.
\[
|F_0|\sim |F|q/p ~~\mbox{if} ~~ |F| > gp/q
\]
and
\[
d_{F_0}(v) \left\{\begin{array}{ll}
\sim d_F(v)q/p&\mbox{$\forall v$ with $d_F(v) > (g \log n )p/q$,}\\
<3g\log n &\mbox{$\forall v$ with $d_F(v) \leq (g \log n )p/q$.}
\end{array}\right.
\]
\end{prop}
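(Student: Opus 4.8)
The plan is to prove this conditionally on $F$, i.e.\ we fix any graph $G$ on $V$ with its associated $F=F(G)$ and then use only the randomness of the $(q/p)$-percolation that produces $F_0$ (and $G_0$) from $G$; viewpoint (A) of Section~\ref{subsec:Coupling}. The key observation, as stressed in the text, is that there is \emph{no union bound over $F$'s}: since $G$ determines $F$, once we condition on $G$ we may regard $F$ as fixed and simply ask that the percolated $F_0=G_0\cap F$ behave as expected. So it suffices to show that for \emph{every} fixed subgraph $F$ of $K_n$, the percolated $F_0$ satisfies, with probability $1-o(1)$ (uniformly in the choice of $F$), both the global size statement (when $|F|>gp/q$) and the degree dichotomy. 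Because the bound is uniform over $F$ and $F$ is a function of $G$, this gives the proposition.

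First I would handle $|F_0|$. Here $|F_0|\sim\mathrm{Bin}(|F|,q/p)$ with mean $\mu:=|F|q/p$. If $|F|>gp/q$ then $\mu>g=\go(1)$, so Chernoff (Theorem~\ref{thm:Chernoff}, in the form $\pr(|X-\mu|>\eps\mu)\le 2\exp[-\gO(\eps^2\mu)]$) gives $|F_0|=(1\pm\eps)\mu$ with probability $1-2\exp[-\gO(\eps^2 g)]=1-o(1)$, and the $o(1)$ is an absolute function of $g$ (hence of $n$), independent of $F$. That's the easy half. For degrees, fix $v$. Conditioned on $G$ (so on $d_F(v)$), $d_{F_0}(v)\sim\mathrm{Bin}(d_F(v),q/p)$, with mean $\mu_v:=d_F(v)q/p$. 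For $v$ with $d_F(v)>(g\log n)p/q$ we have $\mu_v>g\log n$, so Chernoff gives $d_{F_0}(v)=(1\pm\eps)\mu_v$ with probability $1-2\exp[-\gO(\eps^2 g\log n)]=1-n^{-\go(1)}$, and a union bound over the $\le n$ such vertices still leaves failure probability $o(1)$. For $v$ with $d_F(v)\le(g\log n)p/q$, the mean is $\mu_v\le g\log n$, and we want $d_{F_0}(v)<3g\log n$; the upper Chernoff tail (\eqref{eq:ChernoffUpper}, or Theorem~\ref{thm:Chernoff'}) gives $\pr(d_{F_0}(v)\ge 3g\log n)\le\exp[-\gO(g\log n)]=n^{-\go(1)}$ (using $d_{F_0}(v)$ is stochastically dominated by $\mathrm{Bin}$ with mean $\le g\log n$, and $3>e\cdot(\text{something})$ type bounds make the tail exponentially small in $g\log n$). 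Again union bound over $\le n$ vertices.

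Combining: the total failure probability, maximized over all choices of the fixed graph $F$, is $o(1)$; since $F=F(G)$ is a deterministic function of $G$ and the percolation is independent of the choice of $F$ given $G$, we conclude that w.h.p.\ $(G_0,F_0)$ has all the claimed properties. The main (and only real) obstacle here is the conceptual one already flagged in the surrounding text: one must be careful that ``w.h.p.\ for the fixed $F$'' transfers to ``w.h.p.\ for the random $F=F(G)$'' without a union bound blowing up the probability --- and this works precisely because $G$ determines $F$, so conditioning on $G$ freezes $F$ while leaving the percolation randomness (which is all we use) intact, and the per-$F$ failure bounds are uniform in $F$. The calculations themselves are entirely routine Chernoff estimates.
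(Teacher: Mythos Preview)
Your proof is correct and is essentially the argument the paper has in mind: the paper does not give a proof here but cites \cite[Prop.~3.15]{BK}, whose proof is exactly the conditional-on-$G$ Chernoff argument you wrote out (condition on $G$, so $F$ is fixed; then $|F_0|$ and each $d_{F_0}(v)$ are binomials with parameter $q/p$, and the stated thresholds are precisely what makes the Chernoff tails summable over $v$). Your emphasis on why no union bound over $F$'s is needed is the point the paper singles out as ``crucial.''
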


\nin
As mentioned following \cite[Prop.~3.15]{BK}, the proof of that result
gives Proposition~\ref{prop:Frightsize}
for any rule that specifies a particular subgraph (in place of $F$)
for each graph; so we will not repeat the proof here.

\section{Copies and extensions}\label{CandE}

As usual (and as above) a \emph{copy} of a graph $K$ in a graph $G$,
here often denoted $\cop{K}$, 
is a subgraph of $G$ isomorphic to $K$.   
Alternatively it is an equivalence class of isomorphisms from $K$ into $G$, 
where isomorphisms $\psi$ and $\vp$ are equivalent if $\vp =\psi\circ \gc$ 
for some $\gc\in \Aut(K)$.

We extend this to allow restrictions on $\vp$; e.g.\
if $s\in V(K)$ and $g\in E(K)$, a \emph{copy}, $(\cop{K};\cop{s}, \cop{g})$, 
of $(K;s,g)$ in $G$ is
an equivalence class of isomorphisms $\vp$ from $K$ into $V$ 
with $\vp(s) = \cop{s}$ and $\vp(g) = \cop{g}$;
equivalence of $\psi$ and $\vp$ now meaning $\vp =\psi\circ \gc$ 
for some $\gc\in \Aut(K;s,g) := \{\phi\in \Aut(K): \phi(s)=s,~\phi(g)=g\}$
(where $\phi(g)=g$ means $\phi$ fixes $g$ as a set).

For sets $W\sub Z$, we take 
$\lp W,Z\rp=\{Y:W\subsetneq Y\subsetneq Z\}$ 
and define $\lp W,Z\rb$ and so on similarly. 
(We use bold to distinguish the brackets here from those
in the next paragraph, though which is meant should always be clear from context.)

For a graph $F$ and $W\subsetneq Z \subseteq V(F)$,
we use $[W,Z]$ (with $F$ understood and suppressed)
for the graph on $Z$ with edge set $E(F[Z])\sm E(F[W])$
(where $F[X]$ is the induced subgraph on $X$).
We also use $[W,F]$ for $[W,V(F)]$.
The \emph{internal} vertices of $[W,Z]$ are those in $Z\sm W$.  
We use $e(W,Z)$ and $v(W,Z) $  for the numbers of edges and internal vertices of $[W,Z]$.

The \emph{density} of $[W,Z]$ is 
\[
d(W,Z) =d_F(W,Z) =e(W,Z)/v(W,Z);
\] 
for example,
for $T\in\lp W,Z\rp$,
\beq{dSTSU}
d(W,T)\geq d(W,Z) \Lra d(T,Z)\leq d(W,Z) 
\enq
(and similarly with the inequalities reversed).
As usual, $[W,Z]$ is \emph{balanced} 
if 
\[
d(W,Y)\leq d(W,Z) \,\,\,\,\forall Y\in\lp W,Z \rp,
\]
and \emph{strictly balanced} if these inequalities are strict. 
Notice that, by \eqref{dSTSU},
\beq{dYZWZ}
d(Y,Z)\ge d(W,Z)  \,\,\forall Y\in \lp W,Z\rp
\enq
if $[W,Z]$ is balanced, and these inequalities are strict
if the balance is.  
For example (and for later use), strict 2-balance of a graph $K$ is equivalent to 
\beq{S2Bequiv}
\mbox{$[xy,K]$ is strictly balanced for all $xy\in K$.}
\enq
When $|Z\sm W|\geq 2$ we use 
\beq{ab}
\mbox{$\mmm(W,Z)= \max\{d(W,Y):Y\in \lp W,Z\rp \} \,\,\,$ and $\,\,\,
\MMM(W,Z)= \min\{d(Y,Z):Y\in \lp W,Z\rp\}.$}
\enq

For $F,W,Z$ as above with $(v_1\dots v_s)$ an ordering of $W$;
$G$ a graph; and $X=(x_1\dots x_s)$ a sequence of distinct vertices of $G$,
a $[W,Z]$-\emph{extension} (with $F$ again understood) \emph{on} $X$
(\emph{in} $G$, but $G$ will usually be understood)
is an isomorphism $\psi$ of $[W,Z]$ into $G$ extending the map 
$v_i\mapsto x_i$, $i\in [s]$.
Alternately, with $Z\sm W=\{v_{s+1}\dots v_{s+t}\}$, one may think of an extension as
a sequence $(x_{s+1}\dots x_{s+t})$ of vertices of $G$, distinct from each other and 
from $x_1\dots x_s$, such that  $v_iv_j\in [W,Z] \Ra x_ix_j\in G$.
The ordering of $W$ will be relevant just once (see \eqref{deltabd}) and
elsewhere will be left implicit (and, of course, fixed within any particular discussion).

Extensions $\psi$ and $\vp$ of $X$ are \emph{equivalent} 
if $\vp =\psi\circ \gc$ for some $\gc\in \Aut[W,Z]$,
the set of automorphisms of $[W,Z]$ that fix $W$ pointwise.
A \emph{copy of $[W,Z]$ on $X$} is an equivalence class of extensions
(so an ``unlabeled'' extension, but note we continue to label $X$).

We use 
$\tau_{[W,Z]}(G, X)$ for the number of 
copies of $[W,Z]$ on $X$ in $G$, 
and $\gs_{[W,Z]}(G,X)$ for the maximum size of a family of 
internally disjoint copies.
We will usually use $\tau$ for $\tau_{[W,Z]}$ if $W$ and $Z$ are clear and 
$\tau^p(X)$ for $\tau(G_{n,p},X)$ (there will never be any doubt about $n$), and similarly for $\gs$.
As earlier, we also use  $\tau_{[W,F]}$ if $Z=V(F)$.
Use of $X$ in the argument of any $\tau$ or $\gs$ promises that $|X|=|W|$.

We  use $\tdt$ for the labeled counterpart of $\tau$, which 
counts \emph{extensions} rather than \emph{copies}.  
(We won't need $\tds$.)
There is, of course, an easy correspondence, namely
\[
\tau_{[W,Z]} =  \tdt_{[W,Z]}/\aut[W,Z]
\]
(where, again, $\aut(\cdot)=|\Aut(\cdot)|$).
We will often be interested in
\[
\tdm_p(W,Z) := n^{v(W,Z)}p^{e(W,Z)}
=(1+O(1/n))\E[\tdt^p_{[W,Z]}(X)]
\]
and its unlabeled counterpart
\[
\mu_p(W,Z) := n^{v(W,Z)}p^{e(W,Z)}/\aut[W,Z]
=(1+O(1/n))\E[\tau^p_{[W,Z]}(X)],
\]
and note for future use that 
\beq{mutilde}
\tdm_p(W,Z)\geq \E[\tdt^p_{[W,Z]}(X)]
\enq
(and similarly for $\mu_p$).
These clean versions of $\E$
will be convenient and the difference will never matter.

\mn

Recall (see Section~\ref{SEC:Overview}) that
we have fixed the strictly 2-balanced $H\neq P_2,K_3$,
and $p^*$ is as in \eqref{eq:p^*intro}.
A first easy observation (routine verification omitted) is:
\beq{Otriv}
\mbox{$H$ is connected with no vertices of degree 1, and $d_2(H)>1$.}
\enq
Since $p^* =\td{\Theta}( n^{-1/d_2(H)})$
it follows that
\beq{q}
n^{-1+\ccc}<p^*< n^{-\ccc}.
\enq
\nin
(Recall we use $\ccc$ for an unspecified positive constant.)

\begin{prop}\label{LS2Bd}
For any $x\in V(H)$, $[x, H]$ is strictly balanced with 
$d(x, H)< d_2(H)$.
\end{prop}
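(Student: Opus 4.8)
The plan is to derive Proposition~\ref{LS2Bd} directly from the strict $2$-balance of $H$, using the equivalence \eqref{S2Bequiv} together with the density manipulations \eqref{dSTSU}--\eqref{dYZWZ}. Fix $x\in V(H)$. Since $H$ has no vertices of degree $1$ (by \eqref{Otriv}), $x$ has a neighbor $y$ in $H$, so $xy\in E(H)$. By \eqref{S2Bequiv}, $[xy,H]$ is strictly balanced; I will use this as the main input and ``remove'' the vertex $y$ to get information about $[x,H]$.

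First I would record the relationship between the two extensions. We have $v(xy,H)=v_H-2$ and $v(x,H)=v_H-1$, while $e(x,H)=e(xy,H)+d_{H-x}(y)$, where $d_{H-x}(y)$ is the number of neighbors of $y$ in $H$ other than $x$ (equivalently $e([x,\{x,y\}]\text{-part})$; note $e(x,H)=e_H - d_H(x)$ and $e(xy,H)=e_H-d_H(x)-d_{H-x}(y)$). Observe also that $\{x\}\subsetneq\{x,y\}\subsetneq V(H)$, so $\{x,y\}\in\lp\{x\},V(H)\rp$ in the notation of the paper, and hence $[x,H]$ being (strictly) balanced would in particular compare $d(x,\{x,y\})$ with $d(x,H)$. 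The key step is to go the other way: from strict balance of $[xy,H]$ I want strict balance of $[x,H]$ and the bound $d(x,H)<d_2(H)$.

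For balance of $[x,H]$: take any $Y\in\lp\{x\},V(H)\rp$, so $\{x\}\subsetneq Y\subsetneq V(H)$; I must show $d(x,Y)< d(x,H)$ (strict). Split into the case $y\in Y$ and $y\notin Y$. If $y\in Y$, then $\{x,y\}\subseteq Y$, and I compare the extensions $[x,Y]$, $[xy,Y]$, $[xy,H]$, $[x,H]$: strict balance of $[xy,H]$ gives $d(xy,Y)<d(xy,H)$ (when $Y\neq\{x,y\}$; the case $Y=\{x,y\}$ is handled by a direct degree computation using $d_H(y)\ge 2$ and $d_2(H)>1$ from \eqref{Otriv}), and one adds back the contribution of $y$'s edges, checking that the resulting ``averaged'' density still strictly decreases — this is a short mediant-of-fractions computation, using that $d_{H-x}(y)/1 \le \deg$-type bounds stay below $d_2(H)$. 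If $y\notin Y$, then $Y\subseteq V(H)\sm\{y\}=:V(H-y)$, and $[x,Y]$ is an extension inside $H-y$; here I compare with $d(x,H-y)$ and then with $d(x,H)$, again using strict balance of $[xy,H]$ to control the step $H-y\to H$. In all cases the inequality is strict because \eqref{S2Bequiv} gives strict inequalities and the arithmetic of combining fractions with distinct values preserves strictness. Finally, for $d(x,H)<d_2(H)$: since $xy\in E(H)$ and $[xy,H]$ is strictly balanced, \eqref{dYZWZ} (with $W=\{x,y\}$, $Z=V(H)$, and an appropriate intermediate $Y$ containing $y$ but not all of $V(H)$, if $v_H\ge 4$; the case $v_H=3$ is immediate) shows the ``one fewer vertex'' density $d(x,H)$ is a strict mixture of $d(xy,H)=d_2(H)$-related quantities with strictly smaller ones, forcing $d(x,H)<d_2(H)$; concretely $d(x,H)=\frac{e(xy,H)+d_{H-x}(y)}{v_H-1}$ while $d_2(H)=\frac{e_H-1}{v_H-2}=\frac{e(xy,H)+d_H(x)+d_{H-x}(y)-1}{v_H-2}$, and one checks the first is strictly less than the second using $d_H(x)\ge 2$.

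The main obstacle is the bookkeeping in the ``add back $y$'' step: one is comparing fractions $\frac{e(xy,Y)}{v(xy,Y)}$ and $\frac{e(xy,Y)+\alpha}{v(xy,Y)+1}$ (where $\alpha$ counts $y$'s edges into $Y$) against the corresponding pair with $H$ in place of $Y$, and must verify that strict inequality of the ``before'' fractions plus the right bounds on the increments $\alpha$ yield strict inequality of the ``after'' fractions. This is elementary but needs care about which increments can be $0$ (e.g.\ $y$ might have no neighbor in $Y$), and about the boundary cases $v_H=3$, $Y=\{x,y\}$. I expect the cleanest route is to phrase everything in terms of the inequalities \eqref{dSTSU} and \eqref{dYZWZ} applied to the chain $\{x,y\}\subsetneq Y$ (or $\{x,y\}\subsetneq Y\cup\{y\}$) $\subsetneq V(H)$, so that the strictness is inherited directly from \eqref{S2Bequiv} rather than re-proved by hand, with the fraction arithmetic confined to the single comparison $d(x,H)<d_2(H)$ and its local analogues.
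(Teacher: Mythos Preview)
There is a foundational error: you have misread the definition of $[W,Z]$. The edge set is $E(H[Z])\setminus E(H[W])$, so $[\{x\},V(H)]$ removes only edges \emph{inside} $\{x\}$ (of which there are none), giving $e(x,H)=e_H$, not $e_H-d_H(x)$; likewise $e(xy,H)=e_H-1$, not $e_H-d_H(x)-d_{H-x}(y)$. You appear to be removing all edges \emph{incident to} $W$. This breaks your verification of $d(x,H)<d_2(H)$: what you actually check is $(e_H-d_H(x))/(v_H-1)<(e_H-1)/(v_H-2)$, which is strictly weaker than the required $e_H/(v_H-1)<(e_H-1)/(v_H-2)$ and does not imply it.

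Even after correcting the formulas, your strategy leaves the case $y\notin Y$ genuinely unhandled. Using \eqref{S2Bequiv} with a fixed edge $xy$ only directly controls sets $Y\supseteq\{x,y\}$; for $Y\not\ni y$ your suggestion of routing through $H-y$ would need something like strict balance of $[x,H-y]$, which you do not have. (And once the definitions are fixed, the ``mediant'' step in the case $y\in Y$ reduces to exactly the implication $\tfrac{e_F-1}{v_F-2}<\tfrac{e_H-1}{v_H-2}\Rightarrow\tfrac{e_F}{v_F-1}<\tfrac{e_H}{v_H-1}$, which is what must be proved anyway.) The paper avoids the whole detour by working directly from the \emph{definition} of strict $2$-balance rather than from \eqref{S2Bequiv}: for $v_F\ge 3$ it derives the needed implication via the auxiliary inequality $e_H-v_H\ge e_F-v_F$ (a short consequence of $d_2(F)<d_2(H)$ together with $d_2(H)>1$), and treats $v_F=2$ separately using $e_H-v_H+1>0$. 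No choice of $y$, no case split.
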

\begin{proof}
Notice that $d_2(H)>1$ (see \eqref{Otriv}) may be rewritten
\beq{ev}
e_H - v_H + 1 > 0.
\enq
This implies the proposition's second assertion (that is, $d(x,H) < d_2(H)$), since it gives
\[
\frac{e_H-1}{v_H-2} - \frac{e_H}{v_H - 1} = \frac{e_H - v_H + 1}{(v_H-1)(v_H - 2)} >0.
\]

We turn to the strict balance of $[x,V(H)]$, for which we prove the equivalent assertion that
\beq{strict}
\frac{e_H}{v_H - 1} - \frac{e_F}{v_F - 1} = \frac{e_H v_F - e_F v_H - e_H + e_F}{(v_H - 1)(v_F-1)} > 0
\enq
for each $F \subsetneq H$ with $\{x\}\subsetneq V(F)$.
If $v_F = 2$ (so $e_F \leq 1$), then \eqref{strict} follows from \eqref{ev}, since
\[
e_H v_F - e_F v_H - e_H + e_F \geq e_H - v_H + 1.
\]
For $v_F > 2$, we show
\beq{side}
e_H - v_H - e_F +v_F \geq 0,
\enq
which implies \eqref{strict} via
\[
0 < \frac{e_H -1}{v_H - 2} - \frac{e_F -1}{v_F - 2} 
= \frac{(e_H v_F - e_F v_H - e_H + e_F) - (e_H - v_H - e_F +v_F)}{(v_H - 2)(v_F-2)}.
\]
To see that \eqref{side} holds, we just observe that its failure implies the contradiction
\[
\frac{e_H -1}{v_H - 2} = \frac{e_H -v_H + 1}{v_H - 2} + 1
< \frac{e_F -v_F + 1}{v_F - 2} + 1
 = \frac{e_F -1}{v_F - 2}.
 \]
    
\end{proof}

For any graph $K$ we use
\[
\mbox{$\gL_K^p = n^{v_K -2}p^{e_K-1}~$ and $~ \Psi_K^p=n^2p\gL_K = n^{v_K}p^{e_K}$.}
\]
Thus 
$\gL_K^p$ is on the order of the expected number of 
copies of $K$ on a given edge of $G_{n,p}$, and similarly 
for $\Psi_K^p$ relative to the total number of copies.

We will use $\gL^p$ (unsubscripted)
for
the expected number of copies of $H$ on an edge of $G_{n,p}$; so
as observed in \eqref{gLexact} (with $\vr=\vr(H)$ as in \eqref{gaH}),
\[
\gL^p=\vr\cdot(n-2)_{\vH-2}~p^{\eH-1} = (1-O(1/n))\vr \gL_H^p.
\]
In particular we abbreviate 
\beq{gLup*}
\gL^{p^*} =\gL^* ~ \sim ~ (2-1/m_2(H))\log n ~\sim ~\log[\Cc{n}{2}p^*]
\enq
(see the definition of $p^*$ in \eqref{eq:p^*intro}).

If $p \asymp p^*$, then 
\beq{gLH}
\gL_H^p\asymp  \gL^p\asymp \log n,
\enq
and strict $2$-balance of $H$ implies that for any $K\subsetneq H$ with $e_K\geq 2$,
\beq{gLKbig}
\gL_K^p > n^{\ccc}
\enq
(since
$
\gL_K^p \asymp
n^{\left[\frac{v_K-2}{e_K-1} - \frac{v_H-2}{e_H-1} \right] (e_K-1)}$).
The following related observation will also be needed in a few places.

\begin{prop}\label{balcons}
There are $\vt, \eta >0$ such that for any $K \subsetneq H$ with $v_K \geq 3$, 
\beq{nzeta}
\gL_H^p/\gL^p_K  
\,\,\,(=n^{v_H - v_K}p^{e_H - e_K} ) \,\,< 
\begin{cases}
n^{-\eta} & \text{if } p < n^\vt p^*,\\
\gL_H^p n^{-\eta} p^*/p & \mbox{for any $p\geq p^*$.}
\end{cases}
\enq
\end{prop}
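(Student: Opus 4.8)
The plan is to reduce both inequalities to two facts about the quantities $\gL_K^{p^*}$ and $\gL_H^{p^*}$ that are already at hand. First, for every $K\subsetneq H$ with $v_K\ge 3$ one has $\gL_K^{p^*}>n^{\ccc}$: when $e_K\ge 2$ this is exactly \eqref{gLKbig} (applied at $p=p^*$), and when $e_K\le 1$ it is immediate, since $\gL_K^{p^*}=n^{v_K-2}(p^*)^{e_K-1}\ge n^{v_K-2}\ge n$ (using $p^*<1$ and $v_K\ge3$). Second, $\gL_H^{p^*}\asymp\log n$ by \eqref{gLH}. As $H$ is fixed there are only finitely many such $K$, so I would fix one constant $\ccc>0$ (shrinking it if need be so that also $p^*>n^{-1+\ccc}$, as in \eqref{q}) that serves for all of them, and at the very end take $\vt=\ccc/(4e_H)$ and $\eta=\ccc/4$.

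For the first bound, I would begin by noting that $K\subsetneq H$ together with the connectedness of $H$ (see \eqref{Otriv}) forces $e_K<e_H$: if $E(K)=E(H)$, then $H$ having no isolated vertex gives $V(K)\supseteq V(H)$ and hence $K=H$. Thus $e_H-e_K\ge1$, so $\gL_H^p/\gL_K^p=n^{v_H-v_K}p^{e_H-e_K}$ is increasing in $p$, and for $p<n^{\vt}p^*$ it is therefore at most its value at $p=n^{\vt}p^*$, namely $n^{\vt(e_H-e_K)}\cdot\gL_H^{p^*}/\gL_K^{p^*}<n^{\vt e_H}\cdot O(\log n)\cdot n^{-\ccc}<n^{-\eta}$ for $n$ large. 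For the second bound I would first rewrite the target: since $\gL_H^p=n^{v_H-2}p^{e_H-1}$, the inequality $n^{v_H-v_K}p^{e_H-e_K}<\gL_H^p\,n^{-\eta}\,p^*/p$ is equivalent to $\gL_K^p\cdot(p^*/p)>n^{\eta}$, i.e.\ to $n^{v_K-2}p^{e_K-2}p^*>n^{\eta}$. If $e_K\ge2$ the exponent $e_K-2$ is nonnegative, so for $p\ge p^*$ this quantity is at least its value at $p=p^*$, which is $\gL_K^{p^*}>n^{\ccc}$; and if $e_K\le1$, then $p^{e_K-2}\ge1$ for $p\le1$, so $n^{v_K-2}p^{e_K-2}p^*\ge n^{v_K-2}p^*>n^{v_K-2}\cdot n^{-1+\ccc}\ge n^{\ccc}$ (using $v_K\ge3$). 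Either way the quantity exceeds $n^{\ccc}>n^{\eta}$, which completes the argument.

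I do not anticipate any genuine analytic obstacle here; the only thing needing care is the bookkeeping --- choosing a single $\ccc$, hence a single pair $(\vt,\eta)$, that works simultaneously for all the finitely many subgraphs $K$ and across the two ranges of $p$ (which overlap on $[p^*,n^{\vt}p^*)$) --- together with the small structural point that, once the sign of the relevant exponent of $p$ has been pinned down, every expression in sight is monotone in $p$, so it is enough to check the estimates at $p=p^*$ (for the second bound, after also invoking $p\le1$) and at $p=n^{\vt}p^*$ (for the first).
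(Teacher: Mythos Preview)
Your proof is correct and follows essentially the same approach as the paper's: both arguments use monotonicity of $\gL_H^p/\gL_K^p$ in $p$ together with $\gL_K^{p^*}>n^{\ccc}$ for the first bound, and both split the second bound into the cases $e_K\ge 2$ and $e_K\le 1$, reducing in the former case to the value at $p=p^*$. Your version is slightly more explicit (you prove $e_K<e_H$ rather than leaving it implicit, and you give concrete choices $\vt=\ccc/(4e_H)$, $\eta=\ccc/4$), but the substance is the same.
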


\begin{proof}
We may assume $p \geq p^*$ since the l.h.s.\ of \eqref{nzeta} is increasing
in $p$. Then for the first bound, using \eqref{gLH} and taking $\ccc$ as in \eqref{gLKbig}, 
we find that $\gL_H^p/\gL^p_K$ is at most
\[
\td{O}((p/p^*)^{e_H-1}) n^{-\ccc}=\td{O} (n^{\vt(e_H-1)-\ccc}),
\]
so the bound holds if we choose $\vt,\eta$ with $\eta < \ccc - \vt(e_H-1)$.

For the second bound in \eqref{nzeta}
we take $\eta$ less than the minimum of the $\ccc$'s in \eqref{q} and \eqref{gLKbig}. If
$e_K \geq 2$ then 
\[
\gL_H^p/\gL^p_K
= (\gL_H^p/\gL^*_K ) \cdot (p^*/p)^{e_K - 1} \leq ( \gL_H^p/\gL^*_K )\cdot  p^*/p
\leq \gL_H^p n^{-\eta}p^*/p
\]
(where the last inequality uses \eqref{gLKbig}); while if $e_K \leq 1$ then, recalling that $v_K \geq 3$ 
and using \eqref{q}, we have
$\gL_K^p \geq n^{3-2}p^{1-1}=n > n^{\eta}p/p^*$, which is (equivalent to) the desired bound.
\end{proof}

We will be particularly interested in \emph{bridges}.
An $xy$-\emph{bridge}---or a \emph{bridge on $xy$}---in a graph $G$ is 
$K-xy$, with $K$ a copy of $H$ in $G\cup\{xy\}$ containing $xy$ 
(note the order of $x$ and $y$ doesn't matter),
and a \emph{bridge} is an $xy$-bridge for some $x,y$.

For an $xy$-bridge $K$, we use $\inte{V}(K)$
for the set of {\em internal} vertices of $K$, meaning, as before, those not equal to $x,y$.
We use 
$\bft_G(xy)$ for the number of $xy$-bridges in $G$ and
$\bfs_G(xy)$ for
the maximum size of a set of 
internally disjoint $xy$-bridges.  
The notation (bold rather than new letters) is chosen to gesture to the relation
to $\tau$ and $\gs$; e.g.
$
\bft_G(xy) =
\sum_{(a,b)}\tau_{[(a,b), F]}(xy),
$
where $(a,b)$ ranges over isomorphism types of ordered edges of $H$
(cf.\ $\vr$ in \eqref{gaH}).
We again use $\bft^p$ and $\bfs^p$ in place of $\bft_G$ and $\bfs_G$ when
$G=G_{n,p}$,
and observe that 
\beq{Etau}
\E \bft^p(xy) =\gL^p.
\enq

For $F=F(G)$ as in \eqref{eq:f},
the following simple observation will be useful in a couple places.
\beq{xyF}
xy\in F ~~ \Longrightarrow ~~ |F|\geq \bfs_G(xy)+1.
\enq
\begin{proof}
Since $F $ lies in $\cee^\perp_H(G)$, it must contain a second edge of each copy of $H$ on $xy$, 
and there is a set of $\bfs(xy)$ such copies that
share no edges other than $xy$.
\end{proof}

Most results in the rest of this section are in the same ballpark as those of Spencer~\cite{Spencer},
and the structure of their proofs is similar to his, though the details are more like
those in \cite{BK}. Though these arguments take some time, they are relatively
routine, and a reader might want to take a quick look at statements and 
move on to the more interesting material that follows.

For the rest of this discussion, where not otherwise specified, 
$F,W,Z$ are general, and we use $\tau$ and $\gs$ for $\tau_{[W,Z]}$ and $\gs_{[W,Z]}$.
(See Lemma~\ref{disext} for a first instance.)
For the rest of the paper, $G$ means $G_{n,p}$, and from now until the end of 
Section~\ref{SEC:LemmaThresh}
we drop 
the superscript $p$ from
$\tau, \gs, \bft, \bfs$ (so e.g.\ $\tau =\tau^p =\tau_G$),
\emph{except} in the proof of Lemma~\ref{PropSB},
which involves the interplay of $\tau^p$ and $\tau^q$.

The next two lemmas, echoing a key observation 
from \cite{Spencer}, will often allow us to use $\gs$ as a surrogate for $\tau$.
For use in the proofs of Lemma~\ref{311} and Corollary~\ref{cor:pip}, we define
$\gG_{xy}$ to be the graph with vertices the $xy$-bridges of $G$ and 
two bridges adjacent if they are not internally disjoint.

\begin{lemma}\label{disext}
If $[W,Z]$ is strictly balanced 
and $p< n^{-1/d(W,Z) +\vs}$ for a small enough fixed $\vs>0$ (depending on $[W,Z]$), then 
\beq{PtauG}
\pr(\tau(X) - \gs(X) > C) < n^{-f(C)},
\enq
where $f(C)\ra \infty$ as $C\ra \infty$. 
\end{lemma}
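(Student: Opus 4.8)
The plan is to show that if $\tau(X)$, the number of copies of the balanced extension $[W,Z]$ on $X$, is much larger than $\gs(X)$, the maximum number of internally disjoint such copies, then there must be a ``cluster'' of copies all hitting a common small set of internal vertices, and such a cluster is too dense to appear with the required frequency when $p$ is below the stated threshold. Concretely, fix a maximal family $\cF$ of internally disjoint copies of $[W,Z]$ on $X$, so $|\cF|=\gs(X)$ and, by maximality, every copy of $[W,Z]$ on $X$ shares an internal vertex with some member of $\cF$. Thus if $\tau(X)-\gs(X)>C$, then at least $C$ copies not in $\cF$ each meet $\bigcup_{\psi\in\cF}\inte{V}(\psi)$, so by pigeonhole some internal vertex $w$ (lying in at most $v(W,Z)\cdot\gs(X)$ many such copies... but we want this independent of $\gs(X)$) — more carefully, the standard move is: we do \emph{not} bound via $\gs(X)$ but instead build a ``witness'' structure of bounded size directly.

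So here is the cleaner approach I would actually carry out. For a parameter $r$ (to be taken large as a function of $C$), define a \emph{$r$-fan} to be a union of $r$ copies of $[W,Z]$ on $X$ that pairwise intersect only in $W\cup\{w\}$ for a single common internal vertex $w$; more generally one shows that $\tau(X)-\gs(X)$ large forces the existence of a small subgraph $M\supseteq [W,X]$ (meaning: on at most a bounded number $m=m(C)$ of internal vertices beyond $X$) which is the union of $r(C)$ copies of $[W,Z]$ no two of which are internally disjoint, and which therefore satisfies $v(W,M) \le m$ while $e(W,M)$ is forced up: strict balance of $[W,Z]$ says every proper ``sub-extension'' is strictly sparser, hence gluing two non-internally-disjoint copies of $[W,Z]$ along a nonempty set of shared internal vertices produces an extension $[W,Z']$ with $d(W,Z') > d(W,Z)$, and iterating, the union $M$ of a long chain of overlapping copies has $e(W,M)/v(W,M) \ge d(W,Z) + \gc$ for some $\gc=\gc(C)>0$ growing with $C$. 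The point is that $M$ is an extension of $X$ strictly denser than $[W,Z]$, and the expected number of copies of $[W,M]$ on a fixed $X$ in $G_{n,p}$ is $\tdm_p(W,M) = n^{v(W,M)}p^{e(W,M)} \le n^{v(W,M)}(n^{-1/d(W,Z)+\vs})^{e(W,M)} = n^{v(W,M)(1 - d(W,M)/d(W,Z)) + \vs e(W,M)} \le n^{-\gc(C) v(W,M)/d(W,Z) + \vs e(W,M)}$, which, since $v(W,M)\ge 1$ and $e(W,M)$ is bounded in terms of $C$, is $n^{-f(C)}$ with $f(C)\to\infty$ once $\vs$ is small enough relative to $d(W,Z)$ and $m(C)$.

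The remaining steps are bookkeeping. First I would make precise the combinatorial lemma that $\tau(X)-\gs(X) > C$ forces an overlapping chain of $r$ copies with $r\to\infty$ as $C\to\infty$: take $\cF$ maximal internally disjoint; each of the $>C$ extra copies meets $\inte{V}(\cF)$; if some single member $\psi_0\in\cF$ is met by more than $C/|\cF|$ of them we get many copies sharing an internal vertex of $\psi_0$, but $|\cF|$ could be large, so instead one argues by a cleaning/deletion process—this is exactly the argument in \cite{Spencer} and its analogue in \cite{BK}, so I would cite that and adapt—to extract a single vertex $w$ and $r(C)$ copies through $w$, pairwise sharing only $W\cup\{w\}$ internally (discarding copies that overlap too much with already-chosen ones, which only loses a bounded factor). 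Then I would verify the density gain: strict balance of $[W,Z]$ via \eqref{dYZWZ} gives that for any copy $\psi$ and any proper nonempty subset $S$ of its internal vertices, the partial extension on $W\cup S$ has density $> d(W,Z)$; summing edge counts over the $r$ copies through $w$ while counting the vertex $w$ only once (and disjoint internal vertices once each) yields $e(W,M) \ge r(d(W,Z) v(W,Z) - d(W,Z) + 1) $ roughly, against $v(W,M) \le r(v(W,Z)-1)+1$, so $d(W,M) \ge d(W,Z) + \gc/r'$ — wait, I need the gain to \emph{grow}, so I would instead iterate the two-copy gluing $r$ times to accumulate additive density, or more simply observe $d(W,M) > d(W,Z)$ by a fixed margin $\gc_0=\gc_0([W,Z])$ for \emph{any} such $M$ built from $\ge 2$ overlapping copies, and then note $v(W,M)$ grows linearly in $r(C)$ while $d(W,M)$ stays $\ge d(W,Z)+\gc_0$, making $\tdm_p(W,M) \le n^{-\gc_0 v(W,M)/d(W,Z) + \vs e(W,M)}$ decay like $n^{-\Omega(r(C))}$; finally a union bound over the $O(1)$ many isomorphism types of such $M$ on $\le m(C)$ internal vertices, together with Markov's inequality bounding $\pr(\tau_{[W,M]}(X)\ge 1) \le \tdm_p(W,M)(1+o(1))$, gives \eqref{PtauG}. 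I expect the main obstacle to be the first step—pinning down the right ``witness'' structure and the deletion argument that produces it with size bounded purely in terms of $C$ (not $\gs(X)$)—since the density estimate afterward is a mechanical consequence of strict balance and the hypothesis on $p$.
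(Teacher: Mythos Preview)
Your density/first-moment calculation is fine, but the combinatorial extraction step has a genuine gap: it is \emph{not} true that $\tau(X)-\gs(X)>C$ forces an $r(C)$-fan (many copies through a single internal vertex $w$, otherwise internally disjoint). Consider the ``matching'' scenario: pairs $(K_1,L_1),\ldots,(K_M,L_M)$ of copies of $[W,Z]$ on $X$ with $K_i$ and $L_i$ sharing an internal vertex, but all $2M$ copies otherwise pairwise internally disjoint. Then $\tau(X)\ge 2M$ and $\gs(X)=M$, so $\tau-\gs\ge M$, yet no internal vertex lies in more than two copies. Your cleaning/deletion sketch cannot produce a large fan here, and citing \cite{Spencer} or \cite{BK} does not help, because that is not what those arguments do.

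The paper's approach (stated as ``similar to'' the proof of Lemma~\ref{311}, which is given in full) handles this by working with the overlap graph $\gG$ whose vertices are copies of $[W,Z]$ on $X$ and whose edges are pairs that are not internally disjoint; then $\tau-\gs\le |E(\gG)|$, and one bounds $|E(\gG)|$ by bounding separately (i) the maximum component size of $\gG$ and (ii) the maximum size of an induced matching in $\gG$. Part~(i) is close to your chain idea: a long chain of copies, each adding a vertex but overlapping the previous union, accumulates density via strict balance. Part~(ii) is precisely the matching obstruction above, and this is where the hypothesis $p<n^{-1/d(W,Z)+\vs}$ with $\vs$ small is actually used: each pair $(K_i,L_i)$ contributes a factor $\tdm_p(W,Z)\cdot n^{-\eta}$ (the $n^{-\eta}$ from strict balance), and one needs $\tdm_p(W,Z)<n^{\eta}$ so that the product over $M$ pairs decays. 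Your argument never invokes this upper bound on $\tdm_p(W,Z)$, which is a sign that the matching case was missed.
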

\nin
(In \eqref{PtauG}---and occasionally below---we skip ``for all $X$,'' 
since the assertion doesn't depend on $X$.)

Note that Lemma~\ref{disext} is trivial if $\lp W,Z\rp=\0$ 
since in this case $\tau$ and $\gs$ coincide.

\begin{lemma}\label{311}
There are constants $c,C>0$ (depending on $H$) such that if $p < n^c p^*$, then w.h.p.
\beq{eq:tausig}
\bft(xy) - \bfs(xy) < C \quad \forall \{x,y\} \in \Cc{V}{2}.
\enq
\end{lemma}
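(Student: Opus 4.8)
The plan is to deduce the uniform bound \eqref{eq:tausig} from the single-pair estimate in Lemma~\ref{disext} by summing over the (boundedly many) isomorphism types of ordered edges of $H$ and then over all $\binom{n}{2}$ pairs $\{x,y\}$. Recall that $\bft(xy) = \sum_{(a,b)} \tau_{[(a,b),F]}(xy)$, where $F$ here is a fixed copy of $H$ and $(a,b)$ runs over a set of representatives of the isomorphism types of ordered edges of $H$; since $H$ is fixed this is a bounded sum. By \eqref{S2Bequiv}, strict $2$-balance of $H$ says exactly that $[(a,b),H]$ is strictly balanced for each such $(a,b)$, so each summand is governed by Lemma~\ref{disext}, provided $p$ is small enough: we need $p < n^{-1/d((a,b),H)+\vs}$ for a suitable $\vs$. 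By Proposition~\ref{LS2Bd} we have $d((a,b),H) = d(x,H) < d_2(H) = m_2(H)$ for each relevant type, and since $p \le n^c p^* = \tilde\Theta(n^{c - 1/m_2(H)})$, choosing $c>0$ small enough (smaller than the gap coming from Proposition~\ref{LS2Bd}, uniformly over the finitely many types, minus a bit) puts $p$ comfortably inside the range required by Lemma~\ref{disext} for every type simultaneously.

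The next step is to control $\bft(xy) - \bfs(xy)$ by the per-type discrepancies. Here one must be slightly careful: $\bfs(xy)$ is the max size of a family of internally disjoint $xy$-bridges, which is \emph{not} simply the sum over types of $\gs_{[(a,b),F]}(xy)$, because internally disjoint copies of different types can still collide. But we only need an \emph{upper} bound on $\bft - \bfs$, and for that it suffices to exhibit \emph{some} large internally disjoint family; a greedy/union argument gives $\bfs(xy) \ge \sum_{(a,b)} \gs_{[(a,b),F]}(xy) - (\text{something bounded})$ is not quite right either, so instead I would argue via the conflict graph $\gG_{xy}$: in $\gG_{xy}$ the number of vertices is $\bft(xy)$ and the independence number is $\bfs(xy)$, and it suffices to bound the number of edges of $\gG_{xy}$ or, more simply, to note that if within each type the copy-count exceeds the internally-disjoint-count by at most $C_{(a,b)}$, then deleting at most $\sum_{(a,b)} C_{(a,b)}$ bridges leaves, within each type, an internally disjoint family, and one then merges these; any remaining cross-type collisions can be absorbed into the additive constant because internally disjoint copies of a fixed type pairwise meeting a single fixed bridge is a bounded phenomenon. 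In any case $\bft(xy) - \bfs(xy) \le \sum_{(a,b)} \big(\tau_{[(a,b),F]}(xy) - \gs_{[(a,b),F]}(xy)\big) + O(1)$ with the $O(1)$ depending only on $H$, which is all we need.

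Finally, apply Lemma~\ref{disext} to each type with a constant $C_0$ large enough that $f(C_0) > 2$ (possible since $f(C)\to\infty$), giving $\pr\big(\tau_{[(a,b),F]}(xy) - \gs_{[(a,b),F]}(xy) > C_0\big) < n^{-f(C_0)} = o(n^{-2})$ for each of the boundedly many types and each of the $\binom{n}{2}$ pairs $\{x,y\}$. A union bound over types and pairs then shows that w.h.p.\ $\tau_{[(a,b),F]}(xy) - \gs_{[(a,b),F]}(xy) \le C_0$ for all types and all pairs simultaneously, whence $\bft(xy) - \bfs(xy) < C$ for all $\{x,y\}$ with $C := (\#\text{types})\cdot C_0 + O(1)$. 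I expect the only real wrinkle is the bookkeeping in the second step---relating $\bft - \bfs$ to the per-type discrepancies without over- or under-counting cross-type internal-vertex collisions---but since we need only a crude additive upper bound this is routine rather than an essential obstacle; the substantive input is Lemma~\ref{disext} together with the tail $f(C)\to\infty$ beating the $\binom n2$-many pairs.
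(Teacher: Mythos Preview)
Your reduction to Lemma~\ref{disext} per edge-type is natural, but the merging step has a genuine gap. The claimed inequality $\bft(xy)-\bfs(xy)\le \sum_{(a,b)}\bigl(\tau_{[(a,b),H]}(xy)-\gs_{[(a,b),H]}(xy)\bigr)+O(1)$ is false as a deterministic statement: with two types, take internally disjoint families $\cF_1,\cF_2$ each of size $k$ such that each bridge in $\cF_1$ shares one internal vertex with exactly one bridge in $\cF_2$ (a perfect matching of cross-type conflicts). Then $\tau_i=\gs_i=k$ for both $i$, so $\sum(\tau_i-\gs_i)=0$, while the largest internally disjoint subfamily of $\cF_1\cup\cF_2$ has size only $k$, giving $\bft-\bfs\ge k-O(1)$. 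Your observation that a fixed bridge meets at most $\vH-2$ members of any single $\cF_j$ only bounds the \emph{degree} in the cross-type conflict graph; a bounded-degree bipartite graph can still have $\Theta(k)$ edges, and nothing in your argument excludes this. Ruling out such configurations in $G_{n,p}$ is precisely the content of the lemma, and the paper handles it directly rather than via Lemma~\ref{disext}: it works with the full conflict graph $\gG_{xy}$ on all bridges, notes $\bft(xy)-\bfs(xy)\le|E(\gG_{xy})|$, and shows via first-moment arguments (driven by Proposition~\ref{balcons}) that w.h.p.\ every component of $\gG_{xy}$ has $O(1)$ vertices and every induced matching in $\gG_{xy}$ has size $O(1)$. (Note too that the paper omits the proof of Lemma~\ref{disext} as ``similar'' to that of Lemma~\ref{311}, so within the paper's logic the reduction you attempt would be backwards.)

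A minor side point: your appeal to Proposition~\ref{LS2Bd} is misplaced---that proposition concerns $[x,H]$ for a single vertex $x$, whereas for an edge one has $d(\{a,b\},H)=(\eH-1)/(\vH-2)=d_2(H)$ exactly, not strictly less. This does not actually break the range check (Lemma~\ref{disext} needs only $p<n^{-1/d_2(H)+\vs}$, which $p<n^cp^*$ gives for $c<\vs$), but the justification you wrote is incorrect.
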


The proofs of Lemmas~\ref{disext} and \ref{311} are similar 
(and similar to the proof of \cite[Prop.\ 3.11]{BK}),
and we give only the latter. 

\begin{proof}[Proof of Lemma~\ref{311}]

Choose $c < \min\{\vt,\eta/(2\eH)\}$, where $\vt,\eta$ are as in Proposition~\ref{balcons}. 
In view of \eqref{gLH},
this choice guarantees that, for $p$ as in the lemma,
\beq{cchoice}
\gL_H^p n^{-\eta} = (p/p^*)^{(\eH-1)}\gL_H^* n^{-\eta} 
= O(n^{c(\eH-1)}\log n )n^{-\eta} < n^{-\gd}
\enq 
for some (fixed) $\gd > 0$.

Noting that $\bft(x,y)-\bfs(x,y)\leq |E(\gG_{xy})|$
(recall $\gG_{xy}$ was defined before Lemma~\ref{disext}),
we find that \eqref{eq:tausig} (with an appropriate $C$) holds for $x,y$ provided
\begin{itemize}
\item[(i)] the maximum number of vertices in a component of $\gG_{xy}$ is $O(1)$ and
\item[(ii)] the maximum size of an induced matching in $\gG_{xy}$ is $O(1)$;
\end{itemize}
so we want to say that w.h.p.\ these conditions hold for all $x,y$.
(Of course replacing (i) by an $O(1)$ bound on degrees would also suffice.)

\medskip
For (i) we show that, for some fixed $M$, w.h.p.\ there do not exist
$x,y$ and bridges $K_1\dots K_M$ on $xy$ such that for 
$i\geq 2$, $\inte{V}(K_i)$ meets, but is not contained in,
$\cup_{j<i}inte{V}(K_j)$.
This bounds (by $(\vH-3)M+1$) the number of internal vertices in the bridges belonging
to a given component of $\gG_{xy}$, so gives (i).

For the preceding ``w.h.p.''\ statement, notice that if $K_1\dots K_M$ are such bridges, say 
with $R_i=\cup_{j\leq i}K_j$ and, for $i\geq 2$,
$|E(K_i)\sm E(R_{i-1})|=b_i$ and
$|V(K_i)\sm V(R_{i-1})|=a_i$ ($\leq \vH - 3$), then \eqref{nzeta} (with our choice of $c$) 
gives $n^{a_i}p^{b_i}< n^{-\eta}$
for $i\geq 2$,
implying
\beq{nVRM}
n^{|V(R_M) \sm \{x,y\}|}p^{|E(R_M)|} = n^{\vH-2} p^{\eH-1}\prod_{i=2}^M n^{a_i}p^{b_i} 
\leq n^{\vH-2} p^{\eH-1}n^{-\eta(M-1)}.
\enq
The r.h.s.\ is thus an upper bound 
on the probability that, for given $x,y$, there are 
$K_1\dots K_M$ as above with $(K_1\dots K_M$) of a given isomorphism type 
(defined in the obvious way).
So the probability that there are $x,y$ admitting such $K_i$'s 
(with automorphism types unspecified) is
$O(n^{\vH} p^{\eH-1}n^{-\eta(M-1)})$
(where the implied constant depends on $M$),
which is $o(1)$ for large enough $M$.

\medskip
The argument for (ii) is similar.
Here we want to rule out (w.h.p.), for some fixed $M$, existence of bridges
$K_1,L_1\dots K_M,L_M$ on some $xy$, with $\inte{V}(K_i)\cap \inte{V}(L_j)\neq\0$ 
and the $\inte{V}(K_i)$'s and $\inte{V}(L_i)$'s otherwise disjoint.
A discussion like the one above shows that for any such sequence, 
with $R = \cup (K_i \cup L_i)$, we have 
\beq{napb}
n^{|V(R) \sm \{x,y\}|}p^{|E(R)|} < (n^{\vH - 2}p^{\eH-1} n^{-\eta})^M < n^{-M\gd},
\enq
where the last inequality uses \eqref{cchoice}. 
As before, the r.h.s.\ of \eqref{napb} bounds the probability that, 
for given $x,y$, there is $(K_1,L_1\dots K_M,L_M)$ of a given isomorphism type.
This bounds the probability that there 
are $x,y$ and bridges as above by $O(n^{2-M\gd})$, which is $o(1)$ for large enough $M$.
\end{proof}

\begin{lemma}\label{PropSB}
If $[W,Z]$ is strictly balanced then, with $\mu =\mu_p(W,Z)$:

\nin
{\rm (a)}  for any $\gb < 1/d(W,Z)$, there is an $\eps>0$ 
such that if
$p> n^{-\gb}$, then 
\beq{prtauST}
\pr(\tau^p(X) > (1+n^{-\eps}) \mu )< n^{-\go(1)};
\enq

\nin
{\rm (b)}
for any $p$ and $\CCC >e^2$ with $\CCC \mu \gg 1$, 
\beq{prtauST'}
\pr(\tau^p(X) > \CCC \mu)< n^{-\go(1)} +
e^{-(1-o(1))\CCC \mu }.
\enq
\end{lemma}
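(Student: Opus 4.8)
\textbf{Proof proposal for Lemma~\ref{PropSB}.}

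The plan is to get a handle on $\tau^p(X)$ via the first-moment method applied to copies of $[W,Z]$, using the strict balance of $[W,Z]$ to control the contribution of configurations where copies overlap. Fix $X=(x_1\dots x_s)$ (with $s=|W|$). A copy of $[W,Z]$ on $X$ is determined by a choice of internal vertices $(y_{s+1}\dots y_{s+t})$ (with $t=v(W,Z)$), all distinct and disjoint from $X$, together with the presence in $G$ of the $e(W,Z)$ required edges; the number of potential copies is $(1+O(1/n))n^{t}/\aut[W,Z]$, and each is present with probability $p^{e(W,Z)}$, so $\E\tau^p(X)=(1+O(1/n))\mu$ with $\mu=\mu_p(W,Z)=n^tp^{e(W,Z)}/\aut[W,Z]$. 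For the upper-tail bounds we will bound, for an integer parameter $j\ge 1$, the expected number of $j$-tuples of \emph{internally disjoint} copies: this is at most $\binom{\mu(1+o(1))}{j}$-ish, i.e.\ $\E\binom{\gs^{\ge j}}{1}\le \mu^j/j!$ roughly, which is exactly the input needed for a Poisson-type tail. The key subtlety is that $\tau^p(X)$ is \emph{not} a sum of independent indicators, so we cannot directly invoke Chernoff; instead we will invoke Lemma~\ref{JUB} (the Janson--Suen-type bound for the number of \emph{independent} events among $A_1\dots A_N$) applied to the events $A_i=\{$copy $i$ of $[W,Z]$ is present$\}$ indexed by potential copies $i$. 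Note that ``independent $A_i$'s'' in the sense of Lemma~\ref{JUB} means copies that are internally disjoint (hence use disjoint edge sets), so a bound on the number of independent $A_i$'s that occur is exactly a bound on $\gs^p(X)$.

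So the argument splits into two pieces. First, a deterministic/combinatorial step: control $\tau^p(X)-\gs^p(X)$. This is precisely the content of Lemma~\ref{disext} (valid here since $[W,Z]$ is strictly balanced and, in the regime of part (a), $p<n^{-1/d(W,Z)+\vs}$), which gives $\pr(\tau^p(X)-\gs^p(X)>C)<n^{-f(C)}$ with $f(C)\to\infty$. For part (a) we choose $C$ a large enough constant that $n^{-f(C)}=n^{-\go(1)}$ after we've chosen $\gb<1/d(W,Z)$ and a matching small $\vs$; then it suffices to bound $\pr(\gs^p(X)>(1+n^{-\eps}/2)\mu)$, say. For this we apply Lemma~\ref{JUB} with $t:=n^{-\eps}\mu/3$ (note $\gs^p(X)\le$ the max number of independent $A_i$'s, and $\mu_{\text{Janson}}=\sum\pr(A_i)=(1+O(1/n))\mu$), getting a bound $\exp[-\mu\vphi(n^{-\eps}/3)]=\exp[-\gO(n^{-2\eps}\mu)]$. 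Since $p>n^{-\gb}$ with $\gb<1/d(W,Z)$ forces $\mu=n^{t}p^{e(W,Z)}/\aut[W,Z]\ge n^{\ccc}$ for some $\ccc>0$ (because $\mu\asymp n^{[1/d(W,Z)-1/(\text{exponent from }p)]\cdot e(W,Z)}$ grows polynomially as $p$ clears $n^{-1/d(W,Z)}$), we get $n^{-2\eps}\mu\ge n^{\ccc'}$ for $\eps$ small enough, so the whole bound is $n^{-\go(1)}$, as desired. The constant $C$ contributes only $(1+o(1))$ to the $(1+n^{-\eps})$ factor, which is harmless.

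For part (b) we don't have room to play with polynomial growth of $\mu$, so we use the full strength of the tail. Again writing $\tau^p(X)\le \gs^p(X)+ (\tau^p(X)-\gs^p(X))$: if $[W,Z]$ has $\lp W,Z\rp=\0$ then $\tau=\gs$ and we're done by Lemma~\ref{JUB} directly (with $t=(\CCC-1-o(1))\mu$, Lemma~\ref{JUB} gives $\exp[-\mu\vphi(t/\mu)]=\exp[-(1-o(1))\CCC\mu\log(\CCC/e)]\le e^{-(1-o(1))\CCC\mu}$ when $\CCC>e^2$; actually one gets the stronger $\exp[-(1-o(1))\CCC\mu\log(\CCC/e)]$, and $\log(\CCC/e)>1$). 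Otherwise, since here $p$ is \emph{arbitrary} (not restricted to the subcritical range), Lemma~\ref{disext} need not apply, so I would instead bound $\tau^p(X)-\gs^p(X)$ crudely via a union bound over the (finitely many, since $[W,Z]$ has bounded size) ways two copies can overlap: the expected number of pairs of copies sharing at least one internal vertex is $O(\mu^2\cdot \max\{n^{a}p^{b}: [\text{overlap pattern}]\})$, and strict balance of $[W,Z]$ makes each such $n^{a}p^{b}$ factor $O(1)$ — wait, this needs care when $p$ is large, so more robustly: it is cleanest to just split the target event as $\{\gs^p(X)>\CCC\mu/2\}\cup\{\tau^p(X)-\gs^p(X)>\CCC\mu/2\}$; the first is handled by Lemma~\ref{JUB} giving $e^{-(1-o(1))(\CCC/2)\mu\log(\CCC/(2e))}$, and for the second, if $\CCC\mu\gg1$ then $\CCC\mu/2\to\infty$, and Lemma~\ref{disext}'s proof strategy (or a direct second-moment estimate using strict balance) shows $\tau^p(X)-\gs^p(X)$ is stochastically small. \textbf{This second-moment control of $\tau-\gs$ in the unrestricted-$p$ regime is the main obstacle:} for very large $p$, overlapping copies are no longer rare, and one must argue that even so, the \emph{excess} $\tau-\gs$ over a maximum internally-disjoint family is concentrated — most likely by a more careful deletion/greedy argument showing $\E[\tau-\gs]=O(\mu^2/n^{\ccc})$ or by absorbing this regime into the $n^{-\go(1)}$ error term via a separate union-bound over dense overlap clusters. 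I expect the cleanest route is to prove part (b) first for $p$ in a range where Lemma~\ref{disext} (or a variant) still bites, and then note that for larger $p$ the quantity $\mu$ is so large that even the trivial bound $\tau^p(X)\le \binom{n}{t}$ combined with $e^{-(1-o(1))\CCC\mu}$ being astronomically small makes the claim vacuous or easy; the precise cutoff and the interplay with $\CCC\mu\gg1$ is where the bookkeeping lives.
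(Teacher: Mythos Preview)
Your basic plan---control $\tau-\gs$ via Lemma~\ref{disext}, then bound $\gs$ via Lemma~\ref{JUB}---is exactly what the paper does in the regime where it works. The gap is that you have misread the hypothesis of part~(a). The assumption is $p>n^{-\gb}$ with $\gb<1/d(W,Z)$: this is a \emph{lower} bound on $p$, with no upper bound whatsoever, so $p$ may be (say) a constant. Lemma~\ref{disext}, on the other hand, requires $p<n^{-1/d(W,Z)+\vs}$, an \emph{upper} bound. Thus your sentence ``valid here since \dots\ in the regime of part~(a), $p<n^{-1/d(W,Z)+\vs}$'' is simply false, and your argument for~(a) only covers the narrow window $n^{-\gb}<p<n^{-1/d(W,Z)+\vs}$. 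For larger $p$ you have no control on $\tau-\gs$ at all (and indeed for $p$ close to~$1$ the difference $\tau-\gs$ is huge).

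The paper handles this by first proving~(a) and~(b) for $p=q:=n^{-\gc}$ with $\gc$ just below $1/d(W,Z)$ (so both the lower bound of~(a) and the upper bound of Lemma~\ref{disext} are satisfied), exactly as you outline. It then extends~(a) to all larger $p$ by a short coupling argument: with $G_q\subseteq G_p$ coupled in the usual way, one conditions on the bad event $\B=\{\tau^p(X)>(1+2n^{-\eps})\mu_p\}$ and computes $\E[\tau^q(X)\mid\B]$ two ways to deduce that $\pr(\A\mid\B)\ge n^{-O(1)}$, where $\A=\{\tau^q(X)>(1+n^{-\eps})\mu_q\}$ is the already-controlled event at level $q$; since $\pr(\A)=n^{-\go(1)}$, this forces $\pr(\B)=n^{-\go(1)}$. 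This transfer step is the missing idea in your write-up. Once~(a) holds for all $p>n^{-\gb}$, part~(b) for large $p$ is immediate from~(a) (since then $\mu$ is polynomially large and $(1+n^{-\eps})\mu<\CCC\mu$), so your various attempted workarounds for~(b) at large $p$ become unnecessary.
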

\nin

\nin
Notice that \eqref{prtauST} and \eqref{prtauST'} don't change if we replace
$\tau$ and $\mu$ on their left hand sides by $\tdt$ and $\tdm$.

\nin

\nin
\emph{Proof.}
We first observe that if $\lp W,Z\rp=\0$ 
then $\tau^p(X)$ is binomial with mean at most $\mu$ and the lemma follows easily from 
Theorems~\ref{thm:Chernoff} and \ref{thm:Chernoff'}; precisely: (b) is immediate from 
Theorem~\ref{thm:Chernoff'}, and (a) is a weak consequence of
Theorem~\ref{thm:Chernoff}, once we observe that $p> n^{-\gb}$ implies
$\mu> n^\gd$ for a fixed $\gd>0$, and take (say) $\eps =\gd/3$.
We may thus assume $\lp W,Z\rp\neq \0$, so $b(W,Z)$ (see \eqref{ab}) is defined, and 
strict balance implies $b(W,Z)> d(W,Z)$ (see \eqref{dYZWZ}).

We first fix $\ccc < \vs$ with $\vs$ as in Lemma~\ref{disext} 
and assume $p=q:= n^{-\gc}$, where $\gc$ is fixed with
\[    
\mbox{$\gc > 1/d(W,Z) - \ccc \,\,$  and, if we are in (a),
$\,\, \gc < 1/d(W,Z)$.}
\]   

\nin
Our choices of $p$ in (a) and (b) satisfy the assumption in Lemma~\ref{disext},
and if we are in (a) there is then a (fixed) $\gd>0$ such that
\beq{mungd}
\mu> n^\gd.
\enq

Let $\vt = \vt(n)$ satisfy
\beq{1vt}
1\ll \vt \ll \left\{\begin{array}{ll}
\mu^{2/3}&\mbox{if we are in (a),}\\
\CCC \mu &\mbox{if we are in (b).}
\end{array}\right.
\enq
By Lemma~\ref{disext}, 
\[
\pr(\tau^p(X) - \gs^p(X) > \vt) =n^{-\go(1)},
\]
which we use in combination with Lemma~\ref{JUB}:

For (a) we set $\eps =\gd/3$ ($\gd$ as in \eqref{mungd}), and have
(recalling from \eqref{mutilde} that $\E \tau^p(X) \leq \mu$)
\begin{eqnarray*}
\pr(\tau^p(X) >  (1+n^{-\eps})\mu)
&<&\pr(\tau^p(X) >  \mu + \mu^{2/3} ) \\
&\leq&
 \pr(\tau^p(X) - \gs^p(X) > \vt) +
\pr(\gs^p(X) > \mu + \mu^{2/3}-\vt) \\
&< & n^{-\go(1)} +\exp[-\gO(\mu^{1/3})] \, = \, n^{-\go(1)}
\end{eqnarray*}
(using \eqref{1vt} and,
with room, $\mu^{1/3}\gg \log n$).

Similarly, for (b),
\beq{smallqb}
\pr(\tau^p(X) >  \CCC \mu ) 
\,\le\,
n^{-\go(1)} +\pr(\gs^p(X) > \CCC \mu -\vt) 
\,<\,n^{-\go(1)} +
e^{-(1-o(1))\CCC \mu },
\enq
where we sacrifice a factor like $\log(\CCC /e)$ in the last exponent
(and are using $\CCC >e^2$).

\mn

For larger $p$---say $p> q= n^{-\gc}$ 
with $\gc\in (1/d(W,Z) - \ccc,1/d(W,Z))$ fixed
(so we are aiming for \eqref{prtauST})---we 
consider the usual coupling of $G_p$ and $G_q$ 
and define events
\[
\A =\{\tau^q( X) > (1+n^{-\eps})\mu_q(W,Z)\},
\]
\[
\B =\{\tau^{p}(X) > (1+2n^{-\eps})\mu_p(W,Z)\},
\]
with the (positive) constant $\eps$ chosen so $\pr(\A) =n^{-\go(1)}$
(i.e.\ \eqref{prtauST} holds with $q$ in place of $p$).  
Then
\[
(1+2n^{-\eps})\mu_q(W,Z) < \E[\tau^q( X)|\B] < n^{v(W,Z)} \pr(\A|\B) + (1+n^{-\eps})\mu_q(W,Z);
\]
so $\pr(\A|\B) > n^{-v(W,Z)}n^{-\eps}\mu_q(W,Z) = n^{-O(1)}$ and
\[
n^{-\go(1)} =\pr(\A) > \pr(\B)\pr(\A|\B) =\pr(\B)n^{-O(1)},
\]

\nin
yielding \eqref{prtauST}  (with,
pedantically speaking, $2n^{-\eps}$ in place of $n^{-\eps}$).

\qed

When $[W,Z]$ is not strictly balanced we form, again as in \cite{Spencer},
a (not necessarily unique) sequence 
\beq{WZchain}
W = S_0 \subset S_1 \subset \cdots \subset S_k= Z
\enq
by choosing, for $i=1,\ldots~$ until $S_i= Z$, 
$S_i\supset S_{i-1}$ with $d(S_{i-1}, S_i)$ maximum and $|S_i|$ minimum 
subject to this.  Then
\beq{L8a}
\mbox{each $[S_{i-1},S_i ]$ 
is strictly balanced} 
\enq
and
\beq{L8b}
\mmm(W,Z)=d(S_0,S_1) \geq d(S_1,S_2) \geq \cdots \geq d(S_{k-1},S_k)
\enq
(since $d(S_{i-1},S_i)\geq d(S_{i-1},S_{i+1})$ implies $d(S_{i-1},S_{i+1})\geq d(S_i,S_{i+1})$;
see \eqref{dSTSU}).

\nin

\begin{lemma}\label{Erho}
With the above setup:

\nin
{\rm (a)}  For each $\gb<1/\mmm(W,Z)$, there is $\eta > 0$ such that 
if $p> n^{-\gb}$, then with probability $1-n^{-\go(1)}$
\beq{prtauST''}
\tau(X) < (1+n^{-\eta}) \mu_p(W,Z) \,\,\,\forall X.
\enq
{\rm (b)}
For any $p$, if
$\CCC_i >e^2$ and
$\CCC_i\mu_p(S_{i-1},S_i) \gg \log n$ for each $i\in [k]$, then with probability $1-n^{-\go(1)}$
\[
\tau(X) < \mu_p(W,Z)\prod_i \CCC_i \,\,\,\forall X.
\]
Equivalently, these assertions hold with $\tdt$ and $\tdm$ in place of $\tau$ and $\mu$.
\end{lemma}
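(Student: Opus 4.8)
The plan is to bootstrap from the strictly balanced case (Lemma~\ref{PropSB}) via the chain \eqref{WZchain}, multiplying copy-counts along the links $[S_{i-1},S_i]$. First I would observe that it suffices to prove the statements for $\tdt$ and $\tdm$ (the labeled versions), since $\tau = \tdt/\aut[W,Z]$ and likewise $\mu_p = \tdm_p/\aut[W,Z]$, and $\aut[W,Z] = O(1)$; the equivalence claimed at the end of the lemma then handles the translation back. The key structural identity is that a $[W,Z]$-extension on $X$ factors uniquely as a composition of a $[S_0,S_1]$-extension on $X$, followed by a $[S_1,S_2]$-extension on the resulting vertex sequence, and so on up the chain; hence, fixing a realization of the randomness,
\[
\tdt_{[W,Z]}(G,X) \,=\, \sum \prod_{i=1}^{k} \tdt_{[S_{i-1},S_i]}(G,X_{i-1}),
\]
where the sum is over partial extensions and $X_{i-1}$ denotes the (random, depending on the partial extension chosen) image of $S_{i-1}$. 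Since $\tdm_p(W,Z) = \prod_i \tdm_p(S_{i-1},S_i)$ (because $v(W,Z) = \sum_i v(S_{i-1},S_i)$ and likewise for edges), the natural approach is: condition on the event, call it $\cE$, that \emph{every} $X'$ simultaneously satisfies the per-link bound $\tdt_{[S_{i-1},S_i]}(G,X') < c_i\,\tdm_p(S_{i-1},S_i)$ for all $i$, with $c_i$ either $(1+n^{-\eps_i})$ in case (a) or $\CCC_i$ in case (b); on $\cE$ one gets $\tdt_{[W,Z]}(G,X) < \big(\prod_i c_i\big)\tdm_p(W,Z)$ for all $X$ by a telescoping count of partial extensions, using that the number of choices for each intermediate vertex sequence is itself controlled by the preceding factors. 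So the whole thing reduces to showing $\pr(\cE^c) = n^{-\go(1)}$.

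For that, I would apply Lemma~\ref{PropSB} to each link $[S_{i-1},S_i]$, which is strictly balanced by \eqref{L8a}. The hypothesis of Lemma~\ref{PropSB}(a) requires $p > n^{-\gb_i}$ with $\gb_i < 1/d(S_{i-1},S_i)$; here is where \eqref{L8b} is essential — we have $d(S_{i-1},S_i) \le \mmm(W,Z)$ for every $i$, so $1/d(S_{i-1},S_i) \ge 1/\mmm(W,Z) > \gb$, and hence the single hypothesis $p > n^{-\gb}$ of part (a) feeds all $k$ applications. Taking $\eta = \min_i \eps_i$ (and absorbing the constant $k$ into the $n^{-\eta'}$ for a slightly smaller $\eta'$, exactly as in the ``$2n^{-\eps}$'' remark at the end of the proof of Lemma~\ref{PropSB}) gives \eqref{prtauST''}. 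For part (b), Lemma~\ref{PropSB}(b) applies to each link directly once $\CCC_i\mu_p(S_{i-1},S_i) \gg 1$ — and the hypothesis $\CCC_i\mu_p(S_{i-1},S_i) \gg \log n$ is exactly what we need to beat the $e^{-(1-o(1))\CCC_i\mu_p(S_{i-1},S_i)}$ term in \eqref{prtauST'} down to $n^{-\go(1)}$. A union bound over $i \in [k]$ (a bounded number of terms) over the at-most-$n^{|W|}$ choices of $X'$ — already accounted for in the ``$\forall X$'' form of Lemma~\ref{PropSB}, whose probability bound $n^{-\go(1)}$ does not depend on $X$ — then gives $\pr(\cE^c) = n^{-\go(1)}$, completing both parts.

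The main obstacle I anticipate is making the telescoping step precise: when I bound $\tdt_{[W,Z]}(G,X)$ by summing $\prod_i \tdt_{[S_{i-1},S_i]}(G,X_{i-1})$ over partial extensions, I must control the \emph{number} of partial extensions at each stage, and the bound on that number is again a $\tdt$-count (for the prefix link), so one has to iterate the per-link estimate carefully so that the product of the $c_i$'s — and not some larger power — comes out. The cleanest way is induction on $k$: write $[W,Z]$ as the composition of $[S_0,S_1]$ with $[S_1,Z]$, apply the inductive hypothesis to $[S_1,Z]$ (noting $\mmm(S_1,Z) \le \mmm(W,Z)$ by \eqref{L8b}, so its hypothesis is inherited) uniformly over all $X_1$, and then sum over the at most $c_1\tdm_p(S_0,S_1)$ extensions of $X$ to $X_1$. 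One small point to watch: in case (a) the inductive hypothesis must be applied with the \emph{same} $\eta$-type exponent, and since we lose a constant factor $k$ in passing from $\sum_i n^{-\eps_i}$ to a clean $n^{-\eta}$, it is cleanest to first prove the bound with an unspecified $o(1)$ in the exponent and only at the end name $\eta$; this is routine and I would not belabor it. Everything else — that $v$ and $e$ are additive along the chain, that $\aut[W,Z] = O(1)$, that $k = O(1)$ — is immediate.
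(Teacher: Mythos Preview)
Your proposal is correct and is essentially the paper's argument. The ``main obstacle'' you flag dissolves into the single inequality the paper uses, namely
\[
\max_X \tdt_{[W,Z]}(X) \;\le\; \prod_{i=1}^{k}\,\max_{Y}\,\tdt_{[S_{i-1},S_i]}(Y),
\]
which is exactly your telescoping written as one line (each $[W,Z]$-extension factors through the chain, and at stage $i$ there are at most $\max_Y \tdt_i(Y)$ ways to extend); no induction bookkeeping is needed. One small imprecision: the union bound over intermediate $X'$ is over sequences of length $|S_{i-1}|$ (up to $|Z|-1$), not $|W|$, so the count is $n^{|Z|}$ rather than $n^{|W|}$---still $n^{O(1)}$ and harmless against $n^{-\go(1)}$.
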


\nin
(Of course here it's enough to bound failure probabilities by $n^{-\go(1)}$ for a single $X$.)

\begin{proof}  
Here, as mentioned earlier, it's better to work with extensions
(think about what happens to \eqref{mutau} if we work with copies);
so we prove the formulation in the lemma's final sentence. As in Lemma~\ref{PropSB}, the proofs of (a) and (b) are similar, and we combine them as much as possible.

Set $\tdm=\tdm_p(W,Z)$, $\tdm_i=\tdm_p(S_{i-1},S_i)$, $\tdt= \tdt_{[W,Z]}$,
and $\tdt_i=\tdt_{[S_{i-1},S_i]}$.  
With products over $i\in [k]$, we have
\beq{mutau}
\tdm =\prod \tdm_i\,\,\,\,
\text{and}
\,\,\,\,\max_X \tdt(X) \leq \prod \max_Y \tdt_i(Y)
\enq
(where, for each $i$, $Y$ runs over sequences of $|S_{i-1}|$ distinct vertices of $K_n$).

We will apply Lemma~\ref{PropSB} 
to each $[S_{i-1},S_i]$, so need to 
check hypotheses (that is, of Lemma~\ref{PropSB}(a) if we are in (a) 
and similarly for (b)); but these are immediate for (b), and for (a) follow from
$p> n^{-\gb}$ and
\[
1/d(S_{i-1},S_i) \geq 1/d(S_0,S_1) = 1/\mmm(W,Z) > \gb.
\]
Thus, letting
\[
\gz_i = \left\{\begin{array}{ll}
1 + n^{-\eps} &\mbox{in (a),}\\
\CCC_i &\mbox{in (b)}
\end{array}\right.
\]
with $\eps$ the smallest of the $\eps$'s in our $k$ appeals to 
Lemma~\ref{PropSB}(a), we find that with probability 
at least $1 - kn^{|Z|}n^{-\go(1)} = 1 - n^{-\go(1)}$
\[\tdt_i(Y) < \gz_i\tdm_i \] 
for every $i\in [k]$ and sequence $Y$ of $|S_{i-1}|$ distinct vertices of $K_n$;
which in (a) implies
\[
\max_X\tdt(X) \leq \prod(1+n^{-\eps}) \tdm_i < (1+n^{-\eta})\tdm
\]
for any fixed $\eta< \eps$, and in (b),
\[
\max_X\tdt(X) \leq \prod \CCC_i\tdm_i = \tdm_p(W,Z) \prod \CCC_i.
\]

\end{proof}

The next result is a weakening of a special case of \cite[Theorem 2]{Spencer}, so we omit
its proof, just noting that the upper bound follows easily from
Lemma~\ref{PropSB} and the lower bound is an 
application of Lemma~\ref{TJanson} (similar to the proof of Lemma~\ref{JansApp}). 

\begin{thm}
\label{thm:Spencer}
There is a $C$ such that if $\gLp \geq C \log n$, then w.h.p.
\begin{align}
\label{eq:Spencer}
\bft(x,y) = \Theta( \gLp )\quad \forall \, \{x,y\}\in \Cc{V}{2}.
\end{align}
\end{thm}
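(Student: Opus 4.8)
I would prove the two inequalities packaged in $\bft(x,y)=\Theta(\gLp)$ separately, in each case bounding by $o(n^{-2})$ the chance that the bound fails at a fixed pair $\{x,y\}$ and then union bounding over the $\Cc{n}{2}$ pairs. A preliminary reduction: $\gLp=\vr(1-O(1/n))\gL_H^p$ is increasing in $p$ and $\gL^{p^*}=\gL^*\sim(2-1/m_2(H))\log n$ by \eqref{gLup*}, so for $C$ large the hypothesis $\gLp\geq C\log n$ forces $p\geq p^*$; in particular \eqref{gLKbig} and Proposition~\ref{balcons} are then available throughout.

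\emph{Upper bound.} Here I would use $\bft(xy)=\sum_{(a,b)}\tau_{[(a,b),H]}(xy)$, the sum over the $O(1)$ isomorphism types of ordered edges of $H$. By \eqref{S2Bequiv}, strict $2$-balance of $H$ says precisely that each $[(a,b),H]$ is strictly balanced, so Lemma~\ref{PropSB}(b) applies: with $\mu=\mu_p((a,b),H)=\gL_H^p/\aut(H;a,b)\asymp\gLp$ and a fixed $M>e^2$ (note $M\mu\gg1$ since $\mu\asymp\gLp\to\infty$), it gives
\[
\pr\!\big(\tau_{[(a,b),H]}(xy)>M\mu\big)<n^{-\go(1)}+e^{-(1-o(1))M\mu}.
\]
Since $n^{-\go(1)}=o(n^{-2})$ and, using $\mu\asymp\gLp\geq C\log n$, also $e^{-(1-o(1))M\mu}\leq n^{-\gO(MC)}=o(n^{-2})$ once $MC$ is large, summing over the finitely many types $(a,b)$ and union bounding over pairs gives $\bft(xy)=O(\gLp)$ for all $\{x,y\}$, w.h.p.

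\emph{Lower bound.} I would fix $\{x,y\}$ and view $\bft(xy)$ as $X=\sum_jI_j$, with $j$ indexing the possible $xy$-bridges and $I_j$ the indicator that $G$ contains the $\eH-1$ edges of the $j$th one; this is exactly the setting of Theorem~\ref{TJanson} with $\gG=E(K_n)$, and \eqref{Etau} gives $\mu:=\E X=\gLp$. The goal is $\mu^2/\ogD\gg\log n$. The diagonal of $\ogD$ (see \eqref{Delta}) contributes $\mu$; a pair of distinct $xy$-bridges sharing at least one edge determines a graph $K$, isomorphic to a subgraph of $H$ containing the edge $xy$ and having $2\leq e_K\leq\eH-1$, so $K\subsetneq H$ (and $v_K\geq3$), and a routine count (the same one that will appear in the proof of Lemma~\ref{JansApp}) gives
\[
\ogD=\mu+O\!\Big(\mu\sum_K \gL_H^p/\gL_K^p\Big),
\]
the sum over all such $K$. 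As $e_K\geq2$, $\gL_K^p$ is increasing in $p$, so \eqref{gLKbig} yields $\gL_K^p>n^{\ccc}$ for every such $K$ and all $p\geq p^*$; hence $\sum_K\gL_H^p/\gL_K^p=O(\gL_H^pn^{-\ccc})$ and
\[
\mu^2/\ogD=\gO\!\big(\min(\gLp,\,n^{\ccc})\big)
\]
(if the sum is $\leq1$ then $\ogD\leq2\mu$, so $\mu^2/\ogD\geq\mu/2$; otherwise $\mu^2/\ogD\asymp\big(\sum_K1/\gL_K^p\big)^{-1}\gtrsim\min_K\gL_K^p>n^{\ccc}$). Since $\gLp\geq C\log n$ with $C$ large, this exceeds $3\log n$, so Theorem~\ref{TJanson} with $t=\mu/2$ gives $\pr(\bft(xy)<\gLp/2)\leq\exp[-\gO(\mu^2/\ogD)]=o(n^{-2})$; a union bound over pairs completes the lower bound. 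Combining the two halves yields $\bft(x,y)=\Theta(\gLp)$ for all $\{x,y\}$, w.h.p.

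The step I expect to be the real work is the estimate of $\ogD$: one has to check that every edge-overlapping pair of $xy$-bridges is genuinely ``expensive''---i.e.\ that the associated shared graph $K$ really has $e_K\geq2$, so that the polynomial lower bound \eqref{gLKbig} on $\gL_K^p$ applies---and then carry the resulting saving through the whole range $p\geq p^*$ (for very large $p$ the off-diagonal part of $\ogD$ is no longer $O(\mu)$, but $\mu^2/\ogD$ is still $\gg\log n$, being bounded below by a fixed power of $n$). This is essentially the computation underlying Lemma~\ref{JansApp}, which is why the text does not reproduce it here; the remaining ingredients are immediate from the cited results together with a union bound.
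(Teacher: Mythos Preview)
Your proposal is correct and follows precisely the approach the paper indicates (it omits the proof, noting that the upper bound follows from Lemma~\ref{PropSB} and the lower bound from Theorem~\ref{TJanson} via a $\ov{\gD}$ computation like that in Lemma~\ref{JansApp}). Your handling of the full range $p\geq p^*$---observing that $\gL_K^p$ is increasing in $p$ for $e_K\geq2$ so that \eqref{gLKbig} extends, and that $\mu^2/\ov{\gD}$ remains $\gg\log n$ even when the off-diagonal part of $\ov{\gD}$ dominates---is exactly the extra care needed beyond the $p\asymp p^*$ case treated explicitly in Lemma~\ref{JansApp}.
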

\nin
(Recall from \eqref{Etau} that $\gLp =\E \bft(x,y)$.)

\begin{cor}
\label{cor:pip} 
There are $C, c, \vs >0$ such that if $\gLp > C \log n$, then w.h.p.
$\bfs(x,y)=\Omega(\pi)$ for all $x,y$, with
$\pi = \pi(n,p)$ equal to
\begin{align}
\gLp & \quad \text{ if } ~p < n^c p^*, \label{eq:pi1}\\
n^{\vs}p/p^*& \quad \text{ if } ~p \geq n^c p^*.\label{eq:pi2}
\end{align}
\end{cor}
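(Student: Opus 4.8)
The plan is to deduce the bound $\bfs(x,y) = \Omega(\pi)$ from the estimate $\bft(x,y) = \Theta(\gLp)$ of Theorem~\ref{thm:Spencer} together with the ``$\gs$ is a surrogate for $\tau$'' philosophy embodied in Lemma~\ref{311}, handling the two ranges of $p$ separately. First I would fix $C$ large enough that Theorem~\ref{thm:Spencer} applies, so that w.h.p.\ $\bft(x,y) \asymp \gLp$ for all pairs $x,y$ simultaneously; in particular w.h.p.\ $\bft(x,y) = \Omega(\gLp)$, which is exactly $\Omega(\pi)$ in the regime \eqref{eq:pi1}. For that regime ($p < n^c p^*$ with $c$ the constant from Lemma~\ref{311}), I would then simply invoke Lemma~\ref{311}: w.h.p.\ $\bft(x,y) - \bfs(x,y) < C'$ for all $x,y$, so $\bfs(x,y) > \bft(x,y) - C' = \Omega(\gLp)$ (using that $\gLp \geq C\log n \to \infty$ absorbs the additive constant). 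This settles \eqref{eq:pi1}.

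The substantive case is \eqref{eq:pi2}, i.e.\ $p \geq n^c p^*$, where $\bft(x,y)$ can be polynomially large and Lemma~\ref{311} no longer applies; here the target $\pi = n^{\vs} p/p^*$ is much smaller than $\gLp \asymp (p/p^*)^{e_H - 1}\log n$. The idea is to build a large family of internally disjoint $xy$-bridges greedily: starting from the pool of all $\bft(x,y) = \Theta(\gLp)$ bridges on $xy$, repeatedly pick a bridge and delete it together with every bridge sharing an internal vertex with it. A picked bridge has at most $v_H - 2 = O(1)$ internal vertices, so it suffices to bound, for each fixed internal vertex $z$, the number of $xy$-bridges through $z$; if this is at most $M$ for all relevant triples, the greedy process runs for at least $\bft(x,y)/((v_H-2)M+1)$ steps. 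Thus the task reduces to showing that w.h.p., for all $x,y,z$, the number of $xy$-bridges with $z$ internal is $O(\bft(x,y)/\pi) = O(\gLp/(n^{\vs}p/p^*))$. A bridge through $z$ is a $[\,\{a,b,c\},H\,]$-extension for some triple of a copy of $H$ (with $c$ the preimage of $z$), so one can control these counts via Lemma~\ref{Erho}(a)/(b) applied to the relevant $[W,Z]$ with $|W| = 3$: since $H$ is strictly $2$-balanced, pinning a third vertex strictly lowers the density, and Proposition~\ref{balcons} (the second bound, valid for all $p \geq p^*$) gives $\gL_H^p/\gL_K^p < \gL_H^p n^{-\eta} p^*/p$ for $K \subsetneq H$ with $v_K \geq 3$; this is precisely the ratio $\gLp/(\text{extensions fixing a third vertex})$ up to constants, and it provides the needed factor $n^{-\eta}p^*/p$, so $\pi = n^{\vs}p/p^*$ works with $\vs < \eta$.

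The main obstacle I anticipate is the union bound over all $\binom{n}{2}\cdot n$ triples $(x,y,z)$: the count of $xy$-bridges through a fixed $z$ is itself a sum of extension counts $\tdt_{[W,Z]}(X)$ over the $O(1)$ many ordered-triple types $W$ arising in copies of $H$, and for $[W,Z]$ not strictly balanced one must pass through the chain \eqref{WZchain} and apply Lemma~\ref{Erho}(b) with the $\CCC_i$'s chosen so the product concentration bound $n^{-\go(1)}$ survives multiplication by $n^3$ — this requires checking that the relevant $\CCC_i\mu_p(S_{i-1},S_i) \gg \log n$, using \eqref{gLKbig} and $\gLp \gg \log n$. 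Once the per-triple bound $O(\gLp n^{-\eta}p^*/p)$ holds w.h.p.\ uniformly, the greedy argument yields $\bfs(x,y) = \Omega(\gLp \cdot (n^{\eta}p/p^*)/\gLp) = \Omega(n^{\eta}p/p^*) = \Omega(\pi)$, completing the proof.
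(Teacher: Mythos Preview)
Your proposal is correct and follows essentially the same route as the paper. For \eqref{eq:pi1} the paper does exactly what you say (Theorem~\ref{thm:Spencer} plus Lemma~\ref{311}); for \eqref{eq:pi2} the paper phrases your greedy argument as the independence-number bound $\bfs(x,y)\ge |V(\gG_{xy})|/(\gD(\gG_{xy})+1)$ in the overlap graph $\gG_{xy}$, bounds $\gD(\gG_{xy})$ by $\sum_{z\in \inte{V}(K)}\sum_{(u,v,w)}\tau_{[(u,v,w),H]}(x,y,z)$, and controls each summand via Lemma~\ref{Erho}(b) and Proposition~\ref{balcons} just as you outline. One small remark: the union bound over triples $(x,y,z)$ is not actually an obstacle, since the ``$\forall X$'' in Lemma~\ref{Erho}(b) already absorbs it; the genuine technical wrinkle (which the paper handles and you leave implicit) is the choice of the $\CCC_i$'s---one takes $\CCC_i\tdm_i=\log^2 n$ for the ``dense'' links of the chain (those with $n^{v_i}p^{e_i}<1$) and $\CCC_i=\log^2 n$ for the rest, so that $\prod_i\CCC_i\tdm_i$ telescopes to $(\log^{O(1)}n)\,\tdm_p(S_{j-1},V(H))=(\log^{O(1)}n)\,\gL_H^p/\gL_L^p$ for $L=H[S_{j-1}]$ with $v_L\ge 3$, at which point Proposition~\ref{balcons} delivers the $n^{-\eta}p^*/p$ factor.
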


\nin
(Note the assumption on $\gLp$ is superfluous for $p$ as in \eqref{eq:pi2}.)

\begin{proof} 
For \eqref{eq:pi1} we can use Theorem~\ref{thm:Spencer} 
and Lemma~\ref{311}, with $C$ as in the former and $c$ as in the latter.

For \eqref{eq:pi2}, since
Theorem~\ref{thm:Spencer} says that w.h.p.\ $|V(\gG_{xy})| =\gO(\gLp)$
$\forall x,y$
(recall $\gG_{xy}$ was defined just before Lemma~\ref{disext}), 
it is enough to show
\beq{gDsmall}
\mbox{w.h.p. $~\gD(\gG_{xy}) =O(\gLp/\pi) \,\,\,\forall x,y$}
\enq
(where $\gD$ is maximum degree and we use the
the trivial $\bfs(x,y)\geq |V(\gG_{xy})|/(\gD(\gG_{xy})+1)$).

The degree (in $\gG_{xy}$) of $K\in V(\gG_{xy})$ is at most
\beq{deltabd}
\sum \tau_{[(u,v,w),H]}(x,y,z),
\enq
where $uv$ runs over isomorphism types of ordered edges of $H$, 
$w$ over $V(H)\sm \{u,v\}$, and $z$ over $\inte{V}(K)$.

Given such $uv$ and $w$, set $W=\{u,v,w\}$ and $Z=V(H)$; let 
$W = S_0 \subset S_1 \subset \cdots \subset S_k= Z $
be as in \eqref{WZchain}; and let $v_i$ and $e_i$ be the numbers of vertices
and edges of $[S_{i-1},S_i]$.
Notice that, for some {$j\in [k+1]$ (so possibly not in our range)},
\beq{geq1}
n^{v_i}p^{e_i}\geq 1
\enq
iff $i\geq j$; for if \eqref{geq1} holds for $i$, then the fact that $d(S_{i-1},S_i)$ is nonincreasing 
in $i$ (see \eqref{L8b}) gives
\[
n^{v_{i+1}}p^{e_{i+1}} = (n^{v_{i+1}/e_{i+1}}p)^{e_{i+1}} \geq (n^{v_i/e_i}p)^{e_{i+1}} \geq 1^{e_{i+1}} = 1.
\]

Noting that $\tdm_i:= \tilde{\mu}_p(S_{i-1},S_i) = n^{v_i}p^{e_i}$ 
(so $\prod \tdm_i =\tdm_p(W,Z)$)
and defining
$C_i$ to make $C_i \tdm_i $ equal to
$\log^2n$ (a convenient $\go(\log n)$)
if $i < j$, and to $\tdm_i\log^2n $ if $i \geq j$, we find that
Lemma~\ref{Erho}(b) says that w.h.p.
\beq{tdWZ}
\tdt_{[W,Z]}(X) < \prod_i C_i \tdm_i = (\log^{2k}n )\prod_{i\geq j}\tdm_i=
(\log^{2k}n)  \tdm_p(S_{j-1},V(H))\,\,\,\,\,\forall X
\enq
(though here we only care about $X=(x,y,z)$ with $z$ as in \eqref{deltabd}).

On the other hand, letting $L=H[S_{j-1}]$, we have 
(with justification of the inequality to follow)
\beq{mupS}
\tdm_p(S_{j-1},V(H)) = 
\gLp_H/\gLp_L < \gLp_H (p^*/p)n^{-\gO(1)}.
\enq
When $j\le k$ the inequality is given by Proposition~\ref{balcons} 
(which really only needs $p\geq p^*$).  If $j = k+1$ (so $L=H$)
then the l.hs.\ of \eqref{mupS} is $1$, while, since $e_H\geq 4$ (see \eqref{Otriv}),
$\gLp_H(p^*/p) = \gL_H^*(p/p^*)^{e_H-2} \asymp \log n (p/p^*)^{e_H-2}> n^c$;
so the r.h.s.\ of \eqref{mupS} is greater than 1 for a small enough $\gO(1)$.

Thus each of the $O(1)$ summands in
\eqref{deltabd} is $O (\gLp_H (p^*/p)n^{-\gO(1)})$, yielding
\eqref{gDsmall} for a small enough $\vs$.

\end{proof}

Finally, the following application of Janson's Inequality (Theorem~\ref{TJanson}) will be used for
Lemma~\ref{lem:Qsharpthresh} (where something simpler would do),
and more seriously for Lemma~\ref{prop:extind}.

\begin{lemma}\label{JansApp}
If 
$p = \tilde{\Theta}(n^{-1/d_2(H)})$, 
then 
for fixed k, distinct $\{x_1,y_1\},\ldots, \{x_k,y_k\} \in \C{V}{2}$ and $\gc \in [0,1]$,
\beq{eq:extind1}
\pr\left(\sum_i \bft(x_i,y_i) \leq \gc k\gLp\right) < \exp[-\varphi(\gamma-1)k\gLp + o(1)].
\enq
\end{lemma}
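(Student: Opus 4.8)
The plan is to apply Janson's Inequality (Theorem~\ref{TJanson}) directly to the family of events counted by $\sum_i \bft(x_i,y_i)$. First I would set up the index set: for each $i\in[k]$, each isomorphism type $(a,b)$ of an ordered edge of $H$, and each extension $\psi$ of $[(a,b),H]$ on $(x_i,y_i)$ in $K_n$, let $A_{\psi}$ be the edge set of the corresponding $x_iy_i$-bridge (a subset of $E(K_n)\setminus\{x_iy_i\}$, of size $e_H-1$). Then $\Gamma_p\supseteq A_\psi$ exactly when that bridge is present in $G=G_{n,p}$, so $X:=\sum_i\bft(x_i,y_i)$ is precisely $\sum_\psi I_\psi$ with $I_\psi=\mathbbm 1\{\Gamma_p\supseteq A_\psi\}$. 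By \eqref{Etau} and linearity, $\mu:=\E X = k\gL^p$ (up to the harmless $(1-O(1/n))$ factor absorbed into the $o(1)$ in the exponent), so the target probability is $\pr(X\le \gc\mu) = \pr(X\le \mu - (1-\gc)\mu)$ with $t=(1-\gc)\mu$, and Theorem~\ref{TJanson} gives the bound $\exp[-\varphi(-t/\mu)\mu^2/\ogD] = \exp[-\varphi(\gc-1)\, k\gL^p\cdot (\mu/\ogD)]$.

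So the whole matter reduces to showing $\ogD \le (1+o(1))\mu$, i.e.\ that the ``clustering'' term $\ogD = \sum\sum\{\E I_\psi I_{\psi'}: A_\psi\cap A_{\psi'}\neq\0\}$ is dominated by its diagonal contribution $\mu$. The diagonal terms already give $\mu$ (there $\E I_\psi = p^{e_H-1}$ and summing over all $\psi$ over all $i$ gives $k\gL^p$), so I need the off-diagonal sum to be $o(\mu)=o(\log n)$. I would split this according to whether $\psi,\psi'$ are bridges on the same pair $\{x_i,y_i\}$ or on different pairs, and in each case according to the number of shared vertices and edges. For two bridges on the same $xy$ sharing a subgraph $J$ with $V(J)\ni x,y$, the contribution is on the order of $n^{2(v_H-2)-(v_J-2)}p^{2(e_H-1)-(e_J-1)}$; by strict $2$-balance of $H$ — equivalently \eqref{S2Bequiv}, that $[xy,H]$ is strictly balanced — any proper such $J$ with at least one internal vertex has $d(xy,J) > 1$ wait, more precisely has $e_J-1 > \gc(v_J-2)$ in a way that forces $n^{v_J-2}p^{e_J-1} = n^{\gO(1)}$, so dividing by it costs a power of $n$ and the whole off-diagonal same-pair sum is $\gL^p\cdot n^{-\gO(1)} = o(1)$. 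The one subtlety is shared edges without internal vertices being shared (i.e.\ $J$ a subgraph of the two-vertex graph on $\{x,y\}$), but since $xy\notin A_\psi$ by construction, two bridges on the same pair that share an edge must share an internal vertex, so this case does not arise. For bridges on distinct pairs $\{x_i,y_i\}\neq\{x_j,y_j\}$, any shared edge forces the union to span at least $2(v_H-2)-1$ non-special vertices with at least $2(e_H-1)+\text{(something)}$ edges hmm — here I'd again invoke strict $2$-balance (via Proposition~\ref{LS2Bd}, that $[x,H]$ is strictly balanced and sparser than $H$) to see that attaching a second bridge sharing only edges/vertices away from its own special pair costs a factor $n^{-\gO(1)}$ relative to $\gL^p$. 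Summing the $O(1)$-many types gives $\ogD = \mu(1+n^{-\gO(1)}) = (1+o(1))\mu$.

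With $\ogD\le(1+o(1))\mu$ in hand, Theorem~\ref{TJanson} yields $\pr(X\le\gc k\gL^p)\le \exp[-\varphi(\gc-1)k\gL^p\cdot(1-o(1))] = \exp[-\varphi(\gc-1)k\gL^p + o(\log n)]$; absorbing the multiplicative and additive $(1\pm o(1))$ corrections (including the $\mu$-vs-$k\gL^p$ discrepancy and the $o(1)$ from $\ogD/\mu$) into the $+o(1)$ in the exponent — legitimate since $k$ is fixed and $\gL^p = \td\Theta(\log n)$ is bounded, so wait, $\gL^p$ is not bounded; but $\varphi(\gc-1)$ is a fixed constant and the correction is $o(1)\cdot\gL^p$, hmm — actually for the stated $o(1)$ in the exponent one uses that $\varphi(\gc-1)\gL^p(1-o(1)) = \varphi(\gc-1)\gL^p - o(1)$ only if the $o(1)$ error is $o(1/\gL^p)$, which it is: the relative error $\ogD/\mu - 1 = n^{-\gO(1)}$ decays polynomially while $\gL^p$ grows only polylogarithmically, so $n^{-\gO(1)}\gL^p = o(1)$. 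This gives \eqref{eq:extind1}.

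The main obstacle I expect is the bookkeeping in the $\ogD$ estimate: one must carefully enumerate the ways two bridges can overlap (same special pair vs.\ different; sharing the special vertices or not; counting shared internal vertices and edges) and in each case extract a genuine $n^{-\gO(1)}$ saving from strict $2$-balance. The key inputs are \eqref{S2Bequiv} ($[xy,H]$ strictly balanced), Proposition~\ref{LS2Bd} ($[x,H]$ strictly balanced with $d(x,H)<d_2(H)$), and \eqref{gLKbig} ($\gL_K^p > n^\ccc$ for proper $K\subsetneq H$ with $e_K\ge 2$) — exactly the tools already developed — so each overlap case should close routinely; it is purely a matter of not missing a case.
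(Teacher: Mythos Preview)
Your approach is essentially the same as the paper's: apply Janson's inequality (Theorem~\ref{TJanson}) to the sum of bridge indicators, reducing everything to showing $\ov{\gD}=(1+n^{-\gO(1)})\mu$, and handle the off-diagonal terms by splitting into same-pair and different-pair overlaps with strict $2$-balance doing the work. The one device the paper adds that you don't is to restrict, from the outset, to bridges $K$ on $x_iy_i$ whose internal vertices avoid all other special points (i.e.\ $V(K)\cap\bigcup_j\{x_j,y_j\}=\{x_i,y_i\}$); this costs only $O(1/n)$ in $\mu$ and buys a cleaner different-pair analysis, since then the overlap $K_s\cap K_t$ has exactly $a$ or $a+1$ vertices depending only on $|\{x_i,y_i\}\cap\{x_j,y_j\}|$, and Proposition~\ref{LS2Bd} applies directly. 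Without this restriction your different-pair case has a few more subcases (e.g.\ $x_j$ internal to the $x_iy_i$-bridge), but they close the same way, so your route works too---it just needs the more careful enumeration you already flagged as the main obstacle.
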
 
\nin
(Recall from \eqref{eq:varphidef} that $\varphi(x) = (1+x)\log(1+x)-x$.)

\begin{proof}
It will be convenient to bound a slightly larger probability. Let
\beq{KK1}
\K =\cup_{i\in [k]}\{K:  \mbox{$K$ is a bridge on $x_iy_i$ with
$V(K)\cap (\bigcup \{x_j,y_j\})= \{x_i,y_i\} $}\}.
\enq
(The condition on $V(K)$ gives away essentially nothing and avoids the 
annoying possibility of extensions of different pairs $x_iy_i$ whose edge sets coincide.)
Then 
$
X:=|\{K\in \K: K\sub G\}|
$
is a lower bound on $\sum_i \bft(x_i,y_i)$, so it's enough to show 
\beq{Xbd1}
\pr(X \leq k\gamma \gLp)<
\exp[-\varphi(\gamma-1)k\gLp + o(1)].
\enq

We apply Theorem~\ref{TJanson} with $X$ as above; thus $X=\sum_{s\in [m]} I_s$,
where $\K=\{K_1\dots K_m\}$, $A_s= E(K_s)$ and $I_s=\mbone_{\{A_s\sub G\}}$.
Then $\mu = (1 - O(1/n))k\gLp$ (since without the condition on $V(K)$ in \eqref{KK1} 
we'd have $\mu=k\gLp$,
and adding the condition decreases $\mu$ by less than 
$k\cdot 2k\cdot n^{\vH-3}p^{\eH-1} =O(\gLp/n)$).
On the other hand, we will show that $\ov{\gD}<\mu+n^{-\gO(1)}$ ($<(1+ n^{-\gO(1)}) \mu$); 
that is,
\beq{Deltabar1}
\sum\{\E I_sI_t:  s,t\in [m], s\neq t, A_s\cap A_t\neq \0\}=n^{-\gO(1)}.
\enq
If this is true, then Theorem~\ref{TJanson} gives (with justification below)
\beq{prXk}
\pr(X \leq k\gamma \gLp ) < \exp\left[-\varphi(-(1 -k\gc \gLp/\mu))(1- n^{\gO(1)})\mu \right] < 
\exp\left[-(1- n^{-\gO(1)})\varphi(\gamma-1)k\gLp \right],
\enq
which is less than the bound in \eqref{eq:extind1} since $\gLp=\tilde{O}(1)$.  
(For the second inequality in \eqref{prXk} we use
\begin{eqnarray}
\vp(k\gc \gLp/\mu-1) &=& (1+O(1/n))\gc\log[(1+O(1/n))\gc] - ((1+O(1/n))\gc-1)\nonumber\\
&=& \gc\log \gc -(\gc-1)+O(1/n) =\vp(\gc-1)+O(1/n).)\label{vpkgc}
\end{eqnarray}

Finally, for \eqref{Deltabar1}, we observe that
the contribution to the sum 
of pairs $(s,t)$ with $|A_s\cap A_t|=b$ ($\in [1,\eH-2]$) and
$a$ \emph{internal} vertices of $K_{s}$ in $K_{t}$ is (again with justification to follow)
\[
O(k^2n^{2(\vH-2)-a}p^{2(\eH-1)-b}) < O(\mu^2 n^{-\gO(1)})  < n^{-\gO(1)}.
\]
Here the second inequality is immediate since 
\beq{ptdT}
p=\tilde{\Theta}(n^{-1/d_2(H)})
\enq 
gives 
$\mu = \tilde{O}(1)$, and for the first we need
\beq{napb2}
n^ap^b =n^{-\gO(1)}.
\enq
So suppose $K_s$ and $K_t$ are bridges (with $a,b$ as above)
on $\{x_i,y_i\}$ and $\{x_j,y_j\}$.  Then for $i=j$ 
\eqref{napb2} is given by Proposition~\ref{balcons} 
(since, in view of \eqref{ptdT}, 
$n^ap^b = \gL_K^p$ for some $K$ as in the first line of \eqref{nzeta}).
If, on the other hand, $i\neq j$, then \eqref{ptdT} says we just need 
$b/a < d_2(H)$.
But $K_s\cap K_t$ is a proper subgraph of $K_s$ with $b$ edges and
\[
|V(K_s \cap K_t)|=\left\{\begin{array}{ll}
a &\mbox{if $\{x_i,y_i\}\cap\{x_j,y_j\}= \0$}, \\
a+1 &\mbox{if $|\{x_i,y_i\}\cap\{x_j,y_j\}|= 1$}; 
\end{array}
\right.
\]
and Proposition~\ref{LS2Bd} gives 
$b/a< b/(a-1)< d_2(H)$ in the first case and $b/a < d_2(H)$ in the second.

\end{proof}

\section{Two simple points}
\label{SEC:LemmaThresh}

Here we dispose of Lemma~\ref{lem:Qsharpthresh} and the derivation of
Theorem~\ref{lem:Qsharpthresh'} from Theorem~\ref{thm:Hprecise}.
(Recall we are using $G$ for $G_{n,p}$ and $V$ for $V(G)$.)

\begin{proof}[Proof of Lemma~\ref{lem:Qsharpthresh}.] 
(We continue to use $\bft=\bft^p$.)
We begin with the 1-statement,
for which we assume 
$p=O(p^*)$ (as we may since
for larger $p$ the statement is contained in Theorem~\ref{thm:Spencer}). 
Lemma~\ref{JansApp} gives
\begin{align}
\label{eq:LemmaThreshPaths}
\pr(\bft(x,y)=0) 
<e^{-\gL^p + o(1)}.
\end{align}
So the probability that $\cQ_H$ fails (that is, $\bft(x,y)=0$ for some
$xy\in G$) is less than
\[
\Cc{n}{2}p e^{-\gL^p + o(1)} 
=o(1)
\]
(since 
$\gL^p =(p/p^*)^{\eH-1}\gL^* \more(1+\eps)^{\eH-1}\log(\C{n}{2}p)$; see \eqref{gLup*}).

For the 0-statement, we first dispose of the easy case $p \ll n^{-1/d_2(H)}$. 
Here
\[
\Psi^p_H = n^2 p \, n^{\vH - 2}p^{\eH - 1} \ll n^2 p \,n^{\vH - 2}(n^{-1/d_2(H)})^{\eH - 1} 
=  n^2p 
\] 
(using $e_H>1$ for the inequality; see \eqref{Otriv}); 
so, with $X_H$ the number of copies of $H$ in $G$, Markov's Inequality gives
$\pr(X_H > n^2p/3) =o(1)$.
But since $p\gg n^{-2}$, 
Proposition~\ref{prop:routine} says  $|G| \sim n^2p/2$ w.h.p.  So w.h.p.\ $X_H = o(|G|)$, 
and the 0-statement follows.

For larger $p$---say 
\beq{prange'}
n^{-1/d_2(H)}/\log n < p < (1-\eps)p^*
\enq
 ---we use the
second moment method.
With $Z_{xy}$ the indicator of the event $\{xy \in G\} \wedge \{\bft(x,y)=0\}$
($x,y\in V$)
and $Z=\sum Z_{xy}$, it is enough to show
\beq{EZ}
\E Z =\go(1)
\enq
and,
for distinct $\{x,y\}$,$\{u,v\}\in \binom{V}{2}$,
\beq{EZxy}
\mathbb{E}[Z_{xy}Z_{uv}] < (1+o(1))\mathbb{E}[Z_{xy}]^2.
\enq
(Then $\mathbb{E}Z^2\sim \mathbb{E}[Z]^2$ gives 
the 0-statement (namely $\pr(Z= 0)\ra 0$)
via Chebyshev's Inequality.)

By Theorem \ref{thm:Harris},
$\pr(\bft(x,y)=0) > (1-p^{\eH-1})^{\gL^1} > \exp[-\gL^p-o(1)]$,
whence
\begin{align}
\label{eq:LemmaThresh0}
\mathbb{E}[Z_{xy}] > p\exp[-\gLp-o(1)].
\end{align}
This gives \eqref{EZ} (since, by \eqref{prange'} and \eqref{gLup*},
$\gLp =(p/p^*)^{\eH-1}\gL^* \lesssim (1-\eps)^{\eH-1}\log(\C{n}{2}p)$),
and also \eqref{EZxy} when combined with
\[
\mathbb{E}[Z_{xy}Z_{uv}] ~\leq ~ p^2\cdot \pr(\bft(x,y)=\bft(u,v)=0) ~
\leq ~ p^2\exp[-2 \gLp+o(1)].
\]
Here the first inequality is given by Theorem \ref{thm:Harris}
(since $\{xy, uv \in G\}$ and $\{\bft(x,y)=\bft(u,v)=0\}$
are increasing and decreasing respectively), and, since we assume 
\eqref{prange'}, the second is given by Lemma~\ref{JansApp}.

\end{proof}

\nin
\begin{proof}[Proof that Theorem~\ref{thm:Hprecise} implies Theorem~\ref{lem:Qsharpthresh'}.]

This is again routine and we aim to be brief. Lemma~\ref{lem:Qsharpthresh} gives the 1-statement
(which is the interesting part).  For the 0-statement,
it is enough to say that for relevant $p$, $G = G_{n,p}$ w.h.p.\ contains an edge lying
in a member of $\cW_H$ but not in any (copy of) $H$. 
This is again given by Lemma~\ref{lem:Qsharpthresh} if $p$ is large enough that \emph{all} edges
are in members of $\cW_H$, which is (trivially) 
true for all relevant $p$ when $\cW_H \in \{\cE, \cD\}$, and for $p>(1+\gO(1))\log n/n$ if 
$\W_H\sub \cee$ by Claim~\ref{evencyc}.

For smaller $p$ (and $\W_H\sub \cee$), 
Claim~\ref{evencyc} says that w.h.p.\ the largest even cycle length in $G$ is 
$\go(1)$ if $p> (1-o(1))/n$
and $\gO(n)$ if $p> (1+\gO(1))/n$.  On the other hand, the expected number of
$H$'s in $G$ is less than $\Psi^p_H= n^{\vH} p^{\eH} = O((np)^{\eH})$ (see \eqref{Otriv}), which is $O(1)$ if $p = O(1/n)$ and $O(\log^{\eH} n)$ when $p = O(\log n / n)$. Since number of edges in $H$'s is
w.h.p.\ less than $\go \cdot \Psi^p_H$ for any $\go=\go(1)$, in the range under discussion, the $H$'s w.h.p.\  don't cover
the edges of the longest even cycle in $G$.

\end{proof}

\section{Proof of Lemma~\ref{lem:couplingdown}} 
\label{PLCD}

The proof 
here adapts that of
Lemma~2.3 of \cite{BK} to allow for the different possibilities for $\cW_H(G)$. 
By Corollary~\ref{cor:pip}, there is a $K>1$ such that if $p > Kp^*$, 
then w.h.p.
\begin{align}
\label{eq:sigmaLB}
\bfs^p(x,y) = \Omega(\pi)\,\,\,\,\forall \{x,y\}\in\Cc{V}{2},
\end{align}
where $\pi=\pi(n,p)$ is as in \eqref{eq:pi1},\eqref{eq:pi2} (with $c$ as in the
corollary).  We prove 
Lemma~\ref{lem:couplingdown} with this $K$.

We work in the coupling framework of
Section~\ref{subsec:Coupling}, taking $q = Kp^*$ and $G_0=G_{n,q}$.
For Lemma \ref{lem:couplingdown} it is enough to show
\[   
\pr(\{G \notin \mathcal{T}\} \; \wedge \; \{G_0 \in \mathcal{T}\}) \ra 0,
\]   
which, since $G_0\in \T$
implies $F_0\in \cW_H^\perp(G_0)$
(since $F_0\in \cee_H^\perp(G_0)$; see \eqref{F0eperp}), will follow from
\beq{fGF0}
\pr(\{F \neq \0 \} \; \wedge \; \{F_0 \in \cW_H^\perp(G_0)\})\ra 0.
\enq
So it will be enough to show that
\beq{F0cee}
F_0 \notin \cW_H^\perp(G_0)
\enq
follows (deterministically) from
\beq{F0}
F \neq \0
\enq
combined with various statements that we already know hold w.h.p.
This is not hard, but is more circuitous than one might wish.

\mn
{\em A convention.}
To slightly streamline the presentation we agree
that in this argument, appeals to a probabilistic statement $X$---e.g.
``$X$ implies'' or ``by $X$''---actually refer to
{\em the conclusion of} $X$,
which conclusion will always be something that $X$ says holds w.h.p.
The appeals to Lemma~\ref{LemCG}
and Proposition~\ref{prop:Frightsize} in the next paragraph are first examples
of this.

\medskip
If \eqref{F0} holds, then
\eqref{eq:sigmaLB} and \eqref{xyF}
(for the lower bound) together with
Lemma~\ref{LemCG} (for the upper)
imply 
\beq{couple1}
\Omega(\pi) <|F| < n^2p/10.
\enq
Since $\pi q/p \gg 1$ (using $\gL^p = (p/p^*)^{e_H-1}\gL^*$ and \eqref{gLH}
if $\pi$ is as in \eqref{eq:pi1}), the lower bound in \eqref{couple1} and the first part
of Proposition~\ref{prop:Frightsize} give $|F_0|\sim |F|q/p$, so
\beq{couple2}
1 \,\ll\, |F_0| < (1+o(1)) n^2q/10;
\enq
so, since $|G_0|\sim n^2q/2$ (by Proposition~\ref{prop:routine}), 
$F_0\not\in \{\0,G_0\}$, which already gives \eqref{F0cee} if 
$\cW_H \in \{ \cE, \cD\}$.

If instead $\cW_H\subseteq \cC$, then Proposition~\ref{prop:routine} (with \eqref{q}),
\eqref{dFdG} and the second part of Proposition~\ref{prop:Frightsize} give
\[
d_{F_0}(v) <(1+o(1))nq/2 \,\,\,\forall \, v \in V.
\]
Thus, setting $H_0=G_0\sm F_0$ and recalling the approximate ($nq)$-regularity of $G_0$
given by Proposition~\ref{prop:routine}, we have
\beq{couple3}
d_{H_0}(v) > (1-o(1))nq/2 \,\,\,\forall \, v \in V.
\enq
We claim that this, with Proposition~\ref{cpts} (applied to $G_0$), implies
\beq{H0nice}
\mbox{$H_0$ is connected (spanning) and nonbipartite.}
\enq
This gives \eqref{F0cee} because then, for any $xy\in F_0$, $(x,y)$-path $P$ in $H_0$,
and odd cycle $C$ in $H_0$, either $P\cup\{xy\}$ or its symmetric difference (i.e.\ sum)
with $C$ (or both if $\W_H=\cee$) is a member of $\W_H(G_0)$ meeting $F_0$ exactly once.
\begin{proof}[Proof of \eqref{H0nice}]
Suppose $X$ and $Y$ are components of $H_0$.  
By \eqref{couple3} and
Proposition~\ref{cpts} (applied to $H_0$), we have $|X|,|Y| > n/3$, which implies $X=Y$:
otherwise $X$ and $Y$ are disjoint and we have the contradiction
\[
(1-o(1))n^2q/9 < |\nabla_{G_0}(X,Y)| \leq |F_0| < (1+o(1)) n^2q/10,
\]
where the first inequality is given by Proposition~\ref{density} (applied to $G_0$),
the second holds because $\nabla_{G_0}(X,Y)\sub F_0$,
and the third is given by \eqref{couple2}.

Similarly, if $X\cup Y$ is a bipartition for $H_0$, say with $|X|\geq n/2$,
then Proposition~\ref{density} and \eqref{couple2}
give the contradiction
\[
(1-o(1))n^2q/8 < |G_0[X]| < |F_0| < (1+o(1)) n^2q/10.\qedhere
\]
\end{proof}

This completes the proof of Lemma~\ref{lem:couplingdown}.

\section{Proof of Lemma~\ref{lem:p=O(p)'}}
\label{SEC:LemmaP=O(p*)}

\mn
Here we introduce the main assertions, Lemmas~\ref{lem:Claim2}
and \ref{Rlemma}, underlying
Lemma~\ref{lem:p=O(p)'}, and prove the latter assuming them.
The supporting lemmas themselves are then proved in Sections~\ref{PC2}-\ref{PLJ2}.

Note that for the proof of Lemma~\ref{lem:p=O(p)'},
Lemma~\ref{lem:Qsharpthresh} allows us to restrict attention to the range
\begin{align}
\label{prange}
(1-\eps)p^*< p< Kp^*
\end{align}
(for any fixed
$\eps>0$), throughout which we have (repeating \eqref{gLH})
\beq{gL}
\gL^p\asymp \log n.
\enq
Note also that Lemma~\ref{LemCG} says
it's enough to show that for a given $\lambda=\lambda(n) \ra 0$,
\begin{align}
\label{eq:ETS1}
\pr(\{G \in \cQ\} \; \wedge \; \{0 < |F| < \gl n^2p/2\}) \ra 0.
\end{align}

We again work with the coupling of Section~\ref{subsec:Coupling},
now taking 
\beq{qp}
q=\vartheta p
\enq
with a {\em fixed} $\vartheta \in (0,1)$
small enough to support the discussion below
(the rather mild constraints on $\vt$ are at
\eqref{eq:GminusG0capR} and \eqref{vt.constraint2}).
Define the random variables $\alpha$ and $\ga_0$ by
\begin{align}
\label{eq:defalpha}
\mbox{$|F| = \alpha n^2p/2~$ and $~|F_0| = \alpha_0 n^2q/2$.}
\end{align}

We now use $\bfs_0$ for the counterpart of $\bfs$ in $G_0$ 
(we will not need $\bft_0$).
For $S \subseteq G$, a bridge on $xy$
is $S$-\emph{central}
if it contains an odd number of edges of $S$, at
least one of which does not meet $\{x,y\}$.
We use $\bfs(xy;S)$ for the maximum size of a set of internally disjoint
$S$-central $xy$-bridges, and $\bfs_0(xy;S)$ for the corresponding quantity in $G_0$.

With $c$ to be ``specified'' below (it is the $c$ in Lemma~\ref{lem:71up}, which will 
eventually be identified following \eqref{xyFgs}), set 
\beq{R}
R(S) = \{ \{x,y\} \in \Cc{V}{2} : \bfs_0(xy;S) > c \gL^q\}
\enq
and define events
\[     
\cR= \{|F \cap R(F_0)| \geq c\alpha n^2p\}
\]    
and
\[
\pee =\{0 < |F| < \lambda n^2p/2\}
\]
(the second conjunct in \eqref{eq:ETS1}).

Of the following two main points, 
Lemma~\ref{lem:Claim2} is similar to \cite[Lemma 7.1]{BK}, and its proof is to some extent also 
similar (see Section~\ref{PC2} for more on this).  
In contrast, though they play corresponding roles, there is no relation between 
Lemma~\ref{Rlemma} and \cite[Lemma 7.2]{BK}; rather the former,
which we consider the most important part of this whole story,
is where the present work most thoroughly diverges from \cite{BK}.

\begin{lemma}
\label{lem:Claim2}
There is a fixed $\eps>0$ such that for $p$ as in \eqref{prange}, w.h.p.
\begin{align}
\label{eq:Claim2statement'}
G \in \mathcal{Q}\wedge\pee \;\; \Rightarrow \;\;
G\in \cR.
\end{align}
\end{lemma}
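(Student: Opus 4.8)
The plan is to reduce, via the coupling of Section~\ref{subsec:Coupling}, to a purely structural statement about $G=G_{n,p}$. The key observation is: for $xy\in F$ and a copy $K$ of $H$ on $xy$ with bridge $B=K-xy$, since $F\in\cee_H^\perp(G)$ the set $F\cap B$ has odd (hence positive) size, and if moreover $B$ is $F$-\emph{central} (i.e.\ $F\cap B$ contains an edge avoiding $\{x,y\}$) and $B\sub G_0$, then $F_0\cap B=F\cap B$, so $B$ is an $F_0$-central $xy$-bridge in $G_0$. Internally disjoint $xy$-bridges are automatically edge-disjoint (each edge of an $xy$-bridge meets an internal vertex of that bridge), so, conditionally on $G$, the number of members of a maximum internally disjoint family of $F$-central $xy$-bridges in $G$ --- there are $\bfs(xy;F)$ of them --- that survive into $G_0$ is distributed as $\mathrm{Bin}(\bfs(xy;F),\vartheta^{\eH-1})$, and this lower-bounds $\bfs_0(xy;F_0)$. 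The whole argument then rests on the structural fact, proved in Section~\ref{PC2} with $\eps$ chosen small enough for the results cited below to apply throughout the range \eqref{prange}: for suitable fixed $\gd>0$ and $C_0>2c$,
\[
\mbox{w.h.p.}\quad G\in\cQ\wedge\pee\ \Longrightarrow\ |F^*|\ge\gd|F|,\qquad\text{where}\quad F^*:=\{xy\in F:\bfs(xy;F)\ge C_0\gLp\}.
\]

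Granting this, the lemma follows quickly. Conditionally on a $G$ with $G\in\cQ\wedge\pee$, Chernoff's Inequality gives, for each $xy\in F^*$, that $\bfs_0(xy;F_0)\ge\tfrac{1}{2}\vartheta^{\eH-1}C_0\gLp>c\gL^q$ with probability $1-n^{-\gO(1)}$ (using $\gL^q=\vartheta^{\eH-1}\gLp$ and $\gLp\asymp\log n$; see \eqref{gL}), i.e.\ $xy\in R(F_0)$. Hence $\E[\,|F^*\sm R(F_0)|\mid G\,]\le|F^*|\,n^{-\gO(1)}$, and since $|F|\ge1$ on $\pee$, Markov's Inequality gives $\pr(|F^*\sm R(F_0)|\ge\tfrac{\gd}{2}|F|\mid G)=n^{-\gO(1)}$; taking expectations over $G$, $\pr(G\in\cQ\wedge\pee\ \wedge\ |F^*\sm R(F_0)|\ge\tfrac{\gd}{2}|F|)=o(1)$. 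On the complementary event, $|F\cap R(F_0)|\ge|F^*|-\tfrac{\gd}{2}|F|\ge\tfrac{\gd}{2}|F|\ge c\alpha n^2p$ once $c\le\gd/4$, i.e.\ $G\in\cR$. (Note there is no summation over possible $F$'s: $F=F(G)$ is determined by $G$.)

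It remains to attack the structural fact. By Theorem~\ref{thm:Spencer} together with a Lemma~\ref{311}-type statement (that w.h.p.\ all but $O(1)$ of the $xy$-bridges are pairwise internally disjoint), w.h.p.\ $\bft(xy)=\Theta(\gLp)$ and $\bfs(xy;F)\ge\bft(xy;F)-O(1)$ for all $xy$, where $\bft(xy;F)$ counts $F$-central $xy$-bridges. Now fix $xy\in F$: each copy $K$ of $H$ on $xy$ carries a second $F$-edge (besides $xy$), which either avoids $\{x,y\}$, in which case $K-xy$ is $F$-central, or else forces $K$ to contain a cherry (a $2$-edge path) through $x$ or through $y$ whose other edge is an $F$-edge $xz$ (resp.\ $yz$) with $z$ internal to $K$. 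A density estimate in the style of the proof of Lemma~\ref{311} --- using strict $2$-balance of $H$ through \eqref{S2Bequiv} and the upper tails of Lemma~\ref{Erho} --- shows that w.h.p.\ every cherry of $G$ lies in $O(1)$ copies of $H$, so the copies of the second kind on $xy$ number $O(d_F(x)+d_F(y))$. Hence $\bfs(xy;F)\ge\bft(xy)-O(d_F(x)+d_F(y))-O(1)$, so $xy\in F^*$ whenever $d_F(x)+d_F(y)\le D$ for a suitable $D=\Theta(\log n)$.

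The remaining --- and, I expect, hardest --- point is to show that w.h.p., on $\cQ\wedge\pee$, a fixed fraction of the edges of $F$ have both endpoints of $F$-degree at most $D/2$; equivalently, that the ``heavy'' part of $F$ (its edges meeting $\{v:d_F(v)>D/2\}$) does not account for essentially all of $F$. This is where $F\in\cee_H^\perp(G)\sm\W_H^\perp(G)$ and $|F|<\lambda n^2p/2$ must genuinely be used: a vertex $v$ of high $F$-degree all of whose incident $F$-edges are ``bad'' forces, via the cherry-counting above, a large internally disjoint family of copies of $H$ through the edges at $v$ whose second $F$-edges are all pinned to the two relevant endpoints --- a dense/linked configuration which, summed over the comparatively few high-$F$-degree vertices (an honest first-moment estimate, again leaning on strict $2$-balance), is w.h.p.\ impossible unless $F$ locally resembles a cut; but for Eulerian $H$ a cut lies in $\cee^\perp(G)\sub\W_H^\perp(G)$, while in the non-Eulerian cases such an $F$ would put a vertex of odd $F$-degree at an odd-degree vertex of some copy of $H$ --- either way contradicting $F\in\cee_H^\perp(G)\sm\W_H^\perp(G)$. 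This last portion, modeled on \cite[Lemma 7.1]{BK}, is carried out in Section~\ref{PC2}.
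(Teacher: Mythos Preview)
Your framework (prove a ``$G$-only'' structural lemma, then couple down) is right and matches the paper, and your coupling-down step is essentially the paper's.  The gap is in the structural lemma itself.

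Your reduction says: via the cherry bound, $\bfs(xy;F)\ge\bft(xy)-O(d_F(x)+d_F(y))$, so $xy\in F^*$ once $d_F(x)+d_F(y)\le D$ for some $D=\Theta(\log n)$; and it then remains to show that a fixed fraction of $F$-edges have both endpoints of $F$-degree $\le D/2$.  But this last claim is \emph{false} over most of the range of~$\pee$.  If $\ga\, np\gg\log n$ (which is compatible with $\ga<\gl$ for any $\gl=o(1)$), then the set $F_{\rm light}$ of $F$-edges with both endpoints of $F$-degree $\le D/2$ satisfies
\[
2|F_{\rm light}|\ \le\ \sum_{v:\,d_F(v)\le D/2} d_F(v)\ \le\ n\cdot D/2\ =\ O(n\log n),
\]
while $|F|=\ga n^2p/2\gg n\log n$; hence $|F_{\rm light}|=o(|F|)$ \emph{regardless} of the structure of $F$.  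Your sketch (``$F$ locally resembles a cut \ldots contradiction'') cannot rescue this: the obstruction is pure arithmetic, independent of $F\in\cee_H^\perp\sm\W_H^\perp$.

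The paper's proof of the structural lemma (Lemma~\ref{lem:71up}) does not try to pin down small $d_F$.  In the ``small $B$'' case ($B=\{v:\gc_v>\theta\}$, $\gc_v=d_F(v)/d_G(v)$) it bounds $\sum_{xy\in F}\bft^*(xy)$ via the sharper estimate (Lemmas~\ref{lem:spreadeq}, \ref{spreaderr}) that the number of $H$-copies through $v$ with $\ge 2$ edges in $\nabla_F(v)$ is $O(\gc_v^2\,\E M_v)$ --- not $O(d_F(v)^2)$ as your cherry bound would give; the latter loses a factor $np/\log n$ and is too weak even here.  In the ``large $B$'' case (which, per the previous paragraph, is exactly where your reduction collapses) the paper finds central bridges \emph{directly}: for each $v\in B$ it uses Lemma~\ref{spread} to count copies of $(H;x)$ on $v$ with a prescribed odd number $\ge 3$ of $\nabla_F(v)$-edges at $v$, each of which is central for some $vw\in F$.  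This is what forces the choice of a vertex $x$ of $H$ of degree $\ge 3$ (and the separate treatment of cycles via Lemma~\ref{CycleSpread}).

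A secondary point: the cherry claim itself is not as immediate as you suggest.  \eqref{S2Bequiv} gives strict balance of $[xy,H]$, not of $[W,H]$ for a cherry $W$, and Lemma~\ref{Erho} delivers only polylogarithmic upper bounds.  An argument in the style of Lemma~\ref{311} can presumably be made to work, but this would need to be written out --- and, as above, even granting it the resulting bound is too crude.
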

\nin
(In other words,
$\pr(G \in \mathcal{Q}\wedge\pee\wedge \ov{\cR}) \ra 0.$
Of course $\cR$ holds trivially if $F=\0$, so it's
only the upper bound in $\pee$ that's of interest here.)

\mn
{\em Remarks.}
For $\{x,y\} \in \binom{V}{2}$, $\bfs_0(x,y)$
should be on the order of $\gL^q$.
Lemma~\ref{lem:Claim2} says that, provided $G \in \cQ\wedge \pee$,
it's likely that for a decent fraction of the edges $xy$ of $F$, even
$\bfs_0(x,y,F_0)$ is of this order of magnitude---which
is
{\em un}natural if $F_0$ is small relative to $G_0$
(since then bridges should typically avoid $F_0$).
Viewed from Lemma~\ref{lem:Claim2} the parity requirement in the definition of
``central'' may look superfluous, since a
bridge of $G_0$ joining ends of an edge of $F$ necessarily has odd intersection with $F_0$;
but this extra condition
will later play a brief but important role in justifying \eqref{RSimplies}.

For $S\sub \C{V}{2}$, set
\nin
\beq{gaS}
\ga_S = 2|S|/(n^2q).
\enq

\begin{lemma}\label{Rlemma}
There is a fixed $\gd>0$ such that w.h.p.
\beq{RS}
|R(S)| = O(\ga_S^{1+\gd}n^2)
\enq
for every $S\sub G_0$ with $\ga_S< 2\gl$. 
\end{lemma}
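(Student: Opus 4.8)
The plan is to bound, for each fixed value (up to constants) of $\ga_S$, the probability that some $S\sub G_0$ with $\ga_S$ of that size has $|R(S)|$ too large, and then take a union over a dyadic scale of values of $\ga_S$. Fix $S\sub G_0$ with $|S| = s$, so $\ga_S \asymp s/(n^2q)$. By definition, $\{x,y\}\in R(S)$ means there are more than $c\gL^q$ internally disjoint $S$-central $xy$-bridges in $G_0$; each such bridge $K$ is a copy of some $[(a,b),H]$ on $(x,y)$ (with $ab$ ranging over ordered-edge types of $H$) that uses an odd number of $S$-edges, at least one of them internal. The key structural input is that such a bridge, \emph{together with} the requirement that it touch $S$ in an edge not meeting $\{x,y\}$, is a somewhat denser-than-generic structure: we are forcing one of its internal edges (or an edge not at $x,y$) to lie in the already-chosen sparse set $S$ rather than in a fresh random position. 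Since $H$ is strictly $2$-balanced, Proposition~\ref{LS2Bd} says $[x,H]$ is strictly balanced with $d(x,H) < d_2(H)$, and more generally every proper ``piece'' of an extension of $H$ is strictly subcritical; this is what makes ``hitting $S$'' costly.

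Concretely, I would estimate the expected number of pairs $\{x,y\}$ together with a witnessing family of $c\gL^q$ internally disjoint $S$-central bridges. Enumerate the bridges one at a time; a bridge contributing a \emph{new} internal edge into $S$ costs a factor roughly $(s/n^2)$ in place of $p$ for that edge, i.e.\ a gain of $s/(n^2 q) \asymp \ga_S$ relative to the generic count $\gL^q$ of bridges. Bridges that reuse $S$-edges already accounted for are controlled by the internal-disjointness and by the ``no two edges in the same copies'' type of sparsity estimates from Section~\ref{CandE} (Lemmas~\ref{disext}, \ref{311}, \ref{PropSB}, \ref{Erho} and especially the bridge-count machinery of Theorem~\ref{thm:Spencer} and Corollary~\ref{cor:pip}). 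The upshot should be that the expected number of $S$-central $xy$-bridges is $O(\ga_S \gL^q)$ per pair (rather than $\Theta(\gL^q)$), so demanding more than $c\gL^q$ of them, \emph{internally disjoint}, is a large-deviation event; here I would invoke a Janson-type upper-tail-free argument: really what I want is that $R(S)$ is small \emph{in expectation}, namely $\E|R(S)| = O(\ga_S^{1+\gd}n^2)$ for a fixed $\gd>0$, using that requiring a linear-in-$\gL^q$ count of disjoint bridges each individually rare multiplies probabilities, plus the exponent-margin lemmas (\ref{balcons}, \ref{cchoice}) to absorb the $\gL^q = \tilde\Theta(\log n)$ factors and produce the extra $\ga_S^\gd$. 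Then Markov's inequality gives $\pr(|R(S)| > \ga_S^{1+\gd}n^2) $ small, in fact small enough to survive a union bound over all $O(\binom{n^2}{s})$ choices of $S$, because the gain $\ga_S^\gd$ per element of $S$ (a factor like $(s/n^2q)^\gd$) beats the entropy $\binom{n^2}{s} \le (en^2/s)^s$ once we are past the trivial range.

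The union over $S$ is handled by dyadic bucketing: fix $s = 2^j$ for $j$ up to $\log_2(\gl n^2 q)$ (the range $\ga_S < 2\gl$), and in each bucket bound $\pr(\exists S, |S|=s, |R(S)| > C\ga_S^{1+\gd}n^2)$ by $\binom{n^2}{s}$ times the per-$S$ Markov bound; summing the resulting geometric-ish series over $j$ gives $o(1)$. A subtlety I'd need to watch: the definition of $R(S)$ uses $G_0 = G_{n,q}$, and $S$ is itself a (random, but fixed-in-advance for the union bound) subset of $G_0$, so I should condition appropriately — choose $S$ first as an arbitrary $s$-subset of $\Cc{V}{2}$, reveal $G_0 \supseteq S$ via coupling-up (Section~\ref{subsec:Coupling}(B)), and then the bridge edges are fresh randomness on $\ov S$; the $S$-edges used by bridges are ``free'' (present with probability $1$) which is exactly what produces the $\ga_S$ gain rather than a loss. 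The main obstacle, and the part needing genuine care rather than bookkeeping, is getting the \emph{uniform-over-all $S$} large-deviation bound for the count of internally disjoint $S$-central bridges with enough of a power saving $\ga_S^\gd$ to beat the binomial entropy at \emph{every} scale $s$ simultaneously — this is where the strict $2$-balance of $H$, the margins in Proposition~\ref{balcons}, and a careful Janson/union-bound balancing all have to cooperate, and it is the step I'd expect to occupy most of the real work in Sections~\ref{PL7.2}--\ref{PLJ2}.
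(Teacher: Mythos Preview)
Your proposal has a genuine gap at the decisive step: the union bound over $S$ cannot be beaten by the per-$S$ estimates you describe. You write that Markov on $|R(S)|$ (fed by a per-pair bound $\Pr((x,y)\in R(S))\lesssim \ga_S^{c\gL^q}$) will be ``small enough to survive a union bound over all $\binom{n^2}{s}$ choices of $S$, because the gain $\ga_S^\gd$ per element of $S$ beats the entropy.'' But Markov does not produce a per-element-of-$S$ gain: it yields a single factor
\[
\E|R(S)|/(\ga_S^{1+\gd}n^2)\le \ga_S^{c\gL^q-1-\gd},
\]
and since $\gL^q\asymp\log n$ this is at best $\exp[-O(\log n\cdot\log(1/\ga_S))]$, i.e.\ quasi-polynomially small in $n$. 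The entropy over $S\sub G_0$ of size $s=\ga_S n^2q/2$ is $\exp[\Theta(s\log(1/\ga_S))]$, which for $\ga_S$ near $\gl$ (a small constant, say) is $\exp[\Theta(n^2q)]$---vastly larger than any quasi-polynomial bound. No refinement of the per-pair estimate rescues this: to beat the entropy you would need $\Pr(|R(S)|>\ga_S^{1+\gd}n^2)\le\exp[-\gO(s\log(1/\ga_S))]$, and there is no concentration mechanism for $|R(S)|$ that delivers this directly.

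The paper's argument takes a different route, and this is where its main new ideas lie. First (Lemma~\ref{rhoCor}) it converts ``$R_\ell(S)$ large'' into ``many $S$-ropes'': ordered pairs of internally disjoint $(\ell,S)$-bridges on the same $(x,y)$ assemble into copies of an auxiliary graph $H^*$ with two distinguished edges in $S$, and these restrict to copies of a carefully chosen minimal subgraph $K^*\sub H^*$ (the one minimizing $\gL_{K^*}$). Second (Lemma~\ref{MP2}) it bounds the rope count $\rho(S,S)$ uniformly over $S$---but \emph{not} by a union over $S$. Instead, via Proposition~\ref{Propu} it passes to a representative $U\sub S$ of size $u=u_\ga\ll s$ (see \eqref{defineu}) with controlled maximum degree, rewrites the problem through the identity \eqref{t} as a lower-tail question for $X_U=|\{xy\in G_0:\rho(xy,U)\neq 0\}|$, and then (Lemma~\ref{J1}) applies the generalized Janson inequality Theorem~\ref{JDK} to show $X_U>\mu_u-O(\gz\mu_u)$ with failure probability beating the much smaller entropy $\binom{n^2}{u}$. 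Your instinct that strict $2$-balance and Proposition~\ref{balcons}-type margins are essential is right, but they enter in the delicate $\ogd$ and $\gc$ computations of Section~\ref{PLJ2}, not in a direct union over $S$.
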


\mn
{\em Preview.}
The proof of Lemma~\ref{lem:p=O(p)'} is based mainly on
``coupling up'': using information about $(G_0,F_0)$ to constrain what
happens when we choose $G\sm G_0$.
On the other hand, the proof of 
Lemma~\ref{lem:Claim2} in Section~\ref{PC2}
is based on ``coupling down'': most of the work there is devoted to
the proof of a similar statement (Lemma~\ref{lem:71up}) involving only $G$,
from which the desired hybrid statement follows easily via coupling.
In sum, we couple down to show that $\cR$ is likely
(precisely, the conjunction of its failure and $\cQ\wedge\pee$ is unlikely),
and couple up to show it is {\em un}likely.
A little more on the latter:

We would like to say that 
$\pee\wedge\cR$
is unlikely, which gives \eqref{eq:ETS1} via Lemma~\ref{lem:Claim2}.
But if $F\neq \0$ (as in $\pee$) and \eqref{RS} holds (for all $S$ as in Lemma~\ref{Rlemma}),
then the lower bound on
$|G \cap R(F_0)|$ ($= |F \cap R(F_0)|$) in
$\cR$ is
larger by a crucial factor $\ga^{-\gO(1)}$ than $|R(F_0)|p$---its
natural value when we ``couple up''---which {\em ought} to make $\cR$ unlikely.
But of course $F_0$ depends on $G$ (not just $G_0$); so, given
$G_0$, we are forced to sum
the probability of this supposedly unlikely event over possible values $S$ of
$F_0$, which turns out to mean that our argument collapses if we 
replace the above $\ga^{-\gO(1)}$ by $\ga^{-o(1)}$.
(Recall $\pee$ says $\ga$ is small.)

A word on presentation.
We prove the desired
\beq{QP}
\pr(\cQ\wedge \pee) = o(1)
\enq
(= \eqref{eq:ETS1})
by producing a list of unlikely events and showing
that at least one of these must hold if $\cQ\wedge\pee$ does.
A
more intuitive formulation might, for example, begin:
``By Lemma~\ref{lem:Claim2} (since we assume $\cQ\wedge \pee$), {\em we may assume}
$\cR$.''
But note this would really mean, not that we
{\em condition} on $\cR$---not something we can hope to understand---but
that we need only bound probabilities $\pr(\sss\wedge \cR)$
for $\sss$'s of interest, and for a formal discussion this
seems most clearly handled by something like the present approach.

\mn

For the derivation of Lemma~\ref{lem:p=O(p)'}
we need two more events (supplementing $\pee,\cQ,\cR$ above).
The first of these is simply
\[
\sss = \{ \ga_0\sim \ga\}
\]
(i.e.
for any $\eta>0$, $\ga_0=(1\pm \eta)\ga$ for large enough $n$;
recall $\ga,\ga_0$ were defined in \eqref{eq:defalpha}).
The second, which we call $\T$, is the conjunction of two properties of $G_0$
that we already know hold w.h.p., namely  (i) $|G_0|\sim n^2q/2$ (see Proposition~\ref{prop:routine}),
and (ii) \eqref{RS} holds for all $S$ as in Lemma~\ref{Rlemma}.

\begin{proof}[Proof of Lemma~\ref{lem:p=O(p)'}]
We first observe that (a secondary use of $\cR$)
\beq{RSo}
\mbox{w.h.p. $\cR\wedge \{F\neq\0\} \Ra \sss$.}
\enq
(Of course $\{F\neq\0\}$---given by $\pee$---could be omitted here since 
$\sss$ is automatic if $F=\0$.)
\begin{proof}[Proof of \eqref{RSo}]

If $F\neq \0$ (i.e.\ $\alpha> 0$) and $\cR$ holds, then
$F\cap R(F_0)\neq \0$, while by \eqref{xyF}, for any $xy\in F\cap R(F_0)$,
\[
|F|> \bfs(xy)\geq \bfs_0(xy;F_0)
> c\gL^q = \Omega(\log n).
\]
But then (since $\log n \gg p/q$)
Proposition \ref{prop:Frightsize} says that w.h.p.\
$|F_0| \sim \vartheta |F|$, which is the same as $\sss$.\qedhere

\end{proof}
Another easy (now deterministic) observation is that, provided $\vt$ is sufficiently small,
\beq{RSimplies}
\cR\wedge\{F\neq\0\} \wedge \sss ~\Longrightarrow ~|(G \setminus G_0) \cap R(F_0)| 
>  0.9c\alpha n^2p.
\enq
\begin{proof}

Note it is always true that
$     
G_0 \cap R(F_0) \subseteq F_0,
$    
since for any $xy \in (G_0 \cap R(F_0)) \setminus F_0$ there is an $xy$-bridge (actually, many such)
having odd intersection with $F_0$, and adding $xy$ to such a bridge would produce a copy of $H$
having odd intersection with $F_0$.
(This is the above-mentioned reason for ``odd'' in the definition of central.)
So if $\cR$, $\{F_0\neq \0\}$ and $\sss$ hold (and $\vt $ is slightly small), then
\begin{align}
\label{eq:GminusG0capR}
|(G \setminus G_0) \cap R(F_0)| ~\geq ~|F \cap R(F_0)| - |F_0|
~> ~c \alpha n^2p - (1+o(1))\ga n^2q/2  ~>~ 0.9c\alpha n^2p.
\end{align}\qedhere

\end{proof}

So the conjunction of $\pee,\cR,\sss$ and $\T$ implies
(again deterministically) the event---call it $\U$---that
$|G_0|< n^2q$ (say) and there is an $S\sub G_0$
(namely the one that will become $F_0$) satisfying 
\beq{U}
\mbox{$\ga_S<2\gl$, $|R(S)|=O(\ga_S^{1+\gd}n^2)$,
and $|(G \setminus G_0) \cap R(S)| >  0.8c\ga_S n^2p$}.
\enq
(The 0.8 is just to allow for the difference between $\ga_S$ and $\ga$.)

Thus, finally, for \eqref{eq:ETS1} it is enough to show the routine 
\beq{Utoshow}
\pr(\U) =o(1).
\enq
(It is enough because: since $\ov{\U}$ implies $\ov{\pee}\vee\ov{\cR}\vee\ov{ \sss}\vee\ov{\T}$,
\eqref{Utoshow} implies
\[
\pr(\cQ\wedge (\ov{\pee}\vee\ov{\cR}\vee\ov{ \sss}\vee\ov{\T}))=\pr(\cQ)-o(1);
\]
but the l.h.s.\ here is at most
\[
\pr(\cQ\wedge \ov{\pee}) + \pr(\cQ\wedge\pee\wedge\ov{\cR}) +
\pr(\pee\wedge \cR\wedge \ov{ \sss})+\pr(\ov{\T})=\pr(\cQ\wedge\ov{\pee})+o(1),
\]
the second and third terms on the l.h.s.\ being $o(1)$ by Lemma~\ref{lem:Claim2} and \eqref{RSo}
respectively;
so $\pr(\cQ\wedge \pee) =\pr(\cQ)-\pr(\cQ\wedge \ov{\pee}) =o(1)$.)

\begin{proof}[Proof of \eqref{Utoshow}]
Given $G_0$, $S$, we have
$|(G \setminus G_0) \cap R(S)|\sim {\rm Bin}(m,p')$, with $m\leq |R(S)|$ and
$p'<p$
defined by $(1-q)(1-p')=1-p$
(as in (B) of Section~\ref{subsec:Coupling}).
So for $|R(S)|$ as in \eqref{U},
Theorem~\ref{thm:Chernoff'} gives
\[
\pr(|(G \setminus G_0) \cap R(S)| >  0.8c\alpha_S n^2p) < \exp[- \gO(\ga_Sn^2p\log (1/\ga_S))],
\]
where the implied constant depends on $\gd$ but not on $\vt$.
Thus, assuming $|G_0|< n^2q$ (as in $\U$),
setting $\ga_s = 2s/(n^2q) $ 
(where $s$ will be $|S|$, so $\ga_s=\ga_S$),
and
summing over $s< 2\gl n^2q$,
we have
\begin{eqnarray}
\pr(\U|G_0) &<&\mbox{$\sum_s\binom{n^2q}{s}\exp[- \gO(\ga_sn^2p\log (1/\ga_s))]$}
\nonumber\\
&<&\mbox{$\sum_s \exp[\ga_sn^2p\{(\vt/2) \log (2e/\ga_s)- \gO(\log (1/\ga_s)\}]$},
\label{vt.constraint2}
\end{eqnarray}
which is $o(1)$ for small enough $\vt$
(which implies \eqref{Utoshow} since
$\pr(\U) =\sum\{\pr(G_0)\pr(\U|G_0):|G_0|<n^2q\}).$
\end{proof}
\end{proof}

\section{Proof of Lemma~\ref{lem:Claim2}}\label{PC2}

Fix $\eps>0$ and $p$ as in \eqref{prange} with $\eps$
small enough to support the proof of Propositions~\ref{prop:Hcomps} below; this is our
only constraint on $\eps$, and it will be clear it is satisfiable.

Most of our effort here is devoted to proving the following
variant of
Lemma~\ref{lem:Claim2} in which we replace $\bfs_0(xy,F_0)$ by
$\bfs(xy,F)$ and $q$ by $p$.

\begin{lem}\label{lem:71up}
There is a fixed $c>0$ such that 
w.h.p.
\beq{eq:71up}
G \in \cQ \wedge \pee \implies |\{xy \in F : \bfs(xy, F) > 2c\gL^p\}| \geq 2c|F|.
\enq
\end{lem}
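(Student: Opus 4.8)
The plan is to assume $G\in\cQ\wedge\pee$, so that $F:=F(G)$ is nonempty with $|F|<\lambda n^2p/2$, and to exhibit at least $2c|F|$ edges $xy\in F$ carrying $\Omega(\gL^p)$ internally disjoint $F$-central bridges (which, once $c$ is small, exceeds $2c\gL^p$). First I would record the w.h.p.\ facts to condition on: by Theorem~\ref{thm:Spencer} and Corollary~\ref{cor:pip} (since $p\asymp p^*$ in the range \eqref{prange}, the $\pi$ of the corollary is $\gL^p$) one has $\bft(x,y)=\Theta(\gL^p)$ and $\bfs(x,y)=\Omega(\gL^p)$ for all $x,y$; and by the proof of Lemma~\ref{311}, each conflict graph $\gG_{xy}$ has maximum degree $O(1)$, so that any family $\mathcal B$ of $xy$-bridges contains an internally disjoint subfamily of size $\Omega(|\mathcal B|)$. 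Hence it suffices to find $\ge 2c|F|$ edges $xy\in F$ with $\Omega(\gL^p)$ \emph{$F$-central} bridges. For $xy\in F$ the ``odd'' clause in the definition of central is automatic, since $F\in\cee_H^\perp(G)$ forces $|F\cap K|$ to be even --- hence $\ge 2$ --- for every copy $K\ni xy$; so a bridge $K-xy$ is $F$-central precisely when $K$ carries an $F$-edge avoiding $\{x,y\}$.

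The engine is a parity count. Fix $xy\in F$ and a copy $K\ni xy$, and let $b_K:=|F\cap(\nabla_K(x)\cup\nabla_K(y))|$ be the number of $F$-edges of $K$ incident to $\{x,y\}$. Since $|F\cap K|$ is even, if $b_K$ is odd then $K$ has an $F$-edge elsewhere, i.e.\ $K-xy$ is $F$-central; so
\[
\#\{\,F\text{-central bridges on }xy\,\}\;\ge\;\#\{\,K\ni xy:\, b_K\text{ odd}\,\}.
\]
Now $b_K=|F\cap\nabla_K(x)|+|F\cap\nabla_K(y)|-1$, so $b_K$ is odd iff $|F\cap\nabla_K(x)|$ and $|F\cap\nabla_K(y)|$ have the same parity; and heuristically, as $K$ ranges over the copies on $xy$, these counts behave like $1+\mathrm{Bin}(d_K(x)-1,\rho_x)$ and $1+\mathrm{Bin}(d_K(y)-1,\rho_y)$ with $\rho_v:=d_F(v)/d_G(v)$. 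An elementary computation then gives ``same parity'' with probability $\Omega(1)$ \emph{unless} some endpoint $v$ has $\rho_v$ very close to $1$. For Eulerian $H$ this is impossible, since \eqref{dFdG} --- available whenever $\W_H^\perp(G)$ contains the cut space, which is the Eulerian cases of Theorem~\ref{thm:Hspace} --- gives $\rho_v\le\tfrac12$ for all $v$; so (modulo the heuristic) \emph{every} edge of $F$ is good. For non-Eulerian $H$ the vertices with $\rho_v$ near $1$ are $F$-\emph{heavy}, of $F$-degree $\asymp np$, hence only $o(n)$ in number since $|F|<\lambda n^2p/2$ with $\lambda\to0$, and the $F$-edges incident to them form a cut of $G$ small relative to $G$ by the minimality available for $F$; combined with $\pee$ these exhaust at most $(1-2c)|F|$ of $F$, so at least $2c|F|$ edges remain good.

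The step I expect to be the main obstacle --- and where ``most of the effort'' lies --- is making the heuristic above into a theorem, in a form robust enough to apply to $F=F(G)$, which is not a fixed set but is determined by $G$ and, by $\pee$, small. What is needed is: w.h.p., for every such $F$ and all but a small fraction of $xy\in F$, the $\Theta(\gL^p)$ copies of $H$ on $xy$ are sufficiently spread over the $F$-edges incident to $x$ and to $y$ that the genuine count of copies $K$ with $b_K$ odd is still $\Omega(\gL^p)$. To establish this I would lean on the extension estimates of Section~\ref{CandE} (Lemmas~\ref{disext},~\ref{PropSB},~\ref{Erho}, with strict $2$-balance of $H$ making the relevant small extensions subcritical at $p\asymp p^*$) to control the counts $\tau_{[W,H]}$ over small $W$, together with a Janson- or second-moment-type estimate in the spirit of Lemma~\ref{JansApp} to rule out concentration of the copies on $xy$ onto few neighbours; arranging matters so that the conclusion holds for $F=F(G)$ without a prohibitive union bound over candidate $F$'s --- legitimate precisely because $F$ is a function of $G$ --- is the delicate technical heart, and the part unrelated to \cite{BK}.

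Granting all this, the $\ge 2c|F|$ surviving edges $xy$ each have $\Omega(\gL^p)$ copies $K$ with $b_K$ odd, hence $\Omega(\gL^p)$ $F$-central bridges, hence (by the conflict-graph bound) $\bfs(xy,F)=\Omega(\gL^p)>2c\gL^p$, which is Lemma~\ref{lem:71up}; Lemma~\ref{lem:Claim2} then follows by the usual coupling-down, transferring from $(G,F,p)$ to $(G_0,F_0,q)$ via \eqref{F0eperp} and Proposition~\ref{prop:Frightsize}.
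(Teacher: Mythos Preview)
Your parity observation is correct and attractive: for $xy\in F$, any copy $K\ni xy$ with $b_K:=|F\cap(\nabla_K(x)\cup\nabla_K(y))|$ odd is automatically $F$-central, and the binomial heuristic is the right intuition.  But you identify, and then do not close, the real gap: turning that heuristic into a statement that holds w.h.p.\ for every $xy$ and every candidate pair $(S_x,S_y)\subseteq\nabla_G(x)\times\nabla_G(y)$.  What this needs is a \emph{two-sided} spread lemma---joint control, over copies $K\ni xy$, of the pair $(|S_x\cap\nabla_K(x)|,|S_y\cap\nabla_K(y)|)$.  The paper proves only a one-sided version (Lemma~\ref{spread}, for all $v$ and all $S\subseteq\nabla_G(v)$), and its proof does use a union bound over all $S$ (see~\eqref{prcBvF}); the ``$F$ is a function of $G$'' device is not what makes this work, and the two-sided version you need is genuinely additional work that you have not carried out.

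There is also a specific error in your non-Eulerian case.  You argue that the $F$-edges at ``heavy'' vertices ($\rho_v$ near~$1$) are few ``by the minimality available for~$F$''.  But for non-Eulerian $H$ one has $\W_H^\perp\in\{\{\emptyset\},\{\emptyset,G\}\}$, which contains no cuts; so the minimality of $F$ yields no analogue of~\eqref{dFdG}, and you cannot trade $F$ against cuts.  A direct count gives only that the heavy vertices number $O(\alpha n)$ and the $F$-edges at them number $O(|F|)$---not $(1-2c)|F|$.

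The paper's proof is organized differently and avoids both issues.  It never seeks two-sided control; instead it splits on $|B|=|\{v:\gamma_v>\theta\}|$.  When $|B|<\theta\alpha n$ the argument is \emph{indirect}: Lemmas~\ref{lem:spreadeq} and~\ref{spreaderr} show that $\sum_{xy\in F}\bft^*(xy)$ (the total of \emph{non}-central incidences) is at most a small multiple of $|F|\gL$, and one subtracts.  When $|B|\ge\theta\alpha n$ the argument is direct but \emph{one-sided}: one fixes $x\in V(H)$ of appropriate (odd) degree and, for each $v\in B$, uses Lemma~\ref{spread} to find $\Omega(n^{v_H-1}p^{e_H})$ copies $K$ on $v$ with $|\nabla_{K\cap F}(v)|$ equal to a fixed odd~$a$; each such $K$ is then central for some $vw\in F$, and summing over $v\in B$ gives~\eqref{goal71}.  (A minor correction: the part of the paper ``unrelated to~\cite{BK}'' is Lemma~\ref{Rlemma}, not the present lemma, whose proof is an elaboration of the corresponding one in~\cite{BK}.)
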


\nin
``Coupling down'' will then easily get us to Lemma~\ref{lem:Claim2} itself.
(The extra $2$'s leave some room for this.) Throughout this section we use ``central'' for ``$F$-central'' and, except in the ``coupling down'' proof of Lemma~\ref{lem:Claim2} at the end of the section, write $\gL $ for $ \gL^p$.

\mn
{\em Preview.}
The proof of \cite[Lemma 7.3]{BK}, which corresponds to the present
Lemma~\ref{lem:71up},
is indirect, showing that there are (i) many pairs $(e,K)$ 
with $K\sub G$ a copy of $H$ and $e\in K\cap F$, and (ii) significantly fewer
such pairs in which $K$ is \emph{non}central for $e$.
Something similar is needed here, but now needs to be combined
with a direct approach
in situations where we have less control over the numbers in (ii).

The ``direct'' part of the proof of Lemma~\ref{lem:71up}
depends on $H$ having a vertex of degree at 
least three; so for cycles, economizing where we can, 
we just appeal to the earlier version:

\begin{lem}[\cite{BK}, Lemma 7.3]
\label{CycleSpread}
If $H$ is a cycle of length at least four, then 
\[\cQ \wedge \cP \implies |\{xy \in F: \bfs(xy; F) > .26\gL\}| \geq .26|F|.\]
\end{lem}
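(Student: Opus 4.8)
The plan is to run the indirect double-counting argument of \cite{BK} behind this statement. Assume throughout that $\cQ\wedge\cP$ holds, and write $F=F(G)$, so that $F\in\cee_H^\perp(G)\setminus\W_H^\perp(G)$, $0<|F|<\lambda n^2p/2$, and — since $H=C_\kappa$ is Eulerian — $d_F(v)\le d_G(v)/2$ for all $v$ by \eqref{dFdG}. For $xy\in F$ a bridge on $xy$ is simply a path $P$ of length $\kappa-1$ from $x$ to $y$ in $G$, and since $xy\in F\in\cee_H^\perp(G)$ every such $P$ has $|P\cap F|$ odd; by definition $P$ is \emph{non-central} exactly when $|P\cap F|=1$ with the unique $F$-edge of $P$ meeting $\{x,y\}$. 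A short case check gives the key reformulation: a pair $(e,K)$ with $e\in F$ and $K$ a copy of $C_\kappa$ through $e$ in $G$ is non-central iff $|K\cap F|=2$ and the two edges of $K\cap F$ share a vertex (a ``cherry''), in which case $K$ yields exactly two non-central pairs; every other $K$ with $K\cap F\ne\0$ (that is, $|K\cap F|=2$ with disjoint edges, or $|K\cap F|\ge4$) yields only central pairs. Hence, writing $N$ for the number of copies of $C_\kappa$ in $G$ meeting $F$ in exactly a cherry,
\[
\#\{\text{central }(e,K)\}=\sum_{e\in F}\bft(e)-2N .
\]

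Step 1 (control $\bft$). One wants $\sum_{e\in F}\bft(e)\ge\ccc\,\gL|F|$ and $\bft(xy)\le\ccc\,\gL$ for all $\{x,y\}$. The lower bound starts from $\cQ$ (every edge of $G$ is in some $C_\kappa$) and is boosted using the extension-counting of Section~\ref{CandE} — Theorem~\ref{thm:Spencer} and Lemma~\ref{JansApp}; the upper bound follows from Lemma~\ref{PropSB}(b) (the $(\kappa-1)$-path with endpoints fixed is strictly balanced, of density $d_2(H)$) together with a union bound over the $\binom n2$ pairs. Step 2 (control $N$). Each copy counted by $N$ is a cherry $u\text{-}v\text{-}w$ with $uv,vw\in F$ completed by a path of length $\kappa-2$ from $u$ to $w$ in $G$, so $N\le\sum_{uv,vw\in F}\tau(u,w)$, where $\tau(u,w)$ counts $(\kappa-2)$-paths from $u$ to $w$. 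The $(\kappa-2)$-path with endpoints fixed is strictly balanced of density $>d_2(H)$, so Lemma~\ref{disext} applies in the range \eqref{prange}; combined with the fact that w.h.p.\ no pair of vertices supports more than $O(1)$ internally disjoint $(\kappa-2)$-paths (the expected number of two disjoint ones between a fixed pair is $n^{-\Omega(1)}$), this gives $\max_{u,w}\tau(u,w)=O(1)$ w.h.p., whence $N=O(1)\cdot\sum_v\binom{d_F(v)}{2}$. One must then bound $\sum_v\binom{d_F(v)}{2}$ by a sufficiently small multiple of $\gL|F|$; this is the heart of the matter, and it is where the minimality of $F$ must be used, to rule out the configurations (a ``book'' of cherries at a vertex of $F$-degree of order $np$, or cut-like pieces of $F$) that would otherwise allow $\sum_v\binom{d_F(v)}{2}\asymp\gL|F|$ — here it matters that $F\in\cee_H^\perp(G)\setminus\W_H^\perp(G)$, not just that $F\subseteq G$, and that $G=G_{n,p}$ has no dense spots.

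Step 3 (conclude). Granting $N\le(\text{small})\cdot\gL|F|$, the displayed identity gives $\#\{\text{central }(e,K)\}\ge\ccc\,\gL|F|$; since each $e\in F$ lies in at most $\bft(e)\le\ccc\,\gL$ copies, a Markov/averaging argument produces at least $\ccc\,|F|$ edges $xy\in F$ each lying in at least $\ccc\,\gL$ central copies of $C_\kappa$. Finally Lemma~\ref{311} — w.h.p.\ $\bft(xy)-\bfs(xy)=O(1)$, equivalently the intersection graph $\gG_{xy}$ of $xy$-bridges has bounded vertex-cover number for every $\{x,y\}$ — lets us extract from the central copies an internally disjoint central subfamily of all but $O(1)$ of them, so each such $xy$ has $\bfs(xy;F)\ge\ccc\,\gL-O(1)>.26\gL$ once $n$ is large; choosing the constants so that the surviving fraction of $F$ is at least $.26$ (which also pins down the $.26$ inside) completes the proof.

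The step I expect to be the main obstacle is the quantitative control in Step 2 — bounding $N$, i.e.\ the total number of non-central $(e,K)$ pairs, by a small fraction of $\sum_{e\in F}\bft(e)$ — with the lower bound of Step 1 a close second. Two features make this delicate: $F$ is determined by $G$, so the relevant bad event cannot be union-bounded over a fixed family of candidate subgraphs, and one is forced to argue with the single, unknown $F$; and $\bft(xy)$ is only of order $\gL\asymp\log n$ and is \emph{not} concentrated, so every estimate carries an honest constant and there is no slack — this is precisely why the conclusion has an explicit constant $.26$ rather than $1-o(1)$. In particular, because $F$ may a priori have vertices of $F$-degree as large as $np/2\gg\gL$, the trivial per-edge bound on non-central bridges is useless after summing, and one genuinely needs the minimality of $F$ (which forces $F$ to be ``spread out'' rather than star- or cut-like) to keep $N$ small.
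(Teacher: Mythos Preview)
The paper does not prove this lemma; it is quoted verbatim from \cite[Lemma~7.3]{BK}, with the remark that the \cite{BK} proof (stated there for odd cycles) works for all $\kappa\ge 4$.  So there is no ``paper's proof'' to compare to beyond the one-line description in the Preview of Section~\ref{PC2}: count all pairs $(e,K)$ with $e\in K\cap F$, then subtract the non-central ones.  Your outline follows exactly that structure, and Steps~1 and 3 are essentially right (though for Step~1 you should be citing Proposition~\ref{prop:regext} rather than Theorem~\ref{thm:Spencer}, which needs $\gL\ge C\log n$ for a large $C$ and so does not apply near $p^*$).

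The real problem is Step~2.  Your bound $N\le O(1)\cdot\sum_v\binom{d_F(v)}{2}$ via $\max_{u,w}\tau(u,w)=O(1)$ is far too crude: it throws away a factor of order $n^{1/(\kappa-1)}$, since the \emph{expected} number of $(\kappa-2)$-paths between a fixed pair is only $n^{\kappa-3}p^{\kappa-2}\asymp n^{-1/(\kappa-1)}$, not $\Theta(1)$.  With your bound, a single vertex $v$ with $d_F(v)\sim np/2$ (permitted by \eqref{dFdG}) already makes $N$ of order $(np)^2\gg \gL|F|$ when $|F|\asymp np$, so the inequality you want can fail by a polynomial factor.  The correct route---visible in this paper's Lemmas~\ref{spread}--\ref{spreaderr} and the ``$|B|<\theta\alpha n$'' case of the proof of Lemma~\ref{lem:71up}---bounds the number of cycles with both edges at $v$ in $\nabla_F(v)$ by roughly $\gamma_v^2\,\E M_v$ (with $\gamma_v=d_F(v)/d_G(v)$), giving $2N\lesssim np\,\gL\sum_v\gamma_v^2$ rather than $O(1)\sum_v d_F(v)^2$.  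Even this sharper bound is delicate for cycles: with only $\gamma_v\le 1/2$ one gets $2N\lesssim |F|\gL$, which matches the total and leaves no room.  Getting down to $.26$ requires the further argument of \cite{BK}; your appeal to ``minimality of $F$'' here is purely speculative---minimality for Eulerian $H$ enters the present paper only through \eqref{dFdG}, which you have already used and which does not by itself rule out vertices with $\gamma_v$ near $1/2$.
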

\nin
(The statement in \cite{BK} is for \emph{odd} cycles, which were the concern there,
but the proof is valid in general.
The difficulty for even cycles is in the derivation of Lemma~2.2 from 
Lemmas~7.1 and 7.2 in Section~7.)
So we now assume, until we are done with Lemma~\ref{lem:71up}, that $H$ is not
a cycle (though the assumption will not be needed until the end of the proof; see 
the paragraph containing \eqref{XvK}).

\nin\emph{Definitions. } 
\emph{For the rest of this section, $K$ is always a copy of $H$.} 
We write $K \sim K'$ when $K$ and $K'$ are distinct and share at least one edge. 
For distinct edges $e$ and $f$ of $G$, we set
\beq{eq:edgeadj}
e \sim f \iff \mbox{there is $K\subset G$ such that $e,f \in K$},
\enq
\beq{eq:edgedubadj}
e \approx f \iff \mbox{there are $K \sim K'$ with $e \in K$ and $f \in K'$,}
\enq
$S(e) = \{g \in G: e \sim g\}$, and $T(e) = \{g \in G: e \approx g\}$. For $\gc \in (0, 1)$ let
\[L(\gc) = \{\{x,y\} \in \Cc{V}{2}: \bfs(xy) < \gc \gL\}\]
and $F(\gc) = F \cap L(\gc)$. 
Finally, let $\cS$ be the event that the inequality in
\eqref{eq:tausig} holds for every $xy \in G$.

\begin{prop}\label{prop:regext}
For any fixed $\gz>0$, w.h.p.
\[G \in \cQ \implies |F(1-\gz)| = o(|F|).\]
\end{prop}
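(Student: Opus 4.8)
The plan is to replace the packing numbers $\bfs$ by the bridge \emph{counts} $\bft$, and then to bound the exceptional set of $F$-edges by playing the defining property $F\in\cee_H^\perp(G)$ against the (already known) smallness of $F$. For the reduction: on the w.h.p.\ event $\cS$ of Lemma~\ref{311} (applicable since $p<Kp^*<n^cp^*$ eventually, by \eqref{prange}) we have $\bft(xy)<\bfs(xy)+C$ for every $xy$, so for large $n$ $\;F(1-\gz)\sub M:=\{xy\in F:\bft(xy)<(1-\gz/2)\gL\}$, and it suffices to show $|M|=o(|F|)$ w.h.p.\ when $G\in\cQ$. Alongside this I would carry the w.h.p.\ facts that $\bft(xy)=\Theta(\gL)$ for all $xy$ (Theorem~\ref{thm:Spencer}), that $\bfs(xy)=\gO(\gL)$ for all $xy$ (Corollary~\ref{cor:pip}, with $\pi=\gL$ since $p\asymp p^*$ puts us in case \eqref{eq:pi1}), and that $|F|=o(n^2p)$ (Lemma~\ref{LemCG}).

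The point that makes this more than routine is that one cannot prove $|M|=o(|F|)$ just by controlling the set $\{xy\in\Cc{V}{2}:\bft(xy)<(1-\gz/2)\gL\}$: by Lemma~\ref{JansApp} the probability that a fixed pair lies in this set is $n^{-\gO(1)}$, but with an exponent that can be an arbitrarily small constant (a multiple of $\gz^2(2-1/m_2(H))(p/p^*)^{\eH-1}$), so w.h.p.\ that set has $n^{2-o(1)}\gg|F|$ elements; and no soft double counting helps either, since $\sum_{f\in F}\bft(f)=\Theta(|F|\gL)$ holds regardless of $F$, so counting (edge of $M$, copy of $H$ through it) against $F$ only gives $|M|=O(|F|)$. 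The one extra handle is \eqref{xyF}: for $xy\in F$ \emph{every} $xy$-bridge contains a second edge of $F$; thus a bad edge $xy\in M$ has \emph{all} of its $\Theta(\gL)$ bridges meeting the sparse set $F$, which ought to be very unlikely for a fixed sparse set.

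So my plan for the main step is a deviation estimate organized around the single random set $F=F(G)$ (with no union over hypothetical $F$'s). For $xy\in M$ I would fix a maximum internally disjoint family of $xy$-bridges, of size $\bfs(xy)=\gO(\gL)$, each carrying a second $F$-edge; after discarding the members whose only such edge is incident to $\{x,y\}$ — of which there are at most $(d_F(x)+d_F(y))$-many, and in fact $O\!\big((d_F(x)+d_F(y))\,\mathrm{polylog}\,n\big)$ copies of $H$ altogether by the subgraph-count bounds of Section~\ref{CandE} — one is left with $\gO(\gL)$ minus a degree term of pairwise internally disjoint $xy$-bridges carrying \emph{distinct} edges of $F$ that avoid $\{x,y\}$. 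Hence, after isolating and separately disposing of the edges of $M$ at vertices of large $F$-degree (where in the Eulerian case $\gD(F)\le\gD(G)/2$ from \eqref{dFdG} is available), each remaining $xy\in M$ spines a ``book'' in $G$ of $\gO(\gL)$ internally disjoint $xy$-bridges all of whose $\gO(\gL)$ marked edges lie in $F$. I would bound the number of such $xy$ by enumerating the book by its $\le 2+\gO(\gL)\cdot(\vH-2)$ vertices against its $\gO(\gL)\cdot(\eH-1)$ edges, the decisive saving being that each marked edge $f$ must lie with $xy$ in a common copy of $H$ — an event of probability $n^{-\gO(1)}$ per core quadruple by Proposition~\ref{LS2Bd} and \eqref{gLKbig} — which, aggregated over the $\gO(\log n)$ marks and weighed against $|F|=o(n^2p)$ (i.e.\ against the cost of choosing which $F$-edges are marked), should make bad $xy$'s vanishingly rare; structurally this is the same enumeration as the component/induced-matching bounds for $\gG_{xy}$ in the proof of Lemma~\ref{311}.

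The hard part, I expect, is exactly this last estimate: turning ``all $\Theta(\gL)$ bridges on a bad $F$-edge meet $F$, and $|F|=o(n^2p)$'' into a genuine probability bound without an illegitimate union over $F$, and, within that, (i) handling the $F$-edges at high-$F$-degree vertices, where the ``book'' argument degrades, and (ii) getting the bookkeeping right so that the per-mark gain is truly $n^{-\gO(1)}$ and not merely $n^{-o(1)}$. As with the margin-chasing around Proposition~\ref{balcons} (and later in Lemma~\ref{Rlemma}), the arithmetic should only just close, and this is where almost all of the work will go.
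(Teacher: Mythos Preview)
There is a genuine gap. The ``book'' you build for $xy\in M$ --- $\gO(\gL)$ internally disjoint $xy$-bridges each carrying a second $F$-edge --- exists for \emph{every} $xy\in F$ with $\bfs(xy)=\gO(\gL)$, since this is exactly what $F\in\cee_H^\perp(G)$ forces (cf.\ \eqref{xyF}). So whatever upper bound your enumeration produces on the number of such books is a bound on (essentially) all of $F$, not on $M$, and cannot give $|M|=o(|F|)$; the defining inequality $\bft(xy)<(1-\gz/2)\gL$ is never used. The ``decisive saving'' you cite --- that each mark $f$ shares a copy of $H$ with $xy$ --- is already built into the book (the bridge through $f$ \emph{is} that copy), so it contributes no extra factor beyond the $n^{\vH-2}p^{\eH-1}$ you have already counted; and ``choosing which $F$-edges are marked'' is exactly the union over subsets of $F$ you say you want to avoid, which you cannot take since $F=F(G)$ depends on the same randomness as the books.

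The paper's route is structurally different and does exploit small $\bfs$. It splits $F(1-\gz)=F(\gz)\cup(F(1-\gz)\sm F(\gz))$ and bounds each piece by a short deterministic double count, driven by Proposition~\ref{prop:Hcomps} (proved via Harris together with the Janson-type bound of Proposition~\ref{prop:extind}): w.h.p.\ (i) any chain $K_1\sim K_2\sim K_3\sim K_4$ of edge-overlapping $H$-copies contains at most one edge of $L(\gz)$, and (ii) $|S(e)\cap L(1-\gz)|<M$ for every $e$ and some fixed $M$. From (i), for each $e\in F(\gz)$ one finds (through a second $F$-edge $g\notin L(\gz)$ in a common copy of $H$) at least $\gz\gL$ distinct $F$-edges in $T(e)$, and these $T(e)$'s are pairwise disjoint across $F(\gz)$, giving $|F(\gz)|\le|F|/(\gz\gL)$. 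From (ii), the band $B:=F(1-\gz)\sm F(\gz)$ has $\sim$-degree at least $\gz\gL$ into $F$ while every vertex of $F$ has at most $M$ $\sim$-neighbours in $B$, whence $|B|\le M|F|/(\gz\gL-M)$. The leverage you were looking for --- an actual use of \emph{small} $\bfs$ --- comes precisely from (i) and (ii), not from any book enumeration.
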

\nin

\nin
We prove this by showing---in Propositions~\ref{prop:largeextdev} and \ref{prop:smallextdev}, 
assisted by Propositions~\ref{prop:extind} and \ref{prop:Hcomps}---that 
$F(\gz)$ and $F(1-\gz)\sm F(\gz)$ are small.

\begin{prop}\label{prop:extind}
For fixed $\gc \in (0,1)$ and k, and distinct $\{x_1,y_1\},\ldots, \{x_k,y_k\} \in \C{V}{2}$, 
\beq{eq:extind}
\pr(\cS \wedge \{\{x_i,y_i\}\in L(\gc)\; \forall i\in [k]\}) < 
(n^2p)^{-(k - o(1))(1-\eps)^{\eH -1}\varphi(\gamma-1) }.
\enq
\end{prop}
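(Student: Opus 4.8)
The plan is to bound the probability in \eqref{eq:extind} by a lower-tail estimate for the \emph{total} bridge count $\sum_i\bft(x_i,y_i)$, using the event $\cS$ to pass from $\bfs$ (which cannot be controlled from above on its own) to $\bft$, and then to invoke the Janson-type bound of Lemma~\ref{JansApp}. First I would record that the collection of $xy$-bridges---hence both $\bft(xy)$ and $\bfs(xy)$---depends only on $G\sm\{xy\}$, not on whether $xy\in G$; so Lemma~\ref{311}, which applies here since $p<Kp^*\le n^{c}p^*$ for $n$ large (with $c,C$ its constants), in fact gives $\bft(xy)-\bfs(xy)<C$ for \emph{every} pair $\{x,y\}$ w.h.p., and I would let $\cS$ incorporate this. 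Then on $\cS$, for each $i$,
\[
\{x_i,y_i\}\in L(\gc)\ \Longrightarrow\ \bft(x_i,y_i)<\gc\gL^p+C=\gc_n\gL^p,\qquad \gc_n:=\gc+C/\gL^p=\gc+o(1)
\]
(using $\gL^p\asymp\log n$, see \eqref{gL}), and hence
\[
\cS\wedge\{\{x_i,y_i\}\in L(\gc)\ \forall i\}\ \subseteq\ \Big\{\sum_{i\in[k]}\bft(x_i,y_i)\le\gc_n k\gL^p\Big\}.
\]

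Next I would apply Lemma~\ref{JansApp} with $\gc_n$ in place of $\gc$: its hypothesis holds because $p$ as in \eqref{prange} satisfies $p\asymp p^*=\tilde{\Theta}(n^{-1/m_2(H)})=\tilde{\Theta}(n^{-1/d_2(H)})$ (as $H$ is $2$-balanced), and $\gc_n\in[0,1]$ for $n$ large. This gives
\[
\pr\Big(\sum_i\bft(x_i,y_i)\le\gc_n k\gL^p\Big)<\exp\!\big[-\varphi(\gc_n-1)k\gL^p+o(1)\big],
\]
and since $\varphi$ is continuous at $\gc-1\in(-1,0)$ and $\gc_n=\gc+o(1)$, the exponent equals $-\varphi(\gc-1)k\gL^p+o(\gL^p)$. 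Finally I would rewrite $\gL^p$ in terms of $\log(n^2p)$: by \eqref{gLup*} and $(1-\eps)p^*<p$,
\[
\gL^p=(p/p^*)^{\eH-1}\gL^*>(1-\eps)^{\eH-1}\gL^*=(1-o(1))(1-\eps)^{\eH-1}\log(n^2p),
\]
where I used $\gL^*\sim\log[\Cc{n}{2}p^*]\sim\log(n^2p)$ (valid since $p\asymp p^*$). Substituting, and absorbing $o(\gL^p)=o(\log(n^2p))$ together with the various $1+o(1)$ factors into the exponent, yields
\[
\pr\big(\cS\wedge\{\{x_i,y_i\}\in L(\gc)\ \forall i\}\big)<(n^2p)^{-(k-o(1))(1-\eps)^{\eH-1}\varphi(\gc-1)},
\]
which is \eqref{eq:extind}.

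There is no real obstacle here: the proposition is essentially a corollary of Lemmas~\ref{311} and \ref{JansApp}, and the substantive work---above all the estimate $\ov{\gD}=(1+o(1))\mu$ that makes the Janson rate exactly $\varphi(\gc-1)k\gL^p$ rather than something weaker---is already carried out in the proof of Lemma~\ref{JansApp}. The points that need a little care are: (i) $\cS$ is used only as a deterministic containment, so $\pr(\overline{\cS})$ never enters the estimate; (ii) replacing $\gc$ by $\gc_n>\gc$ degrades the Janson rate, but only by $\varphi(\gc-1)-\varphi(\gc_n-1)=o(1)$, which is harmless; and (iii) the factor $(1-\eps)^{\eH-1}$ in the target exponent is pinned down precisely because $\gL^p=(p/p^*)^{\eH-1}\gL^*$ and $\gL^*\sim\log(n^2p)$, so that both the $(1-\eps)^{\eH-1}$ loss from $p>(1-\eps)p^*$ and the conversion from $\gL^p$ to $\log(n^2p)$ are accounted for.
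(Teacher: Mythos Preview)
Your proposal is correct and follows essentially the same approach as the paper: use $\cS$ to turn the $\bfs$-condition defining $L(\gc)$ into a $\bft$-condition, sum over $i$, apply Lemma~\ref{JansApp} with $\gc$ replaced by $\gc+o(1)$, and convert $\gL^p$ to $\log(n^2p)$ via \eqref{gLup*} and the lower bound in \eqref{prange}. Your extra care about $\cS$ covering arbitrary pairs (not just edges of $G$) is warranted but already accounted for, since \eqref{eq:tausig} in Lemma~\ref{311} is stated for all $\{x,y\}\in\Cc{V}{2}$.
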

\nin
(We again recall that $\varphi(x)$ was defined in \eqref{eq:varphidef}.)
Note the bound here is natural, being, for $p$ at the lower bound in
\eqref{prange} (and up to the $o(1)$), what
Theorem~\ref{thm:Chernoff} would give for the probability that $c$ independent binomials,
each of mean $\gL^p $,
are all at most $\gamma \gL^p $.

\begin{proof}
Since $\cS$ gives $\bft(xy) \leq \bfs(xy)+C <  (1+o(1))\gamma \gL $
for $\{x,y\}\in L(\gamma)$, the event in \eqref{eq:extind} implies
\beq{firstX}
\sum_{i \in [k]} \bft(x_i,y_i) < (1+o(1))k\gamma \gL =: k\gc'\gL
\enq
(where $\gc' \sim \gc$);
so the l.h.s.\ of \eqref{eq:extind} is less than the probability of \eqref{firstX}. 
But Lemma~\ref{JansApp} gives (cf.\ \eqref{vpkgc})
\[
\pr(\eqref{firstX}) ~<~ \exp\left[-\varphi(\gamma'-1)k\gL +o(1)\right]
~< ~\exp\left[-(1-o(1))\varphi(\gamma-1)k\gL \right],
\]
which, since
$\gL  \more(1-\eps)^{\eH-1}\log[\Cc{n}{2}p]$ (see \eqref{gLup*}), 
is at most
the r.h.s.\ of \eqref{eq:extind}. 
\end{proof}

\begin{prop}\label{prop:Hcomps}
W.h.p.\ if $K_1\sim K_2\sim K_3 \sim K_4$ are copies of $H$ in $G$, then 
\beq{Q's1}
|(\cup K_i) \cap L(\gz)| \leq 1. \enq
Also, there is a fixed $M$ such that w.h.p.\ 
\beq{Q's2}
|S(e) \cap L(1-\gz)| < M \quad \forall e\in G.\enq
\end{prop}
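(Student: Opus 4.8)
The plan is to establish \eqref{Q's1} and \eqref{Q's2} by first-moment estimates over configurations of copies of $H$. In both cases the recipe is the same: intersect with the event $\cS$ (which holds w.h.p.\ by Lemma~\ref{311}, as $p<Kp^*<n^cp^*$ for large $n$), so that membership in $L(\gz)$ or $L(1-\gz)$ forces an upper bound on the relevant $\bft(\cdot)$; bound the resulting \emph{decreasing} event using Harris (Theorem~\ref{thm:Harris}) to detach the ``$J\subseteq G$'' part, and Lemma~\ref{JansApp} for what remains; and control the number of configurations using strict $2$-balance of $H$. Since $H$ is fixed there are only finitely many relevant isomorphism types, so it suffices to treat each one, with all error terms uniform.

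For \eqref{Q's1}, suppose $K_1\sim K_2\sim K_3\sim K_4$ with $e\neq f$ both in $(\cup K_i)\cap L(\gz)$, and put $J=\cup_{i=1}^4K_i$, one of finitely many connected graphs arising as a union of at most four copies of $H$, with $e,f\in E(J)$. Since $\pr(\overline\cS)=o(1)$, it is enough to bound the expected number of such $(J;e,f)$ with $J\subseteq G$ and $\cS$ holding. On $\cS$ (note $e,f\in E(J)\subseteq E(G)$), $e,f\in L(\gz)$ forces $\bft(e)+\bft(f)<2\gz'\gL$ with $\gz'=\gz+O(1/\gL)\to\gz$; call this decreasing event $\cB$. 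By Harris, $\pr(\{J\subseteq G\}\wedge\cB)\le\pr(J\subseteq G)\pr(\cB)$, and Lemma~\ref{JansApp} (with $k=2$) gives $\pr(\cB)<(n^2p)^{-(2-o(1))c_0}$ with $c_0=(1-\eps)^{\eH-1}\varphi(\gz-1)$ (using $\gL\ge(1-o(1))(1-\eps)^{\eH-1}\log[\binom{n}{2}p]$; cf.\ \eqref{gLup*}). Summing over the $\Theta(n^{v_J})$ embeddings of a type contributes $\tilde{O}(n^{v_J}p^{e_J})(n^2p)^{-(2-o(1))c_0}$. The key point is $n^{v_J}p^{e_J}=\tilde{O}(n^2p)$: when $J$ is assembled by adjoining the $K_i$ one at a time, each new copy meets the current union in a vertex set containing an edge, so by \eqref{S2Bequiv} and \eqref{dYZWZ} it adds new edges and vertices in ratio at least $m_2(H)$; the mediant inequality then gives $(e_J-1)/(v_J-2)\ge m_2(H)$, i.e.\ $\gL_J^p:=n^{v_J-2}p^{e_J-1}=\tilde{O}(1)$, so $n^{v_J}p^{e_J}=n^2p\cdot\gL_J^p=\tilde{O}(n^2p)$. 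Hence each type contributes $\tilde{O}((n^2p)^{1-2c_0+o(1)})$, which is $o(1)$ once $c_0>1/2$; since $c_0\to\varphi(-1)=1$ as $\eps,\gz\to0$, this holds for $\eps$ (and $\gz$) small enough, which is exactly the constraint on $\eps$.

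For \eqref{Q's2}, fix $M$ (to be chosen). If some $e\in G$ has distinct $g_1,\dots,g_M\in S(e)\cap L(1-\gz)$, then for each $g_j\neq e$ pick a copy $K_j\ni e,g_j$ in $G$ and set $J=\bigcup_jK_j$, a ``fan'' through $e$ (one of finitely many types), with $e$ and every $g_j$ in $E(J)$ and $J\subseteq G$. Exactly as above, on $\cS$ the configuration forces $\sum_{j=1}^M\bft(g_j)<M(1-\gz)'\gL$ with $(1-\gz)'\to1-\gz$ (a decreasing event $\cB$), so Harris and Lemma~\ref{JansApp} (with $k=M$) give $\pr(\{J\subseteq G\}\wedge\cB)\le\pr(J\subseteq G)(n^2p)^{-(M-o(1))c_1}$ with $c_1=(1-\eps)^{\eH-1}\varphi(-\gz)>0$; and since each $K_j$ meets the rest of $J$ in (at least) the edge $e$, the density argument again yields $n^{v_J}p^{e_J}=\tilde{O}(n^2p)$ (the sum over choices of $e$ being absorbed into $n^{v_J}$). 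So each type contributes $\tilde{O}((n^2p)^{1-Mc_1+o(1)})$, which is $o(1)$ once $Mc_1>1$; taking $M$ a large enough fixed constant gives \eqref{Q's2}.

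The only step requiring real care is the bound $n^{v_J}p^{e_J}=\tilde{O}(n^2p)$, equivalently $\gL_J^p=\tilde{O}(1)$, uniformly over configuration types: this is precisely where strict $2$-balance of $H$ is used---every proper subgraph of $H$ through an edge has $2$-density at most $m_2(H)$, so an extension off an edge is at least as dense as $H$---and one must verify it for every possible overlap pattern of the copies, not just the ``loosest'' one. The probabilistic ingredients (passing through $\cS$, the Harris detachment, and the Janson bound) are routine given Lemma~\ref{311}, Theorem~\ref{thm:Harris} and Lemma~\ref{JansApp}; indeed they amount to Proposition~\ref{prop:extind} re-run with $\{J\subseteq G\}$ conjoined.
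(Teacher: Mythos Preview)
Your proof is correct and follows essentially the same approach as the paper's: pass to $\cS$, use Harris to detach $\{J\subseteq G\}$ from the decreasing event, bound the latter via Lemma~\ref{JansApp} (which is exactly what Proposition~\ref{prop:extind} packages), and control $\sum_J\pr(J\subseteq G)$ by the density argument from strict $2$-balance. The only cosmetic difference is that the paper phrases the density step as ``$n^{|V(Q_i)|}p^{|E(Q_i)|}\ge n^2p$'' (via \eqref{gLH}, \eqref{gLKbig}) to get $\sum_J\pr(J\subseteq G)=O(n^2p\,\gL^4)$ and $O(n^2p\,\gL^M)$ respectively, while you encode the same thing as $(e_J-1)/(v_J-2)\ge m_2(H)$ via \eqref{S2Bequiv} and \eqref{dYZWZ}; both require (and you correctly note) that $\eps,\gz$ be small enough for \eqref{Q's1} and $M$ large enough for \eqref{Q's2}. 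One small gloss worth making explicit in your density step: the ``new edges'' when adjoining $K_i$ may exceed $e(V(Q_i),V(K_i))$ computed in $K_i$ (since $E(K_i)\cap E(R_{i-1})\subseteq E(K_i[V(Q_i)])$ but not conversely), which only helps your inequality.
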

\begin{proof}
Write $\eta_\gc$ for the quantity
$
(n^2p)^{-(1 - o(1))(1-\eps)^{e_H - 1}\varphi(\gamma-1) }
$
appearing in \eqref{eq:extind} (here without the $k$).

Since $\mathcal{S}$ occurs w.h.p.\ (see Lemma~\ref{311}),
it suffices to show that the probability that it holds while either
\eqref{Q's1} or \eqref{Q's2} fails is $o(1)$.
Thus in the case of \eqref{Q's1} we want to bound the probability
that $
\mathcal{S}  \wedge  \{J \subseteq G\}  \wedge  \{|J \cap L(\zeta)| \geq 2\}$
holds for some $J\sub K_n$ of the form $\cup_{i \in [4]} K_i$, where the $K_i$'s are $H$'s
sharing edges as appropriate.  With $\T(J)=\sss\wedge \{|J\cap L(\gz)|\geq 2\}$,
this probability is at most
\begin{align*}
\mbox{$\sum_J \pr(\{J\sub G\}\wedge\T(J))$}~
&\leq~
\mbox{$\sum_J \pr(J \subseteq G) \pr(\T(J))~$}\\
&\leq ~\mbox{$O(n^2p\gL^4\eta_\gz^2) =o(1).$}
\end{align*}
Here the first inequality is an instance of Theorem~\ref{thm:Harris}
(since $\{J \subseteq G\}$ and
$\T(J)$ are increasing and decreasing respectively);
Proposition~\ref{prop:extind} gives
$\pr(\T(J)) =O(\eta_\gz^2)$ (for any $J$);
and the $o(1)$ holds (for small enough $\eps$ and $\gz$) by the definition of $\eta_\gz$
(since $\gL$ is as in \eqref{gL}).
The argument for
\beq{sumJG}
\sum\pr(J\sub G) = O(n^2p\gL^4)
\enq
is as follows.
If $K_1\sim K_2 \sim K_3 \sim K_4$ and $R_i:=\cup_{j\leq i}K_j$, 
then $Q_i:= R_{i-1} \cap K_i$ is a non-empty subgraph of $H$ for $i\geq 2$. 
So for $i\geq 2$ the 2-balance of $H$ implies
$n^{|V(Q_i)|}p^{|E(Q_i)|}\geq n^2p$ (see \eqref{gLH} and \eqref{gLKbig}) and thus
$n^{|V(H)|-|V(Q_i)|}p^{|E(H)|-|E(Q_i)|}=O(\gL)$,
and \eqref{sumJG} follows.

Treatment of \eqref{Q's2} is similar. Here $J$ runs over subsets of $K_n$ of the form $\cup_{i \in [M]} K_i$, where the $K_i$'s are $H$'s with a common edge,
and, with $\T(J) =\sss\wedge \{|J \cap L(1-\zeta)| \geq M\}$,
the probability that $\sss$ holds while \eqref{Q's2} fails is at most
\[
\mbox{$\sum \pr(\{J \subseteq G\} \wedge \T(J)) $}
~\leq ~O(n^2p \gL^M\eta_{1-\gz}^M) ~=~o(1).
\]
This is shown as above, with
$n^{|V(J)|}p^{|E(J)|} \leq n^2p\gL^M$
given by the passage following \eqref{sumJG}
(with $M$ in place of 4),
and the $o(1)$ valid for large enough $M$ by definition of $\eta$.
\end{proof}

As noted above, the next two assertions give Proposition~\ref{prop:regext}

\begin{prop}\label{prop:largeextdev}
W.h.p.\ 
\beq{eq:largeextdiv}
G \in \cQ \implies |F(\gz)| = o(|F|).\enq
\end{prop}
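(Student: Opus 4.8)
The plan is to show, for every fixed $\eta>0$, that $\pr(G\in\cQ \wedge F\neq\0 \wedge |F(\gz)|>\eta|F|)=o(1)$; together with the trivial case $F=\0$ (where $F(\gz)\sub F=\0$) this is the assertion. Throughout I would work inside the (w.h.p.) events that $\cS$ holds (Lemma~\ref{311}) and that the conclusions \eqref{Q's1}, \eqref{Q's2} of Proposition~\ref{prop:Hcomps} hold, having first fixed $\gz$ (and the $\eps$ of \eqref{prange}) small enough that $(1-\eps)^{\eH-1}\varphi(\gz-1)$ exceeds both $1/2$ (as already required for Proposition~\ref{prop:Hcomps}) and $1/(2m_2(H)-1)$; this is possible since $\varphi(\gz-1)\to 1$ as $\gz\to 0$ and $m_2(H)>1$ (see \eqref{Otriv}). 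A useful preliminary remark is that once $F(\gz)\neq\0$, the ``partnering'' described below (using $G\in\cQ$ and $F\in\cee_H^\perp(G)$) produces an edge of $F\sm L(\gz)$, whence by \eqref{xyF} $|F|\geq\gz\gL+1=\gO(\log n)$; so the regime $F\neq\0$ with $|F|$ bounded does not arise.

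I would dispose of ``large $F$'' by a first moment. The $k=1$ case of Proposition~\ref{prop:extind} gives $\pr(\cS\wedge\{\{x,y\}\in L(\gz)\})<(n^2p)^{-(1-o(1))(1-\eps)^{\eH-1}\varphi(\gz-1)}$, so by Markov's inequality w.h.p.
\[
|L(\gz)|\leq n^2 (n^2p)^{-(1-o(1))(1-\eps)^{\eH-1}\varphi(\gz-1)}\leq n^2p\cdot n^{-\ccc}
\]
for some fixed $\ccc>0$, the last step using $n^2p=\td\Theta(n^{2-1/m_2(H)})$ and our choice of $\gz,\eps$. Since $F(\gz)\sub L(\gz)$, this already yields $|F(\gz)|=o(|F|)$ whenever $|F|\geq n^2p\cdot n^{-\ccc/2}$.

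It remains to treat $F\neq\0$ with $\gz\gL<|F|<n^2p\cdot n^{-\ccc/2}$, and here I would use the structure of $F$. Since $G\in\cQ$ and $F\in\cee_H^\perp(G)$, each edge of $F$ lies in a copy of $H$ that carries a \emph{second} edge of $F$; and by Proposition~\ref{prop:extind} (for our small $\gz,\eps$) w.h.p.\ no copy of $H$ carries two edges of $L(\gz)$, so for $e\in F(\gz)$ that second edge lies in $F\sm L(\gz)=F\sm F(\gz)$. This yields a ``partner'' assignment $\phi\colon F(\gz)\to F\sm F(\gz)$ with $\phi(e)$ sharing a copy of $H$ with $e$; by \eqref{Q's2}, $|\phi^{-1}(f)|\leq|S(f)\cap L(1-\gz)|<M$, so $\phi$ is boundedly many-to-one and $|F(\gz)|=O(|F|)$. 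Moreover every $f$ in the image of $\phi$, being outside $L(\gz)$, lies in $\gO(\log n)$ internally disjoint copies of $H$, hence (again by orthogonality) has $\gO(\log n)$ distinct $F$-neighbours; chasing these partners one step further and invoking the four-copy statement \eqref{Q's1} shows that ``goodness'' propagates along such chains, so the edges of $F(\gz)$ sit in a rigid web of short chains of copies of $H$ anchoring them to the bulk of $F$.

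The main obstacle is upgrading the crude $|F(\gz)|=O(|F|)$ to $|F(\gz)|=o(|F|)$ in this last regime. The plan is to iterate the partnering, or equivalently to feed the chains it produces into a union bound governed by Propositions~\ref{prop:extind} and \ref{prop:Hcomps}; the delicate point is efficiency, since $|L(\gz)|$ is itself a positive power of $n$, so one cannot afford to range over arbitrary subsets of $L(\gz)$, and the argument must genuinely exploit that the $F(\gz)$-edges are pinned down by the orthogonality web rather than being free. I expect \eqref{Q's1} to be exactly what limits how $L(\gz)$-edges can cluster along these chains, and the bookkeeping showing that an $\eta$-fraction of $F$ lying in $L(\gz)$ would propagate to a configuration contradicting Proposition~\ref{prop:extind} to be the technical heart.
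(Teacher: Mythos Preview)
Your proposal has a genuine gap, which you yourself flag: you stop at $|F(\gz)|=O(|F|)$ and only sketch an ``iterate the partnering / union bound over chains'' plan for the upgrade to $o(|F|)$. That sketch is neither carried out nor correct as stated---a union bound over subsets of $L(\gz)$ is hopeless, as you note, and nothing you write explains how to avoid it.

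The missing step is in fact a one--line counting argument, and the ingredients are already on your page. You have, for each $e\in F(\gz)$: a copy $K\ni e$ (from $\cQ$); a second $F$-edge $g\in K$ (from $F\in\cee_H^\perp$); and $g\notin L(\gz)$ (from \eqref{Q's1}, since $e,g\in K$). Then $\bfs(g)\geq\gz\gL$, so there are at least $\gz\gL$ internally disjoint copies of $H$ on $g$, each contributing a distinct second $F$-edge; all of these lie in $S(g)\subseteq T(e)$. Hence
\[
|T(e)\cap F|\;\geq\;\gz\gL\qquad\text{for every }e\in F(\gz).
\]
The point you are missing is disjointness: for distinct $e,f\in F(\gz)$, any $h\in T(e)\cap T(f)$ would produce chains $K_1\sim K_2$ (with $e\in K_1$, $h\in K_2$) and $K_3\sim K_4$ (with $h\in K_3$, $f\in K_4$); since $K_2,K_3$ share the edge $h$, these concatenate to a chain of length at most four containing both $e,f\in L(\gz)$, contradicting \eqref{Q's1}. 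Thus the sets $\{T(e)\cap F:e\in F(\gz)\}$ are pairwise disjoint subsets of $F$, giving
\[
|F|\;\geq\;\sum_{e\in F(\gz)}|T(e)\cap F|\;\geq\;|F(\gz)|\cdot\gz\gL,
\]
whence $|F(\gz)|\leq|F|/(\gz\gL)=o(|F|)$. This is exactly the paper's proof. Your case split by $|F|$, the first-moment bound on $|L(\gz)|$, the additional constraint $(1-\eps)^{\eH-1}\varphi(\gz-1)>1/(2m_2(H)-1)$, and the use of \eqref{Q's2} are all unnecessary here; \eqref{Q's1} alone does the work.
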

\begin{proof}
By the first part of Proposition~\ref{prop:Hcomps} it is enough to show that
the r.h.s. of \eqref{eq:largeextdiv} follows (deterministically) from
the conjunction of $\{G \in \cQ\}$ and \eqref{Q's1}.
But these imply that $|T(e) \cap F| \geq \zeta \gL $ for each $e\in F(\gz)$:
$\{G \in \cQ\}$ gives at least one $H$ containing $e$;
this $H$ contains a second $F$-edge $g$ (since $F\in \cC_H^\perp$), which
by \eqref{Q's1} is not in $L(\gz)$; thus $T(e)$ contains the at least $\zeta \gL $
(distinct) $F$-edges of $S(g)$.
Moreover, again by \eqref{Q's1},
$T(e)\cap T(f)=\0 $ for distinct $e,f\in F(\zeta)$.
Thus $|F(\zeta)|< |F|/(\zeta \gL )$ ($=o(|F|)$), as desired.
\end{proof}

\begin{prop}\label{prop:smallextdev}
W.h.p.\ 
\beq{mid}
|F(1-\gz)\sm F(\gz)| = o(|F|).\enq
\end{prop}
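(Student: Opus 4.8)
The plan is to derive $|F(1-\gz)\sm F(\gz)|=o(|F|)$ from the parity constraint $F\in\cee_H^\perp(G)$ together with the sparsity bound \eqref{Q's2} of Proposition~\ref{prop:Hcomps}, via a short double count; in particular no appeal to $\{G\in\cQ\}$ (or to Propositions~\ref{prop:extind} and \ref{prop:largeextdev}) will be needed. Write $\cM=F(1-\gz)\sm F(\gz)$; if $F=\0$ the assertion is trivial, so assume $F\neq\0$. The key local observation is: if $e=xy\in\cM$ then $e\notin L(\gz)$, so $\bfs(e)\geq\gz\gL$, and hence there are internally disjoint $xy$-bridges $B_1,\ldots,B_m$ with $m\geq\gz\gL$. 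These are in fact pairwise edge-disjoint, since two internally disjoint $xy$-bridges share no vertices other than $x,y$ while neither contains the edge $xy$. Each $K_i:=B_i+e$ is a copy of $H$ in $G$ containing $e$, so $|K_i\cap F|$ is even, hence at least $2$; thus $B_i\cap F\neq\0$, and picking $g_i\in B_i\cap F$ we obtain (by edge-disjointness of the $B_i$) distinct edges $g_1,\ldots,g_m$ of $F\sm\{e\}$, each with $e\sim g_i$.

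First I would fix, via Proposition~\ref{prop:Hcomps}, a constant $M$ with $|S(e)\cap L(1-\gz)|<M$ for all $e\in G$ w.h.p., and condition on this event. For $e\in\cM$ the $g_i$ above all lie in $S(e)$, so at most $M-1$ of them can lie in $L(1-\gz)$; hence at least $\gz\gL-M$ of them lie in $F\sm L(1-\gz)$. Summing over $e\in\cM$, the number of pairs $(e,g)$ with $e\in\cM$, $g\in F\sm L(1-\gz)$ and $e\sim g$ is at least $|\cM|(\gz\gL-M)$.

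Next I would bound the same count from the other end: since $\cM\subseteq F(1-\gz)\subseteq L(1-\gz)$, for each fixed $g\in F\sm L(1-\gz)$ the set $\{e\in\cM:e\sim g\}$ is contained in $S(g)\cap L(1-\gz)$ and so has fewer than $M$ elements, whence summing over $g\in F\sm L(1-\gz)$ gives at most $|F|\,M$ such pairs. Comparing,
\[
|\cM|\,(\gz\gL-M)\;<\;|F|\,M ,
\]
so $|\cM|<|F|\,M/(\gz\gL-M)=o(|F|)$, since $\gL=\gL^p\asymp\log n\to\infty$ by \eqref{gL} while $M$ and $\gz$ are fixed. As this conclusion holds on the w.h.p.\ event of Proposition~\ref{prop:Hcomps}, it holds w.h.p., which is the claim.

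Being this short, the argument has no serious obstacle of its own: the only probabilistic input is Proposition~\ref{prop:Hcomps}\eqref{Q's2} (where the density/Janson work lives), and the rest rests on the elementary facts that internally disjoint bridges are edge-disjoint and that $F\in\cee_H^\perp(G)$ forces a second $F$-edge in each copy of $H$ meeting $F$. The one point to watch is that \eqref{Q's2} gets invoked in two directions---once for edges $e\in\cM$ and once for edges $g\in F\sm L(1-\gz)$---and both uses are legitimate precisely because $\cM\subseteq L(1-\gz)$.
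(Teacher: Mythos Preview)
Your proof is correct and is essentially the same as the paper's: both reduce to the double count
\[
|\cM|(\gz\gL - M) \;\le\; |\{(e,g): e\in\cM,\ g\in F,\ e\sim g\}| \;\le\; M|F|,
\]
with the lower bound coming from the parity constraint $F\in\cee_H^\perp(G)$ (forcing a second $F$-edge in each of the $\ge\gz\gL$ edge-disjoint copies of $H$ on $e$), and the upper bound from \eqref{Q's2} applied with the roles of $e$ and $g$ reversed. The paper's version is a bit terser (it phrases the count as $|\nabla(B,F\sm B)|$ in the auxiliary graph on $F$ with adjacency $\sim$), but the substance is identical, including the observation that $\{G\in\cQ\}$ is not needed here.
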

\begin{proof}
It's enough to show that \eqref{Q's2} implies
\eqref{mid} (since Proposition~\ref{prop:Hcomps} says \eqref{Q's2} holds w.h.p.).
This is again easy:
Set $B=F(1-\zeta)\sm F(\zeta)$ and consider the graph with vertex set $F$ and
adjacency as in \eqref{eq:edgeadj}.
Each $e \in B$ has degree at least $\zeta \gL  $ in this graph,
while \eqref{Q's2} says
no vertex has more than $M$ neighbors in $B$.
Thus $|B| (\zeta \gL  - M) \leq |\nabla(B, F \sm B)| \leq |F \sm B|M$, yielding \eqref{mid}.
\end{proof}

We turn to the second part of the proof of Lemma~\ref{lem:71up}
(producing \emph{central} pairs $(e,K)$), support for which will be
provided by Lemmas~\ref{spread}-\ref{spreaderr}. For these three lemmas we fix $x\in V(H)$, 
say with $d_H(x)=d$, and use $a$ for an integer from $[0, \dHx ]$.
For $v\in V$, 
let $\m_v$ be the set of copies $K$ of $(H;x)$ on $v$ (i.e.\ with $\cop{x}=v$)
in $K_n$, and
\[
M_v= |\{K \in \m_v:K\sub G\}|.
\]
\nin

\nin
By Proposition~\ref{LS2Bd},
$[x,V(H)]$ is strictly balanced with $d(x,V(H)) < m_2(H)$; 
so Lemma~\ref{PropSB}(a), with \eqref{prange}, says that w.h.p.
\beq{MUB}
M_v < (1+n^{-\ccc})\E M_v  \,\,\,\,\forall v \in V.
\enq
(Of course $\E M_v$ doesn't depend on $v$.)  

For the rest of this section we fix $\kappa\in (0,1/(6d))$ and set
\beq{zetac}
\vs= \log^{-\kappa}n.
\enq

\begin{lem}\label{spread}
W.h.p.: for all $v \in V$ and 
$S\sub \nabla_G(v)$, with 
\beq{gammavs}
\gc:=|S|/ d_G(v) \in 
\left\{\begin{array}{ll}
(\vs, 1)&\mbox{if $a=d$,}\\
(0, 1-\vs)&\mbox{if $a=0$,}\\
(\vs, 1-\vs)&\mbox{otherwise,}
\end{array}\right.
\enq
\beq{spreadIneq}
|\{K\in \m_v :K\sub G, \,d_{K\cap S}(v) = a\}| > 
(1 - \log^{-1/3}n)\C{\dHx }{a} \gc^a (1-\gc)^{\dHx  - a}\E M_v.
\enq
\end{lem}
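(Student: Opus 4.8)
The plan is to fix $v$ and view the quantity on the left of \eqref{spreadIneq} as a sum of indicator variables, one for each $K\in\m_v$, and to show it concentrates around its mean. First I would compute the mean: conditioning on the value $d_G(v)$ (which is w.h.p.\ $\sim np$ by Proposition~\ref{prop:routine}) and on the set $\nabla_G(v)$ (hence on $S$ and $\gc$), the edges of $K$ not at $v$ are still $p$-random, so the expected number of $K\in\m_v$ with $K\sub G$ and $d_{K\cap S}(v)=a$ is $(1+O(1/n))\C{\dHx}{a}\gc^a(1-\gc)^{\dHx-a}\E M_v$ — this is just the binomial split of the $\dHx$ edges at $v$ into those landing in $S$ versus $\nabla_G(v)\sm S$, times the expected number of ways to complete the rest of the copy, which is $\E M_v/\aut$ per ordered choice, independent of how the edges at $v$ were split. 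So the target is genuinely the mean up to a $(1-o(1))$ factor, and the content is a lower-tail bound, uniform over $v$, $S$.

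The natural tool is Janson's inequality, Theorem~\ref{TJanson} (or its refinement via Lemma~\ref{JansApp}/the argument in the proof of Lemma~\ref{PropSB}). Writing $X = X_{v,S,a}$ for the count in question, I would apply Janson to get $\pr(X \le (1-\log^{-1/3}n)\E X) \le \exp[-\gO(\log^{-2/3}n)(\E X)^2/\ov\gD]$, where $\ov\gD$ is the usual overlap sum over pairs of copies sharing an edge. The key estimate is $\ov\gD = (1+o(1))\E X$, i.e.\ the ``clustering'' is negligible; this is exactly the kind of bound proved in Lemma~\ref{JansApp} and rests on strict $2$-balance of $H$ via Proposition~\ref{LS2Bd} (so Proposition~\ref{LS2Bd} applied to $[x,V(H)]$, giving that any proper subextension has $n^{\text{verts}}p^{\text{edges}}\to\infty$, which forces the off-diagonal part of $\ov\gD$ to be $n^{-\gO(1)}\E X$). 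With $\E X \asymp \gc^a(1-\gc)^{\dHx-a}\log n$, the bound on $\gc$ in \eqref{gammavs} is precisely what guarantees $\E X \gg \log^{2/3}n \cdot (\text{something})$; one should check $\E X = \vs^{O(1)}\log n = \log^{1-O(\kappa)}n$, so $(\E X)^2/\ov\gD \gtrsim \log^{1-O(\kappa)}n$, and since $\kappa < 1/(6d)$ the exponent $\log^{-2/3}n \cdot \log^{1-O(\kappa)}n$ beats $\log n$ (even after multiplying by $\log^{-2/3}$ we get $\log^{1/3 - O(\kappa)}n \to\infty$, indeed we want it $\gg \log n$, so more carefully: the deviation is $\log^{-1/3}n\cdot\E X$, and $(\log^{-1/3}n\cdot \E X)^2/\ov\gD \asymp \log^{-2/3}n\cdot \E X = \log^{1/3 - O(\kappa)}n$; this needs to exceed $\log n$ after the union bound, which is why the precise constants $\kappa < 1/(6d)$ and the exponent $1/3$ are chosen as they are). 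Then a union bound over the $n$ choices of $v$, the (at most) $2^{np}$ or rather $n^{O(1)}\binom{np}{|S|}$-many relevant sets $S$ — handled by summing over $|S|$ and bounding $\binom{np}{|S|}\le \exp[|S|\log(en/|S|)] \le \exp[O(np)] = \exp[n^{1-\gO(1)}]$ — and the $O(1)$ values of $a$, completes the argument, since the per-instance failure probability $\exp[-\log^{1+\gO(1)}n]$ beats all these factors.

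The main obstacle, and where care is genuinely needed, is the bookkeeping in the union bound over $S$: one cannot afford to lose more than $\exp[o(\E X \cdot n^2p)]$ — wait, here $\E X$ is only polylogarithmic, so the per-$v$ failure probability $\exp[-\log^{1+\gO(1)}n]$ must absorb a union bound that includes choosing $S\sub\nabla_G(v)$, and there are up to $2^{np} = \exp[n^{1-\gO(1)}]$ such sets. This does \emph{not} work naively. The resolution must be that one does not union over all $S$ of fixed size, but rather proves the statement for \emph{all} $S$ simultaneously via a cleverer argument — most likely by first establishing the bound \eqref{spreadIneq} at a polynomially-fine net of values of $\gc$ and all of $S$ realizing that $\gc$ is controlled through a single auxiliary random variable (the count of copies $K\in\m_v$ on $v$ with a prescribed \emph{ordered} edge at $v$, or with each of the $\dHx$ edges at $v$ specified), whose concentration is then transferred to all $S$ by inclusion–exclusion / a generating-function identity over the $2^{\dHx}$ patterns of which edges at $v$ lie in $S$. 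In other words: control, for each $v$ and each of the finitely many subsets $I\sub N_H(x)$ (patterns), the number of copies with edge-at-$v$ pattern exactly matching, as a function that is linear in indicator variables of the ``non-$v$'' part, and then the count in \eqref{spreadIneq} for a given $S$ is a fixed polynomial combination of $\dHx = O(1)$-many such controlled quantities; concentration of the $O(1)$ building blocks (now genuinely a bounded union bound) yields concentration for every $S$ at once. I expect the actual proof in the paper to follow this route, and getting the dependence on $\gc$ (the $\gc^a(1-\gc)^{\dHx-a}$ factor, which can be as small as $\vs^{\dHx}$) to survive the $\log^{-1/3}n$ relative error is exactly what the constraint $\kappa < 1/(6d)$ is engineered for.
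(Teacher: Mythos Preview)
Your initial plan—condition on $\nabla_G(v)$, apply Janson's inequality to $X=X_{v,S,a}$, then union-bound over all $(v,S)$—is exactly what the paper does, and it works. The gap is a single miscalculation: you write $\E X \asymp \gc^a(1-\gc)^{d-a}\log n$, but $\E M_v$ counts copies of $(H;x)$ on a \emph{vertex}, not an edge, so $\E M_v \asymp n^{v_H-1}p^{e_H} = np\cdot\gL_H^p \asymp np\log n$ (by Proposition~\ref{LS2Bd} and \eqref{gLH}). Hence $\E X \asymp \gc^a(1-\gc)^{d-a}\,np\log n \geq \vs^d\,np\log n = np\log^{1-d\kappa}n$.

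With this correction, your estimate $\ov\gD\sim\E X$ (which is correct: the diagonal dominates, and the off-diagonal is $o(\mu)$ by strict balance of $[x,H]$) gives
\[
\pr\bigl(X<(1-\log^{-1/3}n)\E X\bigr) \,<\, \exp\Bigl[-\gO\bigl(\log^{-2/3}n\cdot\E X\bigr)\Bigr] \,=\, \exp\bigl[-\gO(np\log^{1/3-d\kappa}n)\bigr] \,=\, e^{-\go(np)},
\]
since $d\kappa<1/6<1/3$. This comfortably beats the union bound over $n\cdot 2^{(1+o(1))np}$ pairs $(v,S)$. The paper carries out exactly this argument, phrasing the target for $\ov\gD$ as $\ov\gD=o(\mu^2/(np\log^{2/3}n))$ and verifying it by a per-overlap-type count.

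Your proposed ``resolution'' via $O(1)$ pattern counts is therefore unnecessary; it is also not well-defined as written, since which edges at $v$ land in $S$ depends on $S$, so there is no fixed family of $2^d$ building blocks independent of $S$ from which the count for every $S$ can be recovered.
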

\nin
(We won't need the expanded range of $\gc$ for $a=0$.)
\begin{proof} 
Let
\[
\cB_{v,S}=\{\mbox{\eqref{spreadIneq} fails for $(v,S)$}\}.
\]
By Proposition~\ref{prop:routine} there is a fixed $\eta$ 
such that w.h.p.
\[
\nabla_G(v) \in \cZ_v:= \{Z \subset \nabla_{K_n}(v): |Z| = (1\pm n^{-\eta})np\} \,\,\,\forall v;
\]
so it is enough to show that, for any $v$, $Z\in \cZ_v$ and $S\sub Z$ as in \eqref{gammavs},
\beq{prcBvF}
\pr(\cB_{v,S}\cond \nabla_G(v)=Z) < e^{-\go(np)},
\enq
since then 
\begin{align}
    \pr(\cup_{v,S} \cB_{v,S}) &< 
    o(1) + \sum_v\sum_{Z \in \cZ}\pr(\nabla_G(v) = Z) \sum_{S}
    \pr(\cB_{v,S}\cond \nabla_G(v) = Z)\\
    &<
    o(1)  + ne^{np - \omega(np)} \, =\,o(1),
\end{align}
where $e^{np}$ (over)counts possibilities for $S$ contained in $Z\in \cZ_v$.

\mn

For the rest of the proof, aiming for \eqref{prcBvF}, 
we fix $v$, $Z\in \cZ_v$, say of size $z$,
and $S\sub Z$ (with $\gc=|S|/|Z|$ as in \eqref{gammavs}),
and condition on $\nabla_G(v)=Z$.
We now use $\m$ and $M$ for $\m_v$ and $M_v$, and \emph{always assume} $K\in\m$.
Intending to use Theorem~\ref{TJanson}, we set (given $a$)
\[
\cK = \{K: \nabla_K \subset Z, d_{K\cap S}(v) = a\} =\{K_1\dots K_m\},
\]
and let $I_j$ be the indicator of the event $\{K_j \sub G \}\,$
(so the $A_j$'s for the theorem are the sets $E(K_j-v)$) and $X = \sum I_j$.
To ease notation, set $b = \dHx  - a$ and $\gG_{a} = \C{\dHx }{a} \gc^a (1-\gc)^{b}$.

We first observe that (with $\mu = \E X$ as in Theorem~\ref{TJanson})
\beq{muM}
\mu = (1 \pm O(n^{-\eta})) \gG_{a}\, \E M.
\enq
\begin{proof}
Let $J$ be the number of $K$'s with a given $\nabla_K(v)$ (namely, $J= \C{n-1}{d}^{-1}|\m|$),
and notice that
\[
\gG_a \E M 
=\C{\dHx }{a} \gc^a (1-\gc)^{b}|\m|p^{\eH},
\]  
while
\[
\mu = \C{\gc z}{a}\C{(1-\gc)z}{b}J p^{\eH-d}
= \frac{1}{a!b!} (\gc z)_a((1-\gc)z)_b \, \frac{d!}{(n-1)_d}
|\m| p^{\eH-d};
\] 
so for \eqref{muM} we just need 
\[
 (\gc z)_a((1-\gc)z)_b  = (1 \pm O(n^{-\eta})) \gc^a(1-\gc)^b (n-1)_d\, p^d,
 \]
which is true since $Z\in \cZ_v$ and,
for fixed $k$, $(m)_k$ ($:=m(m-1)\cdots (m-k+1)$) $= (1-O(1/m))m^k$.

\end{proof}

It follows that for \eqref{prcBvF} it is enough to show 
\[     
\pr(X< (1-0.5 \log^{-1/3}n)\mu) < e^{-\go(np)},
\]    
which is implied by 
Theorem~\ref{TJanson} provided
\beq{ovDsp}
\ov{\gD} = o\left(\frac{\mu^2}{np\log^{2/3}n }\right)
\enq
(where, as in \eqref{Delta},
$\ov{\gD} = \sum_i \sum_{j\sim i} \E I_i I_j$).
So it remains to check \eqref{ovDsp}.

\nin

We bound $\ov{\gD}$ in a few steps. 
[Language:  In
each step we add to the specification of a pair $(K_i,K_j)$ that contributes to $\ov{\gD}$,
bounding the number of possibilities (``choices'') for the step, and, 
if adding $t$ edges not containing $v$, ``paying'' a ``cost'' $p^t$, 
the probability that these edges are in $G$.  
The ``contribution'' of the step is the product of these two values.
The bound on the number of choices for a step depends on where its new vertices are to be located:
there are at most $\gc z$ choices for vertices that must lie in $N_S(v)$;
$(1-\gc)z$ for those that must lie in $N_{Z\sm S}(v)$; and $n$ for those that are unconstrained.
(The bounds then correspond to \emph{labeled} $(K_i,K_j)$'s, but as the target is 
\eqref{ovDsp} there is no profit in avoiding this overcount to gain constant factors.)]

\nin
(i)  Choose and pay for $K_i$, contributing a factor $\mu$.
    
\nin
(ii) Choose in $O(1)$ ways the \emph{overlap} $L:= K_i \cap K_j$ (there is no cost here since $L\sub K_i$).

\nin
(iii)
Choose the remaining vertices of $K_j$ in at most
\[
(\gc z)^{a - |\nabla_L(v) \cap S|}((1-\gc) z)^{b - |\nabla_L(v) \sm S|}
n^{\vH - (\dHx + 1)- |V(L)\sm (\{v\}\cup N_L(v))| }
\]
ways, paying
\[
p^{\eH - \dHx - |E(L-v)|}.
\]
Since $j \sim I$ means $A_i\cap A_j$ (not just $K_i\cap K_j$) is nonempty,
$L$ must be a copy on $v$ of $[x,V(H')]$ for some 
\beq{Lrange}
\mbox{$H' \sub H $ with $ x\in V(H')$ and $E(H'-x) \neq \0$.}
\enq

Collecting the contributions from (i)-(iii) (and using $Z\in \cZ_v$),
we find that the product of the terms not involving $L$ is
\[
O(\mu \gc^a(1-\gc)^b n^{\vH -1}p^{\eH}) =O(\mu^2)
\]
(see \eqref{muM}),
and the inverse of the product of terms that do involve $L$ is 
\[
\gO(\gc^{|\nabla_L(v) \cap S|} (1-\gc)^{|\nabla_L(v)\sm S|}(np)^{d_L(v)} n^{v_L-d_L(v)-1}p^{e_L -d_L(v)})
\,=\,
\gO(\gc^{|\nabla_L(v) \cap S|} (1-\gc)^{|\nabla_L(v)\sm S|}n^{v_L-1}p^{e_L}).
\]
So, letting $\gS_L = n^{v_L-1}p^{e_L}$, we have 
\beq{Delc}
\ogd = O(\mu^2/\min_L\{\gc^{|\nabla_L(v) \cap S|} (1-\gc)^{|\nabla_L(v)\sm S|}\gS_L\}).
\enq

Finally, we claim that, for each (possible) $L$,
\beq{PsiLow}
\gS_{L} = \gO(n p \log n).
\enq
If this is true, then inserting in \eqref{Delc}, and using 
\[
\gc^{|\nabla_L(v) \cap S|}(1-\gc)^{|\nabla_L(v) \sm S|} \,>\, \vs^{\dHx} \,\,\,\,(\gg \log^{-1/3}n)
\]
(which trivially holds for all $(a,\gc)$ covered by the lemma),
gives \eqref{ovDsp}.

For \eqref{PsiLow}, just notice that, for any $H'$ as in \eqref{Lrange}, 
Proposition~\ref{balcons} (with \eqref{prange}) implies
\[
\gS_L\asymp np\gL_{H'} =\left\{\begin{array}{ll}
\Theta(np\log n)&\mbox{if $H'=H$ (i.e.\ $i=j$)},\\
\gO(n^{1+\gO(1)}p)&\mbox{otherwise.}
\end{array}
\right.
\]
\end{proof}

\begin{lem}\label{lem:spreadeq}
W.h.p.\ for all $v \in V$,
$S\sub \nabla_G(v)$, and $\gc=|S|/ d_G(v)$ as in \eqref{gammavs},
\beq{spreadEq}
|\{K\in \m_v:K\sub G, \,d_{K\cap S}(v) = a\}|
\sim \C{\dHx}{a} \gc^a (1-\gc)^{\dHx - a}\E[M_v].
\enq
\end{lem}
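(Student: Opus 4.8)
\emph{Proof proposal.} The plan is to obtain Lemma~\ref{lem:spreadeq} from Lemma~\ref{spread} — which already supplies the lower bound — together with the upper bound \eqref{MUB} on $M_v$, by decomposing $M_v$ according to the value of $d_{K\cap S}(v)$. So I would first pass to the (w.h.p.) event on which the conclusion of Lemma~\ref{spread} holds for \emph{every} $a\in\{0,\dots,d\}$ and \eqref{MUB} holds; this is a finite intersection of w.h.p.\ events, so it suffices to prove \eqref{spreadEq} deterministically on it. Fix $v\in V$ and $S\subseteq\nabla_G(v)$, put $\gc=|S|/d_G(v)$, and for $a'\in\{0,\dots,d\}$ set $c(a')=|\{K\in\m_v:K\subseteq G,\ d_{K\cap S}(v)=a'\}|$ and $w_{a'}=\C{d}{a'}\gc^{a'}(1-\gc)^{d-a'}$. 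Since every $K\in\m_v$ with $K\subseteq G$ has $d_K(v)=d$ (as $v$ plays the role of the degree-$d$ vertex $x$, and $d\geq2$ by \eqref{Otriv}), we have $\sum_{a'}c(a')=M_v$, while $\sum_{a'}w_{a'}=1$. The conclusion \eqref{spreadEq} is $c(a)\sim w_a\E M_v$, and the lower bound $c(a)\geq(1-\log^{-1/3}n)w_a\E M_v$ is exactly Lemma~\ref{spread}; so only the matching upper bound $c(a)\leq(1+o(1))w_a\E M_v$ needs proof.

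For this I would split on $\gc$. If $\vs<\gc<1-\vs$ then $\gc$ lies in the range \eqref{gammavs} associated with \emph{every} $a'$, so Lemma~\ref{spread} applies to each $a'$, giving $c(a')\geq(1-\log^{-1/3}n)w_{a'}\E M_v$; combining this (for $a'\neq a$) with $M_v\leq(1+n^{-\ccc})\E M_v$ from \eqref{MUB} yields
\[
c(a)=M_v-\sum_{a'\neq a}c(a')\leq[(1+n^{-\ccc})-(1-\log^{-1/3}n)(1-w_a)]\E M_v\leq(w_a+2\log^{-1/3}n)\E M_v .
\]
Here $w_a\geq\gc^a(1-\gc)^{d-a}>\vs^{d}=\log^{-\kappa d}n$, and since $\kappa d<1/6<1/3$ (recall $\kappa\in(0,1/(6d))$) we get $\log^{-1/3}n=o(w_a)$, hence $c(a)\leq(1+o(1))w_a\E M_v$. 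The only remaining possibilities are $0<\gc\leq\vs$, which by \eqref{gammavs} forces $a=0$, and $1-\vs\leq\gc<1$, which forces $a=d$. In the first, $w_0=(1-\gc)^d\geq(1-\vs)^d=1-o(1)$, so $c(0)\leq M_v\leq(1+n^{-\ccc})\E M_v\leq(1+o(1))w_0\E M_v$ while Lemma~\ref{spread} (applied with $a=0$, legitimate since $\gc\in(0,1-\vs)$) gives the matching lower bound; the second case is symmetric, with $w_d=\gc^d\geq(1-\vs)^d=1-o(1)$. In all cases $c(a)=(1\pm o(1))w_a\E M_v$, which is \eqref{spreadEq}.

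The argument is essentially routine given Lemma~\ref{spread} and \eqref{MUB}. The one point that needs care — and the only place the smallness of $\kappa$ is used — is that Lemma~\ref{spread} furnishes the required lower bounds on the \emph{competing} counts $c(a')$, $a'\neq a$, only when $\gc$ is bounded away from $0$ and $1$ (which is exactly what makes $w_a\gg\log^{-1/3}n$, so that the error term $\log^{-1/3}n$ is negligible relative to $w_a$); in the two extreme regimes of $\gc$ one must instead observe that a single value of $a$ is in play and that its weight $w_a$ is already $1-o(1)$, so the crude bound $c(a)\leq M_v$ together with \eqref{MUB} suffices.
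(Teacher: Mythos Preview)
Your proof is correct and follows essentially the same route as the paper: the lower bound comes directly from Lemma~\ref{spread}, and the upper bound is obtained by decomposing $M_v=\sum_{a'}c(a')$, applying Lemma~\ref{spread} to the competing $a'\neq a$ in the middle range $\gc\in(\vs,1-\vs)$, and handling the extreme ranges (where only $a=0$ or $a=d$ is in play and $w_a=1-o(1)$) by the crude bound $c(a)\le M_v$ together with \eqref{MUB}. The paper phrases the upper bound as a proof by contradiction, but the content is identical.
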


\nin
(We just need to bound the order of magnitude of the l.h.s.\ of \eqref{spreadEq}
(see \eqref{Cnv-1}), but the present argument, combining \eqref{MUB} with the \emph{lower} bounds 
of Lemma~\ref{spread},
automatically gives the stronger statement.)

\begin{proof} Lemma~\ref{spread} gives the asymptotic lower bound, so it suffices to show probable nonexistence of $v$ and $a_0$ satisfying (with $\vs$ as in \eqref{zetac}, though any
$o(1)$ would suffice)
\beq{spreadFail}
|\{K\in \m_v:K\sub G, \,d_{K\cap S}(v) = a_0\}| 
> (1 + \vs) \C{\dHx}{a_0} \gc^{a_0} (1-\gc)^{\dHx - a_0}\E[M_v].
\enq
(Recall $\m_v$ and $M_v$ were defined following the proof of Proposition~\ref{prop:smallextdev}.)

As mentioned above we show \eqref{spreadFail} by combining \eqref{MUB} with the lower bounds of 
Lemma~\ref{spread}.

\nin
(i)  For general $a_0$ and $\gc\in (\vs, 1-\vs)$, \eqref{spreadFail} and Lemma~\ref{spread} 
(and 
$\sum_a \gG_{a} = 1$), give
\begin{align}
    \nonumber M_v = \sum_{a = 0}^{\dHx} |\{K\in \m_v :K\sub G, \,d_{K\cap S}(v) = a\}|
    &>  (1 + \vs) \gG_{a_0}\E[M_v] + \sum_{a \neq a_0} \left(1 - \log^{-1/3}n\right) \gG_{a}\E[M_v]\\
    &>\label{MLB} \left(1 + \vs \gG_{a_0} - \log^{-1/3}n \right)\E[M_v],
\end{align}
which, since $\vs \gG_{a_0} \gg\log^{-1/3}n$, contradicts \eqref{MUB}.

\nin
(ii) For $a_0=d$ and $\gc\geq 1-\vs$, the lower bound in \eqref{spreadFail} is already larger than
the upper bound in \eqref{MUB}
(namely, $(1+\eta)\gc^d > 1+n^{-\eps}$, which really just needs $\gc $ a little more than $1-\eta/d$),
which is again a contradiction.
(And the argument for $a_0=0$ and $\gc\leq \vs$ is identical.)

\end{proof}

For $v\in V$ and $S\subseteq \nabla_G(v)$, let 
$
T_S(v)= \{K \in \m_v:K\sub G, |K\cap S|\geq 2\}
$
and $\tau_S(v) = |T_S(v)|$. 
(So $\tau_S(v)$ is a variant of $\tau_{[x,H]}(G,v)$.)
\begin{lem}\label{spreaderr}
For any $\gc_0=o(1)$,
w.h.p.\ 
\beq{eq:spreadSmall}
\tau_S(v)  < o(\gc n^{\vH-1} p^{\eH})
\enq
for all $v \in V$ and $S\sub \nabla_G(v)$ with $|S| = \gc np< \gc_0np$.
\end{lem}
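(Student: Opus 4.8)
\emph{Plan.} Throughout this section $p$ is as in \eqref{prange}, so $\gL^p_H\asymp\log n$ and $n^{\vH-1}p^{\eH}=np\,\gL^p_H\asymp np\log n$. Writing $\gc=|S|/d_G(v)$ (so $|S|=\gc\,d_G(v)\sim\gc\,np$), the target is $\tau_S(v)=o(|S|\log n)$, whereas the typical (expected) order of $\tau_S(v)$ for a fixed pair $(v,S)$ is $\asymp\gc|S|\log n=\gc^2np\log n$; the gain we must exploit is the factor $\gc=o(1)$. Fix once and for all a function $g=g(\gc_0,n)$ with $g\to0$, $g>\gc_0$, and $g\ge(\log\log n)^{-1/2}$ (e.g.\ $g=\max\{\gc_0^{1/2},(\log\log n)^{-1/2}\}$); it suffices to show that w.h.p.\ $\tau_S(v)<g\,|S|\,\gL^p_H$ for all admissible $(v,S)$. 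We split according to whether $\gc$ exceeds $\vs$ (as in \eqref{zetac}); the first case is short, the second is the real content.

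\emph{Case $\gc>\vs$} (vacuous unless $\gc_0>\vs$). Here $\gc$ lies in the range \eqref{gammavs} for $a=0$ and for $a=1$, so Lemma~\ref{spread} (applied with these two values of $a$) together with the uniform upper bound \eqref{MUB} on $M_v$ gives, via $\tau_S(v)=M_v-\sum_{a\le1}|\{K\in\m_v:K\sub G,\,d_{K\cap S}(v)=a\}|$,
\[
\tau_S(v)<\Big[(1+n^{-\ccc})-(1-\log^{-1/3}n)\big((1-\gc)^{\dHx}+\dHx\gc(1-\gc)^{\dHx-1}\big)\Big]\E M_v .
\]
Since $(1-\gc)^{\dHx}+\dHx\gc(1-\gc)^{\dHx-1}=(1-\gc)^{\dHx-1}(1+(\dHx-1)\gc)=1-O(\gc^2)$, the bracket is $O(\log^{-1/3}n+\gc^2)$; as $\gc>\vs=\log^{-\kappa}n$ with $\kappa<1/3$ we have $\log^{-1/3}n=o(\gc)$ and $\gc^2=o(\gc)$, so $\tau_S(v)=o(\gc\,\E M_v)=o(\gc\,n^{\vH-1}p^{\eH})$, uniformly in $(v,S)$.

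\emph{Case $\gc\le\vs$} (so $2\le|S|\le\vs np$). Fix $v$ and condition on $Z:=\nabla_G(v)$; by Proposition~\ref{prop:routine} we may assume $|Z|\sim np$, and the remaining randomness lives in $G':=G-\nabla(v)$. For $S\sub Z$ write $\tau_S(v)=\sum_j I_j$, with $K_j$ ranging over the (deterministic) set of copies of $(H;x)$ on $v$ having $\nabla_{K_j}(v)\sub Z$ and $|\nabla_{K_j}(v)\cap S|\ge2$, and $I_j=\mbone_{\{E(K_j-v)\sub G'\}}$. A routine count gives $\mu:=\sum_j\pr(I_j)=\E[\tau_S(v)\mid Z]=(1+o(1))\big(\sum_{a\ge2}\tbinom{\dHx}{a}\gc^a(1-\gc)^{\dHx-a}\big)\E M_v\asymp\gc^2\,\E M_v\asymp\gc^2 n^{\vH-1}p^{\eH}$, which is $o(\gc\,n^{\vH-1}p^{\eH})$; so we need a matching upper-tail bound for $\tau_S(v)$, uniformly over $v$ and $S$. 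Let $\sigma_S(v)$ be the largest number of the $K_j$ with pairwise edge-disjoint $E(K_j-v)$. First, a bounded-overlap argument in the spirit of Lemma~\ref{311} — strict $2$-balance of $H$, through Propositions~\ref{LS2Bd} and~\ref{balcons}, forces $n^{a}p^{b}=n^{-\gO(1)}$ whenever a further copy on $v$ meets the current union in a non-$v$-edge while adding a new vertex ($a,b$ = numbers of new vertices, edges), exactly as in the proof of Lemma~\ref{311}(i) — shows that w.h.p.\ every component of the ``overlap graph'' of the copies on $v$ has $O_H(1)$ vertices, hence $\tau_S(v)\le C_H\,\sigma_S(v)$ for all $v$ and $S$. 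Second, given $Z$, Lemma~\ref{JUB} applied to the events $\{E(K_j-v)\sub G'\}$ gives, for $t$ with $\mu+t\asymp(g/C_H)|S|\log n$ (so $t\gg\mu$ since $g\gg\gc$, using $g\ge\gc_0^{1/2}>\gc^{1/2}$),
\[
\pr\big(\sigma_S(v)\ge\mu+t\mid Z\big)\le\exp\big[-\mu\,\varphi(t/\mu)\big]\le\exp\big[-\gO\big(g\,|S|\log n\cdot\log(g/\gc)\big)\big].
\]
Because $\gc\le\vs$, we have $g/\gc\ge g\log^{\kappa}n$, so $\log(g/\gc)=\gO(\log\log n)$ and $g\log(g/\gc)=\gO((\log\log n)^{1/2})$; hence the exponent is $\gO\big(|S|\log n\,(\log\log n)^{1/2}\big)$, which comfortably beats $n\binom{np}{|S|}\le\exp[O(|S|\log n)+\log n]$. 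Summing over $v$, over $S$, and over the (at most $np$) values of $|S|$ keeps the total failure probability $o(1)$, and with $\tau_S(v)\le C_H\sigma_S(v)$ this yields $\tau_S(v)<g\,|S|\,\gL^p_H$, as required.

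\emph{Main obstacle.} The crux is the uniformity over $S$ in the second case. The cheap estimate $\tau_S(v)=O_H(|S|^2)$ — obtained by bounding $\tau_S(v)$ by a sum over pairs of $S$-edges and noting, via Proposition~\ref{balcons}, that the number of copies on $v$ through a fixed pair of edges at $v$ is $O_H(1)$ w.h.p.\ uniformly — already overshoots the target $o(|S|\log n)$ as soon as $|S|\gg\log n$, so one genuinely needs $\tau_S(v)$ to be close to its conditional mean $\asymp\gc^2 n^{\vH-1}p^{\eH}$, and this concentration must survive a union bound over the $\binom{np}{|S|}$ choices of $S$. The difficulty is that the ``$\ge2$ $S$-edges'' condition buys only a factor $\gc$ in the Janson exponent, which for the smallest admissible $\gc\asymp(np)^{-1}$ is barely more than the entropy of the union; splitting off $\gc>\vs$ (handled by Lemma~\ref{spread}) and the floor $g\ge(\log\log n)^{-1/2}$ are precisely what make the accounting close. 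A secondary technical point is that for graphs like $H=K_4$ the $\tau$-to-$\sigma$ gap is multiplicative rather than additive, which is why we phrase step one as $\tau_S(v)\le C_H\sigma_S(v)$ and absorb $C_H$ into the deviation.
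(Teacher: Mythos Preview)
Your proof is correct and follows essentially the same approach as the paper: reduce $\tau_S(v)$ to a bounded multiple of the edge-disjoint count $\sigma_S(v)$ via a bounded-component argument on the overlap graph of copies on $v$ (the paper proves this as a Claim, with the overlap graph $\Gamma$ on \emph{all} copies of $H$ and adjacency $|V(K)\cap V(K')|\ge 3$, but the mechanism is the same), then apply Lemma~\ref{JUB} and take a union bound over $v$ and $S$.

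The only real difference is your case split at $\gc=\vs$. The paper handles all $\gc<\gc_0$ in one shot: it chooses $\vr$ with $\max\{\gc^{1/2},1/\log n\}\ll\vr\ll 1$, sets $A=\vr\gc n^{\vH-1}p^{\eH}$, and gets the exponent $\gO(\vr\gc np\log n\cdot\log(1/\gc))$, which beats the entropy $\gc np\log(e/\gc)+\log n$ directly since $\vr\log n\gg 1$. Your Case~1 detour through Lemma~\ref{spread} is correct but unnecessary; your Case~2 argument already works for all $\gc<\gc_0$ once you observe $\log(g/\gc)\ge\tfrac12\log(1/\gc)-O(1)$ (from $g\ge\gc_0^{1/2}>\gc^{1/2}$), without invoking $\gc\le\vs$. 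The split buys you a slightly cleaner constant-tracking in the union bound at the cost of an extra page; the paper's uniform treatment is shorter.
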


\begin{proof} We use a reduction similar to (but cruder than) the one used for 
Lemma~\ref{311}. 
Let $\gs_S(v)$ be the size of a largest collection, say $\cee$, of $K$'s from $T_S(v)$ that 
are edge-disjoint outside $\nabla_G(v)$.

\begin{claim*}
There is a (fixed) $C$ so that w.h.p.\ for all $v\in V$ and $S\subseteq \nabla(v)$,
\[\tau_S(v) < C \gs_S(v).\]
\end{claim*}
\begin{proof}
Let $\gG$ be the graph whose vertices are the copies, $K$, of $H$ in $G$, with
$K \sim K'$ if $|V(K) \cap V(K')| \geq 3$. 
With $C$ TBA, it is enough to show
\beq{compbound}
\text{w.h.p.\ no component of $\gG$ has more than $C$ vertices.}
\enq
[\emph{Because}:  
any component of $\gG$
meeting $T_S(v)$, say in $K$, also meets $\cee$, since some $K'\in \cee$
shares $v$
and an edge off $v$ with $K$, so is either $K'$ or a neighbor of $K'$ in $\gG$.]

We show that, for some fixed $M$, w.h.p.\ there do not exist copies
$K_1\dots K_M$ of $H$ such that
$1 \leq |V(K_i) \sm \cup_{j<i}V(K_j)| \leq \vH - 3$ for $i\geq 2$. 
Suppose $K_1\dots K_M$ is such a sequence, with $R_i=\cup_{j\leq i}K_j$, 
$|E(K_i)\sm E(R_{i-1})|=b_i$ and
$|V(K_i)\sm V(R_{i-1})|=a_i$.
Then Proposition~\ref{balcons} (applied with $L=K_i\cap R_{i-1}$) and \eqref{prange}
give $n^{a_i}p^{b_i}\leq n^{-\gz}$ for some fixed $\gz>0$ and $i\geq 2$, whence
\beq{M-1}
n^{|V(R_M)|}p^{|E(R_M)|} \leq n^{\vH} p^{\eH}(n^{-\gz})^{M-1}.
\enq
So the probability of seeing such a sequence is at most the r.h.s.\ of \eqref{M-1},
which is $o(1)$ for a (not very) large (fixed) $M$.
So w.h.p.\ the $K$'s in any component of $\gG$ cover fewer than $(\vH-3)M+3$ vertices
of $G$, and the claim follows.
\end{proof}

By the claim, it is enough to show \eqref{eq:spreadSmall} with $\tau$ replaced by $\gs$.
Given
$v$ and $S \subset \nabla_G(v)$ (so we condition on this) 
of size $\gc np$, let
\beq{maybemu}
\mu = \E \tau_S(v) <\gc^2 n^{\vH - 1} p^{\eH}.
\enq
Let $\max\{\gc^{1/2},1/\log n\}\ll \vr\ll 1$ and
\beq{maybemu2}
A =A_S= \vr \gc n^{\vH - 1} p^{\eH} = \Theta(\vr \gc np\log n).
\enq
Then $A=o(\gc n^{\vH-1} p^{\eH})$, so for \eqref{eq:spreadSmall} (with $\gs$ in place of $\tau$),
it is enough to show that we're unlikely to see any $v$ and $S$ with $\gs_S(v)> A_S$.

With $K := A/\mu \geq \vr/\gc$
(not to be confused with the $K$'s that are copies of $H$), Lemma~\ref{JUB} gives 
\beq{eq:spread2}
\pr(\gs_S(v) \geq A) <
\exp[-A\log(K/e)] =
\exp[-\gO(A\log(1/\gc))]
=\exp[- \gO(\vr\gc np\log n\log(1/\gc))];
\enq
so the probability that there are $v$ and $S$ with $\gs_S(v)>A_S$ is less than
\beq{spUn}
n\C{n}{\gc np}p^{\gc np} \exp[-A \log(K/e)] 
< 
\exp[\log n + \gc np \log(e/\gc) - \gO(\vr\gc np\log n\log(1/\gc))] = n^{-\go(1)}
\enq
where the first three terms on the l.h.s.\ correspond to summing $\pr(S\sub G)$ over
$v\in V$ and $S\sub \nabla_{K_n}(v)$,
and we note that $S\neq \0$ implies $\gc\geq 1/(np)$.

\end{proof}

\begin{proof}[Proof of Lemma~\ref{lem:71up}] 
We are now done with the special $x$ of Lemmas~\ref{spread}-\ref{spreaderr},
and use $K$ for a general copy of $H$ in $G$.
For $v\in V$ let $\gc_v = d_F(v)/d_G(v)$, 
and set $B=\{v: \gc_v > \theta\}$, with the positive constant $\theta $ small enough to support
the following argument (see \eqref{smalltheta}).
As mentioned at the beginning of this section, we supplement the indirect approach of \cite{BK}, 
which works here only when $B$ is small, with a direct argument for larger $B$.

\subsubsection*{Proof when $|B| < \theta \ga n$} Write $\bft^*(xy)$ 
for the number of $xy$-bridges with an
edge of $F$ incident to $x$ or $y$. Let $\vp_{v,k} = |\{K \ni v: |K\cap \nabla_F(v)| = k\}|$
and recall $\vs$ was defined in \eqref{zetac}. 
Writing $\sum'$ and $\sum''$ for sums over $v$'s with $\gc_v > \vs$ and 
$\gc_v \leq \vs$ respectively, we have w.h.p.
\beq{Cnv-1} 
\sum_{xy \in F} \bft^*(xy) \,\leq\, \sum_v \sum_{k\geq 2} k \vp_{v,k}
\,\leq \, C n^{v_H - 1}p^{e_H} \left[\sideset{}{'}\sum \gc_v^2  + \sideset{}{''}\sum o(\gc_v)\right],
\enq
where $C$ is a constant (depending on $H$) 
and the limit in $o(\gc)/\gc\ra 0$ as $\gc \ra 0$.
Here the first inequality comes from considering how many times each side counts the 
various $K$'s (the l.h.s.\ is 
$\sum_{xy\in F}|\{K\supset xy:\max\{d_{K\cap F}(x), d_{K\cap F}(y)\}\geq 2\}|$
and the r.h.s.\ is 
$\sum_{xy\in F} \sum_{K\supseteq xy}
[\mbone_{\{d_{K\cap F}(x)\geq 2\}} +\mbone_{\{d_{K\cap F}(y)\geq 2\}}]$),
and the second uses Lemmas~\ref{lem:spreadeq}
(with \eqref{MUB} when $\gc> 1-\vs$) and~\ref{spreaderr}
(with, pickily, Proposition~\ref{prop:routine} to say $d_G(v)\sim np$ in the definition of 
$\gc_v$).
Since $\sum_v \gc_v \sim \ga n$ w.h.p.\ (by Proposition~\ref{prop:routine}), the second sum is $o(\ga n)$.  For the first, we have
\[\sum_{v \in B} \gc_v^2 \leq |B| < \theta \ga n\]
and 
\[\sum_{v \not\in B} \gc_v^2 \leq \sum_v \theta \gc_v = \theta \ga n,\]
so the total is at most $2\theta \ga n$. So in all,
\beq{badF}
 \sum_{xy \in F} \bft^*(xy) < 3\theta C \ga n^{v_H}p^{e_H}.
\enq

Now let (say) $\gz =1/4$ and
$F^* = \{xy\in F: \bfs(xy) > (1-\gz) \gL\}$ ($= F\sm F(1-\gz)$),
and recall (see Proposition~\ref{prop:regext}) that
$|F^*| \sim \ga n^2p/2$ w.h.p. If $xy \in F^*$, then $\bfs(xy;F) > \gL/2$ unless $\bft^*(xy)> (1/2 - \gz)\gL$;
so, with $\tilde{F} = \{xy \in F: \bfs(xy, F)\leq \gL/2\}$, \eqref{badF} implies, 
for small enough (fixed, positive) $\theta$,
\beq{smalltheta}
|\tilde{F}| \leq \frac{3\theta C \ga n^{v_H}p^{e_H}}{(1/2 - \gz)\gL}
<\ga n^2 p /5,
\enq
whence $|F^*\sm \tilde{F}| >\ga n^2 p/4$, which is (more than) enough for Lemma~\ref{lem:71up}.

\subsubsection*{Proof when $|B| \geq \theta \ga n$}
We first recall two conditions that have already been shown to hold w.h.p.:
first, with $\bft(xy;F)$ the number of central bridges on $xy$
(and $\bfs(xy;F)$ its internally disjoint counterpart), 
\beq{whp1}
\bft(xy;F)-\bfs(xy;F) \,\,\,\,(\leq \bft(xy)-\bfs(xy))\,\, <C \,\,\,\forall xy
\enq
for a suitable fixed $C$ (see Lemma~\ref{311}); 
and second, 
\beq{whp2}
\bft(xy) < C \gL
\,\,\,\,\forall x,y,
\enq
again, for a suitable fixed $C$ (see Lemma~\ref{PropSB}(b), with 
its assumed strict balance given by \eqref{S2Bequiv}, and \eqref{gLH}).

For the lemma (in the present ``large $B$'' case) it is enough to show that w.h.p.
\beq{goal71}
|\{(e,K) : e\in F,\, \mbox{$K$ is central for $e$}\}| = \gO(|F|\gL),
\enq 
since \eqref{whp1} and \eqref{whp2}  (respectively) then give, still w.h.p.,
\beq{sumxyF}
\sum_{xy \in F} \bfs(xy,F) > \gO(|F|\gL) -O(|F|)  = \gO(|F|\gL)
\enq
and, with $2\nu$ the implied constant on the r.h.s.\ of \eqref{sumxyF},
\[
\sum_{xy \in F} \bfs(xy,F)  < C\gL|\{xy\in F: \bfs(xy,F)>\nu \gL\}| +\nu|F|\gL,
\]
implying
\beq{xyFgs}
|\{xy \in F : \bfs(xy, F) > \nu\gL\}|  >\nu|F|/C.
\enq

So we may take $c$ in Lemma~\ref{lem:71up} to be $\nu/(2C)$ (or, pedantically,
the minimum of this and 1/4, since for $|B|< \theta \ga n$ we had
$|\{xy \in F : \bfs(xy, F) > \gL/2\}|  >\ga n^2p/4$;
see following \eqref{smalltheta}).

For \eqref{goal71},
recalling that we assume $H$ is not a cycle (see following Lemma~\ref{CycleSpread}) and using
\eqref{Otriv},
we may fix $x\in V(H)$ with $d_H(x)$
at least 3 and odd if $H$ is non-Eulerian, and 
at least 4 if $H$ \emph{is} Eulerian, and define $a $ to be $d_H(x)$ in the first case 
and $d_H(x)-1$ in the second.
With $\cN_v= \{\mbox{copies of $(H,x)$ on $v$ in $G$}\}$,
Lemma~\ref{spread}, combined with \eqref{dFdG} in the Eulerian case
(making $\gc$ in the lemma at most 1/2), then says that (w.h.p.)
for every $v\in B$,
\beq{XvK}
X_v := |\{K \in \cN_v: |\nabla_{K\cap F}(v)| = a\}| = \gO(n^{v_H - 1}p^{e_H}).
\enq
So, since each $K$ in \eqref{XvK} is central for at least one $vw\in F$, the l.h.s.\ of \eqref{goal71}
is at least
\[ 
\frac{1}{2}\sum_{v\in B} X_v \geq \gO(\ga n^{v_H}p^{e_H}) = \gO(|F|\gL).
\]
\end{proof}

\begin{proof}[Proof of Lemma~\ref{lem:Claim2}]
As mentioned earlier, this follows easily from Lemma~\ref{lem:71up}
via  ``coupling down'':
it is enough to show that if $G$ satisfies the r.h.s.\ of \eqref{eq:71up} then
w.h.p.\ it also satisfies $\cR$; that is,
$|F \cap R(F_0)| \geq c\alpha n^2p$.

For
$xy\in F':= \{xy\in F:\bfs(xy;F) > 2c\gL^p\}$,
Theorem~\ref{thm:Chernoff} gives
\[
\pr(\bfs_0(xy;F_0) \leq c\gL^q) < \exp[-\Omega(\log n)]
= n^{-\gO(1)},
\]
since members of a set of $\bfs(xy;F)$ internally disjoint, $F$-central
bridges on $xy$ survive in $G_0$ (and become $F_0$-central) independently, each with probability
$\vt^{e_H-1}$.
So by Markov's Inequality,
w.h.p.
\[
|\{xy\in F': \bfs_0(xy;F_0) \leq c\gL^q\}|=o(|F'|).
\]
The lemma follows.
\end{proof}

\section{Proof of Lemma~\ref{Rlemma}}\label{PL7.2}

From this point all randomization is in $G_0$ ($G$ does not appear), and we
drop the superscript and subscript $q$'s 
from $\bft$, $\tau$, $\bfs$, $\gs$ $\mu$, $\gL$ and $\Psi$
(e.g.\ $\tau_{[W,Z]}(X)$ is $\tau_{[W,Z]}(G_0,X)$).
In addition, all $S$'s are now contained in $G_0$; we always assume
$0< \ga_S<2\gl$;
and $x,y$ are always distinct members of $V$.

Let $\ell$ range over triples $(s,t,g)$ with $st$ and $g$ 
disjoint edges of $H$.
For such an $\ell$ define an \emph{$(\ell,S)$-bridge} on $(x,y)$ to be 
a copy of $(H-st;s,t,g)$ with $\cop{s}=x$, $\cop{t}=y$ and $\cop{g}\in S$.
(So now \emph{we do keep track of the order of $(s,t)$ and $(x,y)$}.)
Let $\bfs_0(x,y;\ell,S)$ be the maximum size of a set of internally disjoint $(\ell,S)$-bridges 
on $(x,y)$ in $G_0$ (where, as earlier, 
\emph{internally disjoint} means sharing no vertices other than $x$ and $y$).

Then
\[
\bfs_0(xy;S) \leq \sum_\ell \bfs_0(x,y;\ell,S).
\]
So with $\mmm$ the number of $\ell$'s, $c'=c/\mmm$ ($c$ as in \eqref{R}), and 
\beq{RlS}
R_\ell(S) = \{(x,y) : \bfs_0(x,y;\ell,S) > c'\gL\}
\enq
(recall $\gL=\gL^q$), we have
\[
R(S) \sub \bigcup_\ell R_\ell(S);
\]
and for \eqref{RS} it suffices to show, again for some fixed $\gd>0$ and every $\ell$,
\beq{Rl}
\mbox{w.h.p. $\,|R_\ell(S)| = O(\ga_S^{1+\gd}n^2)\,$ for all $S$.}
\enq

For the rest of this and the next section (i.e.\ for the rest of the paper) 
we fix $\ell = (s,t,g)$ (as above).
The following graph $\Hs$ and subgraph $\Ks$ will be central.

Let $H^*$
be the graph gotten from copies $H_1$, $H_2$ of $H$ by 
identifying the two copies of $s$, and similarly $t$, and deleting the copies of the edge $st$.
Thus $v_{H^*} = 2\vH-2$, $e_{H^*} = 2\eH - 2$ and
\beq{gLH*}
\gL_{H^*}\asymp \gL^2/q.
\enq
We continue to use $s$ and $t$ for the appropriate vertices of $H^*$, and 
use $g_\eps$ for the copy of $g$ in $H_\eps$ ($\eps =1,2$).

A \textit{connection} is a connected subgraph of $\HH$ 
containing $g_1$ and $g_2$.  For the rest of the paper we fix a connection
$\Ks$ with $\gL_\Ks$ as small as 
possible.\footnote{We 
mention for perspective, though we won't need this, that $\Ks$ can be taken to be $L_1\cup L_2$, 
where $L\sub H$ contains $g$ and at least one vertex of $f$,
and $L_i$ is the copy of $L$ in $H_i$.}

We use $\gL_*$ for the ``clean'' version of the 
expected number of copies 
of $(K^*;g_1,g_2)$ with $\cop{g}_2$ equal to some specified
$xy\in \C{V}{2}$ and all other edges in $G_0$; 
thus
\beq{gL*}
\gL_* = 2\tdm_{q}(g_2,\Ks)/\textrm{aut}(K^*;g_1,g_2).
\enq
\nin
(Here the literal expectation is 
$2(n-2)_{v_{K^*}-2}\,q^{e_{K^*}-1}/\textrm{aut}(K^*;g_1,g_2) 
= (1-O(1/n))\gL_*$. We note for clarity that
$\gL_*$ differs from $\gL^*$ in referring to $\Ks$ rather than $H$
and in being ``clean'' rather than exact.)

We will be particularly interested in copies of $(K^*;g_1,g_2)$ in $G_0$ with 
$\cop{g}_1$ and $\cop{g}_2$
belonging to a given $S$; we
call such copies $S$-\emph{ropes}, or simply \emph{ropes} if $S$ is understood.
(The name is from \cite{BK}, where the ropes were paths with terminal edges in $S$.)
We use $\K(S,S)$ for the set of $S$-ropes in $G_0$ and 
$\rho(S,S)=|\K(S,S)|$.

\nin
\emph{Preview.} 
Each $(x,y)\in R_\ell(S)$ generates on the order of $\gL^2$
copies $\cop{H}^*$ of $H^*$ with $\cop{g}_1,\cop{g}_2\in S$
(since any ordered pair of disjoint $(\ell,S)$-bridges on $(x,y)$ gives such a copy);
and each such $\cop{H}^*$ corresponds to an $S$-rope, with---crucially 
and nontrivially---most 
of these ropes not corresponding to too many $\cop{H}^*$'s.
So a large $R_\ell(S)$ implies existence of many such $\cop{K}^*$'s.
A little more precisely, the number of $S$-ropes
implied by an $R_\ell(S)$ of size $\ga_S^{1+\gd}n^2$ 
turns out to be (w.h.p.) more than $\ga_S^{1+2\gd}\Psi_{K^*}$, which 
(for $\gd<1/2$) is much larger than the natural $\ga_S^2\Psi_{K^*}$.
The surprisingly challenging
main point is then to show that this unnatural behavior really is unlikely.  

These two main steps are formalized in the next two lemmas, from which \eqref{Rl} follows
immediately (since Lemma~\ref{MP2} says that w.h.p.\ the behavior in \eqref{rhoLB}
does not occur).

\begin{lemma}\label{rhoCor}
W.h.p.\ for all $S\sub G_0$ (with $\ga_S<2\gl$) and $|R_\ell(S)|> \ga_S^{1+\gd}n^2$, 
\beq{rhoSSS}
\rho(S,S) = \left\{\begin{array}{ll}
\gO(|R_\ell(S)| \gL^2) &\mbox{if $\Ks =\Hs$,}\\
\gO(|R_\ell(S)| \gL_* q\log^{-v_{H^*}}(1/\ga)) &\mbox{if $\Ks\neq\Hs$;}
\end{array}\right.
\enq
in particular, 
\beq{rhoLB}
\rho(S,S) = \gO(\ga_S^{1+2\gd} \Psi_{\Ks}).
\enq
\end{lemma}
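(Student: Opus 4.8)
\emph{Proof proposal.} The plan is to realize the Preview: turn each $(x,y)\in R_\ell(S)$ into $\asymp\gL^2$ copies of $\Hs$ with both marked edges in $S$, note that each such copy carries an $S$-rope, and then divide by the number of copies of $\Hs$ a single rope can support.

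\textbf{Producing copies of $\Hs$.} Fix $(x,y)\in R_\ell(S)$. By \eqref{RlS} there is a family of at least $c'\gL$ internally disjoint $(\ell,S)$-bridges on $(x,y)$; for an ordered pair $(B_1,B_2)$ of distinct members the subgraph $B_1\cup B_2$ of $G_0$ is a copy $\cop H^*$ of $(\Hs;g_1,g_2)$ with $\cop g_1,\cop g_2\in S$ --- since $\Hs=(H_1-st)\cup(H_2-st)$ glued along $\{s,t\}$, each $B_\eps$ is a copy of $H-st$ with $\cop s=x$, $\cop t=y$, $\cop g\in S$, and $B_1,B_2$ meet only in $\{x,y\}$. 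Because $s\neq t$ in $\Hs$ and $\Aut(\Hs;g_1,g_2)$ is finite, the triple $(x,y,B_1,B_2)$ is recovered from $\cop H^*$ up to an $O(1)$ factor, so summing over $(x,y)\in R_\ell(S)$,
\[
N(S)\ :=\ \big|\{\text{copies of }(\Hs;g_1,g_2)\text{ in }G_0\text{ with both marked edges in }S\}\big|\ \geq\ \ccc\,|R_\ell(S)|\,\gL^2 .
\]

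\textbf{Passing to ropes, and the easy case.} Restricting a copy $\cop H^*$ counted by $N(S)$ to the vertex set of $\Ks$ --- which is an induced subgraph of $\Hs$, since among connections minimality of $\gL_\Ks$ prefers as many edges as possible on a fixed vertex set --- produces an $S$-rope $\cop K^*$; writing $m(\cop K^*)$ for the number of $\cop H^*$'s that restrict in this way to a fixed rope $\cop K^*$, we get $N(S)\leq\sum_{\cop K^*\in\K(S,S)}m(\cop K^*)$. If $\Ks=\Hs$ then $m\equiv1$, hence $\rho(S,S)=N(S)=\gO(|R_\ell(S)|\gL^2)$, the first case of \eqref{rhoSSS}. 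So suppose $\Ks\neq\Hs$; then $m(\cop K^*)\leq O(\tau_{[V(\Ks),V(\Hs)]}(G_0,X))$ with $X$ the vertex sequence of $\cop K^*$, and it is enough to prove that, w.h.p., uniformly over $S$ with $\ga_S<2\gl$ and over the ropes that occur,
\beq{multtarget}
m(\cop K^*)\ =\ O\!\big(\tdm(V(\Ks),V(\Hs))\cdot\log^{v_{\Hs}}(1/\ga_S)\big) .
\enq
Indeed \eqref{multtarget} gives $\rho(S,S)\geq N(S)/O(\tdm\log^{v_{\Hs}}(1/\ga_S))$, and since \eqref{gLH*}, \eqref{gL*}, and the identity $\gL_{\Hs}\asymp\gL_\Ks\cdot\tdm(V(\Ks),V(\Hs))$ (which uses $\Ks$ induced) give $\gL^2\asymp\gL_* q\cdot\tdm(V(\Ks),V(\Hs))$, this is precisely the second case of \eqref{rhoSSS}.

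\textbf{The multiplicity bound --- the main obstacle.} For \eqref{multtarget} I would first use minimality of $\gL_\Ks$ to control the chain $V(\Ks)=S_0\subset\cdots\subset S_k=V(\Hs)$ of \eqref{WZchain}: every ``intermediate connection'' has $\tdm\geq1$, which forces each strictly balanced step $[S_{i-1},S_i]$ to be either ``large'' (with $\tdm(S_{i-1},S_i)$ at least a fixed positive power of $n$, where Lemma~\ref{Erho} (built on Lemma~\ref{PropSB}) controls $\tau$ tightly) or one of the at most $v_{\Hs}$ ``small'' steps. Applying Lemma~\ref{Erho}(b) with the constants $\CCC_i$ on the small steps chosen so that each $\CCC_i\tdm(S_{i-1},S_i)$ is a fixed power of $\log n$ then gives $\tau_{[V(\Ks),V(\Hs)]}(G_0,X)=O(\tdm\cdot\log^{O(1)}n)$ for every $X$, which settles \eqref{multtarget} when $\ga_S$ is polynomially small but not otherwise. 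For general $S$ with $\ga_S<2\gl$ I would prune: call a rope \emph{heavy} if $m$ exceeds $C\tdm\log^{v_{\Hs}}(1/\ga_S)$, bound the number of copies of $(\Hs;g_1,g_2)$ with marked edges in $S$ that extend a heavy rope via an upper-tail estimate for copies of the ``doubled'' graph $\Hs\cup_{\Ks}\Hs$ (with two marked edges in $S$), and conclude that heavy ropes carry at most $\tfrac12 N(S)$, so that the light ropes alone give \eqref{multtarget}. I expect this last estimate to be the genuine difficulty: it must hold simultaneously for \emph{all} $S$ with $\ga_S<2\gl$, hence must beat the union bound $\binom{|G_0|}{|S|}\leq\exp[\gO(\ga_S n^2q\log(1/\ga_S))]$, and that is exactly where the factor $\log^{v_{\Hs}}(1/\ga_S)$ is spent. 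The remaining pieces --- the two steps above, the relations $n^2\gL^2\asymp\Psi_{\Hs}$ and $n^2\gL_* q\asymp\Psi_{\Ks}$, and the elementary inequality $\ga_S^{-\gd}\geq\log^{v_{\Hs}}(1/\ga_S)$ (valid for all $\ga_S<2\gl$ once $n$ is large, as $\gl\to0$) needed to pass from \eqref{rhoSSS} to \eqref{rhoLB} --- are routine.
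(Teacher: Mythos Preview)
Your overall plan is right, and the first two stages---producing $\gO(|R_\ell(S)|\gL^2)$ good copies of $\Hs$ from ordered pairs of disjoint $(\ell,S)$-bridges, and handling $\Ks=\Hs$ directly---match the paper. Your observation that $\Ks$ is necessarily induced in $\Hs$ is correct and is used (without comment) in the paper's identification $\mu(V(\Ks),V(\Hs))\asymp\gL_{\Hs}/\gL_*$.

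The gap is in the multiplicity bound when $\Ks\neq\Hs$. You correctly see that a direct application of Lemma~\ref{Erho}(b) only yields a $\log^{O(1)}n$ factor, too coarse unless $\ga_S$ is polynomially small, and you propose to prune via ``heavy ropes'' and an upper-tail estimate on the doubled graph $\Hs\cup_{\Ks}\Hs$, expecting to need a union bound over all $S$. This expectation is where you diverge from the paper, and it is a misconception: no union bound over $S$ is needed for this lemma. The paper instead classifies the \emph{copies $\cHs$ of $\Hs$} (not ropes) as $\ga$-nice/$\ga$-bad, where $\ga$-bad means that for some step $i$ in the chain and some $X\sub V(\cHs)$, the number of copies of $[S_{i-1},S_i]$ on $X$ \emph{using no vertices of $V(\cHs)\sm X$} exceeds roughly $\max\{\mu_i,\log(1/\ga)\}$. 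The point of ``using no vertices of $V(\cHs)\sm X$'' is that it renders, for a potential $J\cong\Hs$ in $K_n$, the event $\{J\sub G_0\}$ independent of $J$'s $\ga$-badness; so a straightforward Markov argument with Lemma~\ref{PropSB}(b), plus a union bound over the $O(\log n)$ \emph{dyadic values of $\ga$} (not over $S$), shows that w.h.p.\ the number of $\ga$-bad $\cHs$'s is at most $\xi_\ga\Psi_{\Hs}$ with $\xi_\ga\leq\ga^2\ll\ga^{1+\gd}$, for every $\ga$. Since this is a statement about $G_0$ alone, it holds simultaneously for all $S$ for free; then for any particular $S$ the $\ga_S$-bad $\cHs$'s carry $o(N(S))$ quads, and (via Lemma~\ref{disext}) every non-$\ga_S$-bad $\cHs$ is $\ga_S$-nice, whence your bound \eqref{multtarget} follows directly. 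Your doubled-graph route might be completable, but it would require a genuine upper-tail estimate on a larger, less well-balanced graph, uniformly in $S$, and you have not supplied one.
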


\nin
(The $2\gd$ in \eqref{rhoLB} is convenient but has no special significance.
The first case of \eqref{rhoSSS} doesn't require the lower bound
on $|R_\ell(S)|$, but of course \eqref{rhoLB}---for which see the value of $\gL_{H^*}$
in \eqref{gLH*}---does.)

\begin{lemma}\label{MP2}
There is a (fixed) $\gd>0$ such that w.h.p.\ 
$\rho(S,S)\ll \ga_S^{1+2\gd}\Psi_{\Ks}$ for all $S\sub G_0$.
\end{lemma}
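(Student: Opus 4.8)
The plan is a union bound over $S$, arranged so that essentially all the work is a single tail estimate. Split according to $s:=|S|$: there are at most $\binom{\binom n2}{s}$ sets of size $s$ and each has $\pr(S\sub G_0)\le q^{s}$, so at scale $s$ the ``union-bound weight'' is at most $(en^2q/2s)^{s}=(e/\ga_s)^{s}$, where $\ga_s:=2s/(n^2q)$ ($=\ga_S$). Hence it suffices to show that, for each fixed $S$, $\pr(\{S\sub G_0\}\wedge\{\rho(S,S)\ge \ga_s^{1+2\gd}\Psi_{\Ks}\})$ is smaller than $q^{s}$ by a factor that beats $(e/\ga_s)^{s}$ and still sums to $o(1)$ over $s$. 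Conditionally on $S\sub G_0$ the threshold $\ga_s^{1+2\gd}\Psi_{\Ks}$ is a factor $\asymp\ga_s^{-1+2\gd}$ above the ``spread'' value $\ga_s^{2}\Psi_{\Ks}$, so for a \emph{spread-out} $S$ this is a large-deviation (over-concentration) event; \emph{clustered} $S$ — those for which the conditional mean of $\rho(S,S)$ is itself already $\gg\ga_s^{2}\Psi_{\Ks}$ because many of the counted copies of $\Ks$ reuse several edges of $S$ — must instead be excluded by observing that such a configuration is a denser-than-typical piece of $G_0$, ruled out by a first-moment estimate.

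Before the tail estimate one performs a Spencer-type reduction, in the spirit of Lemmas~\ref{disext}, \ref{311} and \ref{Erho}: w.h.p.\ and uniformly over all $S\sub G_0$ with $\ga_S<2\gl$, $\rho(S,S)$ is, up to a negligible correction, controlled by a count of ``internally disjoint'' $\Ks$-ropes that use only their two $g$-edges from $S$. The point of passing to such a packing is that its summands then occupy pairwise disjoint non-$g$ edge sets and so, conditionally on $S\sub G_0$, are present independently of one another; this is what makes the large-deviation event tractable. The two ingredients are (i) that the overlap of two copies of $\Ks$ decomposes (via $\Ks=L_1\cup L_2$ for the connector $L\sub H$, cf.\ the footnote) into overlaps of copies of $L$, so the \emph{minimality} of $\Ks$ among connections together with Proposition~\ref{balcons} forces every such overlap pattern to cost a factor $n^{-\gO(1)}$, exactly as in the proof of Lemma~\ref{311}; and (ii) a crude first-moment bound absorbing the contribution of ropes meeting $S$ in three or more edges.

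It then remains to bound the over-concentration probability for a fixed spread-out $S$. When the exponent of $n$ in $\gL_{\Ks}$ is positive, i.e.\ $\gL_{\Ks}\ge n^{\ccc}$, the elementary estimate of Lemma~\ref{JUB}, giving a bound $\exp[-\gO(\ga_s^{1+2\gd}\Psi_{\Ks}\log(1/\ga_s))]$, already beats $(e/\ga_s)^{s}$ comfortably (the comparison reduces to $\gL_{\Ks}\,\ga_s^{2\gd}\to\infty$). The difficulty is the complementary case, where $\gL_{\Ks}$ is only polylogarithmic — which forces $d_2(\Ks)=m_2(H)$ and is exactly the case distinguished in Lemma~\ref{rhoCor} — and the elementary bound falls just short. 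Here one uses the sharpened Janson inequality of \cite{DK} (Theorem~\ref{JDK}), packaged as Lemma~\ref{J1} in Section~\ref{PLJ2}: the events $B_{ij}$ are chosen so that ``two $B_{ij}$'s for the same $i$'' is a rare (lower-order) event, i.e.\ $\pr(B_i)\approx\sum_j\pr(B_{ij})$ — precisely the near-disjointness supplied by the Spencer reduction — and so that the double-count parameter $\gc$ of \eqref{gamma'} is $n^{-\gO(1)}$ times the mean, again a balance property of $L\sub H$. Feeding the per-$S$ bounds into the union bound and summing over $s$ completes the proof.

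The step I expect to be the main obstacle is this last one: extracting a probability bound strong enough to survive the union bound over the (super-exponentially many) sets $S$ in the regime where $\gL_{\Ks}$ is polylogarithmic, where the ``textbook'' concentration inequalities miss by a small but non-negligible margin. It is exactly this margin that forces both the passage to the internally disjoint packing and the use of the refined inequality of \cite{DK}; by comparison, the union-bound bookkeeping, the Spencer-type reduction, and the first-moment treatment of clustered $S$, although technical, are variations on devices already in place in the paper.
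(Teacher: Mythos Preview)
Your proposal has a genuine gap, and in fact mischaracterizes the paper's argument at the crucial point.

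The paper does \emph{not} carry out a union bound over $S$. Its central move is the deterministic inequality
\[
\rho(S,U)\ \le\ |S| + (\rho(G_0,U)-\mu_U) + (\mu_U - X_U),
\]
valid for \emph{any} $U$, which bounds a quantity depending on $S$ by quantities depending only on $U$ (plus the trivial $|S|$). One then uses Proposition~\ref{Propu} to extract, from a hypothetical bad $S$, a small well-spread $U\sub S$ of carefully chosen size $u=u_\alpha$ with $\rho(S,U)=\Omega(\alpha_S^{2\delta}\mu_u)$, and shows this is impossible by bounding the two parentheses: the first by the easy upper-bound Lemma~\ref{Erho14}, the second by a \emph{lower} tail bound on $X_U=|\{xy\in G_0:\rho(xy,U)\neq 0\}|$ via Theorem~\ref{JDK}. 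The union bound is over the pairs $(\alpha,U)$ with $|U|=u_\alpha$, not over $S$; this is what makes the bookkeeping survive.

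Your plan misreads two things. First, Theorem~\ref{JDK} (and hence Lemma~\ref{J1}) is a \emph{lower}-tail inequality, $\pr(X\le \mu-t)\le\cdots$; it is used here to show $X_U$ is not much below its mean, not to bound any upper tail of $\rho(S,S)$. Invoking it as the device that ``extracts a probability bound strong enough to survive the union bound over $S$'' is a category error. Second, your ``hard case'' ($\gL_{\Ks}$ polylogarithmic) does not occur: by \eqref{gamlarge} one always has $\gL_*>n^{\ccc}$, so the case split you describe is vacuous. Even in the ``easy'' branch your Spencer reduction is not as you state: two $S$-ropes can overlap in a single non-$g$ edge $L$, for which $\Psi_L=n^2q$ while $\Psi_{\Ks}>n^{2+\ccc}q$, so the overlap cost $\Psi_{\Ks}/\Psi_L$ is \emph{large}, not $n^{-\Omega(1)}$; the minimality of $\Ks$ among connections (Proposition~\ref{gLprop}(b)) controls only connected overlaps containing both $g_1,g_2$, not arbitrary ones. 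So the reduction to an internally disjoint packing with a global multiplicative constant, uniformly over $S$, is not justified, and without it Lemma~\ref{JUB} by itself does not close the argument.
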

\nin
(Recall we assume $S\sub G_0$ and  $\ga_S<2\gl$.)

The proof of Lemma~\ref{rhoCor} and the first part of that of Lemma~\ref{MP2} are given
in this section, with details for the main point of the latter---the 
Theorem~\ref{JDK} application in Lemma~\ref{J1}---given in Section~\ref{PLJ2}.
Before turning to these main points, we record a few easy properties of $\Ks$ and $\Hs$.

\begin{prop}\label{gLprop}
{\rm (a)}  If $\Ks\neq H^*$ then
\beq{gap}
\gL_* <\log^{-\ccc}n\cdot \gL_{H^*}.
\enq
{\rm (b)}
For any connection $F \subsetneq \Ks$,
\beq{gammagap}
\gL_{F} > n^{\ccc}\cdot\gL_*.
\enq

\nin
{\rm (c)}
For any $Y\in \lp V(\Ks),V(\Hs)\rb$, $d(V(\Ks),Y)\leq d_2(H)$.
\end{prop}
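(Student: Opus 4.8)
The three parts are all elementary density computations about the fixed graphs $\Ks$ and $\Hs$, so the plan is to extract each from the defining extremal property of $\Ks$ (being a connection with $\gL_\Ks$ minimum) together with strict $2$-balance of $H$. Throughout, recall $\gL_K = \gL_K^q = n^{v_K-2}q^{e_K-1}$, that $q\asymp q^*$ so $n^{-1+\ccc}<q<n^{-\ccc}$, and that $\gL_* \asymp \gL_{\Ks}$ up to the automorphism factor (both being $n^{v_{\Ks}-2}q^{e_{\Ks}-1}$ up to constants), while $\gL_{H^*}\asymp \gL^2/q \asymp \log^2 n$ by \eqref{gLH*} and \eqref{gLH}.

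\textbf{Part (c).} For $Y \in \lp V(\Ks), V(\Hs) \rb$ I would argue by contradiction: suppose $d(V(\Ks),Y) > d_2(H)$, i.e.\ the extension $[V(\Ks),Y]$ (relative to $\Hs$) has edge-to-internal-vertex ratio exceeding $d_2(H)$. Then adding these edges to $\Ks$ produces a subgraph $\Ks' = \Hs[Y]$ of $\Hs$ that still contains both $g_1$ and $g_2$ (since $Y \supseteq V(\Ks)$), is still connected (adding edges among/above an existing connected vertex set keeps it connected), hence is a connection; and $\gL_{\Ks'}/\gL_{\Ks} = n^{v(V(\Ks),Y)}q^{e(V(\Ks),Y)} \cdot (\text{adjustment})$—more precisely $\gL_{\Ks'}/\gL_{\Ks} = n^{v_{\Ks'}-v_{\Ks}}q^{e_{\Ks'}-e_{\Ks}} = (n^{1/d(V(\Ks),Y)}q)^{e(V(\Ks),Y)}$, which is $<1$ since $d(V(\Ks),Y) > d_2(H)$ forces $n^{1/d(V(\Ks),Y)}q < n^{1/d_2(H)}q \asymp n^{1/d_2(H)}q^* = \td\Theta(1)\cdot \text{(polynomially small factor)}$; one needs to be a hair careful that $n^{1/d_2(H)}q < 1$, which holds because $q \ll q^* n^{\ccc}$ is false—rather $q \asymp q^*$ and $q^* = n^{-1/m_2(H)}\cdot \td O(1) = n^{-1/d_2(H)}\td O(1)$, so $n^{1/d_2(H)}q = \td\Theta(1)$, not obviously $<1$. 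The clean fix: use strict $2$-balance to get room. Since $[st,\Hs]$ restricted appropriately inherits strictness, one shows $d(V(\Ks),Y) > d_2(H)$ actually gives $\gL_{\Ks'} < \gL_{\Ks}$ because the relevant density strictly exceeds $m_2(H)$ by a fixed amount, contradicting minimality of $\gL_\Ks$. I would phrase this by noting $d(V(\Ks),Y) \le m_2(H) = d_2(H)$ must hold, and if equality held one could still shrink using a minimal such $Y$ and strictness. The main care here is just bookkeeping of which density is compared to $d_2(H)$ versus to $d(st,H)$.

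\textbf{Parts (a) and (b).} For (b): any connection $F \subsetneq \Ks$ has $\gL_F > \gL_{\Ks}$ by minimality of $\gL_\Ks$, so $\gL_F/\gL_* \asymp \gL_F/\gL_{\Ks} > 1$; to upgrade this to $n^{\ccc}$ I would observe that $\gL_F/\gL_{\Ks} = n^{v_F - v_{\Ks}}q^{e_F - e_{\Ks}}$ is a fixed power of $n$ times a fixed power of $q$, hence (being $>1$ and bounded away from $1$ on the nose since these are among finitely many graphs and $q$ is polynomially bounded) is at least $n^{\ccc}$ for a fixed $\ccc>0$—this uses \eqref{q} ($n^{-1+\ccc}<q<n^{-\ccc}$) to convert a strict inequality between quantities of the form $n^a q^b$ into a polynomial gap, exactly as in the proof of \eqref{gLKbig}. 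For (a): if $\Ks \ne \Hs$, then $\Ks$ is a proper connection of $\Hs$, so applying the reasoning just sketched—or invoking \eqref{gLKbig}/\eqref{nzeta}-style estimates with $H^*$ in the role of $H$—one gets $\gL_{H^*}/\gL_{\Ks} > n^{\ccc}$; but I must be careful, because \eqref{gap} only claims a $\log^{-\ccc}n$ gap, not a polynomial one, so the right statement to prove is the weaker $\gL_* < \log^{-\ccc}n \cdot \gL_{H^*}$, which follows a fortiori. Actually the subtlety is the opposite: $\Hs$ itself is \emph{not} strictly $2$-balanced in general (it's $H$ doubled), so $\gL_{H^*}/\gL_{\Ks}$ need not be polynomial; the honest route is to note $\gL_{H^*} = \gL_{H^*}$ and $\gL_* \le \gL_{\Ks} \le \gL_{F_0}$ for the particular proper connection $F_0 = L_1 \cup L_2$ mentioned in the footnote, and compute $\gL_{H^*}/\gL_{F_0}$ directly: this equals $n^{v_H - v_L}q^{e_H-e_L}$ for the witnessing $L \subsetneq H$, and Proposition~\ref{balcons} (applied to $H$ with this $L$, using $q \asymp q^*$) gives $n^{v_H-v_L}q^{e_H-e_L} < \gL_H^q n^{-\eta} q^*/q \asymp \log n \cdot n^{-\eta}$... but that overshoots. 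The cleanest: since $\Ks$ minimizes and $\Ks \ne \Hs$, there is a proper connection, so $\gL_{\Ks} \le \gL_{L_1\cup L_2}$ where $L$ contains $g$ and a vertex of $f$; then $\gL_{H^*}/\gL_{\Ks} \ge \gL_{H^*}/\gL_{L_1\cup L_2} = (\gL_H^q/\gL_L^q)^{?}$—and here strict $2$-balance of $H$ makes $\gL_H^q/\gL_L^q$ \emph{small} ($=n^{-\ccc}$), which is the wrong direction.

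\textbf{Resolution and main obstacle.} The correct reading is that \eqref{gap} is a \emph{polynomial-in-$\log$} savings, so I expect (a) to follow from: $\gL_{\Ks} < \gL_{\Hs}$ strictly (as $\Ks\subsetneq\Hs$ is a connection and $\Ks$ is not forced to be all of $\Hs$ only when a cheaper connection exists), hence $\gL_{\Ks}/\gL_{\Hs} = n^{v_{\Ks}-v_{\Hs}}q^{e_{\Ks}-e_{\Hs}}$; one then checks this ratio is $O(\log^{-\ccc} n)$ by verifying that removing a "last layer" from $\Hs$ to reach $\Ks$ removes an extension of density $\le d_2(H)$ but the total $\gL$-loss, measured against $q\asymp q^*$, is a fixed power of $\log n$—this is where part (c) feeds in, guaranteeing every intermediate extension $[V(\Ks),Y]$ has density $\le d_2(H)$, so $n^{v}q^{e} \le (n^{1/d_2(H)}q)^e = \td\Theta(1)^e$, i.e.\ polylog. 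Passing to $\gL_*$ from $\gL_{\Ks}$ only costs the constant $\mathrm{aut}(\Ks;g_1,g_2)$, which is harmless. I expect the main obstacle to be getting the direction and the $\log$-power right in part (a): one must track that the witnessing proper connection forces the gap to be a \emph{negative} power of $\log n$ (not of $n$, and not positive), which requires combining (c) with the explicit structure of $\Ks$ versus $\Hs$—everything else is routine arithmetic with the estimates \eqref{q}, \eqref{gLH}, \eqref{gLKbig} and Proposition~\ref{balcons}.
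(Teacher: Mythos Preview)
Your overall strategy---deduce everything from the minimality of $\gL_\Ks$ among connections---is the same as the paper's, and your final ``resolution'' for (a) via (c) is a legitimate variant (the paper instead argues all three parts directly from a single formula). But the write-up has real gaps, most seriously in (b).

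The clean tool you are missing is the formula the paper uses: since $q\asymp p^*\asymp n^{-1/d_2(H)}(\log n)^{1/(e_H-1)}$, for every fixed $F$ one has
\[
\gL_F = n^{v_F-2}q^{e_F-1}\asymp n^{\,a(F)}(\log n)^{\,b(F)},\qquad a(F)=v_F-2-\tfrac{e_F-1}{d_2(H)},\quad b(F)=\tfrac{e_F-1}{e_H-1},
\]
with $a(F),b(F)$ fixed rationals. All three parts then reduce to comparing the $n$-exponents $a(\cdot)$, using the $\log$-exponent $b(\cdot)$ only to break ties. In particular your worry in (c) dissolves: you don't need $n^{1/d_2(H)}q<1$ (it isn't; it's $\asymp(\log n)^{1/(e_H-1)}$). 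A strict inequality $d(V(\Ks),Y)>d_2(H)$ between fixed rationals forces $a(H^*[Y])<a(\Ks)$, hence $\gL_{H^*[Y]}/\gL_{\Ks}=n^{-\Omega(1)}\cdot\text{polylog}\to 0$, contradicting minimality. No appeal to strict $2$-balance of $\Hs$ (which indeed fails) is needed.

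For (b) your argument is incomplete. Minimality only gives $\gL_F\ge\gL_\Ks$, and ``finitely many graphs plus $q$ polynomially bounded'' does not rule out $\gL_F/\gL_\Ks\asymp(\log n)^{c}$ for some fixed $c>0$, which would fail \eqref{gammagap}. The missing step is: since $F\subsetneq\Ks$ with both connected, one has $e_F<e_\Ks$, so $b(F)<b(\Ks)$. Hence if $a(F)\le a(\Ks)$, then $\gL_F/\gL_\Ks\le(\log n)^{b(F)-b(\Ks)}<1$, contradicting minimality. So $a(F)>a(\Ks)$ strictly, and (being fixed rationals) by a definite amount, giving the polynomial gap $n^{\ccc}$.

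Once (c) is in hand, your route to (a) does work: with $v=v_{\Hs}-v_\Ks$, $e=e_{\Hs}-e_\Ks$, (c) gives $e/v\le d_2(H)$, whence $n^vq^e\ge(n^{1/d_2(H)}q)^e\asymp(\log n)^{e/(e_H-1)}$, so $\gL_*\asymp\gL_\Ks=\gL_{\Hs}/(n^vq^e)\le\log^{-\ccc}n\cdot\gL_{\Hs}$. This is a valid alternative to the paper's direct case-split on whether $a(\Ks)<a(\Hs)$ or $a(\Ks)=a(\Hs)$. But you should state it this cleanly rather than the back-and-forth with $L_1\cup L_2$ and Proposition~\ref{balcons}, which led you in circles.
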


\nin
\emph{Proof}.
We show the equivalent statements with $\gL$ replaced by $\Psi$ ($=\Psi^q$), all of which 
follow from 
\beq{anyF}
\Psi_F \asymp n^{v_F-  e_F (v_H-2)/(e_H-1)}\log^{e_F/(e_H-1)}n \,\,\,
\mbox{(for any $F$)},
\enq
which is immediate from the definition of $q$ (see \eqref{qp}, \eqref{prange} and 
the definition of $p^*$).

\nin
{\rm (a)}
If $\Psi_{\Ks}\leq \Psi_{H^*}$ then 
in view of \eqref{anyF} (and the definition of $K^*$), either
\beq{vKvH}
v_\Ks- (v_H-2)e_\Ks/(e_H-1) < v_{H^*}- (v_H-2) e_{H^*}/(e_H-1),
\enq
implying
$
\gL_* = n^{-\gO(1)}\gL_{H^*},
$
or the two sides of \eqref{vKvH} are equal so $e_{\Ks}<e_{H^*}$ implies 
\[
\gL_* = \Theta((\log  n )^{(e_\Ks-e_{H^*})/(e_H-1)}\gL_{H^*});
\]
and in either case we have \eqref{gap}.

\nin
{\rm (b)}
Any $F$ as in (b) must satisfy
$v_F- v e_F/(e_H-1) > v_\Ks- v e_\Ks/(e_H-1) $, since otherwise $e_F<e_\Ks$ would imply 
$\Psi_F < \log^{-\gO(1)}n\cdot \Psi_\Ks$.

\nin
(c)  Since $\Psi_Y\geq \Psi_\Ks$, \eqref{anyF} implies
\[
v_\Ks- (v_H-2)e_\Ks/(e_H-1) \leq |Y|- (v_H-2) e_{H^*[Y]}/(e_H-1);
\]
or, rearranging,
\[ 
0 \leq v(V(K^*), Y) - (v_H-2)e(V(K^*), Y)/(e_H-1),
\]
which is what we want.

\qed

We will be interested in lower bounds on $\Psi_K$ 
for subgraphs $K$ of $H^*$, for which we mainly need to 
understand such bounds for \emph{connected} subgraphs.  
Recall (from \eqref{gLKbig}) 
that strict 2-balance of $H$ implies 
\beq{gLKbig'}
\gL_{L} = n^{\ccc}
\enq
for any $L \subsetneq H$ with $e_{\LLL}\geq 2$.

For connected $L\sub H^*$ \eqref{gLKbig'} gives the following bounds.  
If $L $ is
contained in one of $H_1$, $H_2$, then 
\beq{L0}
\Psi_L= 
n^2q \gL_{L} 
\left\{\begin{array}{ll}
\mbox{$=n^2q$ if $L\cong K_2$,}\\
\mbox{$> n^{2+\ccc} q$ otherwise.}
\end{array}\right.
\enq
For $L\not\sub H_1,H_2$ we use 
$v_{\eps}$ and $e_{\eps}$ for the 
numbers of vertices and edges of $L_\eps:=L \cap H_\eps$ ($\eps=1,2$);
in these cases:
If $L$ contains just one of $s,t$, then 
\beq{L1}
\Psi_L
~=~n^{v_1+v_2-1}q^{e_1+e_2}
~=~ 
n^3 q^2\gL_{L_1}\gL_{L_2} >n^{3+\ccc}q^2.
\enq
If $L$ contains both $s$ and $t$, 
then with $M_\eps =L_\eps +st$ (a subgraph of $H$),
\beq{L2}
\Psi_L ~=~  
n^{v_{M_1}+v_{M_2}-2}q^{e_{M_1}+e_{M_2}-2} ~=~ 
n^2   \gL_{M_1}\gL_{M_2} 
\left\{\begin{array}{ll}
\asymp n^2\log^2n&\mbox{if $L=H^*$,}\\
> n^{2+{\ccc}} &\mbox{otherwise.}
\end{array}\right.
\enq

These observations will be used repeatedly in the Proof of Lemma~\ref{J1};
for now we just note the easy consequences:
\beq{m2H*}
m_2(H^*) < d_2(H);
\enq
\beq{gamlarge}
\gL_* > n^{\ccc};
\enq
and
\beq{K*2}
\mbox{if $\Ks\neq H^*$ then
$\Ks$ contains exactly one of $s, t$, and $\gL_* > n^{1+\ccc}q$.}
\enq

\nin
\emph{Proofs.}  
For \eqref{m2H*}, note that a $K\sub H^*$ with $d_2(K)\geq d_2(H)$ would have 
$\gL_K=\tilde{O}(1)$
(see \eqref{gLH}),
whereas \eqref{L0}-\eqref{L2} and \eqref{q} imply $\gL_K > n^{\ccc}$.  
Similarly, \eqref{gamlarge} follows from \eqref{L1} and \eqref{L2} (and \eqref{q}).
Finally, the first assertion of \eqref{K*2} holds since otherwise we have
(using the second part of \eqref{L2}, Proposition~\ref{gLprop}(a), \eqref{gLH*} and \eqref{gLH})
the contradiction 
$n^{\ccc}/q < \gL_* < \gL_{H^*}\asymp \gL^2/q\asymp q^{-1}\log^2n$;
and the second assertion is then given by \eqref{L1}.
\qed

\subsection*{Many ropes}

Let 
\[
V(\Ks) =:W=S_0 \subset S_1 \subset \cdots \subset S_k= Z :=V(\Hs)
\]
be as in \eqref{WZchain}; so $k< v_\Hs$ and, as in \eqref{L8b}, 
\beq{dSSH}
d(S_{i-1},S_i)\leq d(S_{0},S_1)\leq d_2(H) \,\,\,\,\,\forall i,
\enq
where the second inequality is given by Proposition~\ref{gLprop}(c).

\nin
\emph{Proof of Lemma~\ref{rhoCor}.}
Here $i$ runs over $[k]$ and we set $\mu_i=\mu(S_{i-1},S_i)$ (recall
$\mu:= \mu_q$)
and
\beq{mu*}
\mbox{$\mu^* = \min_i\mu_i = \gO(\log^{1/(e_H -1)} n)$}
\enq
(with the bound following from \eqref{dSSH}).

Call a copy of the form $(\cdots;\cdots,\cg_1,\cg_2)$ \emph{good} if $\cg_1,\cg_2\in S$
(so $\K(S,S) =\{\mbox{good copies of $(\Ks;g_1,g_2)$}\})$.

For $S$ as in Lemma~\ref{rhoCor}, 
let $N=N_S$ be the number of 
good copies of 
of $(\Hs;\Ks, g_1,g_2)$ (in $G_0$).
We first observe that, for any such $S$,
\beq{largerho}
N
= \gO(\ga_S^{1 + \gd} n^2 \gL^2).
\enq
\emph{Proof.}
For each $(x,y)\in R_\ell(S)$ there are at least  
$c \gL (c\gL-1)$ good
copies of $(\Hs;s,t,g_1,g_2)$.
Each of these corresponds in the natural way to a (good) copy of  
$(\Hs;g_1,g_2)$, and each such copy is gotten from only $O(1)$ choices of $(x,y)$;
so there are $\gO(\gL^2)$ good copies of $(\Hs;g_1,g_2)$, and each of these produces 
at least one good $(\cHs;\cKs, \cg_1,\cg_2)$.\qed

If $\Ks = \HH$ then $\rho(S,S)=N$ and we are done
(see \eqref{gLH*});
so we may assume $\Ks\neq\Hs$.  To avoid too much repetition, now call
a good $(\cKs;\cg_1,\cg_2)$ a \emph{triad} and a good
$(\cHs;\cKs, \cg_1,\cg_2)$ a \emph{quad}.
So to get from \eqref{largerho} to \eqref{rhoSSS} we should limit
the numbers of quads corresponding to the various triads.
Here we can somewhat simplify, using the trivial
\beq{onon}
\mbox{the number of quads including the triad 
$(\cop{K}^*;\cop{g}_1,\cop{g}_2)$ is at most the number of $\cHs$'s on $\cKs$}
\enq
(i.e.\ for which $(\cHs;\cKs)$ is a copy of $(\Hs; \Ks)$).

Say $\cop{H}^*$ is $\ga$-\emph{nice} if for each $i$ and 
$X \sub V(\cop{H}^*)$ of size $|S_{i-1}|$, 
\beq{tauSS}
\tau_{[S_{i-1},S_i]}(X)  < \max\{2e^2\mu_i, 3\log(1/\ga)\} =:\pi_\ga,
\enq
and $\ga$-\emph{bad} if for some such $i$ and $X$,
the number of copies of
$[S_{i-1},S_i]$ on $X$ using no vertices of $V(\cop{H}^*)\sm X$ is 
at least $ \pi_\ga':= 1.1 \max\{e^2\mu_i, 2\log(1/\ga)\}$.

Lemma~\ref{rhoCor} will follow from the next three assertions, for the last of which,
recalling that $\mu^* = \min_i\mu_i $, we set
\[
\xi_\ga = \min\{\exp[-e^2\mu^*], \ga^2\}.
\]

\begin{claim}\label{nnisb}
    W.h.p.\ every $\cop{H}^*$ that is not $\ga$-bad is $\ga$-nice.
\end{claim}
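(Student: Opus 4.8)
The plan is to prove the contrapositive: w.h.p.\ every copy $\cop{H}^*$ of $H^*$ in $G_0$ that is not $\ga$-nice \emph{is} $\ga$-bad. Suppose $\cop{H}^*$ fails $\ga$-niceness, witnessed by some $i\in[k]$ and $X\sub V(\cop{H}^*)$ with $|X|=|S_{i-1}|$ and $\tau_{[S_{i-1},S_i]}(X)\geq \pi_\ga$. Every copy of $[S_{i-1},S_i]$ on $X$ either avoids $V(\cop{H}^*)\sm X$ or uses a vertex of it, so, keeping this same $i$ and $X$, it suffices to show that w.h.p., uniformly over all $v_{H^*}$-element vertex sets $V'$ and all appropriately-sized $X\sub V'$, the number of copies of $[S_{i-1},S_i]$ on $X$ meeting $V'\sm X$ is at most $\pi_\ga-\pi_\ga'$; applying this with $V'=V(\cop{H}^*)$ leaves at least $\tau_{[S_{i-1},S_i]}(X)-(\pi_\ga-\pi_\ga')\geq\pi_\ga'$ copies on $X$ avoiding $V(\cop{H}^*)\sm X$, i.e.\ $\cop{H}^*$ is $\ga$-bad.

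So fix $i$ and an internal vertex $w$ of $[S_{i-1},S_i]$ (when $[S_{i-1},S_i]$ has a single internal vertex the count is trivially $O(1)$, each such copy being determined by its image). A copy of $[S_{i-1},S_i]$ on $X$ sending $w$ to $z\in V'\sm X$ is an extension of $[S_{i-1}\cup\{w\},S_i]$ on $X\cup\{z\}$, so, summing over the $O(1)$ pairs $(w,z)$, the count is $O(1)$ times the largest number of extensions of some $[S_{i-1}\cup\{w\},S_i]$ on some $(|S_{i-1}|+1)$-element set. The key point is that pinning $w$ makes this extension polynomially rarer: as $[S_{i-1},S_i]$ is strictly balanced with at least two internal vertices, \eqref{dYZWZ} gives $d(S_{i-1},S_{i-1}\cup\{w\})<d(S_{i-1},S_i)$, while the chain \eqref{WZchain} together with Proposition~\ref{gLprop}(c) gives $d(S_{i-1},S_i)\leq d_2(H)$; hence $d_w:=e(S_{i-1},S_{i-1}\cup\{w\})<d_2(H)$, so (using $q=\td{\Theta}(n^{-1/d_2(H)})$) the extension $[S_{i-1}\cup\{w\},S_i]$ has expected count $\Theta(\mu_i)/(nq^{d_w})=\Theta(\mu_i)\,n^{-\gO(1)}$---smaller than the threshold $\pi_\ga-\pi_\ga'=\Theta(\mu_i+\log(1/\ga))$ by a factor $n^{\gO(1)}$. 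Decomposing $[S_{i-1}\cup\{w\},S_i]$ into strictly balanced links and applying Lemma~\ref{Erho}(b) (with each link's constant chosen so the corresponding $C\mu\gg\log n$) then bounds the number of such extensions by $(\pi_\ga-\pi_\ga')/v_{H^*}^2$, uniformly over all base sets, with probability $1-n^{-\go(1)}$; as there are only $O(1)$ pairs $(i,w)$, a union bound finishes the argument.

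A word on the two regimes. When $\mu_i$ is polynomially large, the gap $\pi_\ga-\pi_\ga'=\gO(\mu_i)$ dwarfs the polynomially-smaller mean and Lemma~\ref{Erho}(b)---or Lemma~\ref{PropSB} when all link means already exceed $\log n$---delivers the $n^{-\go(1)}$ estimate at once. When $\mu_i$ is only polylogarithmic, $\Theta(\mu_i)n^{-\gO(1)}=n^{-\gO(1)}$, so the pinned extension is typically absent; here one exploits that $\ga=\ga_S<2\gl\ra 0$ forces $\log(1/\ga)=\go(1)$, so the gap is an $\go(1)$ quantity, and a counting bound in the spirit of the proof of Lemma~\ref{311} (via Lemma~\ref{JUB}) shows that the probability of seeing $\pi_\ga-\pi_\ga'$ internally (nearly) disjoint such extensions is $n^{-\go(1)}$---the factor $n^{-\gO(1)}$ in the mean overwhelming the polylogarithmic gap in the exponent.

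I expect the main obstacle to be precisely this last bookkeeping: verifying, in every regime of $\mu_i$ (equivalently, of $d(S_{i-1},S_i)$ relative to $d_2(H)$), that $\pi_\ga-\pi_\ga'$ really dominates the number of copies of $[S_{i-1},S_i]$ on $X$ meeting the fixed $O(1)$-vertex set $V(\cop{H}^*)$, with a per-instance failure probability small enough to survive the union over the $\gO(n^{v_{H^*}})$ choices of that set. Everything else---the reduction to pinned extensions, the density computations, and the invocations of the deviation lemmas---should be routine given \eqref{dYZWZ}, \eqref{WZchain}, Proposition~\ref{gLprop} and Proposition~\ref{balcons}.
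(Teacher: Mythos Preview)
Your contrapositive strategy (bound the copies of $[S_{i-1},S_i]$ on $X$ that meet $V(\cop{H}^*)\sm X$, leaving at least $\pi_\ga'$ copies that avoid it) is exactly the right shape, but the mechanism you propose for obtaining that bound is where the argument breaks down. The paper's proof splits $[k]=I\sqcup J$ by whether $d(S_{i-1},S_i)<d_2(H)$ or $=d_2(H)$. For $i\in I$ one has $\mu_i=n^{\gO(1)}$, and Lemma~\ref{PropSB}(a) gives $\tau_{[S_{i-1},S_i]}(X)<(1+n^{-\eps})\mu_i<\pi_\ga$ for \emph{every} $X$ directly---no pinning or splitting into ``meeting'' versus ``avoiding'' is needed at all. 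For $i\in J$ the paper invokes Lemma~\ref{disext}: since $q$ sits at the threshold for $[S_{i-1},S_i]$, w.h.p.\ $\tau_{[S_{i-1},S_i]}(X)-\gs_{[S_{i-1},S_i]}(X)=O(1)$, and this immediately implies that at most $O(1)$ copies on $X$ can pass through any fixed vertex $z$ (else $\tau-\gs\geq t-1$ for $t$ such copies); summing over the $O(1)$ vertices of $V(\cop{H}^*)\sm X$ gives the $O(1)$ bound you need.

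Your route via pinning a vertex and applying Lemma~\ref{Erho}(b) or Lemma~\ref{JUB} runs into trouble precisely in the $i\in J$ regime you flag. First, the pinned extension $[S_{i-1}\cup\{w\},S_i]$ can have $a(S_{i-1}\cup\{w\},S_i)>d_2(H)$ (indeed its overall density already exceeds $d_2(H)$ when $i\in J$), so some links in its strictly balanced decomposition have $\mu_q$ polynomially small; the $C_j$'s in Lemma~\ref{Erho}(b) must then be polynomially large, and the resulting product bound is a polylog that need not be below the gap $\pi_\ga-\pi_\ga'=\Theta(\mu_i+\log(1/\ga))$ (recall $\mu_i$ can be as small as $\log^{1/(v_H-2)}n$). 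Second, Lemma~\ref{JUB} only controls \emph{internally disjoint} occurrences; passing from ``few disjoint pinned extensions'' to ``few pinned extensions in total'' requires exactly a $\tau-\gs$ bound---i.e.\ Lemma~\ref{disext}---which is what the paper uses directly and which short-circuits the entire pinning detour.
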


\begin{obs}\label{Onice}
For any $\cop{K}^*$, the number of $\ga$-nice 
$\cHs$'s on $\cop{K}^*$ is 
\[O(\log^k(1/\ga))\mu(\Ks,\Hs).\]
\end{obs}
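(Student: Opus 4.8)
The statement is deterministic, so I would fix $G_0$ and $\cKs$ and count directly. The plan is to assemble each copy of $\Hs$ on $\cKs$ layer by layer along the fixed chain $W=S_0\subset S_1\subset\cdots\subset S_k=Z$ from \eqref{WZchain} (here $W=V(\Ks)$, $Z=V(\Hs)$), using $\ga$-niceness to bound the number of choices at each layer. Note first that $\Ks$ is induced in $\Hs$ --- otherwise $\Hs[V(\Ks)]$ would be a connection with strictly smaller $\gL$, contradicting the choice of $\Ks$ --- so $\Hs[S_0]=\Ks$ and $\cKs$ is a copy of $\Hs[S_0]$. Then a copy $\cHs$ of $\Hs$ on $\cKs$ in $G_0$ gives rise to a chain of copies $\cKs=\cop{S}_0\subset\cop{S}_1\subset\cdots\subset\cop{S}_k=\cHs$, where $\cop{S}_i$ is the sub-copy of $\cHs$ on $S_i$ (a copy of $\Hs[S_i]$); and, given $\cop{S}_{i-1}$, the copies $\cop{S}_i$ of $\Hs[S_i]$ containing it are in bijection with the copies of $[S_{i-1},S_i]$ on $X_{i-1}:=V(\cop{S}_{i-1})$, of which there are $\tau_{[S_{i-1},S_i]}(X_{i-1})$. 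Since distinct $\cHs$'s yield chains with distinct top terms, it suffices to bound the number of such chains whose top term is $\ga$-nice.

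The key step is a pruned depth-$k$ branching count rooted at $\cop{S}_0=\cKs$: from a partial chain $(\cop{S}_0,\ldots,\cop{S}_{i-1})$ the children are its extensions $\cop{S}_i$, except that I prune (give no children to) any node with $\tau_{[S_{i-1},S_i]}(X_{i-1})\ge\pi_{\ga,i}:=\max\{2e^2\mu_i,\,3\log(1/\ga)\}$, where $\mu_i=\mu(S_{i-1},S_i)$, as in \eqref{tauSS}. If $\cHs$ is $\ga$-nice, then every $X_{i-1}$ arising along its chain is a size-$|S_{i-1}|$ subset of $V(\cHs)$, so $\tau_{[S_{i-1},S_i]}(X_{i-1})<\pi_{\ga,i}$ for every $i\in[k]$; hence no node of its chain is pruned and the chain survives to level $k$. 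Every expanded node has fewer than $\pi_{\ga,i}$ children at level $i$ (and a pruned one has none), so the number of level-$k$ nodes --- hence the number of $\ga$-nice $\cHs$ on $\cKs$ --- is at most $\prod_{i=1}^k\pi_{\ga,i}$.

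It remains to estimate this product. Since $\ga=\ga_S<2\gl=o(1)$ we have $\log(1/\ga)\to\infty$, and \eqref{mu*} gives $\mu_i\ge\mu^*\to\infty$; so for large $n$, $\pi_{\ga,i}/\mu_i=\max\{2e^2,\,3\log(1/\ga)/\mu_i\}\le 3\log(1/\ga)$. Together with $\prod_i\mu_i\asymp\mu(\Ks,\Hs)$ --- which holds because $v(W,Z)$ and $e(W,Z)$ are additive along the chain and the automorphism factors in the definition of $\mu$ are all $\Theta(1)$ (cf.\ \eqref{mutau}) --- this gives $\prod_{i=1}^k\pi_{\ga,i}=O(\log^k(1/\ga))\,\mu(\Ks,\Hs)$, as required. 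I do not expect a real obstacle here: the whole argument is branching bookkeeping, and the one point to get right is the pruning step --- that the sets $X_{i-1}$ one meets while building $\cHs$ out of $\cKs$ are exactly the kind of subsets of $V(\cHs)$ that the definition of $\ga$-niceness controls --- after which only routine estimation remains.
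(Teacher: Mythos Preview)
Your argument is correct and is essentially the paper's own proof: both build $\cHs$ layer by layer along the chain $S_0\subset\cdots\subset S_k$, use $\ga$-niceness to bound the branching factor at each step by $\pi_\ga$ (your $\pi_{\ga,i}$), and then note $\prod_i\pi_{\ga,i}=O(\log^k(1/\ga))\prod_i\mu_i\asymp O(\log^k(1/\ga))\,\mu(\Ks,\Hs)$. Your observation that $\Ks$ is induced in $\Hs$ is true (and follows exactly as you say from the minimality of $\gL_{\Ks}$), but it is not strictly needed for the upper bound, since the extension $[S_0,Z]$ is defined relative to $\Hs[S_0]$ regardless; the paper simply doesn't mention it.
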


\begin{claim} \label{Cgabad}
W.h.p.
\beq{gabad}
\mbox{for all $\ga$ the number of $\ga$-bad $\cop{H}^*$'s is at most $\xi_\ga \Psi_{H^*}$.}
\enq
\end{claim}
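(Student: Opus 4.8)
The plan is to establish, for each of the $k$ steps $[S_{i-1},S_i]$ of the chain $V(\Ks)=S_0\subset\cdots\subset S_k=V(\Hs)$ separately, a first‑moment estimate, and to reduce the ``for all $\ga$'' to a union over only the integer thresholds that actually occur, exploiting a monotonicity. Fix $i\in[k]$ and, for an integer $m$, let $N_{i,m}$ be the number of copies $\cop{H}^*\sub G_0$ of $H^*$ admitting a size‑$|S_{i-1}|$ vertex set $X\sub V(\cop{H}^*)$ carrying at least $m$ copies of $[S_{i-1},S_i]$ that use no vertex of $V(\cop{H}^*)\sm X$. Writing $\pi'_{\ga,i}=1.1\max\{e^2\mu_i,2\log(1/\ga)\}$ for the ($i$‑dependent) quantity $\pi'_\ga$ in the definition of $\ga$‑bad, one has $\#\{\ga\text{-bad }\cop{H}^*\}\le\sum_{i=1}^k N_{i,\lceil\pi'_{\ga,i}\rceil}$. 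Now $N_{i,m}$ is nonincreasing in $m$, and a direct check shows that $\lceil\pi'_{\ga,i}\rceil=m$ forces $\ga\ge e^{-m/2.2}$, hence (using also $m\ge 1.1e^2\mu_i\ge 1.1e^2\mu^*$) $\xi_\ga=\min\{e^{-e^2\mu^*},\ga^2\}\ge e^{-m/1.1}$. So it suffices to prove: w.h.p., for every $i\in[k]$ and every integer $m$ that occurs, $N_{i,m}\le\tfrac1k e^{-m/1.1}\Psi_{H^*}$. The occurring $m$'s are $\ge\go(1)$ (since $\ga<2\gl=o(1)$ and $\mu^*=\go(1)$, see \eqref{mu*}), and whenever the right side $\tfrac1k e^{-m/1.1}\Psi_{H^*}$ exceeds $1$ they are $O(\log n)$ (then $\pi'_{\ga,i}$ is governed by the $2\log(1/\ga)$ term and $\ga_S\ge 2/(n^2q)\ge n^{-2}$); there are $O(k\log n)$ of them in total.

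First I would estimate $\E N_{i,m}$, using the clean independence built in by the ``using no vertex of $V(\cop{H}^*)\sm X$'' clause. For a fixed copy $\cop{H}^*$ and fixed $X\sub V(\cop{H}^*)$, every copy of $[S_{i-1},S_i]$ on $X$ avoiding $V(\cop{H}^*)\sm X$ has all its internal vertices outside $V(\cop{H}^*)$, so all its edges meet $V\sm V(\cop{H}^*)$ and are disjoint from $E(\cop{H}^*)$. Hence the event ``at least $m$ such copies are present'' depends only on edges meeting $V\sm V(\cop{H}^*)$, so is independent of $\{\cop{H}^*\sub G_0\}$, and it is contained in $\{\tau_{[S_{i-1},S_i]}(X)\ge m\}$. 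As $X$ has only $O(1)$ positions inside $V(\cop{H}^*)$, summing over $\cop{H}^*$ gives $\E N_{i,m}\le O(1)\cdot n^{v_{H^*}}q^{e_{H^*}}\cdot\pr(\tau_{[S_{i-1},S_i]}(X)\ge m)=O(\Psi_{H^*})\,\pr(\tau_{[S_{i-1},S_i]}(X)\ge m)$.

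The remaining input is a tail bound on $\tau_{[S_{i-1},S_i]}(X)$, whose mean is $\asymp\mu_i$. Since $[S_{i-1},S_i]$ is strictly balanced (\eqref{L8a}) and $d(S_{i-1},S_i)\le d_2(H)$ (\eqref{dSSH}), $q=\tilde\Theta(n^{-1/d_2(H)})$ lies in the range where Lemma~\ref{PropSB}(b) applies (and when $\mu_i=n^{\gO(1)}$ one may instead invoke Lemma~\ref{Erho}(b), which is valid for every $p$); with $\CCC=(m-1)/\mu_i\ge 1.1e^2>e^2$ this yields $\pr(\tau_{[S_{i-1},S_i]}(X)\ge m)\le n^{-\go(1)}+e^{-(1-o(1))m}$. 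Since $e^{-(1-o(1))m}$ beats $e^{-m/1.1}$ with room to spare ($1-o(1)>1/1.1$ for large $n$), Markov gives, for each $(i,m)$, a failure probability $\pr(N_{i,m}>\tfrac1k e^{-m/1.1}\Psi_{H^*})\le n^{-\go(1)}+O(k)e^{-cm}$ with $c>0$ fixed --- a little bookkeeping being needed, e.g. when the threshold $\tfrac1k e^{-m/1.1}\Psi_{H^*}$ drops below $1$ one just observes $N_{i,m}=0$ whenever $\E N_{i,m}=o(1)$, which holds since those $m$ exceed $2\log n$. Summing over the $O(k\log n)$ relevant pairs $(i,m)$, whose $m$-values are all $\go(1)$, the total is $o(1)$, which proves the claim.

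I expect the real obstacle to be not any single estimate --- given Lemma~\ref{PropSB}(b) the arithmetic is short --- but the correct handling of ``for all $\ga$'': a straightforward union over the $\approx n^2q$ values $\ga_S$ diverges, since $\xi_\ga=\ga^2$ is too small relative to the first moment when summed over small $\ga$. One is forced to pass to the $O(\log n)$ integer thresholds $m$ and to check that the two regimes of $\pi'_{\ga,i}=1.1\max\{e^2\mu_i,2\log(1/\ga)\}$ align with the two regimes of $\xi_\ga=\min\{e^{-e^2\mu^*},\ga^2\}$ --- which is precisely where the specific constants $1.1$ and $e^2$ in the definitions of $\pi'_\ga$ and $\xi_\ga$ are chosen to make a cluster of $\approx\pi'_\ga$ surplus copies rare enough to beat $\xi_\ga$.
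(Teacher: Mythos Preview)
Your proof is correct and follows essentially the same route as the paper: a first-moment estimate using the independence built into the ``avoiding $V(\cop{H}^*)\sm X$'' clause, the tail bound of Lemma~\ref{PropSB}(b), Markov, and a discretization to make the union bound over $\ga$ work. The only cosmetic difference is that you discretize via the integer thresholds $m=\lceil\pi'_{\ga,i}\rceil$ (exploiting monotonicity of $N_{i,m}$ in $m$), whereas the paper discretizes $\ga$ dyadically over $D=\{2^i\cdot 2/(n^2q)\}$ and uses that $\ga$-bad implies $\ga'$-bad for $\ga'\in[\ga,2\ga]$; both reduce the union bound to $O(\log n)$ cases and the arithmetic is the same.
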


\nin
Indeed, if \eqref{gabad} holds then for any $\ga$ there are at most
$o(\ga^{1+\gd}n^2\gL^2)$
quads 
with $\cop{H}^*$ $\ga$-bad,
since $\xi_\ga\ll \ga^{1+\gd}$, $\Psi_{H^*}\asymp n^2\gL^2$ (see \eqref{gLH*}),
and no $\cHs$ contributes more than $O(1)$ quads. 
So Claims~\ref{Cgabad} and \ref{nnisb}, with \eqref{largerho},
say that for $S$ as in Lemma~\ref{rhoCor} 
there are $\gO(\ga_S^{1+\gd}n^2\gL^2)$ quads 
with $\cop{H}^*$ $\ga_S$-nice. 
The lemma is thus given by Observation~\ref{Onice} (and \eqref{onon}),
since $\mu(S_0,S_k) \asymp\gL_{H^*}/\gL_*\asymp \gL^2/(q\gL_*)$ (again see \eqref{gLH*})
and $k\leq v_{\Hs}$.
\qed

\begin{proof}[Proof of Claim~\ref{nnisb}]
Let $[k] = I \sqcup J$ where $i \in I$ iff $d(S_{i-1},S_i) < d_2(H)$ 
(so $d(S_{i-1},S_i) = d_2(H)$ for $i \in J$ by \eqref{dSSH}). 
Since the $[S_{i-1},S_i]$'s are strictly balanced,
Lemma~\ref{PropSB}(a) says that w.h.p.\ \eqref{tauSS} holds  for every 
$i \in I$ and $X$.
So it's enough to show that for any fixed $i\in J$,
w.h.p.\ \eqref{tauSS} holds for each $\cHs$ that is not $\ga$-bad
(and each relevant $X$).

If $i\in J$ then $q$ satisfies the hypothesis in Lemma~\ref{disext} for $(W,Z) = (S_{i-1},S_i)$ (and some fixed $\vs$), so 
w.h.p.\ 
\beq{dss}
\tau_{[S_{i-1},S_i]}(X)  -\gs_{[S_{i-1},S_i]}(X)  =O(1)
\,\,\,\, \forall X,
\enq
implying in particular that for any $\cop{H}^*$ and $X \subset V(\cop{H}^*)$, 
the number of copies of
$[S_{i-1},S_i]$ 
on $X$ using vertices of $V(\cop{H}^*)\sm X$ is $O(1)$
(since if there are $t$ such copies sharing $v\in V(\cop{H}^*)\sm X$,
then the l.h.s.\ of \eqref{dss} is at least $t-1$).
But if this is true then
any $\cHs$ that is not $\ga$-bad has
$
\tau_{[S_{i-1},S_i]}(X)  < \pi_\ga'+O(1) < \pi_\ga.
$
\end{proof}

\begin{proof}[Proof of Observation~\ref{Onice}]  The number of $\ga$-nice
$\cHs$'s on a given
$\cop{K}^*$
is at most the number of sequences
$(X_0\dots X_k)$ with $X_0=V(\cKs)$
and, for each $i\in [k]$, $X_i$ the vertex set of an
$[S_{i-1},S_i]$-extension (in $G_0$) on $X_{i-1}$ with
$\tau_{[S_{i-1},S_i]}(X_{i-1})  <\pi_\ga $, and thus
$\tdt_{[S_{i-1},S_i]}(X_{i-1}) =O(\pi_\ga )$.
But the number of such sequences is (crudely) less than
\[
O(1)\log^k(1/\ga)\prod \mu_i =O(\log^k(1/\ga) \mu(\Ks,H^*)).\qedhere
\]
\end{proof}

\begin{proof}[Proof of Claim~\ref{Cgabad}]
We first note that we may assume $\mu^* =O(\log n)$.
(The assumption is convenient but unimportant.)
For if $\mu^* > C\log n$ with, say, $C=v_{H^*}$, then Lemma~\ref{PropSB}(b) 
says that
$\tau_{[S_{i-1},S_i]}(X)  < e^2\mu_i$ for \emph{every} $i$ and $X$
(so no $\cop{H}^*$ is $\ga$-bad for any $\ga$).

For a given copy $J$ of $\Hs$ in $K_n$ to be an $\ga$-bad $\cHs$ 
we must have $J\sub G_0$ and, for some $i\in [k]$ and $X\sub V(J)$ of size $|S_{i-1}|$,
\beq{tausub}
\tau_{[S_{i-1},S_i]}(X) > \pi_\ga'.
\enq
Note that by the 
definition of $\ga$-bad 
$\{J\sub G_0\}$ is
independent of the events in \eqref{tausub}.

By Lemma~\ref{PropSB}(b), the probability of \eqref{tausub} for a given $i$ and $X$ 
is less than 
$
\exp[-(1-o(1))\pi'_\ga]
$
(this absorbs the $n^{-\go(1)}$ of \eqref{prtauST'} since we assume $\mu^* =O(\log n)$).
So (in view of the above-noted independence)
the expected number of $\ga$-bad $\cHs$'s is less than 
\[
\sum_J\pr(J\sub G_0) \cdot \sum_{i,X} \exp[-(1-o(1))\pi'_\ga] <
O(\Psi_{\Hs} \cdot \exp[-0.1e^2\mu^*] \cdot\xi_\ga),
\]
and by Markov's Inequality the probability that there are at least $\xi_\ga \Psi_{H^*}/4$
such $\cHs$'s is less than $O(\exp[-0.1e^2\mu^*])$, which by \eqref{mu*} is 
$o(1/\log n)$.
(The reason for the 4 will appear momentarily.)

This is not quite  enough for a union bound (over $\ga$), but
we may proceed, in a standard way, as follows.
Let $D=\{\ga<2\gl:  \ga =2^i\cdot 2/(n^2 q), i\in \pr\}$.
then $|D| < \log (\gl n^2 q)=O(\log n)$, so w.h.p.\ 
\beq{gainD}
\mbox{there are fewer than
$\xi_\ga \Psi_{H^*}/4$ $\ga$-bad $\cHs$'s for each $\ga\in D$.}
\enq
But for any other $\ga$ there is an $\ga'\in D\cap [\ga,2\ga]$, so, since 
$\ga$-bad implies $\ga'$-bad, \eqref{gainD} implies that the number of
$\ga$-bad $\cHs$'s is less than
$\xi_{\ga'} \Psi_{H^*}/4 < \xi_\ga \Psi_{H^*}$. 
\end{proof}

This completes the proof of Lemma~\ref{rhoCor}.

\subsection*{Few ropes}

Here we prove Lemma~\ref{MP2} modulo the proof of 
Lemma~\ref{J1}, which will be given in Section~\ref{PLJ2}.

We now slightly extend $\rho$, letting
$\rho(A,B)$ be the number of copies of
$(\Ks;g_1,g_2)$ with $\cop{g}_1\in A$, $\cop{g}_2\in B$ 
($A,B\sub \C{V}{2}$)
and all other edges in 
$G_0$ (and writing $\rho(xy,\cdot)$ for $ \rho(\{xy\},\cdot) $).
Note that then (cf.\ \eqref{gL*})
\beq{rhoxyG}
\rho(G_0,xy) = 
\tdt_{[g_2,\Ks]}(x,y)/\textrm{aut}(K^*;g_1,g_2) + 
\tdt_{[g_2,\Ks]}(y,x)/\textrm{aut}(K^*;g_1,g_2).
\enq

For $U\sub \C{V}{2}$, set
\beq{muU}
\muu = \mu_{_{|U|}}= |U|\gL_* = (1+O(1/n)) \E \rho(G_0,U) 
\enq
and 
\beq{XU1}
X_U =|\{xy\in G_0: \rho(xy,U)\neq 0\}|.
\enq

We need one simple deterministic observation (which holds for any fixed $\gd>0$):

\begin{prop}\label{Propu}
If $G_0$ satisfies \eqref{generic1}
(with q in place of p), then for any $S$ ($\sub G_0$)
with
$\rho(S,S) = \gO(\ga_S^{1+2\gd} \Psi_{\Ks})$
and any $u\leq |S|$, there is a $U\sub S$ with $|U|=u$,
\beq{gdudan}
\gD_U\leq \lceil 16u/(\ga n)\rceil,
\enq
and 
\beq{rhoU1}
\rho(S,U) =\gO(u\rho(S,S)/|S|) \,\,\,
(=\gO(u\ga_S^{2\gd} \gL_*)
= \gO(\ga_S^{2\gd}\muu)).
\enq
\end{prop}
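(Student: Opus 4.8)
The plan is to produce $U$ by a greedy procedure that scans the edges of $S$ in order of decreasing ``rope weight'' and adds an edge whenever this keeps all degrees below the target cap. Write $s=|S|$, so $\ga_S=2s/(n^2q)$ and the cap is $D:=\lceil 16u/(\ga_S n)\rceil=\lceil 8unq/s\rceil$; and for $e\in S$ set $w_e=\rho(S,\{e\})$, so that (partitioning the ropes counted by $\rho(S,S)$ according to which of $\cop g_1,\cop g_2$ lies in the singleton, noting $g_1,g_2$ are vertex‑disjoint in $\Ks$) $\sum_{e\in S}w_e=\rho(S,S)$ and $\rho(S,U)=\sum_{e\in U}w_e$ for every $U\sub S$. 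List $S$ as $e_1,\dots,e_s$ with $w_{e_1}\ge w_{e_2}\ge\cdots$, and build $U$ by examining $e_1,e_2,\dots$ in turn, adding $e_i=xy$ iff currently $d_U(x)<D$ and $d_U(y)<D$, stopping once $|U|=u$. Then $\gD_U\le D$ by construction, which is \eqref{gdudan}, and $U\sub S$; the content is that the procedure reaches $|U|=u$ and that the resulting $U$ obeys \eqref{rhoU1}.

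If $u\ge s/4$ this is immediate: by \eqref{generic1} for $G_0$, $\gD_S\le\gD_{G_0}\le(1+o(1))nq<2nq\le D$, so no edge is ever blocked, $U=\{e_1,\dots,e_u\}$, and since these are the $u$ heaviest of the $s$ edges, $\rho(S,U)=\sum_{i\le u}w_{e_i}\ge(u/s)\rho(S,S)$.

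Now suppose $u<s/4$. First, the procedure cannot get stuck: if it terminated with $|U|=m<u$, then every edge of $S\sm U$ was, when examined, blocked by an endpoint with $U$‑degree $D$; since $\sum_v d_U(v)=2m$, at most $2m/D$ vertices are ever saturated, and each meets at most $\gD_S\le(1+o(1))nq$ edges of $S$, so $s=m+|S\sm U|\le m+(2m/D)(1+o(1))nq\le u+(1+o(1))s/4<s$ (using $D\ge 8unq/s$ and $m<u<s/4$), a contradiction. Hence $|U|=u$. For the weight bound we charge: if $e_i\notin U$ was blocked at $x$, then at that moment---hence forever---$x$ carries exactly $D$ edges of $U$, each of weight $\ge w_{e_i}$ (examined earlier), so $w_{e_i}\le D^{-1}\sum_{f\in U,\ x\in f}w_f$; since at most $d_S(x)\le(1+o(1))nq$ edges of $S$ meet $x$, summing over one blocking endpoint of each rejected $e_i$ and then over $x$ gives
\[\sum_{e_i\notin U,\ i\le N} w_{e_i}\ \le\ \frac{(1+o(1))nq}{D}\sum_{x\in V}\ \sum_{f\in U,\ x\in f} w_f\ =\ \frac{2(1+o(1))nq}{D}\,\rho(S,U),\]
where $N$ is the index at which the procedure stops. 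Every $e_i$ with $i>N$ has $w_{e_i}\le w_{e_{N+1}}\le\min_{f\in U}w_f\le u^{-1}\rho(S,U)$, so $\sum_{i>N}w_{e_i}\le (s/u)\rho(S,U)$. Adding the contributions of $U$, of the rejected $e_i$ with $i\le N$, and of the $e_i$ with $i>N$, and using $D\ge 8unq/s$,
\[\rho(S,S)\ \le\ \rho(S,U)\Bigl(1+\tfrac{2(1+o(1))nq}{D}+\tfrac{s}{u}\Bigr)\ \le\ \rho(S,U)\Bigl(\tfrac{s}{u}+\tfrac{(1+o(1))s}{4u}+\tfrac{s}{u}\Bigr)\ \le\ \tfrac{3s}{u}\,\rho(S,U)\]
for large $n$, i.e.\ $\rho(S,U)\ge\frac{u}{3s}\rho(S,S)$. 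In both cases $\rho(S,U)=\gO(u\rho(S,S)/|S|)$; the remaining equalities in \eqref{rhoU1} are arithmetic, since $\rho(S,S)=\gO(\ga_S^{1+2\gd}\Psi_\Ks)$, $\Psi_\Ks\asymp n^2q\gL_*$ and $\ga_S=2s/(n^2q)$ give $u\rho(S,S)/s=\gO(u\ga_S^{2\gd}\gL_*)=\gO(\ga_S^{2\gd}\muu)$.

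The only mildly delicate point is the charging argument for the case $u<s/4$---in particular the observation that a blocked edge is always lighter than each of the $D$ edges of $U$ at its blocking endpoint, which is exactly what lets the single cap $D\asymp unq/s$ simultaneously keep $\gD_U$ small and lose only a constant factor in rope weight. Everything else is bookkeeping with \eqref{generic1}.
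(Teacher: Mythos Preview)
Your proof is correct and takes a genuinely different route from the paper's. The paper invokes the equitable edge-colouring result of de Werra/McDiarmid (Proposition~\ref{PCol}) to partition $S$ into $d=\gD_S+1$ matchings of nearly equal size, orders the matchings by $\rho(S,M_j)$, and builds $U$ either as a subset of the richest matching (when $u\le t:=\lfloor|S|/d\rfloor$) or as the union of the $\lceil u/t\rceil$ richest matchings, then trimmed. Your greedy approach with the charging argument avoids the external colouring lemma entirely and is more self-contained; the charging step (bounding the weight of each rejected edge by the average weight of the $D$ edges of $U$ at its blocking endpoint, then summing) is the one place requiring any thought, and you handle it cleanly. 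The paper's decomposition does buy slightly tighter control on $\gD_U$ in the small-$u$ case (it gets $\gD_U=1$ when $u\le t$), but this is not needed for the proposition as stated. One cosmetic point: your parenthetical about ``which of $\cop{g}_1,\cop{g}_2$ lies in the singleton'' is a bit muddled---what you actually need and use is just additivity of $\rho(S,\cdot)$ in its second argument, which is immediate from the definition.
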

\nin
(For the first expression in parentheses, recall $\ga_S=2|S|/(n^2q)$ 
and $\gL_*\asymp \gL_{\Ks} =\Psi_{\Ks}/(n^2p)$.)

For the proof of this we use the following observation from
\cite{deWerra,McDiarmid}.  Recall that an {\em equitable} coloring
is one in which the sizes of the color
classes differ by at most one.

\begin{prop}\label{PCol}
For any $m\geq \gD+1$,
the edges of any simple graph of maximum degree at most
$\gD$ can be equitably colored with $m$ colors.
\end{prop}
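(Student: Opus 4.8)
The plan is to prove this by the standard Kempe-chain (alternating-path) argument of de Werra and McDiarmid, powered by a quadratic potential. First I would use that $G$ is \emph{simple} with maximum degree at most $\Delta$, so Vizing's theorem gives $\chi'(G)\le\Delta+1\le m$; hence $G$ has a proper edge coloring with $m$ colors (pad with empty classes if $\chi'(G)<m$). Among all proper $m$-edge-colorings of $G$, I would fix one with color classes $C_1,\dots,C_m$ minimizing the potential $\Phi:=\sum_{i=1}^m|C_i|^2$, and claim that any such minimizer is already equitable.

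To establish the claim, suppose it fails, so there are $i\ne j$ with $|C_i|\ge|C_j|+2$. Consider the spanning subgraph $H$ whose edge set is $C_i\cup C_j$. Properness forces every vertex to meet at most one edge of each of $C_i,C_j$, so $\Delta(H)\le 2$ and each component of $H$ is a path or a cycle along which the colors $i,j$ alternate; in particular every cycle component is even, and every component contributes equally to $|C_i|$ and $|C_j|$ except for path components with an odd number of edges, each of which has a unique \emph{majority} color (the one occurring one extra time). Since the sum of the signed majority contributions equals $|C_i|-|C_j|\ge 2$, some path component $P$ has color $i$ as its strict majority. I would then recolor $P$ by interchanging $i$ and $j$ along it: the coloring stays proper (nothing outside $P$ changes, and along $P$ the two colors still alternate), $|C_i|$ drops by $1$, $|C_j|$ rises by $1$, and the resulting change in $\Phi$ equals $2(|C_j|-|C_i|+1)\le -2<0$, contradicting minimality. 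Hence the minimizer is equitable, which is the proposition.

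The only genuinely delicate point is the choice of potential. An operation that balances the pair $\{i,j\}$ could a priori unbalance some other pair, so a naive induction on $\max_i|C_i|-\min_i|C_i|$ need not terminate. Working with $\sum_i|C_i|^2$ sidesteps this precisely because the swap is entirely local — only colors $i,j$ on the single component $P$ are affected — so the potential change is exactly $2(|C_j|-|C_i|+1)$ independently of the other classes, and this is strictly negative exactly when $|C_i|\ge|C_j|+2$. The remaining verifications (that the two-colored subgraph has maximum degree at most $2$, that its cycle components are even, that an odd path with majority color $i$ must exist when $|C_i|-|C_j|\ge 2$, and that the interchange preserves properness) are routine.
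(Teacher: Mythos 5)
Your proof is correct. Note that the paper does not actually prove Proposition~\ref{PCol}: it is quoted as a known result of de~Werra and McDiarmid, with no argument supplied. What you give is essentially the classical proof of that cited result: Vizing to get a proper $m$-edge-coloring (using $m\ge\Delta+1$), then minimize $\Phi=\sum_i|C_i|^2$ and kill an imbalanced pair $|C_i|\ge|C_j|+2$ by a Kempe interchange along an odd path component of $C_i\cup C_j$ with majority color $i$. The two points that need care are both handled: properness is preserved because an endpoint of such a path has degree $1$ in $C_i\cup C_j$, so it sees no other edge in either color, and the potential change $2(|C_j|-|C_i|+1)\le -2$ is strictly negative, so the minimizer is equitable (and its classes are matchings, which is exactly how the proposition is used in the proof of Proposition~\ref{Propu}). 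So your write-up supplies a complete, self-contained proof of a statement the paper leaves to the literature; no gaps.
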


\begin{proof}[Proof of Proposition~\ref{Propu}]
Let $\{M_j\}$ be the equitable partition of $S$ into $d:=\gD_{S}+1$ matchings 
promised by Proposition~\ref{PCol};
thus $t:= \lfloor |S|/d \rfloor\leq |M_j|\leq t+1$ $\forall $u$j$. 
Assume $\rho(S,M_1)\geq \cdots  \geq \rho(S, M_d)$. 

If $|S|<d$ (i.e.\ $S$ is a star) then we have \eqref{gdudan}-\eqref{rhoU1} with
$U$ any $u$-subset of $S$ maximizing $\rho(S,U)$ (for \eqref{gdudan} note that, since
 $d < 2nq$ (see \eqref{generic1}), $|S|<d$ implies $\ga< 4/n$).  So we may assume $t\neq 0$.

If $u \leq t$, we achieve \eqref{gdudan} and \eqref{rhoU1} by taking $U \subseteq M_1$ of 
size $u$ with maximum $\rho(S,U)$, since then $\gD_U = 1$ and 
$
\rho(S,U) \geq (u/|M_1|)\rho(S, M_1) \geq (u/|M_1|)(t/|S|)\rho(S, S) 
\geq (t/(t+1))(u/|S|) \rho(S,S).
$

In other cases, let
$U' = M_1 \cup \cdots \cup M_{\lceil u/t\rceil}$.
Then $\rho(S,U') \geq u\rho(S,S))/|S|$ and (again using $d < 2nq$),
\[
\gD_{U'} \leq \lceil u/t\rceil \leq 2u/t < 4ud/|S| < 
16u/(\ga n).
\]  
And, since $u \leq |U'| \leq (t+1)\lceil u/t\rceil \leq 4u$,
we achieve \eqref{rhoU1} with
$U\subseteq U'$ of size $u$ maximizing $\rho(S,U)$.

\end{proof}
\nin
\emph{Proof of Lemma~\ref{MP2}.}  
For any $S$ ($\sub G_0$) and $U$,
\begin{align}
    \nonumber \rho(S,U) &\le |S| + \sum_{xy \in S} (\rho(xy,U) - 1)^+\\
    \nonumber &\le |S| + \sum_{xy \in G_0} (\rho(xy,U) - 1)^+\\
    \nonumber &= |S| + \rho(G_0,U) - X_U\\
    \label{t}&=|S| + (\rho(G_0,U) - \mu_{_U}) + (\mu_{_U}-X_U).
\end{align}
On the other hand, for a suitable $\gd$ and  each relevant $\ga$,
we will specify
(see \eqref{defineu}) a
$u=u_\ga$ such that
\beq{uga1}  
\ga n^2q =o( \ga^{2\gd}\mu_u)
\enq
and
\beq{uga2}
\mbox{w.h.p.\
the second and third terms in \eqref{t}
are $o(\ga^{2\gd}\mu_u)$ for each $\ga$ and
$U$ of size $u$.}
\enq
(The asymptotic statements in \eqref{uga1} and \eqref{uga2} use $\ga < 2\gl$ and $\gl \rightarrow 0$.) But \eqref{uga1} and \eqref{uga2} say that w.h.p.\ the bound \eqref{t}
is $o(\ga_S^{2\gd}\mu_{_U})$ for all (relevant) $S$ and $U$;
thus w.h.p.\ \eqref{rhoU1} holds for \emph{no} $S$ and $U$, so there is no
$S$ as in Proposition~\ref{Propu}, which is Lemma~\ref{MP2}. 

\qed

For \eqref{uga2}, the second term in \eqref{t} is handled by 
Lemma~\ref{Erho14}, and the 
third, the heart of the matter, by 
Lemma~\ref{J1}.
(The lemmas do not themselves involve $\gd$; rather, we will choose $\gd$ 
below---see \eqref{delta}---so that \eqref{uga1} holds and the lemmas support
\eqref{uga2}.)

\begin{lem}\label{Erho14}
There is a (fixed) $\eta > 0$ such that if $q= \gO(p^*)$, then
w.h.p.
\[  
\mbox{$\rho(G_0,U) < (1+n^{-\eta})\muu \,\,\,\,\forall \,U\sub \C{V}{2}$.}
\]  
\end{lem}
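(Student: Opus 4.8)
The plan is to derive Lemma~\ref{Erho14} from a single application of Lemma~\ref{Erho}(a) to the extension $[g_2,\Ks]$, using \eqref{rhoxyG} to bridge between extension counts and $\rho$. First I would reduce to a uniform per-pair bound. By \eqref{rhoxyG} and linearity,
\[
\rho(G_0,U)=\sum_{xy\in U}\rho(G_0,xy)
=\frac{1}{\textrm{aut}(K^*;g_1,g_2)}\sum_{xy\in U}\big(\tdt_{[g_2,\Ks]}(x,y)+\tdt_{[g_2,\Ks]}(y,x)\big),
\]
while \eqref{gL*} and \eqref{muU} give $\muu=|U|\gL_*=2|U|\,\tdm_q(g_2,\Ks)/\textrm{aut}(K^*;g_1,g_2)$. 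Hence it is enough to produce a fixed $\eta>0$ for which, w.h.p.,
\[
\tdt_{[g_2,\Ks]}(X)<(1+n^{-\eta})\,\tdm_q(g_2,\Ks)\qquad\text{for every ordered pair }X=(x,y);
\]
summing this bound over the edges of $U$ in both orders then yields $\rho(G_0,U)<(1+n^{-\eta})\muu$ simultaneously for all $U\sub\C{V}{2}$.

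This per-pair bound is precisely the conclusion of Lemma~\ref{Erho}(a) — in the $\tdt,\tdm$ form noted there, and with $p$ read as $q$ — applied to $(W,Z)=(V(g_2),V(\Ks))$, so the one thing to check is its hypothesis: $q>n^{-\gb}$ for some $\gb<1/\mmm(V(g_2),V(\Ks))$. For this I would observe that, since $g_2\in E(\Ks)$ and $|V(g_2)|=2$, every $Y$ with $V(g_2)\subsetneq Y\subseteq V(\Ks)$ satisfies
\[
d_{\Ks}(g_2,Y)=\frac{e_{\Ks[Y]}-1}{|Y|-2}=d_2(\Ks[Y])\leq m_2(H^*)<d_2(H),
\]
the first equality using $g_2\in E(\Ks[Y])$, the second using $|Y|\geq 3$, the inequality $d_2(\Ks[Y])\leq m_2(H^*)$ holding because $\Ks[Y]$ is a subgraph of $H^*$, and the last step being \eqref{m2H*}. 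Thus every density appearing in the chain \eqref{WZchain} for $(V(g_2),V(\Ks))$ — in particular $\mmm(V(g_2),V(\Ks))$ — is $<d_2(H)$, so I may fix $\gb$ with $1/d_2(H)<\gb<1/\mmm(V(g_2),V(\Ks))$. Since $q=\gO(p^*)$ and $p^*=\td{\Theta}(n^{-1/d_2(H)})$ (see \eqref{q}), while $\gb>1/d_2(H)$, we get $q>n^{-\gb}$ for large $n$, so Lemma~\ref{Erho}(a) applies.

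I do not expect a real obstacle here: the substance is carried by Lemma~\ref{Erho}(a), and the only extra input, $\mmm(V(g_2),V(\Ks))<d_2(H)$, is immediate from \eqref{m2H*}. The one point worth a moment's care is the normalization in \eqref{gL*}: $\gL_*$, hence $\muu$, is built from the ``clean'' $\tdm_q(g_2,\Ks)=n^{v_{\Ks}-2}q^{e_{\Ks}-1}$ rather than from the exact expectation of $\rho(G_0,\{xy\})$; but by \eqref{mutilde} the clean quantity dominates the true expectation, which is all an upper bound requires, and the $\textrm{aut}(K^*;g_1,g_2)$ factors in \eqref{rhoxyG} and \eqref{gL*} cancel exactly as used above.
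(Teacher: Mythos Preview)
Your proposal is correct and follows essentially the same approach as the paper: reduce by additivity to a per-pair bound, then apply Lemma~\ref{Erho}(a) to $(W,Z)=(V(g_2),V(\Ks))$, checking the hypothesis via $\mmm(V(g_2),V(\Ks))\le m_2(H^*)<d_2(H)$ from \eqref{m2H*}, and translate back using \eqref{rhoxyG} and \eqref{gL*}. The only cosmetic difference is that the paper takes $\gb=1/d_2(H)$ directly rather than choosing $\gb$ strictly between $1/d_2(H)$ and $1/\mmm$, and your phrase ``every density appearing in the chain'' is slightly loose (the chain densities are $d(S_{i-1},S_i)$, not $d(W,Y)$), but your actual bound on $d(W,Y)$ for all $Y\in\lp W,Z\rb$ immediately gives $\mmm(W,Z)<d_2(H)$, which is what is needed.
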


\nin
\emph{Proof.}   
Since $\rho(G_0,\cdot)$ is additive, it is enough to prove this when
$U$ consists of a single $\{x,y\}\in\C{V}{2}$.  

We use Lemma~\ref{Erho}(a) with $F$ (the graph assumed in the lemma) equal to 
$\Ks$, $W=g_2$ and $Z=V(\Ks)$.
Setting $\gb = 1/d_2(H)$, we have (as required by the lemma)
$q>n^{-\gb}$ (since $q= \gO(p^*)$) and
$\gb < 1/\mmm(W,Z)$ 
(as follows from
\eqref{m2H*} since $a(W,Z)\leq m_2(H^*)$).  So with $\eta$ as in the lemma, we have
\[
\pr(\tdt_{[g_2,\Ks]}(xy) > (1+n^{-\eta})\tdm(W,\Ks)) < n^{-\go(1)},
\]
which with \eqref{gL*} and \eqref{rhoxyG} gives
\[
\pr(\rho(G_0,xy) > (1+n^{-\eta})\gL_*) < n^{-\go(1)}
\]
and Lemma~\ref{Erho14}.

\qed

The bounding of the third term in \eqref{t} is based on Theorem~\ref{JDK},
and is broken into two regimes, depending on $\ga$.
Here we finally need to pay attention to some of our small constants.
Let
\beq{theta}
\mbox{$\theta$ be the smallest of the $\ccc$'s appearing in 
\eqref{q}, \eqref{gammagap}, 
\eqref{L0}-\eqref{L2}, \eqref{gamlarge}, \eqref{K*2},} 
\enq
and let $\sg$ (again fixed) satisfy
\beq{ngd1}
0<\sg < \theta/2.
\enq
We then say $\ga$ is \emph{large} if $\ga> n^{-\sg}$ and \emph{small} otherwise.

The arguments for these two regimes differ only in parameters, and we treat them together.
Set
\beq{gz}
\gz=\gz_\ga= \left\{\begin{array}{ll} 
\ga^{1/2} &\mbox{if $\ga$ is large}\\
n^{-\sg/2}&\mbox{if $\ga$ is small}
\end{array}\right.
\enq
and 
\beq{gb}
\gb=\gb_\ga= \left\{\begin{array}{ll} 
\ga^{1/2} &\mbox{if $\ga$ is large}\\
\ga n^{\sg/2}&\mbox{if $\ga$ is small}
\end{array}\right\} \leq\gz,
\enq
and, in either case,
\beq{defineu}
u = u_\ga= \lceil \gb n^2q/\gL_*\rceil.
\enq

Here we should check that $u\leq |S|$ when $\ga=\ga_S$.
This is trivial (and irrelevant) if $S=\0$, so we may assume $u\geq 1$;
but then $u\ll |S|$ since \eqref{gb}, \eqref{gamlarge}, \eqref{theta}
and \eqref{ngd1} give $\gb/\gL_*\ll \ga$.

In what follows
we are really just interested in the order of magnitude of $u$, so the $\lceil\,\,\rceil$'s 
are irrelevant if $u>1$, in which case
\beq{mu}
\mbox{$u\asymp \gb n^2q/\gL_*~$ and
$~\mu \asymp \gb n^2 q.$}
\enq
The easy case $u=1$ will be handled separately at the one point where
$u=1$ is possible and the main argument uses \eqref{mu} (see the paragraph containing \eqref{uneq1}); but with this exception the following discussion includes $u=1$.

As suggested above, we consider the next assertion our main point.
\begin{lem}\label{J1}
W.h.p.\ for every $\ga< 2\gl$ and $U \sub \C{V}{2}$ of size $u=u_\ga$ satisfying
\eqref{gdudan},
\[
X_U > \mu_u - O(\gz \mu_u).
\]
\end{lem}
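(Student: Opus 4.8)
The plan is to derive Lemma~\ref{J1} from a single application of the generalized Janson inequality, Theorem~\ref{JDK}, organized at the level of individual ropes, followed by a union bound over $U$ and over the (fewer than $n^2$) possible values of $\ga_S$. First I would fix $U$ of size $u=u_\ga$ and set up the indices for Theorem~\ref{JDK}: let $i$ range over $\binom{V}{2}$, and for $i=xy$ let $j$ range over the copies of $(\Ks;g_1,g_2)$ with $\cop{g}_1=xy$ and $\cop{g}_2\in U$; take $B_{ij}$ to be the increasing event that $xy$ together with the $e_{K^*}-2$ edges of rope $j$ other than $\cop{g}_1,\cop{g}_2$ all lie in $G_0$. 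Then $B_i=\cup_j B_{ij}=\{xy\in G_0\}\wedge\{\rho(xy,U)\neq 0\}$, so $X:=\sum_i I_i=X_U$, and $\mu=\sum_{i,j}\E I_{ij}=(1+o(1))\mu_u$ by the definition \eqref{muU} of $\mu_u$ together with $\rho(G_0,U)=\sum_{xy\in G_0}\rho(xy,U)$; in particular the diagonal part of $\ov{\gD}$ is exactly $\mu$. Granting (see below) that
\[
\gamma=O(\gz\mu_u)\qquad\text{and}\qquad\ov{\gD}=o\!\left(\gz^2\mu_u^2/(u\log^2 n)\right),
\]
I would apply Theorem~\ref{JDK} with $t$ a large enough fixed multiple of $\gz\mu_u$ (so that $2\gamma\le t\le\mu$), getting
\[
\pr\!\left(X_U\le\mu_u-t\right)\le\exp\!\left[-(t-\gamma)^2/(2\ov{\gD})\right]\le\exp\!\left[-\omega(u\log^2 n)\right];
\]
since $u\ge 1$, there are fewer than $n^2$ relevant sizes, and at most $e^{2u\log n}$ sets $U$ of a given size, a union bound then gives Lemma~\ref{J1} with the error term equal to $t$.

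It remains to verify the two displayed bounds, the first two terms of which are routine. The diagonal term $\mu_u=u\gL_*$ is $o(\gz^2 u\gL_*^2/\log^2 n)$ since $\gL_*>n^{\ccc}$ (see \eqref{gamlarge}) and $\gz\ge n^{-\sg/2}$ with $\sg<\theta/2\le\ccc/2$ (see \eqref{ngd1}), so $\gz^2\gL_*\gg\log^2 n$. For $\gamma$ (and hence the identical $2\gamma$ contribution to $\ov{\gD}$ from ropes with a common anchor) I would classify pairs of distinct ropes through a fixed edge $xy$ by how they overlap: the generic case, in which they meet only in $xy$, contributes $\asymp n^2\cdot(un^{v_{K^*}-4})^2q^{2e_{K^*}-3}\asymp\mu_u^2/(n^2q)\asymp\gb\mu_u$ after summing over $xy$ (using $\mu_u\asymp\gb n^2q$ from \eqref{mu}), and every denser overlap is smaller by the density inequalities \eqref{L0}--\eqref{L2}; since $\gb\le\gz$ (see \eqref{gb}) this yields $\gamma=O(\gz\mu_u)$, and $\gb\log^2 n\le\gz\log^2 n\ll\gz^2\gL_*$ places it well inside the $\ov{\gD}$ budget.

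The hard part, to be carried out in detail, is the cross part of $\ov{\gD}$: pairs of ropes anchored at distinct edges $xy\neq x'y'$ that nevertheless share an edge. I would classify these by the isomorphism type of the structure $R\cup R'$ and bound each class by (number of embeddings)$\,\times q^{m}$, where $m$ is the number of $G_0$-edges of $R\cup R'$, the embeddings factoring into unconstrained vertices (including the anchors $xy,x'y'$, which need not meet $U$) and the at most four endpoints of the two $g_2$-images, confined to $V(U)$. For the generic overlap — a single shared non-anchor edge — this already comes to $\asymp u^2 n^{2v_{K^*}-6}q^{2e_{K^*}-3}$, which is smaller than the target $\gz^2 u\gL_*^2/\log^2 n$ by a factor $\asymp\log^2 n/(\gz\gL_*)=o(1)$ (again using $\gL_*>n^{\ccc}$, $\sg<\ccc$); for every other overlap type one gains an extra $n^{-\Omega(1)}$ from the fact that every proper subgraph of $\Hs$ is strictly subcritical at $q=\td{\Theta}(n^{-1/d_2(H)})$ — the content of \eqref{L0}--\eqref{L2} and \eqref{m2H*} — together with the $\gL$-minimality of $\Ks$ among connections, and, precisely for the configurations in which the two $g_2$-images are forced to meet, the bounded-degree hypothesis \eqref{gdudan} on $U$, which caps the number of such placements. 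The one remaining loose end is the case $u=1$ (where the $\asymp$ estimates of \eqref{mu} may fail): here $X_U$ merely counts the $G_0$-edges joined to a fixed edge by a rope, and ordinary Janson (Theorem~\ref{TJanson}) applied to that count does the job. I expect the cross-term case analysis — in particular extracting the needed $n^{-\Omega(1)}$ in the cases governed by the $U$-degree bound — to be the main obstacle; the rest is bookkeeping.
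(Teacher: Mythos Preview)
Your approach is essentially the paper's: the setup for Theorem~\ref{JDK} is identical (the paper also takes $B_{ij}=\{\text{all edges of }j\text{ other than }g_2(j)\text{ lie in }G_0\}$), the targets $\gamma=O(\zeta\mu)$ and $\ov{\gD}\ll\zeta^2\mu^2/(u\log n)$ are the same (your extra $\log n$ is harmless), and the union bound over $\alpha$ and $U$ is the same.

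Two small corrections to your sketch. First, your claim that for $\gamma$ ``every denser overlap is smaller by the density inequalities \eqref{L0}--\eqref{L2}'' is not true as stated: the paper organizes the analysis by the \emph{type} $a=|V(j)\cap V(g_2(\ell))|\in\{0,1,2\}$, and while type~0 is indeed governed by $\Psi_K\ge n^2q$, type~1 genuinely needs the degree bound \eqref{gdudan} (or $\gD_U=1$, the exception \eqref{except}), type~2 needs the $\gL$-minimality of $\Ks$ (Proposition~\ref{gLprop}(b)), and the configuration $j\sm g_2(j)=\ell\sm g_2(\ell)$ (the exception \eqref{except2}) requires its own counting---the paper's remark after \eqref{mast''} explains why these alternate counts are critical. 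Second, your proposal to handle $u=1$ by ordinary Janson does not work directly, since $X_U$ is a sum of indicators of \emph{unions} (so Theorem~\ref{TJanson} does not apply); the paper instead stays within Theorem~\ref{JDK} and handles $u=1$ by a direct estimate at the single point (see \eqref{uneq1}) where the approximation \eqref{mu} fails.
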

\nin
(Note that though we continue to reference \eqref{gdudan}, Lemma~\ref{J1} does
not require $U\sub S$.)

We can now say what \eqref{uga1} and \eqref{uga2} need from $\gd$.
First, 
since $\gb n^2q\leq \mu$, \eqref{uga1} holds provided
\[
\ga \ll \ga^{2\gd}\gb = \left\{\begin{array}{ll} \ga^{1/2 +2\gd}&\mbox{if $\ga$ is large,}\\
\ga^{1 +2\gd}n^{\sg/2}&\mbox{if $\ga$ is small,}
\end{array}\right.
\]
which is true if
$   
\gd < \min\{1/4, \sg/8\} =\sg/8.
$   
(For small $\ga$ this bound, with the trivial $\ga >n^{-2}$, 
gives $\ga^{2\gd}n^{\sg/2}\gg 1$.)
For \eqref{uga2}, in view of Lemmas~\ref{Erho14} and \ref{J1}, we need
$
\ga^{2\gd} \gg \max\{n^{-\eta}, \gz\}
$ 
(with $\eta$ as in Lemma~\ref{Erho14}),
which (again using $\ga > n^{-2}$) is true if 
$   
\gd < \min\{\eta/4, \sg/8, 1/4\}.
$   
So with (say)
\beq{delta}
\gd = \min\{\eta/8 ,\sg/(16) \},
\enq
we have \eqref{uga1} and, with Lemma~\ref{J1} already established, will have
\eqref{uga2} (and Theorem~\ref{thm:Hprecise}) once we prove
Lemma~\ref{J1}.

\section{Proof of Lemma~\ref{J1}}\label{PLJ2}

In what follows it will sometimes be important to have $\gD_U=1$ (i.e.\ $U$ is a matching).
The next observation gives this when needed.

\begin{prop}\label{gDprop}
If $\Ks\neq H^*$ then $u \leq \max\{\ga n/(16), 1\}$ and
$\gD_U=1$.
\end{prop}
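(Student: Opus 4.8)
The plan is to derive both conclusions from the single lower bound $\gL_*>n^{1+\theta}q$, which is exactly what \eqref{K*2} provides once $\Ks\neq H^*$ (the constant $\ccc$ there being among those that $\theta$ is defined to be at most; see \eqref{theta}). Everything else is bookkeeping with the definitions \eqref{defineu}, \eqref{gb} and \eqref{ngd1}.

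First I would bound $u$. From $u=u_\ga=\lceil \gb n^2q/\gL_*\rceil$ and $\gL_*>n^{1+\theta}q$ we get $\gb n^2q/\gL_*<\gb\,n^{1-\theta}$. Plugging in the two values of $\gb$ from \eqref{gb}: if $\ga$ is large then $\gb=\ga^{1/2}$, and $\ga>n^{-\sg}$ gives $\gb=\ga\cdot\ga^{-1/2}<\ga\,n^{\sg/2}$; if $\ga$ is small then $\gb=\ga\,n^{\sg/2}$ outright. So in either case $\gb n^2q/\gL_*<\ga\,n^{1-\theta+\sg/2}$, and since $\sg<\theta/2$ by \eqref{ngd1} this is at most $\ga n\cdot n^{-3\theta/4}$. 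As $\theta$ is a fixed positive constant, $n^{-3\theta/4}\le 1/32$ for large $n$, so $\gb n^2q/\gL_*\le \ga n/32$ and hence $u=\lceil \gb n^2q/\gL_*\rceil\le\max\{\ga n/16,1\}$, which is the first assertion.

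Second, for $\gD_U=1$ I would combine this with the assumption \eqref{gdudan} on $U$, i.e.\ $\gD_U\le\lceil 16u/(\ga n)\rceil$. If $u\le \ga n/16$ then $16u/(\ga n)\le 1$, so that ceiling equals $1$ and $\gD_U\le 1$; since $u\ge 1$ means $U\neq\0$, this forces $\gD_U=1$. In the only other case allowed by the $\max$, namely $u=1$, $U$ is a single edge and $\gD_U=1$ trivially.

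I don't anticipate a genuine obstacle: this is a short estimate. The one point to get right is which earlier inequality supplies the room — it is precisely the dichotomy $\Ks=H^*$ versus $\Ks\neq H^*$ recorded in \eqref{K*2}, which is why the hypothesis $\Ks\neq H^*$ is needed. The constants ($16$, and the $n^{-3\theta/4}$ slack) are arranged so that the factor $\gb/\gL_*$ can absorb both an extra power of $n$ and the constant in \eqref{gdudan}; any fixed positive $\theta$ with $\sg<\theta/2$ is more than enough.
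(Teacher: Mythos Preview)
Your proof is correct and follows essentially the same approach as the paper: use \eqref{K*2} (with \eqref{theta}) to get $\gL_*>n^{1+\theta}q$, bound $\gb n^2q/\gL_*<\gb n^{1-\theta}$, then use \eqref{gb} and \eqref{ngd1} to conclude this is a small fraction of $\ga n$, and finally invoke \eqref{gdudan} for the second assertion. The paper's writeup is terser (one displayed line), but the logic is the same; your explicit handling of the ceiling via $\lceil x\rceil\le\max\{2x,1\}$ is a nice touch that the paper leaves implicit.
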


\nin
\emph{Proof.}
The first assertion (with \eqref{gdudan})
implies the second (note $u=1$ $\Ra$ $\gD_U=1$), 
and for the first we have
\[
u_\ga =\lceil \gb n^2q/\gL_*\rceil \leq \lceil \gb n^{1-\theta}\rceil
\leq \max\{\ga n/(16), 1\},
\]
where the first inequality uses \eqref{K*2} (and \eqref{theta}) and the 
second is given by
\eqref {ngd1} (and \eqref{gb}). 

\qed

Lemma~\ref{J1} is an application of Theorem~\ref{JDK} in which we use:

\begin{itemize}

\item
$i$ (the first parameter indexing our events) runs over $\C{V}{2}$;

\item
given $i$, $j$ runs over copies of $(\Ks;g_1,g_2)$
in $K_n$ with $g_1(j)=i$ and $g_2(j)\in U$;
and

\item
$\,\,
B_{ij}= \{\mbox{all edges of $j$ other than $g_2(j)$ are in $G_0$}\}.
$

\end{itemize}

\nin
(Note formally different $B_{ij}$'s can be the same event, but these coincidences will be rare
and will not interfere with the remark following Theorem~\ref{JDK}.)

From this point we fix $\ga$, abbreviating $u_\ga=u$ and $\mu_u=\mu$, and 
use $\gD$ for $\gD_U$. Note we are abusively identifying 
the $\mu_u$ of 
\eqref{muU} and the $\mu$ of Theorem~\ref{JDK}; but the two
differ only by a factor $1+O(1/n)$ (see \eqref{gL*}), making this change inconsequential.

Our goals for $\gc$ and $\ov{\gD}$ in Theorem~\ref{JDK} are as follows.
We will show 
\beq{gamma}
\gc = O(\gz\mu)
\enq 
(see \eqref{gz} for $\gz$) 
and take $t=\gc +\max\{\gc, \gz \mu\}$ ($=\Theta(\gz\mu)$).
The theorem then gives
\beq{prXU}
\pr(X_U < \mu - t) 
\,\,(< \exp[-(t-\gc)^2/\ov{\gD}]) ~ <
\exp[-\gO(\gz^2\mu^2)/\ov{\gD}],
\enq
which, to allow a union bound over possible $\ga$'s and $U$'s, 
should be small compared to 
$\left(n^2\C{n^2}{u}\right)^{-1}$
(the $n^2$ could be $\gl n^2q$),
as will be true if 
\beq{largegD}
\ov{\gD} \ll \gz^2 \mu^2/(u \log n)
\enq
(or $\ov{\gD} < \eps \gz^2 \mu^2/(u \log n)$ 
for a suitable fixed $\eps$, but we have room for \eqref{largegD}).

\mn
\emph{Framework.}

We are interested in the contributions of pairs of pairs $((i,j),(k,l))$ to
$\gc$ and $\ogD$.
(So $\elll$, like $j$, is a copy of $(\Ks;g_1,g_2)$.)
An important role will be played by the \emph{type} of $(j,\elll)$, defined to be
\[
a = a(j,\elll) = 
|V(j)\cap V(g_2(\elll))| \,\,\,(\in \{0,1,2\}).
\]

The language here is similar to that in Lemma~\ref{spread}
(see the paragraph following \eqref{ovDsp}).
We specify 
the above pairs and associated costs
(the ``cost'' of $((i,j),(k,l))$ being 
$q^m$, with $m$ the number of edges that $B_{ij}B_{k\ellll}$ forces to be in $G_0$)
in several steps. For most of our discussion the steps 
will be (i)-(v) below; these are always valid but
not always adequate, the situations requiring more care being:
\beq{except}
\mbox{$\gD_U=1$ and $a=1$,}
\enq
for which we replace (i) and (ii) by (i$'$) and (ii$'$);
and---but only when we are bounding $\gc$---
\beq{except2}
\mbox{$i = k$ and $j \sm g_2(j) = \elll \sm g_2(\elll)$,}
\enq
for which the entire sequence (i)-(v) is replaced by
(i$''$) and (ii$''$).
(See the remark following \eqref{mast''} for some explanation of the exceptions.)
Note that our bounds on numbers of choices and costs
may be convenient (slight) overestimates.

\nin
(i)  Choose and ``pay'' for 
$j$ (note $j$ determines $i$);  this contributes (at most) a factor
$
u\gL_*= \,\mu.  
$

\nin
(ii)  Choose $g_2(\elll)$, contributing a factor $u$ if $((i,j),(k,l))$ is of type 0; $O(\gD)$ if type 1;
and $O(1)$ if type~2.

\mn
If \eqref{except} holds we replace (i) and (ii) by:

\nin
(i$'$)  Choose $g_2(j)$ and $g_2(\elll)$---which \eqref{except} forces to be 
disjoint---contributing a factor $u^2$;  

\nin
(ii$'$)  Choose and pay for the rest of $j$, contributing a factor
$O(\gL_* /n)$ (since \eqref{except} says $a = 1$).

\mn
For \eqref{except2} we need just the following two steps, 
each of which gives two bounds, depending on whether $g_2(j)\cap g_2(\ell)$
is empty or a single vertex (of course here 
$g_2(j)=g_2(\ell)$ would imply $j=\ell$):

\nin
(i$''$)  Choose the pairs $g_2(j)$ and $g_2(\elll)$, contributing a factor 
$u^2$ if $g_2(j)\cap g_2(\elll)=\0$ and $2u\gD$ otherwise;  

\nin
(ii$''$)  Choose and pay for the rest of $j$ (a.k.a.\ the rest of $\ell$), contributing a factor
$O(\gL_* /n^2)$ if $g_2(j)\cap g_2(\elll)=\0$ and $O(\gL_* /n)$ otherwise
(note that in the latter case it's only the \emph{edges} that are deleted in \eqref{except2}, 
so we are necessarily in type 2).

\nin
In cases other than \eqref{except2}
(including \eqref{except}), the additional steps below apply.

\mn
(iii)  Choose, in $O(1)$ ways, the \emph{overlap},
\beq{Kjg}
K:= (j\sm g_2(j))\cap (\elll\sm g_2(\elll));
\enq
thus $K$ is a subgraph of $j$ containing at least one edge (since $(i,j)\sim (k,l)$).
(As in \eqref{except2}, it's only edges that are subtracted in \eqref{Kjg}.
Note that we \emph{can} have $g_2(j)\in \elll$ and/or $g_2(\elll)\in j$, so ``overlap''
can be a misnomer.) 

\nin
(iv)  Choose the rest of $\elll$, contributing 
\beq{ivcontrib}
n^{v_{\Ks}-|V(K\cup g_2(\elll))|}  = 
n^{v_{\Ks}-2 - (|V(K\cup g_2(\elll))|-2)} =
n^{v_{\Ks}-2-(v_K-a)}.
\enq

\nin
(v)  Pay for the as yet unpaid for edges of $\elll$, contributing 
\[
q^{e_{K^*}-1-e_K} .
\]

\nin

Collecting (and bounding \eqref{ivcontrib} using worst $K$'s), 
we find
that in non-exceptional cases (those not satisfying \eqref{except}
or \eqref{except2}),
pairs of type $a$ contribute 
\beq{mast}
O(\mu Y_a n^a \gL_*/\min\{\Psi_K\})
=O(\mu^2 (Y_a/u)n^a /\min\{\Psi_K\}),
\enq
where 
\[
Y_a=\left\{\begin{array}{ll}
u&\mbox{if $a=0$,}\\
\gD&\mbox{if $a=1$,}\\
1&\mbox{if $a=2$,}
\end{array}\right.
\]
while pairs with \eqref{except} contribute 
\beq{mast'}
O(u^2\gL_* n^{-1}\gL_* n/\min\{\Psi_K\})
=O(\mu^2  /\min\{\Psi_K\})
\enq
(with the min always over $K$'s allowed in the situation under 
consideration), and those with \eqref{except2} contribute
\beq{mast''}
O(\max\{\mu u/n^2, \mu \gD/n\}) = O(\mu n^{-\theta}).
\enq
Here $u/ n^2 = O(n^{-\theta})$ is trivial if $u=1$, and $\gD/n = O(n^{-\theta})$ is trivial if $\gD=1$. Otherwise they both follow from \eqref{q}, \eqref{ngd1}, \eqref{gdudan}, and $u\asymp \gb n^2q/\gL_*$ (see \eqref{mu}), since, as noted following
\eqref{defineu}, $\gb/\gL_*\ll \ga$.

\mn
\emph{Remark}. 
When \eqref{except} holds the cost in (i$'$)-(ii$'$) multiplies that in (i)-(ii) by 
$u/n $. 
This gain is critical for the bound on $\gc$ when 
$g_2$ is pendant in $\Ks$, say with pendant vertex $w$, and we are bounding
the contribution of pairs $j,\elll$ that agree on $\Ks-w$.
(Proceeding as in (i)-(ii) is wasteful when $u\ll n$ and $a=1$ since for most 
choices of $j$ there is \emph{no} legal $l$.) Similarly, under \eqref{except2}, the $O(1)$ in 
(ii) is wasteful since $i$ is usually the only edge of $U$ in $j$, and the $n^{-\theta}$ gain
in \eqref{mast''}
is again critical.

\begin{proof}[Proof of Lemma~\ref{J1}]
We consider $\gc$ and then $\ogd$, in each case dividing the analysis by type.
Note that if $u=1$ then all relevant $(j,\elll)$'s are of type 2 (since we must have 
$g_2(\elll)=g_2(j)$);  thus we are free to use \eqref{mu}
(which is only valid for $u>1$) in dealing with types 0 and 1.

To be clear, this last bit of our discussion is more or less mechanical and not all that 
enlightening:  we can show \eqref{gamma} and \eqref{largegD} hold, but don't have
much insight into why they should (or, indeed, why our definition of $\Ks$ is
the ``right one'').

\nin
\emph{Bounds for $\gc$}.  Recall we are aiming for
\beq{gcOga}
\gc = O(\gz \mu)
\enq
(see \eqref{gamma})  and that $\gc$ only involves pairs with $i=k$.

\mn 
\emph{Pairs of type $0$: } According to \eqref{mast}, the contribution of these pairs to $\gc$ is
\[
O(\mu^2 /\min\{\Psi_K\}) = O(\mu^2/(n^2q)) = O(\gz \mu),
\]
where: 
$K$ ranges over subgraphs of $\Ks$
containing $g_1$ (we just need $e_K\neq 0$);
the first equality holds because the r.h.s.'s of \eqref{L0}-\eqref{L2} are all at least $n^2q$;
and we use $\mu \asymp \gb n^2q$ and $\gb\leq \gz$ (see \eqref{mu}, \eqref{gb}).

\mn 
\emph{Pairs of type $1$}:  
Suppose first that $\gD>1$, so the relevant bound, as in 
\eqref{mast}, is
\beq{mu2gD}
O(\mu^2 (\gD/u)n /\min\{\Psi_K\}).
\enq
Since the common vertex of $j$ and $g_2(\elll)$ is in $K$, either $K$ has at least two 
components, one of which contains $i$ ($=g_1(j) =g_1(\elll)$),
or $K\cup g_2(\elll)$ is a connection of $\elll$.

In the first case, \eqref{L0}-\eqref{L2} give 
\[
\Psi_K \geq \min\{(n^2q)^2,n^3q\} = n^3q
\]
(where $n^3q$ corresponds to $K$ having exactly two components, one of them $i$
and the other a single vertex); so (again using $\mu \asymp \gb n^2q$ and $\gb\leq \gz$,
with the trivial $\gD\leq u$)
\eqref{mu2gD} is 
\[
O(\mu^2 /(n^2q)) 
= O(\gz\mu).
\]

Now suppose $K \cup g_2(\elll)$ is a connection of $\elll$.
Then \eqref{L1} and \eqref{L2} give 
\[
\Psi_K > \min\{n^2, n^3q^2\},
\]
and, using \eqref{gdudan}
(which, since $\gD>1$, says $\gD=O(u/(\ga n))$) and, again, 
$\mu\asymp \gb n^2q \leq \gz n^2q$, 
we find that \eqref{mu2gD} is 
\[
O(\mu^2/(\ga \min\{n^2, n^3q^2\})) 
=O( (\gb/\ga) \mu \max\{q,(nq)^{-1}\} )  =O(\gz \mu).
\]
Here the r.h.s.\ uses $\gd_1<\theta/2$ (see \eqref{ngd1})
and $\max \{(nq)^{-1}, q\}<n^{-\theta}$ (see \eqref{q} and \eqref{theta}) to say
\[
\ga \,\,(>n^{-\gd_1}) \,\, > \max \{q,(nq)^{-1}\}
\]
if $\ga$ is large, and
\[
(\gb/\ga) \max \{q,(nq)^{-1}\} =n^{\gd_1/2} \max \{q,(nq)^{-1}\} < n^{-\sg/2} =\gz
\]
(see \eqref{gz}) if $\ga$ is small.

If instead $\gD=1$ then the contribution is bounded by \eqref{mast'}, which is $O(\gb\mu)$
since $\mu\asymp \gb n^2q$ and (again using \eqref{L0}-\eqref{L2}) 
$\Psi_K \geq n^2q$.

\mn 
\emph{Pairs of type 2: } Here \eqref{mast} is
\beq{mu2gD'}
O(\mu^2 n^2 /(u\min\{\Psi_K\})).
\enq

If some component, say $M$, of $K\cup g_2(j)$ is a connection of $j$, 
then either $M\subsetneq j$ or \eqref{except2} holds
(since if $M = j$, then $j \sm g_2(j) = K \subseteq \elll \sm g_2(\elll)$ implies
$j \sm g_2(j) = \elll \sm g_2(\elll)$).
If $M \subsetneq j$ then Proposition~\ref{gLprop}(b) 
(with \eqref{theta}) gives 
(with the first inequality a weak consequence of \eqref{L0}-\eqref{L2})
\[
\Psi_K\ge \Psi_M/q = n^2 \gL_M > n^2 n^{\theta} \gL_*,
\]
and the expression in
\eqref{mu2gD'} is (with the second ``$=$'' justified by \eqref{gz}-\eqref{theta})
\[
O(\mu^2 n^2/(u \gL_* n^{2+\theta})  = O(n^{-\theta}\mu) = O(\gz\mu);
\]
and if \eqref{except2} holds, the contribution is bounded by the r.h.s.\ of \eqref{mast''}, 
which is again $O(\gz \mu)$.

Otherwise (i.e.\ if there is no $M$ as above),
$K$ contains $k=i$ and the \emph{ends}
of $g_2(j)$ (but not the edge),
so is disconnected with: a component containing $i$ and neither end of $g_2(j)$,
and either (i) a component containing both ends of $g_2(j)$, or (ii) two
(distinct) components each 
containing one of these ends.  
Again using \eqref{L0}-\eqref{L2} we find that 
the component with $i$ contributes at least a factor $n^2q$ to $\Psi_K$,
and either the component in (i) contributes $n^2$ (since adding $g_2(\elll)$ to that component
produces $M$ with $\Psi_M\geq n^2q$), or each component in (ii) contributes at least 
$\min\{n^2q,n\}=n$.
Thus $\Psi_K\geq n^4q$ and \eqref{mu2gD'} is $O(\mu^2 n^2 /(un^4 q))$.

Here $u=1$ requires special treatment.  If $u>1$ then 
\eqref{mu} gives $\mu\asymp \gb n^2q$ and
\beq{uneq1}
O(\mu^2 n^2 /(un^4 q)) = O(\gb \mu/u)= O(\gz \mu).
\enq
When $u=1$ we lose \eqref{mu} but have $\mu= \gL_*$ and
\[
\mu^2 n^2 /(un^4 q) = \mu \gL_* (n^2q)^{-1}
\leq \mu \gL_{H^*} (n^2q)^{-1} \asymp \mu (n^2q^2)^{-2}\log^2n 
< n^{-2\theta}\mu\log^2n \ll \gz\mu
\]
(with the ``$\leq$'' given by definition of 
$\Ks$, the ``$\asymp$'' by 
\eqref{L2}, and the ``$<$'' by \eqref{q}).

This completes the proof of \eqref{gcOga}.

\mn
\emph{Bounds for $\ov{\gD}$}.
Recall (from \eqref{largegD}) we want
\beq{largegD'}
\ov{\gD} \ll \gz^2  \mu^2/(u \log n).
\enq

\nin 
\emph{Pairs of type $0$: } Since $K$ must contain an edge,
\eqref{mast} (with \eqref{L0}-\eqref{L2}) bounds the contribution here by
\[
O(\mu^2/(n^2 q)) \ll \gz^2 \mu^2/(u \log n),
\]
where for $\gz^2\gg u\log n/(n^2q)$ we use 
\beq{gz2}
\gz^2 \gg n^{-\theta}\log n
\enq
(see the definition of $\gz$ in \eqref{gz}, recalling from \eqref{ngd1} that 
$\gd_1<\theta/2$)
and $u/(n^2q) \asymp \gb/\gL_* < n^{-\theta}$ 
(see \eqref{mu}, \eqref{gamlarge} and \eqref{theta}).

\mn 
\emph{Pairs of type $1$}:  
Here \eqref{mast} and \eqref{mast'} bound the contribution by 
\[
O(\mu^2 (\gD/u)n /\min\{\Psi_K\}) =O(\mu^2/(\ga \min\{\Psi_K\}))
\]
if $\gD>1$ (see \eqref{gdudan}), and by $O(\mu^2/\min\{\Psi_K\})$ if $\gD=1$; 
so, since $\Psi_K\geq n^2q$, it is enough to say (for \eqref{largegD'})
\beq{type1}
\mu^2/( n^2q) \ll 
\left\{\begin{array}{ll}
\ga\gz^2 \mu^2 /(u\log n)&\mbox{if $\gD>1$,}\\
\gz^2 \mu^2 /(u\log n)&\mbox{if $\gD=1$.}
\end{array}\right.
\enq
In fact, the stronger first inequality always holds:  
we have
$u\leq O(\gb n^2q/\gL_*)$ (see \eqref{defineu}, recalling 
from the first paragraph of ``Proof of Lemma~\ref{J1}'' that
``type 1'' implies $u>1$), 
so the first bound in \eqref{type1} is valid if
\[
\frac{\gL_*}{\log n} \gg \frac{\gb}{\ga\gz^2} 
= \left\{\begin{array}{ll}
\ga^{-3/2}&\mbox{if $\ga$ is large,}\\
n^{3\sg/2}&\mbox{if $\ga$ is small,}
\end{array}\right.
\]
which is again given by \eqref{ngd1}
(using $\gL_* > n^\theta$; see \eqref{gamlarge}).

\nin \emph{Pairs of type 2: } 
Here, again using \eqref{mast}, we want
\[
\mu^2 n^2/(u\min\{\Psi_K\}) \ll \gz^2  \mu^2/(u \log n);
\]
i.e., for each eligible $K$,
\beq{PsiKgg}
\gz^2\gg 
n^2\log n/\Psi_K ,
\enq
which in view of \eqref{gz2} will follow from
\beq{n2Psi}
n^2/\Psi_K < n^{-\theta}.
\enq
If $K$ is disconnected, then, since $e_K>0$, \eqref{L0}-\eqref{L2} give 
$\Psi_K>\min\{n^4q^2, n^3q\}=n^3q$, and \eqref{n2Psi} follows from \eqref{q}
(and \eqref{theta}).
If instead $K$ is connected, then $M:=K \cup g_2(\elll)$ is a subgraph of $\elll$ 
with $e_M\geq 2$, and $\Psi_K = \Psi_M/q$.  So either 
$M=\elll$ and $\Psi_K\asymp n^2q^{-1}\log^2n> n^{2+\theta}\log^2n$ 
(see \eqref{L2} and, again \eqref{q}), or
\eqref{L0}-\eqref{L2} give
\[
\Psi_K\geq q^{-1}\min\{n^{2+\theta}q, n^{3+\theta}q^2, n^{2+\theta}\} = n^{2+\theta},
\]
and in each case we have \eqref{n2Psi}.

\end{proof}


\begin{thebibliography}{99}
 

\bibitem{AS}
N.\ Alon and J.H.\ Spencer.
\newblock {\em The Probabilistic Method}.
\newblock Wiley-Interscience Series in Discrete Mathematics and Optimization.
  Wiley, Hoboken, NJ, third edition, 2008.

\bibitem{AKL}
Y.\ Alon, M.\ Krivelevich, and E.\ Lubetzky. Cycle lengths in sparse random graphs. 
{\em Random Struct.\ Alg.} {\bf 61}(3):444--461, 2022.

\bibitem{BMS}
J.\ Balogh, R.\ Morris, and W.\ Samotij.
\newblock Independent sets in hypergraphs.
\newblock {\em J. Amer. Math. Soc.}, {\bf 28}:669--7097, 2015.

\bibitem{Baron}
J.D.\ Baron.
\newblock {\em Two Problems on Cycles in Random Graphs}.
\newblock PhD thesis, Rutgers University, New Brunswick, NJ, Oct 2016.

\bibitem{BK}
J.D.\ Baron and J.\ Kahn. On the cycle space of a random graph. 
\emph{Random Struct.\ Alg.}
\textbf{54} (2019), 39-68.


\bibitem{BFF}
B.\ Bollob\'as, T.I.\ Fenner and A.M.\ Frieze, 
An algorithm for finding hamilton paths and cycles in random graphs,
\emph{Combinatorica} \textbf{7}:327--341, 1987.

\bibitem{CNP}
M.~Christoph, R.~Nenadov, and K.~Petrova,
The Hamilton space of pseudorandom graphs. 
arXiv:2402.01447 [math.CO]

\bibitem{Conlon-Gowers}
D.~Conlon and W.~T.\ Gowers.
\newblock Combinatorial theorems in sparse random sets.
\newblock \emph{Ann. math.}, {\bf 184}(2):367--454, 2016.

\bibitem{DHK}
B.~DeMarco, A.~Hamm, and J.~Kahn.
\newblock On the triangle space of a random graph.
\newblock {\em J. Comb.}, {\bf 4}(2):229--249, 2013.

\bibitem{DK}
B.\ DeMarco and J.\ Kahn, 
Tur\'an's Theorem for random graphs, submitted.
arXiv:1501.01340 [math.PR]



\bibitem{Diestel}
R.\ Diestel.
\newblock {\em Graph Theory}, volume 173 of {\em Graduate Texts in
  Mathematics}.
\newblock Springer-Verlag, Heidelberg, 4th edition, 2010.

\bibitem{ER60}
P.\ Erd\H{o}s and A.\ R\'enyi. On the evolution of random graphs. {\em Publ. Math. Inst. Hungar. Acad. Sci}, {\bf 5}:17--61, 1960.


\bibitem{Erdos-Tetali}
P.\ Erd\H{o}s and P.\ Tetali.
\newblock Representations of integers as the sum of $k$ terms.
\newblock {\em Random Struct.\ Alg.}, {\bf 1}(3):245--261, 1990.



\bibitem{Harris}
T.~E.\ Harris.
\newblock A lower bound on the critical probability in a certain percolation
  process.
\newblock {\em Math. Proc. Cambridge Phil. Soc.}, {\bf 56}(1):13--20, Jan 1960.

\bibitem{Janson}
S.\ Janson.
\newblock Poisson approximation for large deviations.
\newblock {\em Random Struct.\ Alg.}, {\bf 1}(2):221--229, June 1990.

\bibitem{JLR}
S.\ Janson, T.\ {\L}uczak, and A.\ Ruci\'{n}ski.
\newblock {\em Random Graphs}.
\newblock Wiley-Interscience Series in Discrete Mathematics and Optimization.
  Wiley, New York, 2000.

\bibitem{Kahle}
M.\ Kahle.
\newblock Topology of random clique complexes.
\newblock {\em Discrete Math.}, {\bf 309}(6):1658--1671, Apr 2009.

\bibitem{KahleArxiv}
M.\ Kahle.
\newblock Sharp vanishing thresholds for cohomology of random flag complexes.
\newblock \emph{Ann.\ of Math.}, {\bf 179}(3):1085--1107, 2014.


\bibitem{McDiarmid}
C.J.H.\ McDiarmid.
\newblock The solution of a timetabling problem.
\newblock \emph{J. Inst.
Math. Appl.} {\bf 9}:23--34, 1972.

\bibitem{Pittel}
B.~Pittel.
\newblock A random graph with a subcritical number of edges.
\newblock {\em Trans. Amer. Math. Soc.}, {\bf 309}(1):51--75, Sept 1988.


\bibitem{RW} O. Riordan and L. Warnke,
The Janson inequalities for general up-sets.
\emph{Random Struct.\ Alg.}, {\bf 46}(2):391--395, 2015.

\bibitem{Rodl-Schacht}
V.\ R\"odl and M.\ Schacht.
\newblock Extremal results in random graphs.
\newblock In L\'aszl\'o Lov\'asz, Imre~Z. Ruzsa, and Vera~T. S\'os, editors,
  {\em Erd\H{o}s Centennial}, volume~25 of {\em Bolyai Society Mathematical
  Studies}, pages 535--583. Springer Berlin Heidelberg, New York, 2013.

\bibitem{ST}
D.\ Saxton and A.\ Thomason.
\newblock Hypergraph containers.
\newblock \emph{Invent. math.} {\bf 201}:925--992, 2015.

\bibitem{Simonovits}
M.~Simonovits.
\newblock A method for solving extremal problems in graph theory, stability
  problems.
\newblock In {\em Theory of Graphs (Proc. Colloq., Tihany, 1966)}, pages
  279--319. Academic Press, New York, 1968.

\bibitem{Spencer}
J.\ Spencer.
\newblock Counting extensions.
\newblock {\em J. Combin. Theory Ser. A}, {\bf 55}(2):247--255, Nov 1990.

\bibitem{deWerra}
D.\ de Werra. \newblock Balanced schedules,  \emph{INFORCanad. J. Operational Res.
and Information Processing} {\bf 9}:230--237, 1971.

\end{thebibliography}
\end{document}